\newtheorem{thm}{Theorem}[section]
\newtheorem{cor}[thm]{Corollary}
\newtheorem{lem}[thm]{Lemma}
\theoremstyle{definition}
\newtheorem{defin}[thm]{Definition}
\newtheorem{rem}[thm]{Remark}
\newtheorem{exa}[thm]{Example}
\numberwithin{equation}{section}
\newcommand{\abs}[1]{\left| #1 \right|}
\newcommand{\expr}[1]{\left( #1 \right)}
\newcommand{\norm}[1]{\left\| #1 \right\|}
\newcommand{\set}[1]{\left\{ #1 \right\}}
\newcommand{\scalar}[1]{\left< #1 \right>}
\newcommand{\tscalar}[1]{\langle #1 \rangle}
\newcommand{\ind}{\mathbf{1}}
\newcommand{\sub}{\subseteq}
\newcommand{\C}{\mathbf{C}}
\newcommand{\R}{\mathbf{R}}
\newcommand{\Z}{\mathbf{Z}}
\newcommand{\pr}{\mathbf{P}}
\newcommand{\ex}{\mathbf{E}}
\newcommand{\hl}{{(0, \infty)}}
\newcommand{\A}{\mathcal{A}}
\newcommand{\schwartz}{\mathcal{S}}
\newcommand{\fourier}{\mathcal{F}}
\newcommand{\laplace}{\mathcal{L}}
\newcommand{\dynkin}{\mathfrak{A}}
\newcommand{\domain}{\mathcal{D}}
\newcommand{\eps}{\varepsilon}
\newcommand{\ph}{\varphi}
\newcommand{\thet}{\vartheta}
\newcommand{\conv}{*}
\newcommand{\formula}[2][nolabel]
{\ifthenelse{\equal{#1}{nolabel}}
 {\begin{align*} #2 \end{align*}}
 {\ifthenelse{\equal{#1}{}}
  {\begin{align} #2 \end{align}}
  {\begin{align} \label{#1} #2 \end{align}}
 }
}
\DeclareMathOperator{\pv}{pv}
\DeclareMathOperator{\Arg}{Arg}
\DeclareMathOperator{\imag}{Im}
\DeclareMathOperator{\real}{Re}
\DeclareMathOperator{\dist}{dist}
\DeclareMathOperator{\li}{Li}
\DeclareMathOperator{\supp}{supp}
\DeclareMathOperator{\res}{Res}
\DeclareMathOperator{\var}{Var}
\theoremstyle{plain}
\newtheorem{prop}[thm]{Proposition}
\theoremstyle{definition}
\newtheorem{assumption}[thm]{Assumption}
\theoremstyle{remark}
\begin{document}

\title[Subordinate Brownian motions in half-line]{Spectral analysis of subordinate Brownian motions in half-line}

\author[M. Kwa{\'s}nicki]{Mateusz Kwa{\'s}nicki}
\thanks{Work supported by the Polish Ministry of Science and Higher Education grant no. N~N201 373136}
\thanks{The author received financial support of the Foundation for Polish Science}

\address{Institute of Mathematics \\ Polish Academy of Sciences \\ ul. {\'S}niadeckich 8 \\ 00-976 Warszawa, Poland}
\email{m.kwasnicki@impan.pl}
\address{Institute of Mathematics and Computer Science \\ Wroc{\l}aw University of Technology \\ ul. Wybrze{\.z}e Wyspia{\'n}\-skiego 27 \\ 50-370 Wroc{\l}aw, Poland}
\email{mateusz.kwasnicki@pwr.wroc.pl}

\date{}

\begin{abstract}
We study one-dimensional L{\'e}vy processes with L{\'e}vy-Khintchine exponent $\psi(\xi^2)$, where $\psi$ is a complete Bernstein function. These processes are subordinate Brownian motions corresponding to subordinators, whose L{\'e}vy measure has completely monotone density; or, equivalently, symmetric L{\'e}vy processes whose L{\'e}vy measure has completely monotone density on $(0, \infty)$. Examples include symmetric stable processes and relativistic processes. The main result is a formula for the generalized eigenfunctions of transition operators of the process killed after exiting the half-line. A generalized eigenfunction expansion of the transition operators is derived. As an application, a formula for the distribution of the first passage time (or the supremum functional) is obtained.
\end{abstract}

\subjclass[2010]{Primary: 47G30, 60G51; Secondary: 60G52, 60J35}

\keywords{L{\'e}vy process, subordination, complete Bernstein function, killed process, half-line, spectral theory, first passage time}

\maketitle

%
%                            ---------- o ----------
%

\section{Introduction and statement of main results}
\label{sec:intro}

%
%                            ---------- o ----------
%

In a recent paper~\cite{bib:kkms10}, spectral problem for the one-dimensional Cauchy process (that is, the symmetric $1$-stable process) killed upon exiting a half-line or an interval was studied. For the half-line, an explicit formula for generalized eigenfunctions of transition operators was obtained using methods developed in the theory of linear water waves. The argument of~\cite{bib:kkms10} relies on some properties specific to the Cauchy process, and it does not easily generalize to other L{\'e}vy processes. The purpose of this article is to derive a similar formula for generalized eigenfunctions in a more general setting, using a modified method.

The class of processes considered here consists of symmetric (one-di\-men\-sion\-al) L{\'e}vy processes $X_t$ with L{\'e}vy measure having completely monotone density function on $(0, \infty)$. Alternatively, this class can be described as (one-dimensional) subordinate Brownian motions, corresponding to subordinators with L{\'e}vy measure having completely monotone density function. Yet another characterization is given by the condition that the L{\'e}vy-Khintchine exponent of $X_t$ has the form $\psi(\xi^2)$ for a \emph{complete Bernstein function} $\psi(\xi)$. The equivalence of the above is given in Proposition~\ref{prop:cbf}. This class of L{\'e}vy processes have attracted much attention in the last decade; see, for example,~\cite{bib:bbkrsv09, bib:cs05, bib:cs06, bib:hk09, bib:ksv09, bib:ksv10a, bib:ksv10, bib:ksv11, bib:sv06, bib:sv08} for some recent developments. There is also an extensive literature focused specifically on symmetric $\alpha$-stable processes (\cite{bib:bk04, bib:bk06, bib:bb00, bib:bgr10, bib:cs97, bib:d90, bib:d04, bib:dm07, bib:i09, bib:k97, bib:k98, bib:mo69}) and relativistic $\alpha$-stable processes (\cite{bib:bmr09, bib:bmr10, bib:cks12, bib:ks06, bib:r02}), which are included here as examples.

We consider the process $X_t$ killed upon leaving the half-line $(0, \infty)$. Let $P^\hl_t$, $\A_\hl$ and $\domain(\A_\hl; L^\infty)$ denote the corresponding transition operators, the $L^\infty(\hl)$ generator, and the domain of $\A_\hl$, respectively (formal definitions are given in the Preliminaries). The following generalization of Theorem~2 from~\cite{bib:kkms10} is the main result of the article. Its main advantage is the explicit description of the eigenfunctions.

\begin{thm}
\label{th:eigenfunctions}
Suppose that the L{\'e}vy-Khintchine exponent of $X_t$ has the form $\psi(\xi^2)$ for a complete Bernstein function $\psi$. For all $\lambda > 0$, there is a bounded function $F_\lambda$ on $\hl$ which is the eigenfunction of $P^\hl_t$ and $\A_\hl$:
\formula{
 P^\hl_t F_\lambda(x) & = e^{-t \psi(\lambda^2)} F_\lambda(x) && \text{and} & \A_\hl F_\lambda(x) & = -\psi(\lambda^2) F_\lambda(x)
}
for all $x > 0$. The function $F_\lambda$ is characterized by its Laplace transform:
\formula[eq:lf]{
 \laplace F_\lambda(\xi) & = \frac{\lambda}{\lambda^2 + \xi^2} \, \exp\expr{\frac{1}{\pi} \int_0^\infty \frac{\xi}{\xi^2 + \zeta^2} \, \log \frac{\psi'(\lambda^2) (\lambda^2 - \zeta^2)}{\psi(\lambda^2) - \psi(\zeta^2)} \, d\zeta}
}
for $\xi \in \C$ such that $\real \xi > 0$. Furthermore, for $x > 0$ we have
\formula[eq:f]{
 F_\lambda(x) & = \sin(\lambda x + \thet_\lambda) - G_\lambda(x) ,
}
where $\thet_\lambda \in [0, \pi/2)$, and $G_\lambda(x)$ is a bounded, completely monotone function on $(0, \infty)$. More precisely, we have
\formula[eq:theta]{
 \thet_\lambda & = -\frac{1}{\pi} \int_0^\infty \frac{\lambda}{\lambda^2 - \zeta^2} \, \log \frac{\psi'(\lambda^2) (\lambda^2 - \zeta^2)}{\psi(\lambda^2) - \psi(\zeta^2)} \, d\zeta ,
}
and $G_\lambda$ is the Laplace transform of a finite measure $\gamma_\lambda$ on $(0, \infty)$. When $\psi(\xi)$ extends to a function $\psi^+(\xi)$ holomorphic in the upper complex half-plane $\{\xi \in \C : \imag \xi > 0\}$ and continuous in $\{\xi \in \C : \imag \xi \ge 0\}$, and furthermore $\psi^+(-\xi) \ne \psi(\lambda)$ for all $\xi > 0$, then the measure $\gamma_\lambda$ is absolutely continuous, and
\formula[eq:gamma0]{
\begin{aligned}
 \gamma_\lambda(d\xi) & = \frac{1}{\pi} \expr{\imag \frac{\lambda \psi'(\lambda^2)}{\psi(\lambda^2) - \psi^+(-\xi^2)}} \\ & \qquad \times \exp\expr{-\frac{1}{\pi} \int_0^\infty \frac{\xi}{\xi^2 + \zeta^2} \, \log \frac{\psi'(\lambda^2) (\lambda^2 - \zeta^2)}{\psi(\lambda^2) - \psi(\zeta^2)} \, d\zeta} d\xi
\end{aligned}
}
for $\xi > 0$.
\end{thm}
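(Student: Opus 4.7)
The plan is to take \eqref{eq:lf} as a definition of a holomorphic function $H_\lambda$ on $\set{\xi \in \C : \real \xi > 0}$, show it is the Laplace transform of a bounded function $F_\lambda$ on $\hl$ of the form \eqref{eq:f}, and then verify the eigenfunction identity by exploiting a Wiener--Hopf factorisation encoded in \eqref{eq:lf}. The key object is the exponential factor
\formula{
 \Phi_\lambda(\xi) & := \exp\expr{\frac{1}{\pi} \int_0^\infty \frac{\xi}{\xi^2 + \zeta^2} \log \phi(\zeta) \, d\zeta}, \qquad \phi(\zeta) := \frac{\psi'(\lambda^2)(\lambda^2 - \zeta^2)}{\psi(\lambda^2) - \psi(\zeta^2)},
}
a Poisson--Cauchy type integral in the right half-plane. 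A Sokhotski--Plemelj computation of its boundary values on $i \R$ yields the multiplicative identity $\abs{\Phi_\lambda(is)}^2 = \phi(s)$, which is essentially the Wiener--Hopf factorisation of $\psi(\lambda^2) - \psi(s^2)$ modulo the polynomial prefactor $\psi'(\lambda^2)(\lambda^2 - s^2)$; this is what will encode the killing condition at $x = 0$.

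For the construction, the analytic properties of complete Bernstein functions ($\psi$ strictly increasing, $\phi(\lambda) = 1$ by l'H\^opital, controlled behaviour of $\phi$ at $0$ and $\infty$) give integrability of $\log \phi$ against the Poisson kernel $\xi/(\xi^2 + \zeta^2)$, so $H_\lambda(\xi) := \lambda \Phi_\lambda(\xi)/(\lambda^2 + \xi^2)$ is holomorphic in $\real \xi > 0$ with $O(1/\abs{\xi})$ decay, and is therefore the Laplace transform of a bounded $F_\lambda$ on $\hl$. For the decomposition \eqref{eq:f}, I would analytically continue $H_\lambda$ across $i \R$ to $\C \setminus ((-\infty, 0] \cup \set{\pm i\lambda})$---the branch cut on $(-\infty, 0]$ being inherited from $\psi$---and invert the Laplace transform by wrapping a contour around this cut. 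The residues at the simple poles $\pm i \lambda$, together with the phase read off from the boundary values of $\Phi_\lambda$ at $\pm i\lambda$ (yielding \eqref{eq:theta} after a principal-value computation), produce the oscillatory part $\sin(\lambda x + \thet_\lambda)$, while the cut contribution gives $-G_\lambda(x)$ as the Laplace transform of a finite nonnegative measure $\gamma_\lambda$ on $\hl$. Nonnegativity of $\gamma_\lambda$, hence complete monotonicity of $G_\lambda$, follows from the Herglotz-type positivity of $\imag \psi$ on its boundary values; when $\psi$ admits the holomorphic extension $\psi^+$, Plemelj formulas applied to $\psi(\lambda^2) - \psi^+(-\xi^2)$ give the explicit density \eqref{eq:gamma0}.

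The eigenfunction identity is the decisive step. Extending $F_\lambda$ by zero to $(-\infty, 0]$ and using that the full-line generator has Fourier symbol $\psi(\xi^2)$, the identity $\A_\hl F_\lambda = -\psi(\lambda^2) F_\lambda$ on $\hl$ is equivalent to $(\psi(\xi^2) - \psi(\lambda^2)) \fourier F_\lambda(\xi)$ being the Fourier transform of a distribution supported in $(-\infty, 0]$, i.e.\ extending holomorphically to $\imag \xi > 0$. Substituting $w = i \xi$ reduces this to the requirement that $(\psi(-w^2) - \psi(\lambda^2)) H_\lambda(w)$ extend holomorphically from $\real w > 0$ across $i \R$. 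A direct computation on $w = is$, using the factorisation $\abs{\Phi_\lambda(is)}^2 = \phi(s)$, gives the product value $-\lambda \psi'(\lambda^2)/\overline{\Phi_\lambda(is)}$, a common boundary value from both sides of $i \R$ of the holomorphic function obtained from the Wiener--Hopf factors; the zero of $\psi(-w^2) - \psi(\lambda^2)$ at $w = \pm i \lambda$ precisely cancels the pole of $\lambda/(\lambda^2 + w^2)$ there, so the extension is holomorphic on all of $i \R$. I expect the main obstacle to be the careful handling of this analytic continuation: the integral formula \eqref{eq:lf} is itself discontinuous across $i \R$, so the true analytic continuation of $H_\lambda$ must be constructed with care and matched against the formal boundary computation; the delicate $0 \cdot \infty$ cancellation at $\pm i \lambda$ has to be verified rigorously; and the final passage from a distributional identity to a pointwise eigenfunction statement on $\domain(\A_\hl; L^\infty)$ needs regularity of $P^\hl_t$. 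An approximation argument via the dense subclass of complete Bernstein functions admitting $\psi^+$ should reduce the general case to the setting in which \eqref{eq:gamma0} is directly available.
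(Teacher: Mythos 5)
Your overall strategy coincides with the paper's: interpret \eqref{eq:lf} as a Wiener--Hopf type definition of $\laplace F_\lambda$, verify the decomposition \eqref{eq:f}, prove the distributional eigenfunction identity via the factorization, and upgrade to a genuine eigenfunction. Within this shared skeleton, there are two places where your plan either misses the decisive idea or underestimates the difficulty.

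First, the passage from a distributional eigenfunction to $F_\lambda \in \domain(\A_\hl; L^\infty)$ is not merely a matter of ``regularity of $P^\hl_t$.'' The obstacle is that $F_\lambda(x) \to \sin(\lambda x + \thet_\lambda)$ as $x \to \infty$, so $F_\lambda \notin C_0(\hl)$, and Dynkin's theory of the characteristic operator applies directly only to $C_0$ functions. The paper's Lemma~\ref{lem:eigenfunctions} resolves this by writing $F_\lambda = f_1 + f_2$, where $f_1 = h \cdot \sin(\lambda \cdot + \thet_\lambda)$ is a smooth cutoff (with $h$ vanishing near $0$ and normalized so that $\A f_1(0) = 0$), handled by a dedicated result (Lemma~\ref{lem:smooth:domain}) for $C_b^\infty$ functions supported away from the boundary, while $f_2 \in C_0(\hl)$ is handled by the Dynkin-type Lemma~\ref{lem:d2}. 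Without this decomposition you have no route from the distributional identity to the semigroup identity; ``regularity'' of $P^\hl_t$ alone does not help, because $P^\hl_t$ is not strongly continuous on $L^\infty$. There is also a separate case you omit: when $\psi$ is bounded ($X_t$ a compound Poisson process), the killed process need not be Feller at $0$, and the paper uses a direct computation (Proposition~\ref{prop:poisson}) for that case.

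Second, for the decomposition \eqref{eq:f} and the complete monotonicity of $G_\lambda$, your contour-wrapping argument presupposes the boundary extension $\psi^+$; without it, the measure $m_0$ in the Stieltjes representation of $\psi$ can be singular, and the ``cut contribution'' is not a density integral. You propose an approximation argument, but approximating a general CBF by ones admitting continuous boundary values while controlling $F_\lambda$ in the appropriate topology is a non-trivial project of its own, and nothing in your sketch secures it. The paper avoids this entirely with a purely algebraic lemma (Lemma~\ref{lem:g}) about complete Bernstein functions: given a CBF $f$ and $a > 0$, the difference between the partial-fraction term $f^*$ and $f(z)/(z^2 + a^2)$ is shown directly from the representation \eqref{eq:cbf0} to be a Stieltjes function, hence the Laplace transform of a completely monotone function which is itself the Laplace transform of a finite measure. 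Applied to $c_\lambda \psi_\lambda^\dagger$, this yields \eqref{eq:f} in full generality with no boundary-value hypothesis, and formula \eqref{eq:gamma0} then becomes a corollary of the Sokhotski--Plemelj-type representation (Proposition~\ref{prop:cbf:repr}) under the extra assumption on $\psi^+$, exactly as stated. Similarly, for the Wiener--Hopf step, the paper works with the probability measure $R_+$ (with completely monotone density) and $\schwartz'$-convolutions rather than boundary values of $\psi(-w^2)$, precisely to remain valid for arbitrary CBFs. Your route can be made to work when $\psi^+$ exists, but the claim to general CBFs is a gap.
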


In some cases, the above formulae can be substantially simplified. For example, when $X_t$ is the symmetric $\alpha$-stable process, then $\thet_\lambda = (2 - \alpha) \pi/8$, and $F_\lambda(x) = F_1(\lambda x)$; see Section~\ref{sec:examples}.

Any complete Bernstein function $\psi(\xi)$ extends holomorphically to $\C \setminus (-\infty, 0]$. Hence, the assumption in the last part of Theorem~\ref{th:eigenfunctions} concerns the existence of boundary values of $\psi$ (approached from the upper complex half-plane) along the branch cut $(-\infty, 0]$. A formula similar to~\eqref{eq:gamma0} can be given in the general case, as discussed in Remark~\ref{rem:gamma}.

It should be emphasized that although Theorem~\ref{th:eigenfunctions} extends the result of~\cite{bib:kkms10}, its proof is essentially different.

\begin{rem}
\label{rem:anal}
Entirely analytical formulation of Theorem~\ref{th:eigenfunctions} is available; see~\cite{bib:j01} for the analytical definition of $\A_\hl$. Also, our arguments are purely analytical, with the exception of probabilistic proofs of Lemmas~\ref{lem:d2} and~\ref{lem:smooth:domain}.\qed
\end{rem}

The eigenfunctions $F_\lambda$ \emph{never} belong to $L^2(\hl)$: this reflects the fact that the spectrum of transition operators $P^\hl_t$, considered as operators on $L^2(\hl)$, is purely continuous. Nevertheless, the functions $F_\lambda$ yield a \emph{generalized eigenfunction expansion} of $P^\hl_t$. In classical eigenfunction expansion, a function is decomposed with respect to a complete orthonormal set of eigenfunctions, and the operator acts at each component independently. Informally, in \emph{generalized} eigenfunction expansions, the principle is the same, but eigenfunctions are no longer square-integrable and there are uncountably many of them. A formal statement is given in the following result, which generalizes Theorem~3 in~\cite{bib:kkms10}, and explicitly provides functional calculus for $\A_\hl$. Here $\A_\hl$ and $\domain(\A_\hl; L^2)$ denote the $L^2(\hl)$ generator of the semigroup $P^\hl_t$, and the domain of $\A_\hl$, respectively (again, see Preliminaries for formal definitions).

\begin{thm}
\label{th:spectral}
With the notation of Theorem~\ref{th:eigenfunctions}, let
\formula[eq:pistar]{
 \Pi f(\lambda) & = \int_0^\infty f(x) F_\lambda(x) dx && \text{for $\lambda > 0$, $f \in C_c(\hl)$.}
}
Then $\sqrt{2 / \pi} \, \Pi$ extends to a unitary operator on $L^2(\hl)$, and
\formula{
 \Pi P^\hl_t f(\lambda) & = e^{-t \psi(\lambda^2)} \Pi f(\lambda) && \text{for $f \in L^2(\hl)$.}
}
Furthermore, $f \in \domain(\A_\hl; L^2)$ if and only if $\psi(\lambda^2) \Pi f(\lambda)$ is in $L^2(\hl)$, and
\formula{
 \Pi \A_\hl f(\lambda) = -\psi(\lambda^2) \Pi f(\lambda) && \text{for $f \in \domain(\A_\hl; L^2)$.}
}
\end{thm}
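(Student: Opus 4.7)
My strategy proceeds in three stages: first, establish the intertwining identity on the dense subspace $C_c(\hl)$; second, prove the Plancherel identity via a resolvent / eigenfunction expansion derived from the Laplace transform formula \eqref{eq:lf}; third, upgrade the resulting isometry to a unitary operator and extract the functional calculus. Stage one is immediate from Theorem~\ref{th:eigenfunctions}: the process $X_t$ is symmetric, so $P^\hl_t$ is self-adjoint with symmetric kernel, and for $f \in C_c(\hl)$ boundedness of $F_\lambda$ permits Fubini, giving
\formula{
 \Pi P^\hl_t f(\lambda) = \int_0^\infty f(x) P^\hl_t F_\lambda(x) dx = e^{-t \psi(\lambda^2)} \Pi f(\lambda).
}

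The central step is the Plancherel identity $\int_0^\infty \abs{\Pi f(\lambda)}^2 d\lambda = \frac{\pi}{2} \int_0^\infty \abs{f(x)}^2 dx$ for $f \in C_c(\hl)$. My plan is to pass through the semigroup by proving the bilinear spectral representation
\formula{
 p^\hl_t(x, y) = \frac{2}{\pi} \int_0^\infty e^{-t \psi(\lambda^2)} F_\lambda(x) F_\lambda(y) d\lambda, \qquad t > 0, \; x, y > 0,
}
or, equivalently after a Laplace transform in $t$, the analogous expansion for the Green function of $P^\hl_t$. I would verify the Green-function version by taking double Laplace transforms in $x$ and $y$: thanks to \eqref{eq:lf}, the integral on the right reduces to a contour integral in $\lambda$ that admits a Wiener--Hopf-style evaluation, with the factorization of $q + \psi(\lambda^2)$ mirrored by the exponential in \eqref{eq:lf}. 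This is the place where the boundary-value / holomorphic-extension properties of $\psi$ enter and where the complete Bernstein structure is truly exploited. Matching against the known Laplace transform of the resolvent kernel yields the identity, and then pairing both sides with $f, g \in C_c(\hl)$ and letting $t \to 0^+$ (using strong $L^2$-continuity of $P^\hl_t$ at $t=0$) produces the Plancherel equality.

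With the Plancherel identity in hand, $U = \sqrt{2/\pi}\,\Pi$ extends uniquely from $C_c(\hl)$ to an isometry on $L^2(\hl)$, and the intertwining extends by continuity. To promote $U$ to a unitary operator I would show its range is dense: any $g \in L^2(\hl)$ orthogonal to the range satisfies $\int_0^\infty g(\lambda) F_\lambda(x) d\lambda = 0$ for almost every $x > 0$, and the Laplace-transform description \eqref{eq:lf} forces $g = 0$. The functional calculus statement is then automatic: $P^\hl_t$ is conjugated by $U$ to multiplication by $e^{-t \psi(\lambda^2)}$, so by standard spectral theory $f \in \domain(\A_\hl; L^2)$ iff $\psi(\lambda^2) \Pi f \in L^2(\hl)$, in which case $\Pi \A_\hl f = -\psi(\lambda^2) \Pi f$. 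The hardest part will be the bilinear spectral representation underlying the Plancherel identity; the contour-integral reduction and boundary-value analysis there are the real content. Everything else, including the $L^2$ functional calculus, is comparatively routine.
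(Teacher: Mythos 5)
Your Stage 1 (the intertwining identity on $C_c(\hl)$) matches the paper exactly. But for the Plancherel identity your route is genuinely different, and it is also where the proposal has real gaps.

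The paper does not go through the kernel expansion $p^\hl_t(x,y) = \tfrac{2}{\pi}\int_0^\infty e^{-t\psi(\lambda^2)}F_\lambda(x)F_\lambda(y)\,d\lambda$ at all; in fact that identity is the paper's Theorem~\ref{th:pdt}, which is derived \emph{from} Theorem~\ref{th:spectral}, not the other way around. The paper's route is: first prove (via Hardy--Hilbert's inequality and the decomposition $F_\lambda = \sin(\lambda x + \thet_\lambda) - G_\lambda(x)$ from Lemma~\ref{lem:gest}) that $\Pi^*$ extends to a \emph{bounded} operator on $L^2(\hl)$ (Lemma~\ref{lem:pi:bounded}); then combine the intertwining with self-adjointness of $P^\hl_t$ to show that $\Pi^* f$ and $\Pi^* g$ are orthogonal whenever $f$ and $g$ have essentially disjoint supports; this makes $m(E) = \|\Pi^*\ind_E\|_2^2$ a measure, whose density at $\lambda_0$ is then computed locally by a short-interval analysis of $\int(\sin(\lambda x + \thet_{\lambda_0}))^2 e^{-\eps x}\,dx$ together with estimates on $G_\lambda$. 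This is entirely elementary and never touches Green functions, contours, or the Wiener--Hopf factorization of $q + \psi(\xi^2)$.

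The gaps in your route are concrete. First, you never establish that $\Pi$ is bounded on $L^2(\hl)$ (or that $\Pi f \in L^2$ for $f \in C_c(\hl)$), which you implicitly use when passing $t\to 0^+$ under the integral; the paper devotes Lemma~\ref{lem:pi:bounded} to this. Second, the ``contour integral \ldots admits a Wiener--Hopf-style evaluation'' and ``matching against the known Laplace transform of the resolvent kernel'' are not evaluations at all but the entire substance of the argument, and carrying them out is far from routine: one would have to match $\int_0^\infty c_\lambda^2\, \psi_\lambda^\dagger(\xi)\psi_\lambda^\dagger(\eta)\bigl((\lambda^2+\xi^2)(\lambda^2+\eta^2)(q+\psi(\lambda^2))\bigr)^{-1}d\lambda$ against the Pecherskii--Rogozin-type formula for the $q$-resolvent, and none of the ingredients for that match are set up here. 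Third, Theorem~\ref{th:pdt} itself requires the integrability hypothesis ``$e^{-t\psi(\xi^2)}$ integrable'', which fails for small $t$ in some examples (the variance-gamma case, Example~\ref{ex:gamma}), so even after the contour argument one would need to be careful that the $t\to 0^+$ passage through the Green function does not tacitly reintroduce that restriction. Finally, your one-line dismissal of the density-of-range step (``the Laplace-transform description \eqref{eq:lf} forces $g = 0$'') glosses over what the paper explicitly flags as the genuinely hard point: injectivity of $\Pi$ is assumed in the proof given in the paper and was only established later in~\cite{bib:kmr11a}. Proving that $\int_0^\infty g(\lambda)F_\lambda(x)\,d\lambda \equiv 0$ forces $g = 0$ does not follow from \eqref{eq:lf} by any standard manipulation; the $\lambda$-integral is not a Laplace transform in $\lambda$.

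In short: your overall architecture (intertwine, Plancherel, unitarity, spectral calculus) is sound and standard, but the Plancherel step is a different and substantially harder route than the paper's, the key computation is omitted, and the same injectivity difficulty the paper acknowledges is present but unacknowledged in your sketch.
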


\begin{rem}
Below we provide a complete proof of Theorem~\ref{th:spectral} under an additional condition that $\Pi$ is injective, and we verify that $\Pi$ is injective when $X_t$ is the symmetric $\alpha$-stable process, $\alpha \in (0, 2]$ (that is, $\psi(\xi) = \xi^{\alpha/2}$). In the preliminary version of this article, a relatively easy to check, but rather restrictive sufficient condition for injectivity of $\Pi$ was given (Lemma~19 in~\cite{bib:k10:v1}), and it was conjectured that in fact $\Pi$ is always injective. After a few months, this conjecture was solved in affirmative by Jacek Ma{\l}ecki, and the proof of injectivity of $\Pi$ is given in the recent preprint~\cite{bib:kmr11a}. We announce the result of~\cite{bib:kmr11a} and state Theorem~\ref{th:spectral} in full generality.\qed
\end{rem}

\begin{rem}
There is a striking similarity between formulae~\eqref{eq:lf} and~\eqref{eq:gamma0} on one hand, and some formulae in fluctuation theory of L{\'e}vy processes on the other. More precisely, the exponent in~\eqref{eq:lf} and~\eqref{eq:gamma0} resembles known formulae for the Laplace exponent of ladder processes (see Corollary~9.7 in~\cite{bib:f74}) and the Laplace transform of the supremum functional (Theorem~1 in~\cite{bib:bd57}). This phenomenon can be easily explained at the analytical level: in both cases explicit formulae are obtained using the \emph{Wiener-Hopf method}, a technique for solving some integral equations in the half-line via Fourier transform. Fluctuation theory of L{\'e}vy processes relies upon the Wiener-Hopf factorization of the $\lambda$-potential operator (that is, the resolvent) of the transition semigroup of a L{\'e}vy process. The Fourier symbol of this operator is $1 / (\lambda + \psi(\xi^2))$. In the proof of Theorem~\ref{th:eigenfunctions}, the operator with Fourier symbol $(\psi(\lambda^2) - \psi(\xi^2)) / (\lambda^2 - \xi^2)$ is factorized.

A probabilistic formulation of the fluctuation theory is available through local times, excursion theory and ladder processes; see~\cite{bib:b98, bib:d07, bib:k06, bib:s99}. It is an interesting open problem whether there is a similar \emph{probabilistic} derivation of the formula for $F_\lambda$.\qed
\end{rem}

The proof of Theorem~\ref{th:spectral} (under the assumption that $\Pi$ is injective) follows the line of~\cite{bib:kkms10}. Proofs of Theorems~\ref{th:eigenfunctions} and~\ref{th:spectral} are rather technical. Therefore, it may be helpful to keep in mind the following well-recognized example.

\begin{exa}
Suppose that $X_t$ is the Brownian motion, with variance $2 t$ (so that the generator of $X_t$ is the one-dimensional Laplace operator $d^2 / dx^2$, and $\psi(\xi) = \xi$). In this case $\A_\hl$ is the Laplace operator in $(0, \infty)$ with Dirichlet boundary condition at $0$, and $P^\hl_t$ is the classical heat semigroup on $\hl$ with Dirichlet boundary condition at $0$. The eigenfunctions of $\A_\hl$ and $P^\hl_t$ are simply $F_\lambda(x) = \sin(\lambda x)$, with corresponding eigenvalues $-\lambda^2$ and $e^{-t \lambda^2}$, respectively. The integral transform $\Pi$ is the Fourier sine transform on $(0, \infty)$, and Theorem~\ref{th:spectral} states that the Fourier sine transform is the unitary mapping (up to a constant factor $\sqrt{2 / \pi}$), which diagonalizes the action of the heat semigroup.
\end{exa}

To the author's knowledge, no results similar to Theorems~\ref{th:eigenfunctions} and~\ref{th:spectral} were available for L{\'e}vy processes other than the Brownian motion (possibly with drift; this requires a minor modification to the above example) and the Cauchy process (studied in~\cite{bib:kkms10}). Although generalized eigenfunction expansions similar to Theorem~\ref{th:spectral} have been studied for various classes of operators (see~\cite{bib:g59} for the case of general Markov processes, and~\cite{bib:psw89, bib:s82} for diffusion processes with Feynman-Kac potential), the explicit description of eigenfunctions was not available. Consequently, their applications were limited. The novelty of Theorem~\ref{th:spectral} is in that the generalized eigenfunctions $F_\lambda(x)$ are given in a fairly explicit form, allowing various estimates, asymptotic analysis and numerical approximation. For this reason, one can expect that Theorems~\ref{th:eigenfunctions} and~\ref{th:spectral} will find a variety of applications; some are already discussed below.

Theorems~\ref{th:eigenfunctions} and~\ref{th:spectral} yield formulae for the transition density $p^\hl_t(x, y)$ of $X_t$ killed upon leaving the half-line $(0, \infty)$, and for the distribution of the first exit time $\tau_\hl$ from the positive half-line. These results are contained in the following two theorems. By $\pr_x$ we denote the probability for the process $X_t$ starting at a fixed point $x > 0$.

\begin{thm}
\label{th:pdt}
Let $t > 0$. If $e^{-t \psi(\xi^2)}$ is integrable in $\xi > 0$, then
\formula[eq:pdt]{
 p^\hl_t(x, y) & = \frac{2}{\pi} \int_0^\infty e^{-t \psi(\lambda^2)} F_\lambda(x) F_\lambda(y) d\lambda && \text{for $x, y > 0$.}
}
\end{thm}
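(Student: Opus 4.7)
The plan is to derive the formula directly from the spectral decomposition in Theorem~\ref{th:spectral} by recovering $p^\hl_t(x, y)$ as the integral kernel of $P^\hl_t$. Since $U := \sqrt{2/\pi}\,\Pi$ is unitary on $L^2(\hl)$, one has $U^{-1} = U^*$, and the identity $\Pi P^\hl_t f(\lambda) = e^{-t \psi(\lambda^2)} \Pi f(\lambda)$ inverts to
\formula{
 P^\hl_t f(x) & = \frac{2}{\pi} \int_0^\infty e^{-t \psi(\lambda^2)} F_\lambda(x) \, \Pi f(\lambda) \, d\lambda .
}
For $f \in C_c(\hl)$, expanding $\Pi f$ via \eqref{eq:pistar} produces a double integral in $(\lambda, y)$.

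Next I would interchange the two integrations by Fubini's theorem. This requires a uniform bound of the form $|F_\lambda(x)| \le C$ in $(\lambda, x) \in \hl \times \hl$. Such a bound follows from \eqref{eq:f}: the sine term is controlled by $1$, while the eigenfunction property forces $F_\lambda(0^+) = 0$, so that $\|\gamma_\lambda\| = G_\lambda(0^+) = \sin \thet_\lambda \le 1$ (using $\thet_\lambda \in [0, \pi/2)$), and therefore $|G_\lambda(x)| \le \|\gamma_\lambda\| \le 1$ uniformly, giving $|F_\lambda| \le 2$. Together with the hypothesis $e^{-t \psi(\lambda^2)} \in L^1(\hl)$ and $f \in C_c(\hl)$, the double integral is absolutely convergent, and Fubini yields
\formula{
 P^\hl_t f(x) & = \int_0^\infty f(y) \expr{\frac{2}{\pi} \int_0^\infty e^{-t \psi(\lambda^2)} F_\lambda(x) F_\lambda(y) d\lambda} dy .
}

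Comparing with the defining relation $P^\hl_t f(x) = \int_0^\infty p^\hl_t(x, y) f(y) dy$ and letting $f$ range over $C_c(\hl)$, the two kernels must agree for almost every $y > 0$. To upgrade this to equality at every $x, y > 0$, I would invoke joint continuity of both sides: the right-hand side of \eqref{eq:pdt} is continuous in $(x, y)$ by dominated convergence (using the uniform bound on $F_\lambda$ together with $e^{-t \psi(\lambda^2)} \in L^1$), and $p^\hl_t(x, y)$ inherits joint continuity from the free transition density of $X_t$, which under the present integrability hypothesis is itself bounded and continuous, via Hunt's formula.

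The principal obstacle is securing the uniform bound on $F_\lambda$; once it is in hand, the rest reduces to a straightforward Fubini argument and a standard continuity upgrade. A secondary subtlety is making precise the pointwise representation $\Pi^* g(x) = \int_0^\infty g(\lambda) F_\lambda(x) d\lambda$ used in the very first step, but the integrability hypothesis makes the integrand absolutely integrable and the formal manipulations above rigorous.
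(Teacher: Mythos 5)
Your proof is correct and follows essentially the same route as the paper's: apply Theorem~\ref{th:spectral} to write $P^\hl_t f = (2/\pi) \Pi^*(e^{-t\psi(\lambda^2)}\Pi f)$, expand $\Pi f$ and $\Pi^*$ as integrals, and interchange by Fubini. The paper's proof is terse (it simply asserts joint integrability and applies Fubini); you supply the needed uniform bound $|F_\lambda| \le 2$, which is precisely what Lemma~\ref{lem:gest} in the paper establishes ($0 \le G_\lambda(x) \le \sin\thet_\lambda$). One small inaccuracy in your justification of that bound: you claim that the eigenfunction property forces $F_\lambda(0^+) = 0$, but that only holds when $\psi$ is unbounded (Lemma~\ref{lem:f0}); in the compound-Poisson case $F_\lambda(0^+)$ may be strictly positive, as in the paper's Example~\ref{ex:bounded}. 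What you actually need is only $F_\lambda(0^+) \ge 0$, which holds in all cases and gives $G_\lambda(0^+) = \sin\thet_\lambda - F_\lambda(0^+) \le \sin\thet_\lambda \le 1$. The continuity upgrade at the end, which the paper leaves implicit in the phrase ``the kernel of $P^\hl_t$ has the form'', is a reasonable addition.
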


\begin{thm}
\label{th:fpt}
Let $t > 0$. If
\formula[eq:fpt:a1]{
 \sup_{\xi > 0} \frac{\xi |\psi''(\xi)|}{\psi'(\xi)} < 2 ,
}
and
\formula[eq:fpt:a2]{
 \int_1^\infty \sqrt{\frac{\psi'(\xi^2)}{\psi(\xi^2)}} \, e^{-t \psi(\xi^2)} d\xi < \infty ,
}
then
\formula[eq:fpt]{
 \pr_x(\tau_\hl > t) & = \frac{2}{\pi} \int_0^\infty \sqrt{\frac{\psi'(\lambda^2)}{\psi(\lambda^2)}} \, e^{-t \psi(\lambda^2)} F_\lambda(x) d\lambda && \text{for $x > 0$.}
}
If $\sqrt{\psi(\xi^2) \psi'(\xi^2)} e^{-t \psi(\xi^2)}$ is integrable in $\xi > 1$ for all $t > 0$, then furthermore
\formula[eq:fptd]{
 \pr_x(\tau_\hl \in dt) & = \frac{2}{\pi} \expr{\int_0^\infty \sqrt{\psi'(\lambda^2) \psi(\lambda^2)} \, e^{-t \psi(\lambda^2)} F_\lambda(x) d\lambda} dt
}
for $x, t > 0$.
\end{thm}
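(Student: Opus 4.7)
The plan is to identify the survival probability with $P^\hl_t \ind(x)$ and apply the generalized eigenfunction expansion of Theorem~\ref{th:spectral}. Since $\ind \notin L^2(\hl)$, I would approximate by $f_\eps(y) = e^{-\eps y}$, which lies in $L^2(\hl)$ and increases pointwise to $\ind$ as $\eps \to 0^+$. Noting that $\Pi f_\eps(\lambda) = \laplace F_\lambda(\eps)$, Theorem~\ref{th:spectral} together with the inversion formula (valid because $\sqrt{2/\pi}\,\Pi$ is unitary) gives
\formula{
 P^\hl_t f_\eps(x) = \frac{2}{\pi} \int_0^\infty e^{-t \psi(\lambda^2)} \laplace F_\lambda(\eps) F_\lambda(x) d\lambda .
}
The left-hand side equals $\int_0^\infty p^\hl_t(x, y) e^{-\eps y} dy$ and tends monotonically to $\pr_x(\tau_\hl > t)$ as $\eps \to 0^+$.

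The pointwise limit of $\laplace F_\lambda(\eps)$ can be read off from~\eqref{eq:lf}. Writing $h(\zeta) = \log \frac{\psi'(\lambda^2)(\lambda^2 - \zeta^2)}{\psi(\lambda^2) - \psi(\zeta^2)}$, one has $h(0) = \log \frac{\lambda^2 \psi'(\lambda^2)}{\psi(\lambda^2)}$, and the Poisson-kernel identity $\lim_{\eps \to 0^+} \frac{1}{\pi} \int_0^\infty \frac{\eps}{\eps^2 + \zeta^2} h(\zeta) d\zeta = \frac{1}{2} h(0)$ yields
\formula{
 \lim_{\eps \to 0^+} \laplace F_\lambda(\eps) = \frac{1}{\lambda} \, \sqrt{\frac{\lambda^2 \psi'(\lambda^2)}{\psi(\lambda^2)}} = \sqrt{\frac{\psi'(\lambda^2)}{\psi(\lambda^2)}} ,
}
precisely the weight appearing in~\eqref{eq:fpt}. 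To interchange this limit with the $\lambda$-integral I would use dominated convergence. The required ingredients are a uniform-in-$\eps$ estimate of the form $|\laplace F_\lambda(\eps)| \le C \sqrt{\psi'(\lambda^2)/\psi(\lambda^2)}$ together with a uniform-in-$\lambda$ bound on $F_\lambda(x)$; condition~\eqref{eq:fpt:a2} then supplies an integrable dominating function.

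The hard part will be the uniform estimate on $\laplace F_\lambda(\eps)$ as $\eps \to 0^+$. This is where hypothesis~\eqref{eq:fpt:a1} enters: by restricting the logarithmic derivative of $\psi'$, it controls the growth of $h$, and hence of the exponent in~\eqref{eq:lf}, uniformly in $\eps$. Delicate points are the behavior of $h$ near $\zeta = \lambda$ (where $h$ vanishes) and as $\zeta \to 0^+$ or $\zeta \to \infty$ (where $h$ must be shown bounded under~\eqref{eq:fpt:a1}). Once~\eqref{eq:fpt} is proved, the density formula~\eqref{eq:fptd} is obtained by differentiating~\eqref{eq:fpt} in $t$, which brings down a factor $-\psi(\lambda^2)$ and produces the weight $\sqrt{\psi(\lambda^2)\psi'(\lambda^2)}$; differentiation under the integral sign is legitimized precisely by the strengthened integrability of $\sqrt{\psi(\xi^2) \psi'(\xi^2)} e^{-t \psi(\xi^2)}$ assumed in the last sentence of the theorem.
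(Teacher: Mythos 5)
Your overall strategy is the same as the paper's: approximate $\ind_\hl$ from below by $e_\eps(y)=e^{-\eps y}$, note $\Pi e_\eps(\lambda)=\laplace F_\lambda(\eps)$, apply Theorem~\ref{th:spectral}, and pass to the limit $\eps\to 0^+$ via dominated convergence; your Poisson-kernel computation of $\lim_{\eps\to 0^+}\laplace F_\lambda(\eps)$ recovers Proposition~\ref{prop:lf:0}. However, there is a genuine gap in the dominated-convergence step, and it is precisely the gap the paper goes out of its way to close.

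The proposed dominating function $C\sqrt{\psi'(\lambda^2)/\psi(\lambda^2)}\,e^{-t\psi(\lambda^2)}$ is \emph{not} integrable near $\lambda=0$. For small $\xi$ one has $\psi'(\xi)/\psi(\xi)\sim\alpha/\xi$ (for $\psi$ regularly varying of order $\alpha$ at $0$), so $\sqrt{\psi'(\lambda^2)/\psi(\lambda^2)}\sim\sqrt\alpha/\lambda$ while $e^{-t\psi(\lambda^2)}\to 1$; condition~\eqref{eq:fpt:a2} addresses only $\lambda>1$ and says nothing about $(0,1)$. Thus a ``uniform-in-$\lambda$ bound on $F_\lambda(x)$'' cannot suffice: you need $F_\lambda(x)$ to \emph{decay} as $\lambda\to 0^+$ so as to compensate the $1/\lambda$ singularity. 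The paper achieves this via H\"older continuity of $F_\lambda$ (Corollary~\ref{cor:power}, built on Lemma~\ref{lem:power}): $|F_\lambda(x)|=|F_\lambda(x)-F_\lambda(0)|\le C\lambda^{\eps}x^{\eps}$, which it then pairs with the cruder bound $\laplace F_\lambda(\eps)\le 2/\lambda$ from Proposition~\ref{prop:lfest} to get the integrable dominator $2C\,e^{-t\psi(\lambda^2)}\min(1,\lambda^{\eps})/\lambda$. Note also that the H\"older estimate in Corollary~\ref{cor:power} is established only under the strengthened hypothesis~\eqref{eq:fpt:a3}, not under~\eqref{eq:fpt:a1} alone; accordingly the paper's in-text proof of Theorem~\ref{th:fpt} is explicitly limited to~\eqref{eq:fpt:a3}, with the general case~\eqref{eq:fpt:a1}+\eqref{eq:fpt:a2} deferred to~\cite{bib:kmr11a}. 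Your proposal, which aims directly at~\eqref{eq:fpt:a1}, does not confront this: controlling the exponent in~\eqref{eq:lf} is not enough; you must in addition control $F_\lambda(x)$ for small $\lambda$. For the density formula~\eqref{eq:fptd} your plan (differentiate under the integral in $t$) is feasible, though the paper prefers to integrate over $(t_0,\infty)$ and invoke Fubini, which avoids justifying a derivative.
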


\begin{rem}
Theorem~\ref{th:fpt} in full generality is proved in the recent preprint~\cite{bib:kmr11a}. In this article we provide a much simpler proof, under a more restrictive condition
\formula[eq:fpt:a3]{
 \limsup_{\xi \to 0^+} \frac{\xi |\psi''(\xi)|}{\psi'(\xi)} & < 1 , & \limsup_{\xi \to \infty} \frac{\xi |\psi''(\xi)|}{\psi'(\xi)} & < 1 .
}
This condition implies both~\eqref{eq:fpt:a1} and~\eqref{eq:fpt:a2}, as well as integrability of $\sqrt{\psi(\xi^2) \psi'(\xi^2)} e^{-t \psi(\xi^2)}$ (see the proof of the theorem). The proof given below relies on Theorem~\ref{th:spectral}. Noteworthy, many interesting examples, including symmetric $\alpha$-stable processes and relativistic $\alpha$-stable processes, satisfy~\eqref{eq:fpt:a3}. By Corollary~\ref{cor:power2}, condition~\eqref{eq:fpt:a3} is satisfied when $\psi$ is regularly varying of positive order at $0$ and at $\infty$.\qed
\end{rem}

It should be emphasized that the integrand in~\eqref{eq:pdt} has two oscillatory factors, and cancellations in the integral are essential. Hence, formula~\eqref{eq:pdt} is problematic for numerical computations. For the Cauchy process, a significant simplification of~\eqref{eq:pdt} is possible (see Theorem~4 in~\cite{bib:kkms10}). It is an open problem whether formula~\eqref{eq:pdt} can be simplified in the more general case, even for symmetric $\alpha$-stable processes for general $\alpha \in (0, 2]$.

Cancellation is of less importance in Theorem~\ref{th:fpt}; see~\cite{bib:kmr11a}, where~\eqref{eq:fptd} is used to obtain estimates for the density function of the distribution of $\tau_\hl$.

Since $\xi |\psi''(\xi)| / \psi'(\xi) < 2$ for all $\xi > 0$, the supremum in~\eqref{eq:fpt:a1} is always not greater than $2$ (see Preliminaries). The assumptions~\eqref{eq:fpt:a1} and~\eqref{eq:fpt:a2} are rather mild regularity and growth conditions for $\psi(\xi)$; examples are given in Section~\ref{sec:examples}.

\begin{rem}
By symmetry, the first exit time $\tau_\hl$ for $X_t$ starting at $x > 0$ has the same distribution as the first passage time $\tau_x$ through a barrier at $x$ for $X_t$ starting at $0$. On the other hand, first passage times are related to the supremum functional:
\formula{
 \pr_0 \expr{\sup_{s \in [0, t]} X_s < x} & = \pr_0(\tau_x > t) = \pr_x(\tau_\hl > t) .
}
Hence, Theorem~\ref{th:fpt} gives, in a rather general setting, an explicit expression for the distribution of first passage times and the supremum functional, which are fundamental objects in fluctuation theory of L{\'e}vy processes. Noteworthy, the double Laplace transform (in $t$ and $x$) of the distribution of $\tau_x$ is known for general L{\'e}vy processes since 1957 (Theorem~1 in~\cite{bib:bd57}). Nevertheless, explicit formulae for $\pr_x(\tau_\hl > t)$ are known only in some special cases; see~\cite{bib:ds10, bib:gj10, bib:hk09, bib:k10, bib:kkp10} for some recent developments in this area. Also, a formula for the single Laplace transform (in $t$) of $\pr(\tau_x > t)$ for a large class of symmetric L{\'e}vy processes was obtained only very recently in~\cite{bib:kmr11}.\qed
\end{rem}

Theorems~\ref{th:eigenfunctions} and~\ref{th:spectral} can be applied to certain systems of PDEs, related to traces of two-dimensional diffusions, as discussed in Section~\ref{sec:applications}. Furthermore, there are at least four recent preprints exploiting the results of the present article. An application of Theorem~\ref{th:eigenfunctions} to the spectral theory of symmetric $\alpha$-stable process in the interval can be found in~\cite{bib:k10a}. The case of the relativistic $1$-stable process in half-line and interval is studied in detail in~\cite{bib:kkm11}. This application may be of particular interest in quantum physics; see~\cite{bib:els08, bib:gko95, bib:ls09} for related research. In~\cite{bib:fg11}, Theorems~\ref{th:eigenfunctions} and~\ref{th:spectral} are used to obtain refined semi-classical asymptotics for eigenvalues of higher-dimensional isotropic $\alpha$-stable processes in domains. Finally, the proof of Theorems~\ref{th:spectral} and~\ref{th:fpt} in full generality, as well as their application to a detailed analysis of first passage times, can be found in~\cite{bib:kmr11a}.

We conclude the introduction with a brief description of the structure of the article. The Preliminaries section contains definitions and standard properties used in the article, and auxiliary lemmas. It includes some background on distribution theory, L{\'e}vy processes and their generators, subordinate Brownian motions and complete Bernstein functions. In Section~\ref{sec:cbf} a certain transformation related to the Wiener-Hopf factorization is studied. Next, in Section~\ref{sec:eigenfunctions}, we derive the formula for $F_\lambda$ and prove Theorem~\ref{th:eigenfunctions}. Theorems~\ref{th:spectral}, \ref{th:pdt} and~\ref{th:fpt} are proved in Section~\ref{sec:spectral}. Examples, including symmetric $\alpha$-stable processes and relativistic $\alpha$-stable processes, are studied in detail in Section~\ref{sec:examples}. In Section~\ref{sec:applications} we give an application of our results to systems of PDEs (Theorem~\ref{th:pde}).

%
%                            ---------- o ----------
%

\section{Preliminaries}
\label{sec:preliminaries}

%
%                            ---------- o ----------
%

\subsection{Distribution theory}

In the theory of partial differential equations, it is a common case that one first finds a weak solution to a problem, and then, by showing that the solution is sufficiently regular, one argues that it is in fact a strong solution. This approach will be used to prove Theorem~\ref{th:eigenfunctions}: first we find a distributional eigenfunction of $\A_\hl$, and then we show that in fact it belongs to the domain of $\A_\hl$. Our argument relies heavily on Fourier methods, and the operators involved are typically non-local, so the required background on distribution theory is slightly different from the one used in partial differential equations. It should be emphasized that the use of distributions could be avoided, at least partially, at the price of less clear exposition. However, the language of distribution theory seems to be well-suited for problems involving generators of killed L{\'e}vy processes.

Let $\schwartz$ denote the class of Schwartz functions in $\R^d$ ($d = 1, 2, ...$), and let $\schwartz'$ be the space od tempered distributions in $\R^d$. If $\ph \in \schwartz$ and $F \in \schwartz'$, we write $\tscalar{F, \ph}$ for the value of $F$ at $\ph$. Below we recall some well-known properties of tempered distributions; for a detailed exposition of the theory, see e.g.~\cite{bib:v02}.

If $\ph \in \schwartz$, the Fourier transform of $\ph$ is $\fourier \ph(\xi) = \int_{\R^d} e^{i \xi \cdot x} \ph(x) dx$. For $F \in \schwartz'$, $\fourier F$ is the tempered distribution satisfying $\tscalar{\fourier F, \ph} = \tscalar{F, \fourier \ph}$.

The convolution of $\ph_1, \ph_2 \in \schwartz$ is defined in the usual way, $\ph_1 * \ph_2(x) = \int_{\R^d} \ph_1(y) \ph_2(x - y) dy$. When $\ph \in \schwartz$ and $F \in \schwartz'$, then $F * \ph$ is an infinitely smooth function, defined by
\formula{
 F * \ph(x) & = \tscalar{F, \ph_x} , && \text{where} & \ph_x(y) & = \ph(x - y) .
}
The convolution of two distributions is not well-defined in general. Suppose that $F_1, F_2 \in \schwartz'$. We say that $F_1$ and $F_2$ are \emph{$\schwartz'$-convolvable} if for all $\ph_1, \ph_2 \in \schwartz$, the functions $F_1 * \ph_1$ and $F_2 * \ph_2$ are convolvable in the usual sense, i.e. the integral $\int_{\R^d} (F_1 * \ph_1)(y) (F_2 * \ph_2)(x - y) dy$ exists for all $x$. When this is the case, the \emph{$\schwartz'$-convolution} $F_1 \conv F_2$ is the unique distribution $F$ satisfying $F * (\ph_1 * \ph_2) = (F_1 * \ph_1) * (F_2 * \ph_2)$ for $\ph_1, \ph_2 \in \schwartz$. Note that there are other non-equivalent definitions of the convolution of distributions, and $\schwartz'$-convolution is often denoted as $F_1 \circledast F_2$; for the discussion of various notions of convolvability, the reader is referred to~\cite{bib:dv78, bib:v02}.

Recall that the \emph{support} of a distribution $F$ is the smallest closed set $\supp F$ with the property that $\tscalar{F, \ph} = 0$ for all $\ph \in \schwartz$ such that $\ph(x) = 0$ for $x \in \supp F$. If any of the tempered distributions $F_1$, $F_2$ has compact support, or (in the one-dimensional case) if both $F_1$ and $F_2$ are supported in $[0, \infty)$, then $F_1$ and $F_2$ are automatically $\schwartz'$-convolvable. 

It is well known that the distributions $(F_1 \conv F_2) \conv F_3$ and $F_1 \conv (F_2 \conv F_3)$ need not be equal; however, if the pairs $(F_1, F_2)$ and $(F_2, F_3)$ are $\schwartz'$-convolvable, and furthermore the functions $F_1 * \ph_1$, $F_2 * \ph_2$, and $F_3 * \ph_3$ are convolvable (that is, $(F_1 * \ph_1)(y) (F_2 * \ph_2)(z) (F_3 * \ph_3)(x - y - z)$ is integrable in $y$, $z$ for all $x \in \R^d$) for any $\ph_1, \ph_2, \ph_3 \in \schwartz$, then the $\schwartz'$-convolution of $F_1$, $F_2$ and $F_3$ is associative; see~\cite{bib:v02}, Section~4.2.8.

Any $\schwartz'$-convolvable distributions $F_1$, $F_2$ satisfy the \emph{exchange formula} $\fourier (F_1 \conv F_2) = \fourier F_1 \cdot \fourier F_2$, where the multiplication of distributions $\fourier F_1$ and $\fourier F_2$ extends standard multiplication of functions in an appropriate manner, see~\cite{bib:ho58, bib:v02}. Since we only use the exchange formula when $\fourier F_1$ and $\fourier F_2$ are genuine functions, or when $\fourier F_1$ is a measure and $\fourier F_2$ is a function, we do not discuss the notion of multiplication for general distributions and refer the interested reader to~\cite{bib:k82, bib:si64, bib:v02}.

In the one-dimensional case $d = 1$, the Laplace transform can be defined for tempered distributions. If $\ph \in \schwartz$ is supported in $[0, \infty)$, then the Laplace transform of $\ph$ is denoted by $\laplace \ph(\xi) = \int_\R e^{-\xi x} \ph(x) dx$ ($\real \xi \ge 0$). This is a holomorphic function of $\xi$ in the right complex half-plane $\real \xi > 0$, continuous at the boundary. Clearly, $\fourier \ph(\xi) = \laplace \ph(-i \xi)$. If $F \in \schwartz'$ is supported in $[0, \infty)$, the Laplace transform of $F$ is defined for $\xi \in \C$ with $\real \xi > 0$ by
\formula{
 \laplace F(\xi) & = \scalar{F, e_\xi h} ,
}
where $e_\xi(x) = e^{-\xi x}$ and $h$ is any infinitely smooth function such that $h(x) = 1$ for $x \ge 0$ and $h(x) = 0$ for $x \le -1$. This definition does not depend on the choice of $h$ (see formula~(9.1.4) in~\cite{bib:v02}). The above definition of $\laplace F$ extends the usual definition of the Laplace transform of signed measures on $[0, \infty)$.

Note that while the Fourier transform of a distribution is again a distribution, the Laplace transform is always a (holomorphic) function. For a fixed $t > 0$, the function $\laplace F(t - i s)$ is the Fourier transform of the distribution $e_t F$ (where again $e_t(x) = e^{-t x}$), and as $t \to 0^+$, the distributions $\laplace F(t - i s)$ converge to $\fourier F$ in $\schwartz'$. The exchange formula holds also for the Laplace transform: if both $F_1, F_2 \in \schwartz'$ are supported in $[0, \infty)$, then they are $\schwartz'$-convolvable and $\laplace (F_1 \conv F_2)(\xi) = \laplace F_1(\xi) \laplace F_2(\xi)$ for all $\xi$ with $\real \xi > 0$.

We say that two distributions $F_1$, $F_2$ restricted to an open set $D$ are equal if $\tscalar{F_1, \ph} = \tscalar{F_2, \ph}$ for any $\ph \in \schwartz$ vanishing in $\R^d \setminus D$. Equivalently: $F_1 - F_2$ is supported in $\R^d \setminus D$.

%
%                            ---------- o ----------
%

\subsection{Transition semigroups and generators}
\label{subsec:gen}

Below we recall and extend some standard definitions and properties, which can be found, for example, in~\cite{bib:a04, bib:b98, bib:s99}. In this article we are only concerned with one-dimensional subordinate Brownian motions. However, Lemma~\ref{lem:d2} below might be of interest for general L{\'e}vy processes. For this reason, in this and the next subsection, we work with the general case.

Let $X_t$ be a L{\'e}vy process in $\R^d$. We write $\pr_x$ and $\ex_x$ for the probability and expectation for the process $X_t$ which starts at $x \in \R^d$. The process $X_t$ is completely determined by its L{\'e}vy-Khintchine exponent $\Psi$: for $\xi \in \R^d$ we have
\formula[eq:lk:general0]{
 \ex_0 e^{i \xi \cdot X_t} = e^{-t \Psi(\xi)} ,
}
where
\formula[eq:lk:general]{
 \Psi(\xi) & = \beta \xi \cdot \xi - i \gamma \cdot \xi + \int_{\R^d} (1 - e^{i \xi \cdot z} + \ind_{|z| \le 1} i \xi \cdot z) \nu(dz) .
}
Here $\beta$ is a nonnegative definite $d \times d$ matrix (the \emph{diffusion coefficient}), $\gamma$ is a vector in $\R^d$ (the \emph{drift}), and $\nu$ is the \emph{L{\'e}vy measure} of $X_t$: a Radon measure on $\R^d \setminus \{0\}$ such that $\int_{\R^d} \min(1, |z|^2) \nu(dz) < \infty$. If $\nu$ is absolutely continuous, we denote its density function with the same symbol $\nu(z)$.

The \emph{transition operators} of $X_t$ are defined by
\formula{
 P_t f(x) & = \ex_x f(X_t) = \int_{\R^d} f(y) \pr_x(X_t \in dy) , && t > 0 , \, x \in \R ,
}
whenever the integral is absolutely convergent. Each $P_t$ is a convolution operator, with convolution kernel given by the distribution of $(-X_t)$ under $\pr_0$. By the L{\'e}vy-Khintchine formula~\eqref{eq:lk:general0},
\formula[eq:lk:pt]{
 P_t e_\xi & = e^{-t \Psi(\xi)} e_\xi && \text{for} & e_\xi(x) & = e^{i \xi \cdot x} .
}
Furthermore, for $\ph \in \schwartz$,
\formula[eq:ptfourier]{
 \fourier P_t \ph(\xi) & = e^{-t \overline{\Psi(\xi)}} \fourier \ph(\xi) , && \xi \in \R^d ;
}
see Theorem~3.3.3 in~\cite{bib:a04} (note that in~\cite{bib:a04}, Fourier transform is defined with $e^{-i \xi \cdot x}$ instead of $e^{i \xi \cdot x}$). The operators $P_t$ form a contraction semigroup on each of the Lebesgue spaces $L^p(\R^d)$ ($p \in [1, \infty]$), on the space $C_b(\R^d)$ of bounded continuous functions, and on the space $C_0(\R^d)$ of continuous functions vanishing at infinity (all equipped with usual norms). On $L^p(\R^d)$ ($p \in [1, \infty)$) and on $C_0(\R^d)$, this semigroup is strongly continuous (see Section~3.4 in~\cite{bib:a04}). The generator of each of these semigroups is denoted by the same symbol $\A$, and we write $\domain(\A; \mathcal{X})$ for the corresponding domain, where $\mathcal{X}$ is either $L^p(\R^d)$ ($p \in [1, \infty]$), $C_b(\R^d)$ or $C_0(\R^d)$. More precisely, $f \in \domain(\A; \mathcal{X})$ if $f \in \mathcal{X}$ and the limit
\formula{
 \A f & = \lim_{t \to 0^+} \frac{P_t f - f}{t}
}
exists in the topology of $\mathcal{X}$. Note that the limit, if it exists, does not depend on the choice of $\mathcal{X}$ (apart from the fact that in $C_0(\R^d)$ and $C_b(\R^d)$ it is defined pointwise, while in $L^p(\R^d)$ only up to a set of zero measure). Therefore, using a single symbol $\A$ for operators acting on different domains $\domain(\A; \mathcal{X})$ causes no confusion. The Schwartz class $\schwartz$ is a core of $\A$ on each of the spaces $L^p(\R^d)$ ($p \in [1, \infty)$) and $C_0(\R^d)$, and by~\eqref{eq:ptfourier}, for $\ph \in \schwartz$,
\formula[eq:genfourier]{
 \fourier \A \ph(\xi) & = -\overline{\Psi(\xi)} \fourier \ph(\xi) , && \xi \in \R .
}
We abbreviate, for example, $\domain(\A; L^p(\R^d))$ to $\domain(\A; L^p)$.

\begin{rem}
\label{rem:spectral:free}
The spectral theory of $P_t$ and $\A$ is very simple, thanks to the L{\'e}vy Khintchine formula~\eqref{eq:lk:pt}. Indeed, the function $e^{i \xi \cdot x}$ is the eigenfunction of $P_t$, with eigenvalue $e^{-t \Psi(\xi)}$. Hence, $e^{i \xi \cdot x}$ belongs to $\domain(\A; L^\infty)$, and it is an eigenfunction of $\A$ with eigenvalue $-\Psi(\xi)$. The generalized eigenfunction expansion of $P_t$ and $\A$ is given by~\eqref{eq:ptfourier} and~\eqref{eq:genfourier}; the Fourier transform plays the same role as the integral transform~$\Pi$ in Theorem~\ref{th:spectral}.

If $X_t$ is symmetric, then $\Psi(\xi) = \Psi(-\xi)$ is real. Since $\sin(\xi \cdot x + \thet)$ is a linear combination of $e^{i \xi \cdot x}$ and $e^{i (-\xi) \cdot x}$, it is also the eigenfunction of $P_t$ and $\A$. Note that the eigenfunctions $F_\lambda$ in Theorem~\ref{th:eigenfunctions} behave asymptotically as $\sin(\lambda x + \thet_\lambda)$ as $x \to \infty$.\qed
\end{rem}

When $\ph \in \schwartz$, then the generator of $X_t$ can be written in the form
\formula[eq:gen:lk]{
\begin{aligned}
 \A \ph(x) & = \beta \nabla \ph(x) \cdot \nabla \ph(x) + \gamma \cdot \nabla \ph(x) \\ & + \int_{\R^d} (\ph(x + z) - \ph(x) - \ind_{|z| \le 1} z \cdot \nabla \ph(x)) \nu(dz) ,
\end{aligned}
}
where $\beta$, $\gamma$ and $\nu$ are as in~\eqref{eq:lk:general} (Theorem~31.5 in~\cite{bib:s99}). Furthermore,
\formula[eq:gen:bound]{
 \|\A \ph\|_{L^\infty(\R^d)} & \le C \|\ph\|_{C^2_b(\R^d)} , && \ph \in \schwartz,
}
where $C$ depends only on the process $X_t$, and $\|\ph\|_{C^2_b(\R^d)}$ is the maximum of $L^\infty(\R^d)$ norms of $\ph$ and its first and second partial derivatives (see the proof of Theorem~31.5 in~\cite{bib:s99}).

We need the following extension of~\eqref{eq:gen:lk}. For $C_0(\R^d)$ instead of $C_b(\R^d)$, this is well-known, see Theorem~31.5 in~\cite{bib:s99}. The statement given below is, however, difficult to find in the literature, so we provide a proof. Let $C_b^\infty(\R^d)$ be the class of functions $F$ such that $F$ and all partial derivatives of $F$ (of all orders) belong to $C_b(\R^d)$.

\begin{lem}
\label{lem:gen:cb}
If $F \in C_b^\infty(\R^d)$, then $F \in \domain(\A; C_b)$ and
\formula[eq:gen:cb]{
\begin{aligned}
 \A F(x) & = \beta \nabla F(x) \cdot \nabla F(x) + \gamma \cdot \nabla F(x) \\ & + \int_{\R^d} (F(x + z) - F(x) - \ind_{|z| \le 1} z \cdot \nabla F(x)) \nu(dz) .
\end{aligned}
}
\end{lem}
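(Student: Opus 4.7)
My plan is to derive Lemma~\ref{lem:gen:cb} by combining the known generator formula~\eqref{eq:gen:lk} for Schwartz-class inputs with a cutoff-approximation and two applications of dominated convergence, and then closing with a standard semigroup limit.

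Denote by $\tilde\A F$ the right-hand side of~\eqref{eq:gen:cb}. First I would verify that $\tilde\A F$ is a bounded, uniformly continuous function on $\R^d$, with the quantitative bound $\|\tilde\A F\|_{L^\infty} \le C \|F\|_{C^2_b}$ in the spirit of~\eqref{eq:gen:bound}. The integral is split at $|z|=1$: on the near region, Taylor's theorem gives $|F(x+z) - F(x) - z \cdot \nabla F(x)| \le \tfrac{1}{2} \|D^2 F\|_\infty |z|^2$, which is integrable against $\nu(dz)$; on the far region the integrand is bounded by $2 \|F\|_\infty$ and $\nu$ restricted to $\{|z|>1\}$ is finite. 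Uniform continuity of $\tilde\A F$ follows from the same splitting together with the fact that $F$, $\nabla F$, $D^2 F$ are Lipschitz (all partial derivatives of $F$ are bounded), via dominated convergence on the integrand.

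Next, I would approximate $F$ by Schwartz functions to import~\eqref{eq:gen:lk}. Fix $\chi \in C_c^\infty(\R^d)$ with $\chi \equiv 1$ on the unit ball and $\chi \equiv 0$ outside the ball of radius $2$, and set $\ph_n(x) = \chi(x/n) F(x) \in C_c^\infty(\R^d) \subset \schwartz$. Then $\|\ph_n\|_{C^2_b} \le M$ uniformly in $n$, and $\ph_n, \nabla \ph_n, D^2 \ph_n$ tend pointwise to $F, \nabla F, D^2 F$. Applying~\eqref{eq:gen:lk} to $\ph_n$ and invoking dominated convergence with the same $\nu$-integrable majorants as above yields $\A \ph_n(x) \to \tilde\A F(x)$ pointwise, with $\|\A \ph_n\|_\infty \le CM$. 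Since $\ph_n \in \domain(\A; C_0) \subset \domain(\A; C_b)$, the standard semigroup identity
\formula{
 P_t \ph_n(x) - \ph_n(x) & = \int_0^t P_s \A \ph_n(x) \, ds
}
holds pointwise for all $x$ and $t$. Passing to $n \to \infty$ (pointwise dominated convergence for $P_t \ph_n(x) \to P_t F(x)$ using the transition distribution of $X_t$, then a nested dominated convergence on $[0,t]$ using the uniform bound $\|\A \ph_n\|_\infty \le CM$) gives
\formula{
 P_t F(x) - F(x) & = \int_0^t P_s \tilde\A F(x) \, ds .
}

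Finally, I need $t^{-1}(P_t F - F) \to \tilde\A F$ in the $C_b$-norm as $t \to 0^+$. Because $\tilde\A F$ is bounded and uniformly continuous by the first step, and $X_s \to 0$ in probability as $s \to 0^+$, splitting $\ex_0[\tilde\A F(x+X_s) - \tilde\A F(x)]$ over $\{|X_s|<\delta\}$ and its complement yields $\|P_s \tilde\A F - \tilde\A F\|_\infty \to 0$ as $s \to 0^+$. Averaging then gives
\formula{
 \frac{P_t F(x) - F(x)}{t} & = \frac{1}{t} \int_0^t P_s \tilde\A F(x) \, ds \longrightarrow \tilde\A F(x)
}
uniformly in $x \in \R^d$, proving $F \in \domain(\A; C_b)$ with $\A F = \tilde\A F$, which is~\eqref{eq:gen:cb}. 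The only genuinely delicate step is the pointwise convergence $\A \ph_n(x) \to \tilde\A F(x)$ with a $\nu$-integrable dominator independent of $n$; that step is precisely where the assumption $F \in C_b^\infty$ (guaranteeing uniform control on $\ph_n, \nabla \ph_n, D^2 \ph_n$, not just on compact sets) is essential. Every other ingredient is routine semigroup and Fubini/dominated-convergence book-keeping.
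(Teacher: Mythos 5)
Your proposal is correct, and it shares the skeleton of the paper's argument (cutoff of $F$ by Schwartz functions with uniform $C_b^k$ control, Dynkin's integral identity $P_t\ph_n - \ph_n = \int_0^t P_s\A\ph_n\,ds$, dominated convergence in $n$). The closing step, however, is genuinely different. The paper proves a \emph{quantitative} rate $\left|\tfrac{P_t\ph_n(x)-\ph_n(x)}{t}-\A\ph_n(x)\right|\le \tfrac{t}{2}\|\A\ph_n\|_{C^2_b}\le \tfrac{CtM}{2}$ by iterating the bound \eqref{eq:gen:bound} together with the fact that $P_s$ and $\A$ commute with partial derivatives; this uses $C^4_b$ control on the approximating sequence and yields uniform convergence as $t\to 0^+$ directly, without ever needing to examine the regularity of the limit $\tilde\A F$. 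You instead establish that $\tilde\A F$ is bounded and uniformly continuous (a separate step, which needs only $C^3_b$ control: the near-region Lipschitz estimate on $h_z(x)=F(x+z)-F(x)-z\cdot\nabla F(x)$ goes through the third derivative), pass to the limit $P_tF-F=\int_0^t P_s\tilde\A F\,ds$, and finish by the soft strong-continuity argument $\|P_s\tilde\A F - \tilde\A F\|_\infty\to 0$ via $X_s\to 0$ in probability and an $\epsilon$--$\delta$ split. Both are correct. The paper's route gives the sharper $O(t)$ estimate at the price of tracking higher-order norms; yours is a bit softer and shows the regularity of $\tilde\A F$ explicitly, which is perhaps more transparent. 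One small remark: you should make the uniform-continuity verification slightly more explicit (split the integral at $|z|=1$ and bound the increment $h_z(x)-h_z(x')$ by $\tfrac12|z|^2 L|x-x'|$ near zero and $2\|\nabla F\|_\infty|x-x'|$ far out), but the statement is true and the sketch you give is adequate.
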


\begin{rem}
\label{rem:domain:counterexample}
Note that in contrast to the $C_0(\R^d)$ case, $F$ may fail to belong to $\domain(\A; C_b)$ when $F$ and its first and second partial derivatives are in $C_b(\R^d)$. A simple counterexample can be easily constructed for one-dimensional Brownian motion: if $F, F', F'' \in C_b(\R)$, but $F''$ is not \emph{uniformly} continuous, then the convergence of $(P_t f - f) / t$ may fail to be uniform. We omit the details.\qed
\end{rem}

\begin{proof}[Proof of Lemma~\ref{lem:gen:cb}]
Let $\|f\|_{C^k_b(\R^d)}$ denote the maximum of $L^\infty(\R^d)$ norms of $f$ and its partial derivatives of order not greater than $k$, and denote $M = \|F\|_{C^4_b(\R^d)}$. There is a sequence $\ph_n \in \schwartz$ such that $\ph_n$ and its first and second partial derivatives converge locally uniformly to $F$ and the corresponding first and second partial derivatives of $F$, and furthermore $\|\ph_n\|_{C^4_b(\R^d)} \le C M$ (with $C$ depending only on the dimension $d$). For $f \in \domain(\A; L^\infty)$ and $x \in \R^d$, we have (see~\cite{bib:d65})
\formula[eq:dpt]{
\begin{aligned}
 P_t f(x) - f(x) & = \int_0^t P_s \A f(x) ds = \int_0^t \A P_s f(x) ds .
\end{aligned}
}
In particular, $\|P_t f - f\|_{L^\infty(\R^d)} \le t \|\A f\|_{L^\infty(\R^d)}$. Since $\ph_n \in \schwartz \sub \domain(\A; C_0)$, by~\eqref{eq:dpt} we have
\formula{
 \abs{\frac{P_t \ph_n(x) - \ph_n(x)}{t} - \A \ph_n(x)} & = \abs{\frac{1}{t} \int_0^t \A P_s \ph_n(x) ds - \A \ph_n(x)} \\
 & \le \frac{1}{t} \int_0^t |\A (P_s \ph_n - \ph_n)(x)| ds .
}
By~\eqref{eq:gen:bound},
\formula{
 \abs{\frac{P_t \ph_n(x) - \ph_n(x)}{t} - \A \ph_n(x)} & \le \frac{1}{t} \int_0^t \|P_s \ph_n - \ph_n\|_{C_b^2(\R^d)} ds .
}
Since $P_s$ and $\A$ commute with partial differential operators on $\schwartz$, by~\eqref{eq:dpt},
\formula{
 \abs{\frac{P_t \ph_n(x) - \ph_n(x)}{t} - \A \ph_n(x)} & \le \frac{1}{t} \int_0^t s \|\A \ph_n\|_{C_b^2(\R^d)} ds = \frac{t}{2} \, \|\A \ph_n\|_{C_b^2(\R^d)} .
}
Finally, by~\eqref{eq:gen:bound},
\formula{
 \abs{\frac{P_t \ph_n(x) - \ph_n(x)}{t} - \A \ph_n(x)} & \le \frac{t}{2} \, \|\ph_n\|_{C_b^4(\R^d)} \le \frac{C t M}{2} \, .
}
As $n \to \infty$, we have $\ph_n(x) \to F(x)$ and $P_t \ph_n(x) \to P_t F(x)$ (by dominated convergence). By~\eqref{eq:gen:lk}, Taylor's theorem and dominated convergence, $\A \ph_n(x)$ converges to the right-hand side of~\eqref{eq:gen:cb}, which we denote by $G(x)$, and the convergence is locally uniform in $x \in \R^d$. It follows that
\formula{
 \abs{\frac{P_t F(x) - F(x)}{t} - G(x)} & \le \frac{C t M}{2} \, .
}
Since $x \in \R^d$ was arbitrary, we have $F \in \domain(\A; C_b)$ and $\A F(x) = G(x)$, as desired.
\end{proof}

Informally, the generator $\A$ is also a convolution operator, but the convolution kernel is a tempered distribution. We give this a precise meaning; see also~\cite{bib:b98, bib:s99}, and~\cite{bib:bb99} for the case of symmetric $\alpha$-stable processes.

\begin{defin}
\label{def:gen:distr}
The \emph{distributional generator} of $X_t$ is the tempered distribution $A \in \schwartz'$ defined by the formula
\formula{
 \scalar{A, \ph} & = \A \ph(0) , && \ph \in \schwartz.
}
\end{defin}

Let $\check{\ph}(x) = \ph(-x)$ for $\ph \in \schwartz$. We define $\check{A}$ (the \emph{reflection} of $A$) by the formula $\tscalar{\check{A}, \ph} = \tscalar{A, \check{\ph}}$. Hence, $\tscalar{A, \ph} = \check{A} * \ph(0)$ and $\A \ph = \check{A} * \ph$ for $\ph \in \schwartz$ (see~\cite{bib:v02}). By~\eqref{eq:lk:pt}, $\fourier \check{A}(\xi) = -\overline{\Psi(\xi)}$ and $\fourier A(\xi) = -\Psi(\xi)$ for $\xi \in \R^d$.

\begin{prop}
\label{prop:gen:distr}
When $F \in \domain(\A; L^\infty)$, then $\A F = \check{A} \conv F$.
\end{prop}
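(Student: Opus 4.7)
The plan is to verify that $\check{A}$ and $F$ are $\schwartz'$-convolvable and then identify the resulting distribution with $\A F$ by testing against Schwartz functions.

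For the first step, let $\ph \in \schwartz$. Formula~\eqref{eq:gen:lk} together with the integrability condition $\int_{\R^d} \min(1, |z|^2) \nu(dz) < \infty$ (splitting the L\'evy integral into the $|z|\le 1$ part, controlled by $|z|^2 \|D^2\ph\|_{L^1}$, and the $|z|>1$ part, controlled by $2\|\ph\|_{L^1}\nu(\{|z|>1\})$) shows that $\check{A} * \ph = \A\ph$ belongs to $L^1(\R^d) \cap C_0(\R^d)$. Consequently, for any $\ph_1, \ph_2 \in \schwartz$ the function $(\check{A} * \ph_1)(y)\,(F * \ph_2)(x-y)$ is the product of an $L^1$ function of $y$ and a bounded continuous function, so the integral in $y$ converges absolutely for every $x \in \R^d$. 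Thus $\check{A}$ and $F$ are $\schwartz'$-convolvable.

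Two tempered distributions that agree after convolution with every Schwartz function must be equal, so it suffices to prove
$$(\check{A} \conv F) * \psi = (\A F) * \psi \quad \text{for every } \psi \in \schwartz.$$
For the right-hand side, since each $P_t$ is a convolution operator, convolution with $\psi$ commutes with the limit defining $\A F$ on $L^\infty$, giving
$$(\A F) * \psi(x) = \lim_{t \to 0^+} \frac{P_t(F*\psi)(x) - (F*\psi)(x)}{t}.$$
Because $F * \psi \in C_b^\infty(\R^d)$, Lemma~\ref{lem:gen:cb} ensures $F * \psi \in \domain(\A; C_b)$ and the limit equals $\A(F*\psi)(x)$, expressible via the explicit formula~\eqref{eq:gen:cb}. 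A Fubini argument (justified by the integrability of $\min(1, |z|^2)$ against $\nu$ and the boundedness of $F$) rewrites this explicit formula as $F * \A\psi(x)$.

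For the left-hand side, factor $\psi = \ph_1 * \ph_2$ with $\ph_1, \ph_2 \in \schwartz$ (possible, up to a finite sum handled by linearity, by the Dixmier--Malliavin factorization). Applying the defining property of $\schwartz'$-convolution,
$$(\check{A} \conv F) * \psi = (\check{A} * \ph_1) * (F * \ph_2) = (\A\ph_1) * (F * \ph_2),$$
and another use of Fubini together with the associativity of convolution on $\schwartz$ yields $F * (\A\ph_1 * \ph_2) = F * \A\psi$. Comparing the two sides gives the desired identity.

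The main obstacle is exchanging the limit in the $L^\infty$ definition of $\A F$ with convolution against $\psi$. If the topology on $\domain(\A; L^\infty)$ is interpreted as the norm topology, this is immediate from Young's inequality with $\psi \in L^1$; if instead it is the weak-$*$ topology relative to $L^1$, one replaces the step by the duality identity $\int (P_t F - F)\,\psi\, dx = \int F (P_t^* \psi - \psi)\, dx$ and invokes the strong $L^1$-continuity of the dual semigroup $P_t^*$ applied to the Schwartz function $\psi$.
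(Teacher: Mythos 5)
Your argument is correct, but it takes a longer route than the paper's. The paper's proof works directly with the defining identity $(\check{A} \conv F) * \ph_1 * \ph_2 = (\A\ph_1) * F * \ph_2$, then uses only two ingredients: the fact that $(P_t\ph_1 - \ph_1)/t \to \A\ph_1$ in $L^1$, and a single Fubini interchange $(P_t\ph_1 - \ph_1) * F * \ph_2 = (P_t F - F) * \ph_1 * \ph_2$; passing to the limit gives $(\A F)*\ph_1*\ph_2$ directly. You instead show that both $(\check{A}\conv F)*\psi$ and $(\A F)*\psi$ equal the common expression $F*\A\psi$, which requires two extra pieces of machinery: Lemma~\ref{lem:gen:cb} (to identify $\lim_t (P_t(F*\psi)-F*\psi)/t$ with the explicit formula~\eqref{eq:gen:cb}) and the Dixmier--Malliavin factorization (to reduce an arbitrary $\psi\in\schwartz$ to $\ph_1*\ph_2$). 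Neither detour is wrong, but both are unnecessary: the $\schwartz'$-convolution is by definition determined by its action on functions of the form $\ph_1*\ph_2$, so there is no need to handle a general test function $\psi$; and passing through the explicit L\'evy-type formula for $\A$ on $C_b^\infty$ can be avoided entirely by moving the limit onto the $\ph_1$ side, where it lives in $L^1$. Your closing remark about the topology on $\domain(\A; L^\infty)$ is resolved by the paper's definition of the generator: it is the $L^\infty$ norm topology, so the Young's-inequality version of the interchange you invoke is the relevant one, and your argument closes correctly.
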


\begin{proof}
We have, by the definition of $\schwartz'$-convolution,
\formula{
 (\check{A} \conv F) * \ph_1 * \ph_2 & = (\check{A} * \ph_1) * (F * \ph_2) = (\A \ph_1) * F * \ph_2 .
}
Recall that $\schwartz \sub \domain(\A; L^1)$. Hence, $\A \ph_1$ is the $L^1(\R^d)$ limit of $(P_t \ph_1 - \ph_1) / t$. Since $\ph_1$, $\ph_2$ and the convolution kernel of $P_t$ are integrable, and $F$ is bounded, by Fubini we have
\formula{
 (P_t \ph_1 - \ph_1) * F * \ph_2 & = (P_t F - F) * \ph_1 * \ph_2 .
}
By dominated convergence,
\formula{
 (\check{A} \conv F) * \ph_1 * \ph_2 & = \lim_{t \to 0^+} \frac{(P_t F - F) * \ph_1 * \ph_2}{t} = (\A F) * \ph_1 * \ph_2 \, ,
}
as desired.
\end{proof}

\begin{cor}
\label{cor:gen:cb}
If $F \in C_b^\infty(\R^d)$ (as in Lemma~\ref{lem:gen:cb}), then $\A F \in C_b^\infty(\R^d)$, and $\A$ commutes with partial differential operators on $C_b^\infty(\R^d)$.
\end{cor}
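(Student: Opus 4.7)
The goal is to prove simultaneously that $\A F \in C_b^\infty(\R^d)$ and that $\partial^\alpha(\A F) = \A(\partial^\alpha F)$ for every multi-index $\alpha$. My plan is to work directly from the explicit pointwise formula~\eqref{eq:gen:cb} furnished by Lemma~\ref{lem:gen:cb}, splitting $\A F = L F + I F$ into the local part $LF(x)$ (the drift and second-order terms) and the nonlocal part $I F(x) = \int_{\R^d}(F(x+z) - F(x) - \ind_{|z|\le 1}\, z \cdot \nabla F(x))\,\nu(dz)$. The local piece is harmless: since $F \in C_b^\infty(\R^d)$, all its derivatives are again in $C_b^\infty(\R^d)$, and constant-coefficient linear differential operators commute with $\partial^\alpha$.

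The substantive work is with $IF$, where the plan is to differentiate under the integral sign. I will establish, for each multi-index $\alpha$, a $\nu$-integrable majorant, uniform in $x$, of the derivative of the integrand. Formally, $\partial^\alpha_x[F(x+z) - F(x) - z\cdot\nabla F(x)] = \partial^\alpha F(x+z) - \partial^\alpha F(x) - z\cdot\nabla\partial^\alpha F(x)$. For $|z|\le 1$, Taylor's theorem bounds this by $\tfrac{1}{2}\|D^2\partial^\alpha F\|_\infty |z|^2$; for $|z|>1$, the corresponding derivative of $F(x+z)-F(x)$ is bounded by $2\|\partial^\alpha F\|_\infty$. Since $\int \min(1,|z|^2)\,\nu(dz) < \infty$, these bounds are $\nu$-integrable and uniform in $x$, as required.

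Dominated convergence then yields $\partial^\alpha IF(x) = I(\partial^\alpha F)(x)$, and joint continuity in $x$ follows by a further application of dominated convergence to the integrand. Combined with the obvious identity $\partial^\alpha L F = L \partial^\alpha F$ and Lemma~\ref{lem:gen:cb} applied to $\partial^\alpha F \in C_b^\infty(\R^d)$, one obtains $\partial^\alpha \A F = \A \partial^\alpha F$. The right-hand side is bounded by the same Lemma, so $\A F$ has bounded continuous partial derivatives of all orders, i.e., $\A F \in C_b^\infty(\R^d)$, and the commutation $\A \circ \partial^\alpha = \partial^\alpha \circ \A$ on $C_b^\infty(\R^d)$ holds, as claimed.

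No serious obstacle is expected: the only step requiring care is producing the uniform $\nu$-integrable bounds above, which is a routine Taylor expansion combined with the standard L\'evy-measure integrability condition. An equivalent, more abstract route would exploit translation-invariance $\A \tau_h = \tau_h \A$ of the L\'evy generator together with the estimate $\|\A G\|_\infty \le C \|G\|_{C_b^2(\R^d)}$ (valid on $C_b^\infty(\R^d)$, as a direct consequence of the formula in Lemma~\ref{lem:gen:cb}) to show by induction that difference quotients of $\A F$ converge uniformly to $\A \partial^\alpha F$; I would, however, prefer the direct differentiation-under-the-integral approach for its transparency.
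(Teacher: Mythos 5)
Your proof is correct, but it takes a genuinely different route from the paper's. The paper disposes of the corollary in one line by invoking Proposition~\ref{prop:gen:distr} (the identity $\A F = \check{A} \conv F$) and the fact that $\schwartz'$-convolution commutes with differential operators: $D \A F = D(\check{A} \conv F) = \check{A} \conv (D F) = \A D F$. You instead work directly from the explicit pointwise formula~\eqref{eq:gen:cb} of Lemma~\ref{lem:gen:cb}, split $\A$ into its local and nonlocal parts, and justify differentiation under the integral sign via a uniform $\nu$-integrable majorant of the form $C_\alpha \min(1,|z|^2)$, using Taylor's theorem on $\{|z|\le 1\}$ and the crude bound $2\|\partial^\alpha F\|_\infty$ on $\{|z|>1\}$; the integrability condition $\int \min(1,|z|^2)\,\nu(dz)<\infty$ then closes the argument. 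Your version is more elementary and self-contained: it never touches the distributional convolution machinery, and the commutation is derived from first principles rather than read off from an abstract identity. The price is length and the (correctly handled) need to iterate the differentiation-under-the-integral step one derivative at a time. The paper's route, by contrast, buys brevity at the cost of having already proved Proposition~\ref{prop:gen:distr}, and implicitly requires the observation that once all distributional derivatives of $\A F$ are identified with continuous bounded functions $\A(DF)$, the function $\A F$ is in fact classically smooth — a step your approach avoids by producing classical derivatives directly. Both are sound; yours is the more transparent argument for a reader who has not internalized the $\schwartz'$-convolution framework.
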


\begin{proof}
Led $D$ be a partial derivative operator (of arbitrary order). By Proposition~\ref{prop:gen:distr}, we have $D \A F = D (\check{A} \conv F) = \check{A} \conv (D F) = \A D F$.
\end{proof}

When $X_t$ is one-dimensional and symmetric, then~\eqref{eq:lk:general} takes the form
\formula[eq:lk]{
 \Psi(\xi) & = \beta \xi^2 + \int_{-\infty}^\infty (1 - \cos(\xi z)) \nu(dz) , && \xi \in \R ,
}
where $\beta \ge 0$ and $\int_\R \min(1, z^2) \nu(dz) < \infty$. For $f \in C_b^\infty(\R^d)$ (as in Lemma~\ref{lem:gen:cb}), we have
\formula[eq:generator]{
 \A f(x) & = \beta f''(x) + \pv \int_\R (f(x + z) - f(x)) \nu(dz) , && x \in \R .
}
Here $\pv \int$ denotes the Cauchy principal value integral:
\formula{
 \pv \int_\R (f(x + z) - f(x)) \nu(dz) & = \lim_{\eps \to 0^+} \int_{\R \setminus (-\eps, \eps)} (f(x + z) - f(x)) \nu(dz) .
}
Of course, when $X_t$ is symmetric, then $\check{A} = A$. 

%
%                            ---------- o ----------
%

\subsection{Killed process and its generator}
\label{subsec:kill}

The main references for the notion of a killed process (or part of a Markov process) are~\cite{bib:bg68, bib:d65}, where general strong Markov processes are studied. Here we consider a L{\'e}vy process $X_t$ in $\R^d$. Let $\tau_D$ be the time of first exit from $D$, $\tau_D = \inf \set{t \ge 0 : X_t \notin D}$. The \emph{killed process} $X^D_t$ is a strong Markov process in $D$ with lifetime $\tau_D$, such that $X^D_t = X_t$ for all $t < \tau_D$. The transition operators $P^D_t$, the generator $\A_D$, and domains $\domain(\A_D; \mathcal{X})$ corresponding to the killed process $X^D_t$ are defined in the same way as for the free process $X_t$, that is,
\formula{
 P^D_t f(x) & = \ex_x f(X^D_t) = \ex_x(f(X_t) \ind_{t < \tau_D}) ,
}
and
\formula{
 \A_D f & = \lim_{t \to 0^+} \frac{P^D_t f - f}{t} ,
}
whenever the limit exists in the topology of the function space $\mathcal{X}$; in this case, we write $f \in \domain(\A_D; \mathcal{X})$. Again we abbreviate, for example, $\domain(\A_D; L^p(D))$ to $\domain(\A_D; L^p)$.

When $X_t$ is the Brownian motion, then $\A$ is the Laplace operator in $\R^d$, and $\A_D$ is the Laplace operator in $D$ with Dirichlet boundary condition. Even in this case the relation between $\A_D$ and $\A$ is very delicate. On the other hand, this relation is crucial for the proof of Theorem~\ref{th:eigenfunctions}. Hence, we now discuss this topic in more detail.

Suppose first that $X_t$ is a compound Poisson process: $\beta = 0$, $\gamma = 0$ and $\nu$ is a finite measure. In this case, the situation is clear. The following result seems to be well-known to specialists, but it is difficult to find in literature (cf. Example~3.3.7 in~\cite{bib:a04} for the case $D = \R^d$).

\begin{lem}
\label{lem:cp}
Let $X_t$ be a compound Poisson process in $\R^d$, with L{\'e}vy measure $\nu$, and let $p \in [1, \infty]$. Then the semigroup $P^D_t$ is strongly continuous on $L^p(D)$, the generator $\A_D$ is a bounded operator on each $L^p(D)$, and
\formula{
 \A_D f(x) & = \ind_D(x) \A f(x) = \ind_D(x) \int_{\R^d} (f(x + z) - f(x)) \nu(dz)
}
for all $f \in L^p(D)$, with the usual extension $f(x) = 0$ for $x \notin D$.
\end{lem}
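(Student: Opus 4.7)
The plan is to produce a bounded operator $B$ on $L^p(D)$ whose exponential equals $P^D_t$; once this is achieved, every claim in the lemma follows at once. Let $c = \nu(\R^d) < \infty$. Extending functions in $L^p(D)$ by zero to all of $\R^d$, define
\formula{
 K_D f(x) & = \int_{\R^d} f(x+z)\,\nu(dz), && x \in D.
}
Minkowski's integral inequality gives $\|K_D f\|_{L^p(D)} \le c\|f\|_{L^p(D)}$, so $B := K_D - cI$ is bounded on $L^p(D)$; moreover, for $x \in D$ one has $Bf(x) = \int_{\R^d}(f(x+z) - f(x))\,\nu(dz) = \ind_D(x)\A f(x)$, which is precisely the formula asserted by the lemma. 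Boundedness of $B$ immediately forces $Q_t := e^{tB} = \sum_{n \ge 0} t^n B^n / n!$ to be an operator-norm continuous, hence strongly continuous, semigroup on $L^p(D)$ with generator~$B$.

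It remains to identify $P^D_t$ with $Q_t$. For this I would use the pathwise construction of the compound Poisson process: between jumps the trajectory is constant, the jump times form a Poisson process $N_t$ of rate $c$, and the jump magnitudes $Z_1, Z_2, \ldots$ are i.i.d.\ with common law $c^{-1}\nu$. Writing $s_k = Z_1 + \cdots + Z_k$, for $x \in D$ the event $\{t < \tau_D\}$ coincides with the event that $x + s_k \in D$ for each $k = 1, \ldots, N_t$. Conditioning on $N_t$ yields
\formula{
 P^D_t f(x) & = e^{-ct} \sum_{n=0}^\infty \frac{t^n}{n!} \int_{(\R^d)^n} f(x+s_n) \prod_{k=1}^{n} \ind_D(x+s_k)\,\nu(dz_1)\cdots\nu(dz_n).
}

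A short induction based on the recursion $K_D^{n+1} f(x) = \ind_D(x) \int_{\R^d} K_D^n f(x+z_1)\,\nu(dz_1)$, together with the observation that $f \equiv 0$ off $D$ makes the final indicator $\ind_D(x+s_n)$ automatic, identifies the $n$-th summand above with $e^{-ct}(t^n/n!) K_D^n f(x)$. Summing in $n$ gives $P^D_t = e^{-ct} e^{t K_D} = e^{tB} = Q_t$, whence $\A_D = B$ with the claimed formula, and every regularity assertion follows from boundedness of $B$. I expect the main obstacle to lie precisely in this bookkeeping: matching the product of indicators generated by the killing condition $\{t < \tau_D\}$ with the product arising from iterating $K_D$, and justifying the interchange of the probabilistic series with the operator-exponential expansion (both of which are absolutely convergent in operator norm, thanks to $\|K_D^n\| \le c^n$).
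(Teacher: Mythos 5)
Your proof is correct, but it takes a genuinely different route from the paper's. The paper estimates the difference quotient $(P^D_t f - f)/t$ directly: it splits $P^D_t f(x) = \ex_x(f(X_t)\ind_{t<\tau_D})$ into three pieces according to whether zero, exactly one, or at least two jumps have occurred by time $t$, keeps the first two explicitly (these produce $f$ and $M^{-1}f*\check\nu$, i.e.\ the zeroth- and first-order terms in $t$), and bounds the residual tail $\ex_x(f(X_t)\ind_{\tau_2 \le t < \tau_D})$ by $O(t^2)\|f\|_{L^p}$ using the Poisson distribution of the jump counts. You instead expand the full series by conditioning on $N_t = n$, check by induction on $n$ that the $n$-th term of the probabilistic expansion coincides with $e^{-ct}(t^n/n!)K_D^n f$ (the final indicator $\ind_D(x+s_n)$ being absorbed into the zero-extended $f$), and conclude that $P^D_t = e^{t(K_D - cI)}$ as an operator exponential. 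Your identification is global and structural: once $P^D_t$ is recognized as the exponential of a bounded operator $B = K_D - cI$ with $\|K_D\| \le c$ (via Minkowski), norm continuity, strong continuity, boundedness of $\A_D$, and the formula $\A_D = B$ all come for free, while the paper's argument works with the bare definition of the generator and yields only the pointwise limit. The two routes prove the same thing; yours packages the conclusion more cleanly as an identity of semigroups, whereas the paper's is self-contained at the level of first principles. The one bookkeeping point you flag — matching the killing indicator with the iterated $K_D$, and justifying the rearrangement by absolute operator-norm convergence $\sum_n (t^n/n!)\|K_D^n\| \le e^{ct}$ — is exactly right and is the only place where care is needed.
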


\begin{proof}
Denote by $M = \nu(\R^d \setminus \{0\})$ the intensity of jumps of $X_t$, and let $0 < \tau_1 < \tau_2 < ...$ be the sequence of (random) times when $X_t$ jumps. Let $f$ be in some $L^p(D)$ ($p \in [1, \infty]$), and let $x \in D$ be the starting point of $X_t$. Note that
\formula{
 P^D_t f(x) = \ex_x(f(X_t) \ind_{t < \tau_D}) & = \ex_x(f(X_t) \ind_{t < \tau_1}) \\ & + \ex_x(f(X_t) \ind_{\tau_1 \le t < \tau_2}) \\ & + \ex_x(f(X_t) \ind_{\tau_2 \le t < \tau_D}) ;
}
indeed, in the first summand $t < \tau_1$ implies that $t < \tau_D$, and in the second one, when $\tau_1 \le t < \tau_2$ and $t \ge \tau_D$, then $f(X_t) = 0$. We have $f(X_t) = f(x)$ in the first summand, and in the second one, $f(X_t) = f(X_{\tau_1})$ is independent of $(\tau_1, \tau_2)$. Furthermore, $\pr_x(X_{\tau_1} \in x + dz) = M^{-1} \nu(dz)$. Hence, if $\check{\nu}(E) = \nu(-E)$, we have
\formula{
 \ex_x f(X_{\tau_1}) & = \int_{\R^d} f(x + z) \nu(dz) = \int_{\R^d} f(x - z) \check{\nu}(dz) = f * \check{\nu}(x) .
}
It follows that,
\formula{
 P^D_t f(x) & = f(x) \pr_x(t < \tau_1) \\ & + M^{-1} f * \check{\nu}(x) \pr_x(\tau_1 \le t < \tau_2) \\ & + \ex_x(f(X_t) \ind_{\tau_2 \le t < \tau_D}) ,
}
and so, finally,
\formula[eq:cpest]{
\begin{aligned}
 & \abs{\frac{P^D_t f(x) - f(x)}{t} - (f * \check{\nu}(x) - M f(x))}
 \\ & \le \abs{\frac{\pr_x(t < \tau_1) - 1 + M t}{t} \, f(x)} + \abs{\frac{\pr_x(\tau_1 \le t < \tau_2) - M t}{M t} \, f * \check{\nu}(x)} \\ & \hspace*{15em} + \abs{\frac{\ex_x(f(X_t) \ind_{\tau_2 \le t < \tau_D})}{t}} .
\end{aligned}
}
The proof will be complete once we show that each of the summands on the right-hand side converges to $0$ in $L^p(D)$.

Since $\tau_1, \tau_2, ...$ are increase times of a Poisson process, we have
\formula{
 \pr_x(\tau_k \le t < \tau_{k+1}) & = \frac{(M t)^k}{k!} \, e^{-M t} , && t > 0 , \, k = 0, 1, ... ,
}
where $\tau_0 = 0$. In particular, $|\pr_x(t < \tau_1) - 1 + M t| = |e^{-M t} - 1 + M t| \le (M t)^2 / 2$ and $|\pr_x(\tau_1 \le t < \tau_2) - M t| = |M t e^{-M t} - M t| \le (M t)^2$, which shows that the first two summands in~\eqref{eq:cpest} converge to $0$ in $L^p(\R^d)$. Furthermore,
\formula{
 \abs{\ex_x(f(X_t) \ind_{\tau_2 \le t < \tau_D})} & \le \ex_x(|f(X_t)| \ind_{\tau_2 \le t}) \\
 & = \sum_{k = 2}^\infty \ex_x(|f(X_{\tau_k})| \ind_{\tau_k \le t < \tau_{k+1}}) \\
 & = \sum_{k = 2}^\infty \ex_x |f(X_{\tau_k})| \pr_x(\tau_k \le t < \tau_{k+1}) \\
 & = \sum_{k = 2}^\infty \frac{|f| * (\check{\nu}^{*k})(x)}{M^k} \, \frac{(M t)^k}{k!} \, e^{-M t} .
}
Since the total mass of $\check{\nu}^{*k}$ is $M^k$, we conclude that
\formula{
 \norm{\ex_x(|f|(X_t) \ind_{\tau_2 \le t < \tau_D})}_{L^p(D)} & \le \norm{\sum_{k = 2}^\infty \frac{f * (\check{\nu}^{*k})}{M^k} \, \frac{(M t)^k}{k!} \, e^{-M t}}_{L^p(\R^d)} \\
 & \le \sum_{k = 2}^\infty \norm{f}_{L^p(\R^d)} \, \frac{(M t)^k}{k!} \, e^{-M t} \\
 & = \norm{f}_{L^p(\R^d)} (1 - e^{-M t} - M t e^{-M t}) ,
}
which is of order $t^2$ as $t \to 0^+$. Hence, also the third summand in~\eqref{eq:cpest} converges to $0$.
\end{proof}

When $X_t$ is not a compound Poisson process, then the generators of $X_t$ and $X^D_t$ are unbounded operators, and the relation between the domains of $\A$ and $\A_D$ is much less obvious. A point $x \in \partial D$ is said to be a \emph{regular boundary point of $D$} if $\inf \{ t > 0 : X_t \notin D \}  = 0$ a.s. with respect to $\pr_x$ (note that here the inequality $t > 0$ is strict, while $t \ge 0$ is used in the definition of $\tau_D$). Let $C_0(D)$ denote the space of $C_0(\R^d)$ functions vanishing in $\R^d \setminus D$. If every $x \in \partial D$ is a regular boundary point and $X_t$ has the strong Feller property (that is, $P_t$ maps $L^\infty(\R^d)$ to $C_b(\R^d)$), then the operators $P^D_t$ form a strongly continuous contraction semigroup on $C_0(D)$ (that is, $X^D_t$ has the Feller property; see~\cite{bib:c85}). Note that if $X_t$ is a symmetric L{\'e}vy process in $\R$ (and not a compound Poisson process) and $D = \hl$, then $0$, the only boundary point of $D$, is regular (see Theorem~47.5 in~\cite{bib:s99}).

We have the following fundamental result due to Dynkin.

\begin{defin}[Dynkin characteristic operator, \cite{bib:d65}]
Let $x \in \R^d$ and denote by $\tau_{x, \eps}$ the first exit time from the ball centered at $x$ and with radius $\eps$. If the limit
\formula{
 \dynkin_x f & = \lim_{\eps \to 0^+} \frac{\ex_x f(X_{\tau_{x, \eps}}) - f(x)}{\ex_x \tau_{x, \eps}}
}
exists, we write $f \in \domain(\dynkin_x)$. If $f \in \domain(\dynkin_x)$ for all $x \in D$, we write $f \in \domain(\dynkin_D)$, and we define $\dynkin_D f(x) = \dynkin_x f$.
\end{defin}

\begin{lem}[Theorem~5.5 in~\cite{bib:d65} for L{\'e}vy processes]
\label{lem:d}
Suppose that $X_t$ is a L{\'e}vy process in $\R^d$ which is a strong Feller process. Suppose furthermore that $D \sub \R^d$ is open, and every boundary point of $D$ is regular. Let $f \in C_0(D)$. Then $f \in \domain(\A_D; C_0)$ if and only if $f \in \domain(\dynkin_D)$ and $\dynkin_D f \in C_0(D)$. In this case, $\A_D f(x) = \dynkin_D f(x)$ for $x \in D$. \qed
\end{lem}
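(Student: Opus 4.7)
The plan is to reduce the lemma to Dynkin's general result (Theorem~5.5 in~\cite{bib:d65}) identifying the generator of a Feller process with its characteristic operator, applied to the killed process $X^D_t$.

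First, I would verify the hypotheses of Dynkin's theorem, namely that $X^D_t$ is a Feller process on $C_0(D)$. This is already noted in the paragraph preceding the lemma: under the standing assumptions (strong Feller property of $X_t$ and regularity of every point $x \in \partial D$), the semigroup $P^D_t$ is a strongly continuous contraction semigroup on $C_0(D)$ (cf.~\cite{bib:c85}). Dynkin's theorem then yields that $f \in \domain(\A_D; C_0)$ if and only if the characteristic operator of the killed process, defined by
\formula{
 \tilde{\dynkin}_x f & = \lim_{\eps \to 0^+} \frac{\ex_x f(X^D_{\sigma_{x,\eps}}) - f(x)}{\ex_x \sigma_{x,\eps}} ,
}
exists for every $x \in D$ and defines a function in $C_0(D)$; here $\sigma_{x,\eps}$ denotes the first exit time of $X^D_t$ from the ball $B(x,\eps)$. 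When this is the case, $\A_D f(x) = \tilde{\dynkin}_x f$ for $x \in D$.

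The second step is to identify $\tilde{\dynkin}_x f$ with $\dynkin_x f$ for every $x \in D$. Fix $x \in D$ and choose $\eps > 0$ so small that $\overline{B(x,\eps)} \sub D$. Since $\R^d \setminus D \sub \R^d \setminus \overline{B(x,\eps)}$, any jump of $X_t$ that takes the process out of $D$ also takes it out of $B(x,\eps)$; hence $\tau_{x,\eps} \le \tau_D$ almost surely under $\pr_x$, and in particular $\sigma_{x,\eps} = \tau_{x,\eps}$. On $\{\tau_{x,\eps} < \tau_D\}$ the killed process agrees with the free one at time $\tau_{x,\eps}$, while on $\{\tau_{x,\eps} = \tau_D\}$ one has $X_{\tau_{x,\eps}} \notin D$, so (with $f$ extended by $0$ outside $D$ and $f(\partial) = 0$ at the cemetery state) $f(X^D_{\sigma_{x,\eps}}) = f(X_{\tau_{x,\eps}})$ almost surely. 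Consequently $\ex_x f(X^D_{\sigma_{x,\eps}}) = \ex_x f(X_{\tau_{x,\eps}})$ and $\ex_x \sigma_{x,\eps} = \ex_x \tau_{x,\eps}$, so $\tilde{\dynkin}_x f = \dynkin_x f$, and the statement of the lemma follows.

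The main obstacle sits in the first step, namely the classical result that the strong Feller property of $X_t$ together with regularity of every boundary point of $D$ suffices to make $X^D_t$ a Feller process on $C_0(D)$; this is invoked as a black box from~\cite{bib:c85}. Once this is granted, Dynkin's theorem does the heavy lifting, and the identification of the two characteristic operators reduces to the elementary observation that for small $\eps$ the free process cannot leave $D$ before it leaves $B(x,\eps)$, combined with the convention that $f$ vanishes outside $D$.
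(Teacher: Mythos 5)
Your proposal is correct and follows the same route the paper intends: the lemma is stated with a bare \textsf{q.e.d.}, the intended justification being exactly what you describe -- that the hypotheses make $X^D_t$ a Feller process on $C_0(D)$ (via~\cite{bib:c85}, as noted in the paragraph preceding the lemma), so Dynkin's Theorem~5.5 applies to the killed semigroup, after which one identifies the characteristic operator of $X^D_t$ with the operator $\dynkin_D$ defined via the free process. Your reconciliation of the two characteristic operators (choosing $\eps < \dist(x, \R^d \setminus D)$ so that $\tau_{x,\eps} \le \tau_D$, hence $\sigma_{x,\eps} = \tau_{x,\eps}$, and using the convention $f \equiv 0$ outside $D$ and at the cemetery) is the right detail to supply and is correctly argued.
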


The above lemma states that, in a sense, the operators $\A_D$ and $\A$ have the same \emph{pointwise} definition, but their domains are different. This can be made even more explicit using the language of distribution theory, as we now describe. Recall that $A$ is the distributional generator, defined in Definition~\ref{def:gen:distr}, and $\check{A}$ is the reflection of $A$.

\begin{lem}
\label{lem:d2}
Suppose that $X_t$ is a L{\'e}vy process in $\R^d$ which is a strong Feller process. Suppose furthermore that $D \sub \R^d$ is open, and every boundary point of $D$ is regular. Let $F \in C_0(D)$. If the distribution $\check{A} \conv F$ restricted to $D$ is equal to a $C_0(D)$ function, then $F \in \domain(\A_D; C_0)$ and $\A_D F(x) = \check{A} \conv F(x)$ for $x \in D$.
\end{lem}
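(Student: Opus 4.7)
The plan is to reduce everything to the Dynkin characterization of $\A_D$ given by Lemma~\ref{lem:d}. Write $G \in C_0(D)$ for the function that, by hypothesis, represents the distribution $\check{A} \conv F$ on $D$; extended by zero outside $D$, it lies in $C_0(\R^d)$. Since $G$ is already in $C_0(D)$, to apply Lemma~\ref{lem:d} it will suffice to verify that the Dynkin characteristic operator satisfies $\dynkin_x F = G(x)$ at every $x \in D$.

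To access the characteristic operator, I would first regularize $F$ by setting $F_n = F \conv \eta_n$, where $\eta_n$ is a standard smooth approximate identity with $\supp \eta_n \sub B(0, 1/n)$. Each $F_n$ lies in $C_b^\infty(\R^d)$, so Lemma~\ref{lem:gen:cb} gives $F_n \in \domain(\A; C_b)$, and Proposition~\ref{prop:gen:distr} yields $\A F_n = (\check{A} \conv F) \conv \eta_n$. Fixing $x \in D$ and $\eps > 0$ small enough that $\overline{B(x, 2\eps)} \sub D$, the crucial observation is that for every $n$ with $1/n < \eps$ and every $y \in \overline{B(x, \eps)}$, the mollifier $\eta_n$ translated to $y$ has support inside $D$; so the portion of $\check{A} \conv F$ supported in $\R^d \setminus D$ (which need not be a function) contributes nothing, and $\A F_n(y) = G \conv \eta_n(y)$. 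In particular $\A F_n \to G$ uniformly on $\overline{B(x, \eps)}$, with a uniform bound by $\norm{G}_\infty$.

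With this in hand, Dynkin's formula applied to $F_n$ and the bounded stopping time $\tau_{x,\eps} \wedge T$, followed by $T \to \infty$ (permitted because $\ex_x \tau_{x,\eps} < \infty$ and both $F_n, \A F_n$ are bounded), gives
\formula{
 \ex_x F_n(X_{\tau_{x,\eps}}) - F_n(x) & = \ex_x \int_0^{\tau_{x,\eps}} \A F_n(X_s) \, ds .
}
I would then let $n \to \infty$. Because $F$ extends to a function in $C_0(\R^d)$, it is uniformly continuous, so $F_n \to F$ uniformly on $\R^d$, which handles the left-hand side. On the right, $X_s \in B(x, \eps) \sub D$ for $s < \tau_{x, \eps}$, and the uniform convergence and uniform boundedness of $\A F_n$ on $\overline{B(x, \eps)}$ justify dominated convergence:
\formula{
 \ex_x F(X_{\tau_{x, \eps}}) - F(x) & = \ex_x \int_0^{\tau_{x, \eps}} G(X_s) \, ds .
}
Dividing by $\ex_x \tau_{x, \eps}$ and sending $\eps \to 0^+$, continuity of $G$ at $x$ gives $\dynkin_x F = G(x)$, and Lemma~\ref{lem:d} concludes.

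The delicate step I expect to be the main obstacle is the identification $\A F_n = G \conv \eta_n$ on $\overline{B(x, \eps)}$: one has to unpack the definition of the $\schwartz'$-convolution to confirm that the portion of $\check{A} \conv F$ supported off $D$ (in general a genuine distribution, not a function) contributes nothing at points that are a positive distance from $\partial D$, once $\eta_n$ is narrow enough. This is really just the locality of mollification by compactly supported kernels, but it is what allows the regularity of $F$ near $\partial D$ to be entirely bypassed in the core calculation, and it is the hinge on which the probabilistic argument turns.
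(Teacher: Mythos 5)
Your proof is correct and follows essentially the same route as the paper's: mollify $F$, exploit the locality of convolution with a narrowly supported kernel to see that only the $C_0(D)$ function $G$ (and not the distributional part of $\check{A} \conv F$ off $D$) contributes on balls well inside $D$, pass to the limit in Dynkin's formula as the mollification parameter shrinks, and then send $\eps \to 0^+$ to identify $\dynkin_x F = G(x)$ and conclude via Lemma~\ref{lem:d}. Your detour through the truncated stopping time $\tau_{x,\eps} \wedge T$ is sound but unnecessary: since $\A F_n = F \conv (\check{A} \conv \eta_n)$ is the convolution of a $C_0(\R^d)$ function with the integrable function $\check{A} \conv \eta_n = \A \eta_n$, one has $\A F_n \in C_0(\R^d)$ and hence $F_n \in \domain(\A; C_0)$ automatically, so the Dynkin formula~\eqref{eq:dynkin} applies to $\tau_{x,\eps}$ directly --- this is exactly the observation the paper makes.
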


In the proof, we use the \emph{Dynkin's formula} (\cite{bib:d65}, formula~(5.8)): when $\tau$ is a Markov time, $\ex_x \tau < \infty$, and $f \in \domain(\A; C_0)$, then
\formula[eq:dynkin]{
 \ex_x f(X_\tau) - f(x) & = \ex_x \expr{\int_0^\tau \A f(X_s) ds} , && x \in \R^d .
}
We denote by $B(x, \eps)$ and $\overline{B}(x, \eps)$ the open and closed balls centered at $x$ and with radius $\eps$.

\begin{proof}[Proof of Lemma~\ref{lem:d2}]
Fix $x \in D$ and let $r > 0$. Let $g_r \in \schwartz$ be an approximation to identity: $g_r(y) \ge 0$, $\int_{\R^d} g_r(y) dy = 1$ and $g_r(y) = 0$ when $|y| \ge r$. We define $f = F * g_r$. Then $f \in C_b^\infty(\R^d) \sub \domain(\A; C_b)$ (see Lemma~\ref{lem:gen:cb}), and 
\formula[eq:d2:dec]{
 \A f(y) = \check{A} * (F * g_r) = (\check{A} \conv F) * g_r = F \conv (\check{A} \conv g_r) .
}
(The convolution of $\check{A}$, $F$ and $g_r$ is associative because $F$ is a bounded function, $g_r$ has compact support, and $\check{A} * \ph$ is integrable for all $\ph \in \schwartz$.) In particular, $\A f \in C_0(\R^d)$, and therefore $f \in \domain(\A; C_0)$. Let $\tau_{x,\eps}$ be the time of first exit from $B(x, \eps)$. By Dynkin's formula~\eqref{eq:dynkin},
\formula{
 \abs{\frac{\ex_x f(X(\tau_{x,\eps})) - f(x)}{\ex_x \tau_{x,\eps}} - \A f(x)} & = \frac{1}{\ex_x \tau_{x,\eps}} \abs{\ex_x \int_0^{\tau_{x,\eps}} (\A f(X_t) - \A f(x)) dt} \\
 & \le \sup_{y \in B(x, \eps)} |\A f(y) - \A f(x)| .
}
Suppose that $r + \eps < \dist(x, \R^d \setminus D)$. Then using~\eqref{eq:d2:dec} for the right-hand side, we obtain
\formula{
 \abs{\frac{\ex_x f(X(\tau_{x,\eps})) - f(x)}{\ex_x \tau_{x,\eps}} - \A f(x)} & \le \sup_{y \in B(x, r + \eps)} |\check{A} \conv F(y) - \check{A} \conv F(x)| .
}
Consider the limit as $r \to 0^+$. Then $g_r$ approximates the Dirac delta measure $\delta_0$. Hence, $f = F * g_r$ converges uniformly to $F$, and (again by~\eqref{eq:d2:dec}) $\A f(x)$ converges to $\check{A} \conv F(x)$ ($x \in D$ is fixed). It follows that
\formula{
 \abs{\frac{\ex_x F(X(\tau_{x,\eps})) - F(x)}{\ex_x \tau_{x,\eps}} - \check{A} \conv F(x)} & \le \sup_{y \in \overline{B}(x, \eps)} |\check{A} \conv F(y) - \check{A} \conv F(x)| .
}
As $\eps \to 0^+$, the right-hand side tends to zero. Therefore, $F \in \domain(\dynkin_x)$ and $\dynkin_x F = \check{A} \conv F(x)$. Since $x \in D$ was arbitrary, we conclude that $F \in \domain(\dynkin_D)$, and $\dynkin_D F(x) = \check{A} \conv F(x)$ for $x \in D$. The result follows from Lemma~\ref{lem:d}.
\end{proof}

Lemma~\ref{lem:d2} seems to be now. However, a related distributional approach to generators of the free and killed processes was used for example in~\cite{bib:bb99}, in the study of harmonic functions with respect to symmetric $\alpha$-stable processes.

We need the following auxiliary result. Its $L^2$ analogue is relatively easy to prove using Dirichlet forms. The uniform version seems to be new, especially in the case of unbounded $D$ and $F$ not vanishing at infinity.

\begin{lem}
\label{lem:smooth:domain}
Suppose that $r > 0$, $D \sub \R^d$ is an open set, $F \in C_b^\infty(\R^d)$, and $F(x) = 0$ whenever $\dist(x, \R^d \setminus D) \le r$. Suppose furthermore, that $\A F(x) = 0$ for all $x \in \partial D$. Then $F \in \domain(\A_D; C_b)$ and $\A_D F(x) = \A F(x)$ for $x \in D$.
\end{lem}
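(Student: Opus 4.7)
The plan is to identify $\A_D F$ with $\A F$ on $D$ via Dynkin's formula at $\tau_D\wedge t$, and to show that the error $\|(P^D_t F-F)/t-\A F\|_{L^\infty(D)}$ splits into two pieces that each tend to $0$ as $t\to 0^+$. By Lemma~\ref{lem:gen:cb}, $F\in\domain(\A;C_b)$, and by Corollary~\ref{cor:gen:cb}, $\A F\in C_b^\infty(\R^d)$; in particular $\A F$ is uniformly continuous on $\R^d$ and Lipschitz, with some constant $L$.

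Dynkin's formula~\eqref{eq:dynkin}---extended from $\domain(\A;C_0)$ to $\domain(\A;C_b)$ with bounded $\A F$ by a standard $C_c^\infty$ approximation---applied at the bounded stopping time $\tau_D\wedge t$ gives
\formula{
 \ex_x F(X_{\tau_D\wedge t}) - F(x) & = \ex_x\int_0^{\tau_D\wedge t}\A F(X_s)\,ds.
}
Since $F\equiv 0$ on an $r$-neighbourhood of $\R^d\setminus D$, and $X_{\tau_D\wedge t}=X_{\tau_D}\in\R^d\setminus D$ on $\{\tau_D\leq t\}$, the left side collapses to $P^D_t F(x)-F(x)$. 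Writing $t\,\A F(x)=\ex_x\int_0^t\A F(x)\,ds$ and rearranging gives
\formula{
 \frac{P^D_t F(x)-F(x)}{t}-\A F(x) & = \frac{1}{t}\,\ex_x\int_0^{\tau_D\wedge t}\bigl(\A F(X_s)-\A F(x)\bigr)ds - \frac{\A F(x)}{t}\,\ex_x\bigl[(t-\tau_D)^+\bigr].
}

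The first term is controlled by $\tfrac{1}{t}\int_0^t\rho(s)\,ds$, where $\rho(s):=\sup_{x\in\R^d}\ex_x|\A F(X_s)-\A F(x)|$. Uniform continuity of $\A F$ together with stochastic continuity of $X$ (via dominated convergence applied to the modulus of continuity of $\A F$ evaluated at $|X_s|$) force $\rho(s)\to 0$ as $s\to 0^+$, so this term vanishes uniformly in $x\in D$. For the second term I invoke the hypothesis $\A F|_{\partial D}=0$: combined with Lipschitz regularity this yields $|\A F(x)|\leq L\,\dist(x,\partial D)$, while the observation that the process must displace itself by at least $\dist(x,\partial D)$ to leave $D$ gives $\pr_x(\tau_D\leq t)\leq \pr_0(\sup_{s\leq t}|X_s|\geq \dist(x,\partial D))$. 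Together with $\ex_x[(t-\tau_D)^+]\leq t\,\pr_x(\tau_D\leq t)$, a dichotomy at a parameter $\delta>0$---using the bound $L\delta$ when $\dist(x,\partial D)\leq\delta$ and the bound $\|\A F\|_\infty\,\pr_0(\sup_{s\leq t}|X_s|\geq\delta)$ when $\dist(x,\partial D)>\delta$---shows that letting first $t\to 0^+$ (with $\delta$ fixed) and then $\delta\to 0^+$ drives the supremum of the second term over $x\in D$ to $0$.

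The main obstacle is precisely this second term: the naive estimate $|\A F(x)|\,\pr_x(\tau_D\leq t)$ does not tend to $0$ uniformly on $D$, since $\pr_x(\tau_D\leq t)\to 1$ as $x\to\partial D$. The hypothesis $\A F|_{\partial D}=0$, together with the Lipschitz regularity of $\A F$ furnished by Corollary~\ref{cor:gen:cb}, is exactly what makes $|\A F(x)|$ vanish linearly near $\partial D$ and thereby cancels this singular behaviour. A secondary technical ingredient, which I would handle routinely, is the extension of Dynkin's formula from $\domain(\A;C_0)$ to the $C_b$ setting via $C_c^\infty$ approximations of $F$.
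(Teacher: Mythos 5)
Your argument is correct, and it reaches the conclusion by a genuinely different and more economical route than the paper's. Both proofs rest on Dynkin's formula~\eqref{eq:dynkin}, but you invoke it a single time, directly for $F$, at the bounded stopping time $\tau_D\wedge t$ (the extension from $\domain(\A;C_0)$ to $C_b^\infty$ is exactly the Schwartz--approximation device already used in the proof of Lemma~\ref{lem:gen:cb}: pass to the limit along $\ph_n$, which works since $\ph_n$, $\A\ph_n$ are uniformly bounded and converge pointwise). The hypothesis that $F$ vanishes on $\R^d\setminus D$ turns $\ex_xF(X_{\tau_D\wedge t})$ into $P^D_tF(x)$, and the resulting error then splits cleanly into the two terms you display: the first is killed uniformly in $x$ by the uniform continuity of $\A F$ (from Corollary~\ref{cor:gen:cb}) and the stochastic continuity of $X$; the second is precisely where $\A F|_{\partial D}=0$ and the Lipschitz bound $|\A F(x)|\le L\,\dist(x,\partial D)$ enter, and your $\delta$-dichotomy disposes of it. The paper's proof instead partitions $D$ into a boundary layer $D\setminus D_\eps$, on which it runs a Dynkin argument at $\min(t,\tau_D)$ that is close in spirit to your second term, and an interior region $D_\eps$, on which it compares $P^D_tF$ with the \emph{free} $P_tF$ and controls the gap by the double-displacement estimate $\|F\|_\infty(g_\eps(t))^2/t$; this step uses Proposition~\ref{prop:ball} and, essentially, the full-strength hypothesis that $F$ vanishes on an $r$-neighbourhood of $\R^d\setminus D$ (so that reaching $\supp F$ after exiting $D$ forces a second displacement of size $\geq\eps$). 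Your decomposition avoids both the region split and the double-displacement bound, replaces Proposition~\ref{prop:ball} by mere right-continuity of paths, and in fact only uses the weaker assumption $F=0$ on $\R^d\setminus D$ rather than vanishing on a whole $r$-neighbourhood, so it proves a slightly stronger statement.
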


Recall that $B(x, \eps)$ is the open ball centered at $x$ and with radius $\eps$. In the proof, the following well-known result is used. For completeness, we give a simple proof.

\begin{prop}
\label{prop:ball}
For all $\eps > 0$, $\pr_0(t \ge \tau_{B(0, \eps)}) / t$ is bounded in $t > 0$.
\end{prop}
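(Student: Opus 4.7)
The plan is to combine Dynkin's formula~\eqref{eq:dynkin} applied to a carefully chosen compactly supported smooth bump with a direct estimate on the probability of a large jump. First I would fix $\ph \in C_c^\infty(\R^d) \subset \schwartz$ satisfying $\ph(0) = 0$, $0 \le \ph \le 1$, and $\ph(x) = 1$ whenever $\eps \le |x| \le 2 \eps$. Since $\ph \in \schwartz \sub \domain(\A; C_0)$, the bound~\eqref{eq:gen:bound} gives $M := \|\A \ph\|_{L^\infty(\R^d)} < \infty$.

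Next I would apply Dynkin's formula to the bounded stopping time $\tau := \tau_{B(0, \eps)} \wedge t$. Since $\ex_0 \tau \le t < \infty$ and $\ph(0) = 0$, this yields
\formula{
 \ex_0 \ph(X_\tau) = \ex_0 \int_0^\tau \A \ph(X_s) \, ds \le t M .
}

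The key step is to split $\{\tau_{B(0, \eps)} \le t\}$ according to the position of $X_{\tau_{B(0, \eps)}}$. On the event $\{\tau_{B(0, \eps)} \le t, \, |X_{\tau_{B(0, \eps)}}| \le 2 \eps\}$ the exit value satisfies $\eps \le |X_{\tau_{B(0, \eps)}}| \le 2 \eps$ (using that $X_s \in B(0, \eps)$ for all $s < \tau_{B(0, \eps)}$, and passing to the right limit), so $\ph(X_\tau) = 1$, and the probability of this event is at most $\ex_0 \ph(X_\tau) \le t M$. On the complementary event $\{\tau_{B(0, \eps)} \le t, \, |X_{\tau_{B(0, \eps)}}| > 2 \eps\}$ we have $|X_{\tau_{B(0, \eps)}-}| \le \eps$ while $|X_{\tau_{B(0, \eps)}}| > 2 \eps$, so $X_t$ has made a jump of size strictly greater than $\eps$ during $[0, t]$; the jumps of $X_t$ of size exceeding $\eps$ form a Poisson process with finite rate $\lambda := \nu(\{z : |z| > \eps\})$, so this event has probability at most $1 - e^{-\lambda t} \le \lambda t$.

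Adding the two bounds yields $\pr_0(\tau_{B(0, \eps)} \le t) \le t (M + \lambda)$ uniformly in $t > 0$, which is the claim. The main subtle point is the splitting into a ``gentle'' exit, controlled by Dynkin's formula with a compactly supported test function, and an exit by a large jump, controlled by finiteness of $\nu$ away from the origin: this decomposition sidesteps the need for a test function with $\ph(x) \ge \ind_{\{|x| \ge \eps\}}$ everywhere, which would force $\ph \in C_b^\infty(\R^d) \setminus C_0(\R^d)$ and a corresponding extension of Dynkin's formula beyond the version stated in the paper.
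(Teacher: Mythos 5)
Your argument is correct, but it takes a noticeably longer route than the paper's, and the closing remark reveals a misconception worth correcting. The paper also applies Dynkin's formula~\eqref{eq:dynkin} with a Schwartz test function and $\tau = t \wedge \tau_{B(0,\eps)}$, but in the opposite direction: it picks $h \in \schwartz$ with $h(0) = 1$, $h \le 1$ everywhere, and $h = 0$ outside $B(0,\eps)$, then bounds $\pr_0(\tau_{B(0,\eps)} > t) \ge \ex_0 h(X_\tau)$ (since $h(X_\tau) = 0$ on $\{\tau_{B(0,\eps)} \le t\}$ and $h \le 1$ on the complement), and Dynkin gives $\ex_0 h(X_\tau) \ge 1 - t\|\A h\|_{L^\infty}$, so $\pr_0(\tau_{B(0,\eps)} \le t) \le t\|\A h\|_{L^\infty}$ in a single step. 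That is the direct approach you were worried doesn't exist: you do not need a compactly supported $\ph$ dominating $\ind_{\{|x| \ge \eps\}}$, because one can equally well minorize $\ind_{\{|x| < \eps\}}$ by a bump supported in the ball and lower-bound $\pr_0(\tau > t)$. Your decomposition into a ``gentle'' exit (caught by a bump on the annulus $\eps \le |x| \le 2\eps$) and a large-jump exit (caught by the rate $\nu(\{|z| > \eps\}) < \infty$ of jumps exceeding size $\eps$) is sound --- the exit position lies outside the open ball by right-continuity, so $\ph(X_{\tau_{B(0,\eps)}}) = 1$ on the gentle event, and a jump from $|X_{\tau-}| \le \eps$ to $|X_\tau| > 2\eps$ indeed has size $> \eps$ --- but the Poisson-process argument for the second event is extra machinery that the simpler choice of test function renders unnecessary.
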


\begin{proof}
Let $h \in \schwartz$ satisfy $h(0) = 1$, $h(x) = 0$ for $x \notin B(0, \eps)$, and $h(x) \le 1$ for all $x$. Define $\tau = \min(t, \tau_{B(0, \eps)})$. Then, by Dynkin's formula~\eqref{eq:dynkin},
\formula{
 \pr_0(t \ge \tau_{B(0, \eps)}) & = 1 - \pr_0(t < \tau_{B(0, \eps)}) \le h(0) - \ex_0 h(X_\tau) \\
 & = - \ex_x \expr{\int_0^\tau \A h(X_s) ds} \le t \|\A h\|_{L^\infty(\R^d)} \, . \qedhere
}
\end{proof}

\begin{proof}[Proof of Lemma~\ref{lem:smooth:domain}]
Since there are few tools to handle functions non-vanishing at infinity, we use the definition of $\A_D$. For $\eps \in (0, r]$, denote by $D_\eps$ the set of $x$ such that $\dist(x, \R^d \setminus D) \ge \eps$, and $D_\eps' = \R^d \setminus D_\eps$. Our strategy is to show that away from the boundary of $D$ (in $D_\eps$) we have $P^D_t F \approx P_t f$, while near the boundary (in $D \setminus D_\eps$), $P^D_t F \approx 0$.

Let $g_\eps(t) = \pr_x(t \ge \tau_{B(x, \eps)})$. Note that $g_\eps(t)$ does not depend on $x \in \R^d$, and that by Proposition~\ref{prop:ball}, $g_\eps(t) / t$ is a bounded function of $t > 0$.

First, we consider $x \in D_\eps$, where $\eps \in (0, r]$ will be specified later. By Lemma~\ref{lem:gen:cb}, $F \in \domain(\A; C_b)$. Since $F$ vanishes on $D_r'$, for $x \in D_\eps$ we have
\formula{
 \abs{P_t F(x) - P_t^D F(x)} & = \abs{\ex_x(F(X_t) \ind_{t \ge \tau_D})} \\
 & \le \|F\|_{L^\infty(\R)} \ex_x(\ind_{X_t \in D_r} \ind_{t \ge \tau_D}) .
}
Observe that the condition $X_t \in D_r$ and $t \ge \tau_D$ implies that before time $t$, the process \emph{first} exits $D$, and \emph{after that} it exits $D_r'$. Furthermore, $B(X_{\tau_D}, \eps) \sub D_r'$ a.s. Hence, by the strong Markov property,
\formula{
 \ex_x(\ind_{X_t \in D_r} \ind_{t \ge \tau_D}) & \le \pr_x(\text{$X_s$ first exists $D$ and then $B(X_{\tau_D}, \eps)$ before time $t$}) \\
 & = \ex_x(g_\eps(t - \tau_D) \ind_{t \ge \tau_D}) .
}
Since $g_\eps$ is increasing, and $B(x, \eps) \sub D$, we have
\formula{
 \ex_x(\ind_{X_t \in D_r} \ind_{t \ge \tau_D}) & \le g_\eps(t) \pr_x(t \ge \tau_D) \\
 & \le g_\eps(t) \pr_x(t \ge \tau_{B(x, \eps)}) = (g_\eps(t))^2 .
}
We conclude that for $x \in D_\eps$,
\formula[eq:smooth:domain:1]{
\begin{aligned}
 \abs{\frac{P_t^D F(x) - F(x)}{t} - \A F(x)} & \le \norm{\frac{P_t F - F}{t} - \A F}_{L^\infty(\R^d)} \\ & + \frac{\|F\|_{L^\infty(\R)} (g_\eps(t))^2}{t} \, .
\end{aligned}
}

Now we consider $x$ close to the boundary. Fix (small) $\eta > 0$. By Corollary~\ref{cor:gen:cb}, $\A F$ is Lipschitz continuous. Hence, we can choose $\eps \in (0, r/2]$ small enough, so that $|\A F(y)| \le \eta$ when $\dist(y, D \setminus D_\eps) < \eps$. Let $\ph_n \in \schwartz$ be the sequence approximating $F$ as in the proof of Lemma~\ref{lem:gen:cb}. Let $x \in D \setminus D_\eps$ and $\tau = \min(t, \tau_D)$. By Dynkin's formula~\eqref{eq:dynkin},
\formula{
 \abs{\ex_x \ph_n(X_\tau) - \ph_n(x)} & = \abs{\ex_x \expr{\int_0^\tau \A \ph_n(X_s) ds}} \le \ex_x \expr{\int_0^t |\A \ph_n(X_s)| ds} .
}
When $t < \tau_{B(x, \eps)}$, we use the estimate
\formula{
 \int_0^t |\A \ph_n(X_s)| ds & \le t \sup_{y \in B(x, \eps)} |\A \ph_n(y)| .
}
When $t \ge \tau_{B(x, \eps)}$, we simply have
\formula{
 \int_0^t |\A \ph_n(X_s)| ds & \le t \|\A \ph_n\|_{L^\infty(\R^d)} .
}
It follows that
\formula{
 \abs{\ex_x \ph_n(X_\tau) - \ph_n(x)} & \le t \pr_x(t < \tau_{B(x, \eps)}) \sup_{y \in B(x, \eps)} |\A \ph_n(y)| \\
 & \hspace*{6em} + t \pr_x(t \ge \tau_{B(x, \eps)}) \|\A \ph_n\|_{L^\infty(\R^d)} \\
 & \le t \expr{\sup_{y \in B(x, \eps)} |\A \ph_n(y)| + g_\eps(t) \|\A \ph_n\|_{L^\infty(\R^d)}} .
}
Recall that $\ph_n$ and $\A \ph_n$ converge to $F$ and $\A F$, respectively, locally uniformly, and that $\|\A \ph_n\|_{C_b^2(\R^d)} \le C \|\ph_n\|_{C_b^4(\R^d)} \le C^2 \|F\|_{C_b^4(\R^d)}$ for some $C > 0$. By taking the limit as $n \to \infty$ and applying dominated convergence, we obtain that for $x \in D \setminus D_\eps$,
\formula{
 \abs{\ex_x F(X_\tau) - F(x)} & \le t \expr{\sup_{y \in B(x, \eps)} |\A F(y)| + C^2 g_\eps(t) \|F\|_{C_b^4(\R^d)}} .
}
We have $|\A F(y)| \le \eta$ for $y \in B(x, \eps)$ (because $\dist(y, D \setminus D_\eps) < \eps$). Hence,
\formula{
 \abs{\frac{\ex_x F(X_\tau) - F(x)}{t} - \A F(x)} & \le \frac{|\ex_x F(X_\tau) - F(x)|}{t} + |\A F(x)| \\
 & \le 2 \eta + C^2 g_\eps(t) \|F\|_{C_b^4(\R^d)} .
}
Finally, $F(X_{\tau_D}) = 0$ a.s., and therefore
\formula{
 \ex_x F(X_\tau) & = \ex_x(F(X_t) \ind_{t < \tau_D}) = P^D_t F(x) .
}
We conclude that
\formula[eq:smooth:domain:2]{
 \abs{\frac{P^D_t F(x) - F(x)}{t} - \A F(x)} & \le 2 \eta + C g_\eps(t) \|F\|_{C_b^4(\R^d)} .
}
Recall that as $t \to 0^+$, $g_\eps(t) / t$ is bounded. Furthermore, $(P_t F - F) / t$ converges uniformly to $\A F$. Hence, \eqref{eq:smooth:domain:1} and~\eqref{eq:smooth:domain:2} imply that for $t$ small enough,
\formula{
 \norm{\frac{P^D_t F - F}{t} - \ind_D \A F}_{L^\infty(D)} & \le 3 \eta .
}
Since $\eta > 0$ was arbitrary, the proof is complete.
\end{proof}

%
%                            ---------- o ----------
%

\subsection{Subordinate Brownian motions}

From now on, we only consider one-dimensional L{\'e}vy processes. The process $X_t$ corresponding to $\beta = 1$, $\gamma = 0$ and $\nu$ vanishing, is the Brownian motion, running at twice the usual speed (that is, $\var X_t = 2 t$). We denote the transition density of the Brownian motion by $k_s(z)$,
\formula[eq:ks]{
 k_s(z) & = \frac{1}{\sqrt{4 \pi s}} \, \exp \expr{-\frac{z^2}{4 s}} \, , && s > 0 , \, z \in \R .
}
A L{\'e}vy process $X_t$ is said to be a \emph{subordinate Brownian motion} if it can be written in the form $X_t = B_{Z_t}$, where $B_s$ is the Brownian motion, $Z_t$ is a \emph{subordinator} (a non-decreasing L{\'e}vy process), and $B_s$ and $Z_t$ are independent processes. We assume that under $\pr_x$, $B_s$ starts at $x$ and $Z_t$ starts at $0$ a.s. To avoid trivialities, we assume that $X_t$ is non-constant, that is, $\beta > 0$ or $\nu$ is non-zero.

The subordinator $Z_t$ is completely described by its \emph{Laplace exponent} $\psi(\xi)$: we have $\ex_0 e^{-\xi Z_t} = e^{-t \psi(\xi)}$ ($\xi \in \C$, $\real \xi > 0$), where
\formula[eq:psi]{
 \psi(\xi) & = \beta \xi + \int_0^\infty (1 - e^{-\xi s}) \mu(ds) , && \real \xi > 0 .
}
Here $\beta \ge 0$ is the same as in~\eqref{eq:lk}, and $\mu$ is the L{\'e}vy measure of $Z_t$: a Radon measure on $(0, \infty)$ satisfying $\int_0^\infty \min(1, s) \mu(ds) < \infty$. The relation between the L{\'e}vy measures $\mu$ and $\nu$ of $Z_t$ and $X_t$ is
\formula[eq:nu]{
 \nu(z) = \int_0^\infty k_s(z) \mu(ds) , && z \in \R \setminus \set{0} ,
}
with $k_s(z)$ defined in~\eqref{eq:ks}. The L{\'e}vy-Khintchine exponent of the subordinate Brownian motion $X_t$ satisfies $\Psi(\xi) = \psi(\xi^2)$.

The class of subordinate Brownian motions is naturally divided into compound Poisson processes (with bounded Laplace exponents $\psi$) and processes corresponding to unbounded $\psi$. The latter class consists of strong Feller processes: if $Z_t > 0$ a.s., then $X_t = B_{S_t}$ has absolutely continuous distribution. This property is important for applications of Lemma~\ref{lem:d2}. See~\cite{bib:bbkrsv09, bib:ksv11, bib:ssv10} for more information on subordinate Brownian motions.

\begin{rem}
One needs to distinguish the \emph{killed subordinate} and the \emph{subordinate killed} processes. The first one is $X_t^D$ introduced above, while the other one is obtained by subordination of the killed Brownian motion. We emphasize that these two processes are essentially different. In particular, the spectral theory for the subordinate killed process is trivial: its eigenfunctions are the same as the eigenfunctions of the (insubordinate) killed Brownian motion, and only the corresponding eigenvalues are different. For a discussion of the relation between killed subordinate and subordinate killed processes, see e.g.~\cite{bib:sv08}.\qed
\end{rem}

We recall some standard definitions; see~\cite{bib:ssv10} for further information. A function $f : (0, \infty) \to \R$ is said to be \emph{completely monotone} on $(0, \infty)$ if $f^{(n)}(z) \ge 0$ for all $z > 0$ and $n = 0, 1, 2, ...$. A function $f : (0, \infty) \to \R$ is a \emph{Bernstein function} if $f$ is nonnegative and $f'$ is completely monotone. By Bernstein's theorem, every completely monotone function is the Laplace transform of some Radon measure on $[0, \infty)$, and every Bernstein function has the form
\formula[eq:bf]{
 f(z) & = c_1 + c_2 z + \int_0^\infty (1 - e^{-z s}) m(ds) , && z > 0 ,
}
for some $c_1, c_2 \ge 0$ and a Radon measure $m$ on $(0, \infty)$ satisfying $\int_0^\infty \min(1, s) m(ds) < \infty$. Whenever we write $f(0)$ when $f$ is a Bernstein function, we mean $\lim_{z \to 0^+} f(z)$. Note that if $f(0) = 0$, then $f$ has the form~\eqref{eq:psi}; hence, a Bernstein function $f$ is the Laplace exponent of a subordinator if and only if $f(0) = 0$.

%
%                            ---------- o ----------
%

\subsection{Complete Bernstein functions and Stieltjes functions}

A Bernstein function $f$ for which the measure $m$ in the representation~\eqref{eq:bf} has a completely monotone density function is said to be a \emph{complete Bernstein function} (CBF in short; also known as \emph{operator monotone function}). Recall that $X_t$ is a subordinate Brownian motion, $\mu$ is the L{\'e}vy measure of the underlying subordinator, and $\psi$ is its Laplace exponent (so that the L{\'e}vy-Khintchine exponent of $X_t$ is $\psi(\xi^2)$).

\begin{prop}
\label{prop:cbf}
The following conditions are equivalent:
\begin{enumerate}
\item[(a)] $\psi$ is a complete Bernstein function;
\item[(b)] $\mu(ds)$ has a completely monotone density function $\mu(s)$;
\item[(c)] for a Radon measure $\mu_0$ on $(0, \infty)$ such that the integral $\int_0^\infty \min(\zeta^{-1}, \zeta^{-2}) \mu_0(d\zeta)$ is finite,
\formula[eq:cbf]{
 \psi(\xi) & = \beta \xi + \frac{1}{\pi} \int_0^\infty \frac{\xi}{\xi + \zeta} \, \frac{\mu_0(d\zeta)}{\zeta} \, ;
}
\item[(d)] $\psi$ extends to a holomorphic function in $\C \setminus (-\infty, 0]$, which leaves the upper and the lower complex half-planes invariant;
\item[(e)] $\nu(z)$ is a completely monotone function on $(0, \infty)$.
\end{enumerate}
A symmetric L{\'e}vy process satisfying~(e) is automatically a subordinate Brownian motion.
\end{prop}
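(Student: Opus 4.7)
The strategy splits the five conditions into an analytic block, $(a)\Leftrightarrow(b)\Leftrightarrow(c)\Leftrightarrow(d)$, encoding classical properties of complete Bernstein functions, and a probabilistic block, $(b)\Leftrightarrow(e)$ together with the final automatic-subordinacy statement, tied together by the Gaussian subordination identity~\eqref{eq:nu}. The equivalence $(a)\Leftrightarrow(b)$ is immediate, since $\psi$ is a Bernstein function of the form~\eqref{eq:psi} with $c_1=0$, $c_2=\beta$, $m=\mu$, so it is a CBF exactly when $\mu$ has a completely monotone density. For $(a)\Leftrightarrow(c)$, I would apply Bernstein's theorem to write $\mu(s)=\int_0^\infty e^{-s\zeta}\sigma(d\zeta)$, insert this into~\eqref{eq:psi}, exchange the order of integration, and use $\int_0^\infty(1-e^{-\xi s})e^{-s\zeta}ds=\xi/(\zeta(\zeta+\xi))$ to recover~\eqref{eq:cbf} with $\mu_0(d\zeta)$ obtained from $\sigma$ after absorbing a factor of $\zeta$; the equivalence of the two integrability conditions is a direct check. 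The equivalence $(c)\Leftrightarrow(d)$ is the classical Stieltjes/Nevanlinna--Pick characterization of half-plane--preserving holomorphic functions, which may be quoted from~\cite{bib:ssv10}.

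The core of the proof is $(b)\Leftrightarrow(e)$, which rests on the Gaussian--Laplace identity
\begin{equation*}
\int_0^\infty k_s(z)\,e^{-s\zeta}\,ds=\frac{1}{2\sqrt{\zeta}}\,e^{-|z|\sqrt{\zeta}},\qquad z\ne 0,\ \zeta>0,
\end{equation*}
an instance of the standard formula $\int_0^\infty s^{-1/2}e^{-a/s-bs}ds=\sqrt{\pi/b}\,e^{-2\sqrt{ab}}$. For $(b)\Rightarrow(e)$, write $\mu(s)=\int_0^\infty e^{-s\zeta}\sigma(d\zeta)$, substitute into~\eqref{eq:nu}, and apply Fubini together with the identity to obtain $\nu(z)=\int_0^\infty(2\sqrt{\zeta})^{-1}e^{-|z|\sqrt{\zeta}}\sigma(d\zeta)$; pushing $\sigma$ forward along $\zeta\mapsto\sqrt{\zeta}$ exhibits $\nu(z)$ as a Laplace transform in $|z|$, hence completely monotone on $(0,\infty)$. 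For $(e)\Rightarrow(b)$, I would run this computation in reverse: Bernstein's theorem gives $\nu(z)=\int_0^\infty e^{-zu}\tau(du)$ for $z>0$, the identity above supplies $e^{-zu}=2u\int_0^\infty k_s(z)e^{-su^2}ds$, and exchanging the order of integration in~\eqref{eq:nu} forces
\begin{equation*}
\mu(s)=\int_0^\infty 2u\,e^{-su^2}\,\tau(du),
\end{equation*}
which after the change of variable $\eta=u^2$ is a Laplace transform in $s$, and hence completely monotone.

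For the final claim that any symmetric L\'evy process satisfying~(e) is automatically a subordinate Brownian motion, the construction just described produces a candidate $\mu$ and hence a candidate Laplace exponent $\psi(\xi)=\beta\xi+\int_0^\infty(1-e^{-\xi s})\mu(ds)$, using the diffusion coefficient $\beta$ of $X_t$. Combining~\eqref{eq:nu} with the Fourier transform of $k_s$ yields $\int_\R(1-\cos(\xi z))\nu(dz)=\int_0^\infty(1-e^{-\xi^2 s})\mu(ds)$, so the L\'evy--Khintchine exponent of $X_t$ equals $\psi(\xi^2)$, which identifies $X_t$ in distribution with $B_{Z_t}$ for the subordinator $Z_t$ associated with $\psi$. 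The main obstacle I expect is verifying that $\mu$ is a genuine subordinator L\'evy measure: by Tonelli and the elementary bound $\int_\R\min(1,z^2)k_s(z)dz\asymp\min(1,s)$, the hypothesis $\int_\R\min(1,z^2)\nu(dz)<\infty$ translates into $\int_0^\infty\min(1,s)\mu(ds)<\infty$, but tracking the endpoint behaviors $u\to 0^+$ and $u\to\infty$ of $\tau$ separately requires some care. A subsidiary technical point is to justify the global Bernstein representation of $\nu$ on $(0,\infty)$ from mere complete monotonicity, which is standard.
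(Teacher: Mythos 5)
Your proof is correct, but it routes through a different central equivalence than the paper. The paper proves $(c)\Leftrightarrow(e)$ directly, by a Fourier computation at the level of L\'evy--Khintchine exponents: it plugs the representation~\eqref{eq:cbf} into $\Psi(\xi)=\psi(\xi^2)$ and applies the explicit formula $\int_{-\infty}^\infty(1-\cos(\xi z))e^{-\sqrt{\zeta}|z|}\,dz=\tfrac{2}{\sqrt{\zeta}}\tfrac{\xi^2}{\xi^2+\zeta}$ under the integral sign to read off $\nu$. You instead prove $(b)\Leftrightarrow(e)$ by a Laplace computation at the level of densities, via the subordination identity $\int_0^\infty k_s(z)e^{-s\zeta}\,ds=\tfrac{1}{2\sqrt{\zeta}}e^{-|z|\sqrt{\zeta}}$. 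The two identities are Fourier/Laplace duals of each other and both ultimately exhibit $\nu(z)$ as a Laplace transform in $|z|$ of the representing measure, so the proofs are closely related in content; yours is a bit more explicit in that it stays with kernels rather than passing through the characteristic exponent, and it keeps the Bernstein-representation machinery for $\mu$ and $\nu$ on the surface rather than folding $(a)\Leftrightarrow(c)$ into the citation to [ssv10]. One small remark: the obstacle you flag at the end is overstated. Once $\mu(s)=\int_0^\infty 2u\,e^{-su^2}\tau(du)$ is defined and~\eqref{eq:nu} is verified by Tonelli (all integrands nonnegative), a single further application of Tonelli together with $\int_\R\min(1,z^2)k_s(z)\,dz\asymp\min(1,s)$ converts $\int_\R\min(1,z^2)\nu(z)\,dz<\infty$ into $\int_0^\infty\min(1,s)\mu(s)\,ds<\infty$; no separate analysis of $\tau$ near $0$ and near $\infty$ is needed.
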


\begin{proof}
Equivalence of~(a) and~(b) is just the definition of complete Bernstein functions. Equivalence of~(b)--(d) is standard, see~\cite{bib:ssv10}, Chapters~6 and~11. Equivalence of~(c) and~(e) seems to be well-known to specialists (cf.~\cite{bib:r83}), but a direct reference to the literature is difficult to find. The proof is a rather straightforward calculation involving Fourier transform and an application of Bernstein's representation theorem; for completeness, we provide the details.

Suppose that~(c) holds. Then, for $\xi \in \R$,
\formula{
 \Psi(\xi) = \psi(\xi^2) & = \beta \xi^2 + \frac{1}{\pi} \int_0^\infty \frac{\xi^2}{\xi^2 + \zeta} \, \frac{\mu_0(d\zeta)}{\zeta} .
}
By an explicit calculation, for $\zeta > 0$,
\formula{
 \int_{-\infty}^\infty (1 - \cos(\xi z)) e^{-\sqrt{\zeta} |z|} dz & = \frac{2}{\sqrt{\zeta}} - \frac{2 \sqrt{\zeta}}{\xi^2 + \zeta} = \frac{2}{\sqrt{\zeta}} \, \frac{\xi^2}{\xi^2 + \zeta} \, .
}
Hence, by Fubini,
\formula{
 \Psi(\xi) & = \beta \xi^2 + \frac{1}{2 \pi} \int_0^\infty \expr{\int_{-\infty}^\infty (1 - \cos(\xi z)) e^{-\sqrt{\zeta} |z|} dz} \frac{\mu_0(d\zeta)}{\sqrt{\zeta}} \\
 & = \beta \xi^2 + \int_{-\infty}^\infty (1 - \cos(\xi z)) \expr{\frac{1}{2 \pi} \int_0^\infty e^{-\sqrt{\zeta} |z|} \, \frac{\mu_0(d\zeta)}{\sqrt{\zeta}}} dz .
}
Comparing this with~\eqref{eq:lk}, we obtain
\formula[eq:nu:cbf]{
 \nu(z) & = \frac{1}{2 \pi} \int_0^\infty e^{-\sqrt{\zeta} |z|} \, \frac{\mu_0(d\zeta)}{\sqrt{\zeta}} ,
}
which proves that $\nu(z)$ is completely monotone on $(0, \infty)$. Conversely, if~(e) holds, that is, if $\nu(z)$ is completely monotone on $(0, \infty)$, then, by Bernstein's theorem, $\nu(z)$ has the representation~\eqref{eq:nu:cbf}, and the above reasoning can be reversed to prove that $X_t$ is a subordinate Brownian motion, and that~(c) holds.
\end{proof}

The assumptions of the main theorems can be put in the following form.

\begin{assumption}
\label{asmp}
The process $X_t$ is a subordinate Brownian motion satisfying any of the equivalent conditions of Proposition~\ref{prop:cbf}.
\end{assumption}

We note the following immediate consequence of Proposition~\ref{prop:cbf}.

\begin{cor}
\label{corr:cbf}
The following conditions are equivalent:
\begin{enumerate}
\item[(a)] $f$ is a complete Bernstein function; that is, \eqref{eq:bf} holds for some $c_1, c_2 \ge 0$ and a measure $m$ with completely monotone density;
\item[(b)] for some $c_1, c_2 \ge 0$ and a Radon measure $m_0$ on $(0, \infty)$ such that $\int_0^\infty \min(s^{-1}, s^{-2}) m_0(ds) < \infty$,
\formula[eq:cbf0]{
 f(z) & = c_1 + c_2 z + \frac{1}{\pi} \int_0^\infty \frac{z}{z + s} \, \frac{m_0(ds)}{s} \, , && z > 0;
}
\item[(c)] $f(z) \ge 0$ for $z > 0$ and either $f$ is constant, or $f$ extends to a holomorphic function in $\C \setminus (-\infty, 0]$, which leaves the upper and the lower complex half-planes invariant;
\item[(d)] $f(\xi^2) - f(0)$ is the L{\'e}vy-Khintchine exponent of a symmetric L{\'e}vy process, whose L{\'e}vy measure has a completely monotone density function on $(0, \infty)$.\qed
\end{enumerate}
\end{cor}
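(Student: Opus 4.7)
My plan is to reduce the entire statement to Proposition~\ref{prop:cbf} by stripping off the constant term. Given $f$, I would set $c_1 = \lim_{z \to 0^+} f(z)$ (which is finite and nonnegative in each of the four cases, as I explain below) and $\psi(z) = f(z) - c_1$, so that $\psi$ is a Bernstein function with $\psi(0) = 0$, hence a legitimate Laplace exponent of a subordinator in the sense of \eqref{eq:psi}. I can then invoke each equivalence of Proposition~\ref{prop:cbf} for $\psi$ and re-add the constant $c_1$ to recover the corresponding statement for $f$.

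Concretely, the four translations go as follows. For (a)$\Leftrightarrow$(b), the equivalence (b)$\Leftrightarrow$(c) of Proposition~\ref{prop:cbf} applied to $\psi$ gives the Stieltjes representation \eqref{eq:cbf} for $\psi$; comparing with \eqref{eq:cbf0} yields the equivalence once one identifies the $\beta$ of \eqref{eq:psi} with $c_2$ and sets $m_0 = \mu_0$. For (a)$\Leftrightarrow$(c), the equivalence (a)$\Leftrightarrow$(d) of Proposition~\ref{prop:cbf} yields the holomorphic extension of $\psi$ to $\C \setminus (-\infty, 0]$ with preservation of the upper and lower complex half-planes; adding the real constant $c_1$ is a biholomorphism of $\C$ fixing each half-plane, so the same property carries over to $f$. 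The constant alternative in~(c) corresponds exactly to $\psi \equiv 0$, i.e.\ $f \equiv c_1$, and must be stated separately because a real constant function does not map the upper half-plane into itself. Finally, (a)$\Leftrightarrow$(d) of the corollary is read off from (a)$\Leftrightarrow$(e) of Proposition~\ref{prop:cbf}, using $f(\xi^2) - f(0) = \psi(\xi^2)$ as the L{\'e}vy-Khintchine exponent of the subordinate Brownian motion associated with $\psi$.

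The one step I expect to require some care is verifying that $c_1 = f(0^+)$ is finite and nonnegative when we start from condition~(c) alone. Here I would invoke the standard Pick/Herglotz representation of a holomorphic function on $\C \setminus (-\infty, 0]$ that preserves the upper half-plane and is real on $(0, \infty)$: such a function is automatically nondecreasing on $(0, \infty)$, so combined with $f \ge 0$ we obtain $0 \le f(0^+) \le f(z) < \infty$ for all $z > 0$. This monotonicity also guarantees $\psi = f - c_1 \ge 0$ on $(0, \infty)$, which is what allows us to conclude that $\psi$ satisfies condition~(d) of Proposition~\ref{prop:cbf} and is therefore a CBF with $\psi(0) = 0$. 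All remaining steps are a mechanical bookkeeping exercise between $f$ and $\psi$.
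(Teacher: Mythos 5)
Your reduction is correct and is essentially the argument the paper has in mind: the paper states the corollary as an ``immediate consequence of Proposition~\ref{prop:cbf}'' and offers no proof, and the intended route is exactly what you spell out---subtract $c_1 = f(0^+)$ and invoke the corresponding equivalence for the Bernstein function $\psi = f - c_1$. Your extra care in verifying that $f(0^+)$ is finite and nonnegative under condition~(c) alone (via the Pick/Herglotz monotonicity on $(0,\infty)$), and your handling of the degenerate constant case, fill in precisely the details the paper leaves unstated, so there is no gap.
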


The right-hand side of formula~\eqref{eq:cbf0} defines a holomorphic function in $\C \setminus (-\infty, 0]$. We often denote this extension by the same symbol $f$. We will also use the related notion of a Stieltjes function: $f$ is said to be a \emph{Stieltjes function} if there are $\tilde{c}_1, \tilde{c}_2 \ge 0$ and a Radon measure $\tilde{m}_0$ on $(0, \infty)$ satisfying $\int_0^\infty \min(1, s^{-1}) \tilde{m}_0(ds) < \infty$ such that
\formula[eq:stieltjes]{
 f(z) & = \tilde{c}_1 + \frac{\tilde{c}_2}{z} + \frac{1}{\pi} \int_0^\infty \frac{1}{z + s} \, \tilde{m}_0(ds) , && z > 0 .
}
Since $1 / (z + s) = \int_0^\infty e^{-\xi s} e^{-\xi z} d\xi$, we see that $f$ has the representation~\eqref{eq:stieltjes} if and only if it is the Laplace transform of the measure
\formula{
 \tilde{c}_1 \delta_0(d\xi) + (\tilde{c}_2 + \pi^{-1} \laplace \tilde{m}_0(\xi)) d\xi
}
on $[0, \infty)$, where $\delta_0$ is the Dirac delta measure. In other words, $f$ is the Laplace transform of a completely monotone function, plus a nonnegative constant.

We list some standard properties of complete Bernstein and Stieltjes functions.

\begin{prop}[Chapter~3 in~\cite{bib:j01} and Chapter~7 in~\cite{bib:ssv10}]
\label{prop:cbf:prop}
Suppose that $f$ and $g$ are not constantly equal to $0$.
\begin{enumerate}
\item[(a)] The following conditions are equivalent: $f(z)$ is a CBF; $z / f(z)$ is a CBF; $1 / f(z)$ is a Stieltjes function; $f(z) / z$ is a Stieltjes function.
\item[(b)] If $f$, $g$ are CBF, $a, C > 0$ and $\alpha \in (0, 1)$, then $f(z) + g(z)$, $C f(z)$, $f(C z)$, $(z - a) / (f(z) - f(a))$ (extended continuously at $z = a$), $f(g(z))$ and $(f(z^\alpha))^{1/\alpha}$ are CBF.
\qed
\end{enumerate}
\end{prop}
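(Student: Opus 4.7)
The plan is to base everything on the holomorphic characterization of complete Bernstein functions (Corollary~\ref{corr:cbf}(c)) together with the analogous characterization for Stieltjes functions, which follows directly from~\eqref{eq:stieltjes}: a nonnegative function $g$ on $(0, \infty)$ is Stieltjes if and only if it extends to a holomorphic function on $\C \setminus (-\infty, 0]$ sending the open upper half-plane into the closed lower half-plane.

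For part~(a), I would first note that a non-constant CBF $f$ satisfies $\imag f(z) > 0$ on the open upper half-plane and $f > 0$ on $(0, \infty)$, both by inspection of~\eqref{eq:cbf0}. In particular $f$ has no zeros on $\C \setminus (-\infty, 0]$, so $1/f$ is holomorphic there, positive on $(0, \infty)$, and $\imag(1/f) = -\imag f / |f|^2 < 0$ in the upper half-plane; hence $1/f$ is Stieltjes. The converse follows by applying the same reciprocal computation. The equivalence with $f(z)/z$ being Stieltjes is transparent after dividing~\eqref{eq:cbf0} by $z$, which directly produces a representation of the form~\eqref{eq:stieltjes}. Finally, $z/f(z) = 1/(f(z)/z)$ chains the two reciprocations together.

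For part~(b), additivity, scaling by $C$, and $f(Cz)$ are immediate from~\eqref{eq:cbf0}. For $h(z) = (z-a)/(f(z)-f(a))$, I would apply the algebraic identity $\frac{z}{z+s} - \frac{a}{a+s} = \frac{s(z-a)}{(z+s)(a+s)}$ term by term in~\eqref{eq:cbf0} to obtain
\formula{
 \frac{f(z)-f(a)}{z-a} & = c_2 + \frac{1}{\pi} \int_0^\infty \frac{1}{z+s} \, \frac{m_0(ds)}{a+s} \, ,
}
which is manifestly Stieltjes, so by part~(a) its reciprocal $h$ is a CBF. The composition $f \circ g$ is handled via the mapping property: a non-constant CBF $g$ sends each of the open upper half-plane, the open lower half-plane, and $(0, \infty)$ into itself, so $g(\C \setminus (-\infty, 0]) \subset \C \setminus (-\infty, 0]$, and one more application of $f$ yields a function holomorphic on $\C \setminus (-\infty, 0]$, positive on $(0, \infty)$, and preserving the upper half-plane.

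The delicate case, and the main obstacle, is $(f(z^\alpha))^{1/\alpha}$, for which the bare mapping property is insufficient: raising to the power $1/\alpha > 1$ can push arguments beyond $\pi$. I would therefore invoke the sharper \emph{angle inequality} $0 \le \arg f(z) \le \arg z$ for $z$ in the open upper half-plane, which is equivalent to $f(z)/z$ being Stieltjes (already established in part~(a), and visible directly in~\eqref{eq:cbf0} since each factor $\frac{1}{z+\zeta}$ has argument in $(-\arg z, 0)$). Taking principal branches: $\arg z^\alpha = \alpha \arg z \in (0, \alpha \pi)$, then $\arg f(z^\alpha) \le \alpha \arg z$, and finally $(1/\alpha) \arg f(z^\alpha) \le \arg z < \pi$, keeping $(f(z^\alpha))^{1/\alpha}$ in the upper half-plane; positivity on $(0, \infty)$ is automatic, which gives the CBF property. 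The obstacle is precisely this step: one must go beyond half-plane preservation and extract the finer angle monotonicity from the integral representation, then marry it with the correct branch of the fractional power; all other items reduce either to direct manipulations of~\eqref{eq:cbf0} or to routine consequences of the holomorphic characterization.
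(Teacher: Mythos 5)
The paper states this proposition without proof, citing only Chapter~3 of Jacob and Chapter~7 of Schilling--Song--Vondra\v{c}ek, so there is no in-paper argument to compare against; I am evaluating your self-contained proof on its own. Your route --- reducing everything to the holomorphic mapping characterization (Corollary~\ref{corr:cbf}(c) and its Stieltjes analogue) plus direct manipulation of the integral representation~\eqref{eq:cbf0} --- is precisely the standard one in those references, and the steps check out. In particular: the reciprocal computation $\imag(1/f) = -\imag f/|f|^2$ together with zero-freeness of a non-constant CBF off $(-\infty,0]$ correctly gives the CBF/Stieltjes duality; dividing~\eqref{eq:cbf0} by $z$ produces the Stieltjes representation for $f/z$ with $\tilde{m}_0(ds) = m_0(ds)/s$, whose integrability condition transforms exactly as required; the identity $\frac{z}{z+s} - \frac{a}{a+s} = \frac{s(z-a)}{(z+s)(a+s)}$ yields $\frac{f(z)-f(a)}{z-a} = c_2 + \frac{1}{\pi}\int_0^\infty \frac{1}{z+s}\frac{m_0(ds)}{a+s}$, manifestly Stieltjes with the right integrability; the mapping property for $f\circ g$ is correct once one notes that a non-constant $g$ sends $\C\setminus(-\infty,0]$ into itself; and you have correctly isolated $(f(z^\alpha))^{1/\alpha}$ as the step needing the sharper angle inequality $0\le\arg f(z)\le\arg z$, since $w\mapsto w^{1/\alpha}$ with $1/\alpha>1$ does not preserve the slit plane and mere half-plane preservation is insufficient. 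Two cosmetic remarks: your phrasing occasionally says ``non-constant CBF'' where the proposition hypothesis only excludes $f\equiv 0$, so the (trivial) constant case should be acknowledged separately; and the item $(z-a)/(f(z)-f(a))$ is genuinely undefined when $f$ is a positive constant, but that is a defect of the proposition's phrasing, not of your argument.
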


One of the fundamental properties of complete Bernstein and Stieltjes functions is that the characteristics in representations~\eqref{eq:cbf0} and~\eqref{eq:stieltjes} can be easily recovered from (the holomorphic extension of) the function. We illustrate this for Stieltjes functions. By $\delta_0$ we denote the Dirac delta measure at $0$.

\begin{prop}
\label{prop:cbf:repr}
Let $f$ be (the holomorphic extension of) a Stieltjes function with representation~\eqref{eq:stieltjes}. Then
\formula[eq:stieltjes:constants]{
 \tilde{c}_1 & = \lim_{z \to \infty} f(z) , & \tilde{c}_2 & = \lim_{z \to 0^+} (z f(z)) ,
}
and
\formula[eq:jump]{
 \tilde{m}_0(ds) + \pi \tilde{c}_2 \delta_0(ds) & = \lim_{\eps \to 0^+} (-\imag f(-s + i \eps) ds) ,
}
with the limit understood in the sense of weak convergence of measures.
\end{prop}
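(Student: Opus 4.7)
The plan is to verify the three claims by direct computation from the integral representation~\eqref{eq:stieltjes}, using the integrability condition $\int_0^\infty \min(1, s^{-1}) \tilde m_0(ds) < \infty$ to justify passages to the limit.

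The first two identities are quick applications of dominated convergence. For $\tilde c_1$, as $z \to \infty$ along the positive axis the term $\tilde c_2/z$ vanishes and $1/(z+s) \to 0$ for each $s > 0$, with the uniform $\tilde m_0$-integrable bound $1/(z+s) \le \min(1, 1/s)$ valid for $z \ge 1$. For $\tilde c_2$, I would rewrite $z f(z) = \tilde c_1 z + \tilde c_2 + (z/\pi) \int_0^\infty \tilde m_0(ds)/(z+s)$ and use that for $z \in (0, 1]$ the integrand $z/(z+s)$ is again dominated by $\min(1, 1/s)$ and converges pointwise to $0$.

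The substantive part is the Stieltjes inversion formula~\eqref{eq:jump}. Substituting $z = -s + i\eps$ (which avoids $(-\infty, 0]$ for every $s \in \R$ and $\eps > 0$) into~\eqref{eq:stieltjes} and computing imaginary parts gives
\formula{
 -\imag f(-s + i\eps) & = \pi \tilde c_2 P_\eps(s) + (P_\eps * \tilde m_0)(s) , && s \in \R ,
}
where $P_\eps(s) = \eps/(\pi(s^2 + \eps^2))$ is the Poisson kernel of the upper half-plane and $\tilde m_0$ is viewed as a measure on $\R$ supported in $(0, \infty)$. Since $P_\eps \to \delta_0$ weakly on $\R$ (the full mass $1$ of the symmetric Poisson kernel accounts for the factor $\pi$ in~\eqref{eq:jump}), the first summand contributes $\pi \tilde c_2 \delta_0$ in the limit. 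For the second summand I would pair with a test function $\ph \in C_c(\R)$ and invoke Fubini:
\formula{
 \int_\R \ph(s) (P_\eps * \tilde m_0)(s) ds & = \int_0^\infty \expr{\int_\R \ph(s) P_\eps(s - t) ds} \tilde m_0(dt) ,
}
observing that the inner integral converges pointwise to $\ph(t)$ and is bounded by $\|\ph\|_{L^\infty(\R)}$. Combined with the local finiteness of $\tilde m_0$ on $\supp \ph \cap (0, \infty)$ (immediate from the hypothesis on $\tilde m_0$), dominated convergence produces $\int_0^\infty \ph(t) \tilde m_0(dt)$ in the limit. Summing the two contributions yields~\eqref{eq:jump}.

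The only delicate point I foresee is the dominated convergence in the Fubini step, where $\tilde m_0$ need not be globally finite; this is handled by restricting to the bounded set $\supp \ph$ and using the hypothesis $\int_0^\infty \min(1, s^{-1}) \tilde m_0(ds) < \infty$ to control $\tilde m_0$ there.
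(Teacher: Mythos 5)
Your proof is correct, and it takes a genuinely more self-contained route than the paper. The paper's argument invokes the Herglotz representation theorem: $-\imag f$ is a nonnegative harmonic function in the upper half-plane, hence has a unique Poisson-kernel representation with representing measure $\lim_{\eps\to 0^+}(-\imag f(s + i\eps)\,ds)$; one then reads off the measure by comparing with the Poisson-type expression obtained from~\eqref{eq:stieltjes}. You instead compute $-\imag f(-s+i\eps)$ explicitly as $\pi\tilde c_2 P_\eps(s) + (P_\eps * \tilde m_0)(s)$ and verify the weak convergence by hand, which is exactly the content being swept under the Herglotz rug, so the two arguments are morally equivalent; yours trades the citation for a direct dominated-convergence check. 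The one imprecision is in your final sentence: restricting the $\tilde m_0$-integral to $\supp\ph$ is not enough, since the inner integral $\int_\R\ph(s)P_\eps(s-t)\,ds$ does not vanish for $t$ outside $\supp\ph$. What actually makes dominated convergence work is that for $\supp\ph\subset[-M,M]$, $0<\eps\le1$ and $t>2M$ one has $|\!\int_\R\ph(s)P_\eps(s-t)\,ds|\le C\eps/t^2\le C/t^2$, which together with the uniform bound $\|\ph\|_{L^\infty(\R)}$ on $(0,2M]$ yields a $\tilde m_0$-integrable majorant by the hypothesis $\int_0^\infty\min(1,s^{-1})\,\tilde m_0(ds)<\infty$; the latter also gives $\tilde m_0((0,2M])<\infty$. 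With that repair the argument is complete, and the computations for $\tilde c_1$ and $\tilde c_2$ (dominating $1/(z+s)$ and $z/(z+s)$ by $\min(1,1/s)$ for $z\ge1$ and $z\le1$ respectively) are fine.
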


Formula~\eqref{eq:jump} is well-known in complex analysis, it is a variant of the Sokhotskyi-Plemelj formula for the Cauchy-Stieltjes transform in the upper complex half-plane. The proof of a version for complete Bernstein functions is contained in Theorem~6.2 in~\cite{bib:ssv10} (see Corollary~6.3 and Remark~6.4 therein). We provide a simple argument based on the representation theorem for harmonic functions in half-plane.

\begin{proof}[Proof of Proposition~\ref{prop:cbf:repr}]
Since $-\imag f$ is a nonnegative harmonic function in the upper complex half-plane, it can be represented using the Poisson kernel,
\formula[eq:poisson:harm]{
 -\imag f(x + i y) & = C y + \frac{1}{\pi} \int_{-\infty}^\infty \frac{y}{y^2 + (x - s)^2} \, \tilde{m}(ds)
}
for unique costant $C = \lim_{y \to \infty} (-\imag f(i y) / y)$ and unique measure $\tilde{m}(ds) = \lim_{\eps \to 0^+} (-\imag f(s + i \eps) ds)$. On the other hand, by~\eqref{eq:stieltjes},
\formula{
 -\imag f(x + i y) & = \frac{\tilde{c}_2 y}{y^2 + x^2} + \frac{1}{\pi} \int_0^\infty \frac{y}{y^2 + (x + s)^2} \, \tilde{m}_0(ds) .
}
Formula~\eqref{eq:jump} follows simply by comparing the above two representations. The expressions~\eqref{eq:stieltjes:constants} are direct consequences of~\eqref{eq:stieltjes} and dominated convergence.
\end{proof}

We need three more properties of complete Bernstein functions.

\begin{prop}
\label{prop:cbf:ratio}
If $f$ is a complete Bernstein function and $a > 0$, then $g(z) = (1 - z/a) / (1 - f(z) / f(a))$ and $h(z) = \log g(z)$ are complete Bernstein functions.
\end{prop}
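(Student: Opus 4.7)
The plan is to treat the two assertions separately; only the second requires real work. For $g$, one simply rewrites
\formula{
 g(z) & = \frac{f(a)}{a} \cdot \frac{z-a}{f(z)-f(a)} \,,
}
invokes Proposition~\ref{prop:cbf:prop}(b), which states that $(z-a)/(f(z)-f(a))$ is a CBF, and uses the same proposition to multiply by the positive constant $f(a)/a$. The main obstacle, and the content of the rest of the proof, is to show that $h = \log g$ is a CBF.

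I would verify the three items in the characterization of Corollary~\ref{corr:cbf}(c). If $f$ is linear (the only case in which $g \equiv 1$), then $h \equiv 0$ is trivially a CBF, so assume $f$ non-constant. Then $f' > 0$ on $(0, \infty)$, $f$ is strictly increasing on the positive real axis, and $g$ is a non-constant CBF. I first need a holomorphic branch of $\log g$ on $\C \setminus (-\infty, 0]$, which exists once $g$ is shown to be non-vanishing there. On $(0, \infty)$ numerator and denominator of $g(x)$ share a sign and the removable singularity at $x = a$ gives $g(a) = f(a)/(a f'(a)) > 0$ by l'Hopital; on the open upper half-plane, being a non-constant CBF, $g$ satisfies $\imag g > 0$ strictly by the maximum principle for the nonnegative harmonic function $\imag g$, and likewise $\imag g < 0$ strictly on the open lower half-plane. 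Thus $g$ avoids $(-\infty, 0]$, and the principal branch of $\log g$ is holomorphic on $\C \setminus (-\infty, 0]$ and real on $(0, \infty)$.

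The main analytic step is the inequality $g(x) \ge 1$ for every $x > 0$, which is equivalent to $h \ge 0$ on $(0,\infty)$. Because $1 - x/a$ and $1 - f(x)/f(a)$ share sign on $(0,\infty) \setminus \{a\}$, elementary manipulation converts $g(x) \ge 1$ into the statement that $f(x)/x \ge f(a)/a$ for $x \in (0,a)$ and $f(x)/x \le f(a)/a$ for $x \in (a,\infty)$. Both are implied by $x \mapsto f(x)/x$ being non-increasing on $(0,\infty)$, and this is automatic: by Proposition~\ref{prop:cbf:prop}(a), $f(z)/z$ is a Stieltjes function, and each summand $\tilde c_1$, $\tilde c_2/x$ and $1/(x+s)$ in its representation~\eqref{eq:stieltjes} is non-increasing in $x > 0$. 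This is the only genuinely non-trivial step in the whole argument.

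It remains to verify that $h$ maps the upper half-plane into itself. For $\imag z > 0$ the previous step places $g(z)$ in the open upper half-plane, so $\arg g(z) \in (0,\pi)$ under the principal branch, and hence $\imag h(z) = \imag \log g(z) = \arg g(z) > 0$. Combined with holomorphy on $\C \setminus (-\infty, 0]$ and nonnegativity on $(0, \infty)$, Corollary~\ref{corr:cbf}(c) yields that $h$ is a CBF.
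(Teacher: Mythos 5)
Your treatment of $g$ is identical to the paper's (write $g(z) = (f(a)/a)\,(z-a)/(f(z)-f(a))$ and invoke Proposition~\ref{prop:cbf:prop}(b)), but for $h$ you take a genuinely different route. The paper notes that $g(0)=1$, hence $g-1$ is a CBF, and then gets $h = \log(1+(g-1))$ as a CBF by composing with the known CBF $\log(1+z)$ -- a purely algebraic argument riding on the closure properties listed in Proposition~\ref{prop:cbf:prop}(b). You instead verify the defining characterization in Corollary~\ref{corr:cbf}(c) from scratch: you derive $h\ge 0$ on $(0,\infty)$ from monotonicity of $x\mapsto f(x)/x$ (which you correctly read off from the Stieltjes representation of $f(z)/z$), and you get the half-plane invariance of $h$ from $\arg g\in(0,\pi)$ on $\imag z>0$. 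Your route is longer but more self-contained (it does not presuppose $\log(1+z)$ is a CBF), and, incidentally, more robust: the paper's shortcut $g(0)=1$ actually requires $f(0)=0$ (for general $f$ one only gets $g(0)=f(a)/(f(a)-f(0))\ge 1$, and the argument should use $g-g(0)$ together with $\log(g(0)+z)$), whereas your argument never needs that normalization.

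There is, however, a small but genuine gap in the case split. You write ``assume $f$ non-constant'' and then assert that $g$ is a non-constant CBF, so that the maximum principle gives strict inequalities $\imag g>0$ and $\imag g<0$ on the open half-planes. This implication fails: if $f(z)=c_1+c_2 z$ with $c_1>0$ and $c_2>0$ (non-constant, not linear through the origin), then $g\equiv f(a)/(af'(a))$, a constant strictly greater than $1$, and the maximum-principle step does not apply. The fix is trivial -- when $g$ is constant, $h$ is the nonnegative constant $\log g(a)\ge 0$ (nonnegativity being exactly what your $g(x)\ge 1$ argument shows), which is a CBF. So the correct dichotomy is ``$f$ affine'' (then $g$ is a constant $\ge 1$) versus ``$f$ not affine'' (then $g$ is a non-constant CBF, and your argument goes through). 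With this adjustment the proof is complete.
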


Here we extend $g$ and $h$ continuously at $z = a$, $g(a) = f(a) / (a f'(a))$ and $h(a) = \log (f(a) / (a f'(a)))$.

\begin{proof}
By Proposition~\ref{prop:cbf:prop}(b), $g(z) = (f(a) / a) (x - a) / (f(z) - f(a))$ is a CBF. Note that $g(0) = 1$, and so also $g(z) - 1$ is a CBF. Furthermore, $\log(1 + z)$ is a CBF. Again by Proposition~\ref{prop:cbf:prop}(b), $h(z) = \log(1 + (g(z) - 1))$ is a CBF.
\end{proof}

\begin{lem}
\label{lem:g}
Let $f$ be (the holomorphic extension of) a complete Bernstein function, and let $a > 0$. Define
\formula{
 f^*(z) & = \frac{1}{2 i a} \expr{\frac{f(i a)}{z - i a} - \frac{f(-i a)}{z + i a}} , && z \in \C \setminus \{-i a, i a\} .
}
Then $g(z) = f^*(z) - f(z) / (z^2 + a^2)$ is a Stieltjes function, and it is the Laplace transform of a completely monotone function $G$ on $(0, \infty)$. Furthermore, $G$ is the Laplace transform of a finite measure on $(0, \infty)$.
\end{lem}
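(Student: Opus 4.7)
The plan is to expand $f$ via the complete Bernstein function integral representation of Corollary~\ref{corr:cbf}(b),
$$f(z) = c_1 + c_2 z + \frac{1}{\pi}\int_0^\infty \frac{z}{z+s}\,\frac{m_0(ds)}{s},$$
and then compute $g$ term by term. Since $f$ extends holomorphically to $\C\setminus(-\infty,0]$ by the same formula, the values $f(\pm ia)$ are well defined and $f^*$ is by design the unique rational function with poles only at $\pm ia$ matching the principal parts of $f(z)/(z^2+a^2)$ there; hence $g$ extends holomorphically to $\C\setminus(-\infty,0]$.

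The key algebraic step is the observation that the constant and linear pieces $c_1$ and $c_2 z$ contribute zero to $g$ (a short partial-fraction check), while for the building block $\phi_s(z) = z/(z+s)$ with $s>0$, a direct computation over the common denominator $(s^2+a^2)(z+s)(z^2+a^2)$ shows that the numerator collapses to $s(z^2+a^2)$, whence
$$\phi_s^*(z) - \frac{\phi_s(z)}{z^2+a^2} = \frac{s}{(s^2+a^2)(z+s)}.$$
Integrating this identity against $\pi^{-1} m_0(ds)/s$ (justified by Fubini once integrability is in hand) yields
$$g(z) = \frac{1}{\pi}\int_0^\infty \frac{\tilde{m}_0(ds)}{z+s}, \qquad \tilde{m}_0(ds) = \frac{m_0(ds)}{s^2+a^2}.$$

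To finish, I verify that $\tilde{m}_0$ is a \emph{finite} measure on $(0,\infty)$. The CBF integrability condition $\int_0^\infty \min(s^{-1},s^{-2})\,m_0(ds) < \infty$ gives $\int_0^1 s^{-1}m_0(ds) < \infty$, and combined with $m_0(ds) \le s^{-1}m_0(ds)$ on $(0,1]$ and $1/(s^2+a^2) \le 1/a^2$, this bounds $\int_0^1 \tilde{m}_0(ds)$; near infinity, $\int_1^\infty s^{-2}m_0(ds) < \infty$ together with $1/(s^2+a^2) \le 1/s^2$ bounds $\int_1^\infty \tilde{m}_0(ds)$. Hence $g$ is a Stieltjes function with $\tilde{c}_1 = \tilde{c}_2 = 0$ and finite $\tilde{m}_0$, and the discussion following~\eqref{eq:stieltjes} identifies $g = \laplace G$ with
$$G(\xi) = \frac{1}{\pi}\int_0^\infty e^{-\xi s}\,\tilde{m}_0(ds).$$
By Bernstein's theorem $G$ is completely monotone, and the same formula simultaneously exhibits $G$ as the Laplace transform of the finite measure $\pi^{-1}\tilde{m}_0$ on $(0,\infty)$, as required.

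The main obstacle is the algebraic identity for the building block $\phi_s$: one must expand the numerator carefully to see the cancellation of the factor $z^2+a^2$ that removes the apparent singularities at $\pm ia$. Everything else is routine — partial fractions, elementary estimates on $m_0$ using the CBF integrability condition, and an invocation of the already-recorded characterization of Stieltjes functions together with Bernstein's theorem.
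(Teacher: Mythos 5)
Your proposal is correct and takes essentially the same route as the paper: both expand $f$ via the representation~\eqref{eq:cbf0}, observe the constant and linear terms contribute nothing to $g$, and reduce the claim to a partial-fraction identity for the elementary building block, which upon integration identifies $g$ with $\pi^{-1}\int_0^\infty (z+s)^{-1}(a^2+s^2)^{-1}\,m_0(ds)$. Your identity
\[
 \phi_s^*(z) - \frac{\phi_s(z)}{z^2+a^2} = \frac{s}{(s^2+a^2)(z+s)}
\]
is just a repackaging (multiply through by $1/s$) of the key algebraic formula displayed in the paper's proof, and your finiteness check on $\tilde m_0(ds) = (s^2+a^2)^{-1} m_0(ds)$ matches the paper's conclusion that $\gamma(ds) = \pi^{-1}(a^2+s^2)^{-1} m_0(ds)$ is finite.
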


\begin{proof}
Let $f$ have the representation~\eqref{eq:cbf0}, and let $h(z) = f(z) - c_1 - c_2 z$. Fix $z \in \C \setminus (-\infty, 0]$. Since
\formula{
 \frac{1}{z + s} \, \frac{1}{a^2 + s^2} & = \biggl(\frac{1}{2 i a (z - i a)} \, \frac{i a}{s + i a} \\ & \qquad + \frac{1}{2 (-i a) (z + i a)} \, \frac{-i a}{s - i a} - \frac{1}{a^2 + z^2} \, \frac{z}{z + s}\biggr) \frac{1}{s} \, ,
}
we have
\formula{
 \frac{1}{\pi} \int_0^\infty \frac{1}{z + s} \, \frac{m_0(ds)}{a^2 + s^2} & = \frac{1}{2 i a} \expr{\frac{h(i a)}{z - i a} - \frac{h(-i a)}{z + i a}} - \frac{h(z)}{a^2 + z^2} \, .
}
Furthermore, 
\formula{
 \frac{1}{2 i a} \expr{\frac{c_1 + c_2 i a}{z - i a} - \frac{c_1 - c_2 i a}{z + i a}} - \frac{c_1 + c_2 z}{a^2 + z^2} & = 0 .
}
We conclude that
\formula{
 \frac{1}{\pi} \int_0^\infty \frac{1}{z + s} \, \frac{m_0(ds)}{a^2 + s^2} & = \frac{1}{2 i a} \expr{\frac{f(i a)}{z - i a} - \frac{f(-i a)}{z + i a}} - \frac{f(z)}{a^2 + z^2} = g(z) .
}
By the definition, $g$ is the Stieltjes function, and by the remark following~\eqref{eq:stieltjes}, $g$ is the Laplace transform of a completely monotone function $G(x) = \laplace \gamma(x)$, where $\gamma(ds) = (1 / \pi) (a^2 + s^2)^{-1} m_0(ds)$ is a finite measure on $(0, \infty)$.
\end{proof}

\begin{prop}
\label{prop:cbf:ests}
If $f$ is a complete Bernstein function, then
\begin{enumerate}
\item[(a)] $0 \le z f'(z) \le f(z)$ for $z > 0$;
\item[(b)] $0 \le -z f''(z) \le 2 f'(z)$ for $z > 0$;
\item[(c)] $|f(z)| \le (\sin(\eps/2))^{-1} \, f(|z|)$ for $z \in \C$, $|\Arg z| \le \pi - \eps$, $\eps \in (0, \pi)$;
\item[(d)] $|z f'(z)| \le (\sin(\eps/2))^{-1} \, f(|z|)$ for $z$ as in~(c);
\item[(e)] $|f(z)| \le C(f, \eps) (1 + |z|)$ for $z$ as in~(c).
\end{enumerate}
In~(c)--(e), the holomorphic extension of $f$ is denoted by the same symbol.
\end{prop}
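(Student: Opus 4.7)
The plan is to derive all five parts directly from the integral representation~\eqref{eq:cbf0}. First I would differentiate under the integral (legitimate by dominated convergence using $\int_0^\infty \min(s^{-1},s^{-2})\,m_0(ds)<\infty$) to record
\[
f'(z)=c_2+\frac{1}{\pi}\int_0^\infty \frac{m_0(ds)}{(z+s)^2}, \qquad f''(z)=-\frac{2}{\pi}\int_0^\infty \frac{m_0(ds)}{(z+s)^3}.
\]
Parts (a)--(b) will then reduce to algebra on explicitly nonnegative integrands, while parts (c)--(e) will rest on a single geometric estimate for $|z+s|$.

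For (a) I would use the identity $\tfrac{1}{s(z+s)}-\tfrac{1}{(z+s)^2}=\tfrac{z}{s(z+s)^2}$ to rewrite
\[
f(z)-zf'(z)=c_1+\frac{1}{\pi}\int_0^\infty \frac{z^2}{s(z+s)^2}\,m_0(ds),
\]
which is manifestly nonnegative for $z>0$; the lower bound $zf'(z)\ge 0$ is immediate. For (b) I would similarly obtain $-zf''(z)=\frac{2z}{\pi}\int_0^\infty (z+s)^{-3}\,m_0(ds)\ge 0$, and, via $\tfrac{1}{(z+s)^2}-\tfrac{z}{(z+s)^3}=\tfrac{s}{(z+s)^3}$,
\[
2f'(z)+zf''(z)=2c_2+\frac{2}{\pi}\int_0^\infty \frac{s\,m_0(ds)}{(z+s)^3}\ge 0,
\]
i.e., $-zf''(z)\le 2f'(z)$.

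For (c)--(e) the key lemma I would prove first is a geometric bound: for $z=re^{i\theta}$ with $|\theta|\le \pi-\eps$ and $s>0$,
\[
|z+s|^2=(r+s)^2-4rs\sin^2(\theta/2)\ge (r+s)^2\cos^2(\theta/2)\ge (r+s)^2\sin^2(\eps/2),
\]
using $4rs\le (r+s)^2$ and $|\theta/2|\le \pi/2-\eps/2$; hence $|z+s|\ge (|z|+s)\sin(\eps/2)$. Substituting this bound into~\eqref{eq:cbf0} termwise and using $\sin(\eps/2)\le 1$ to absorb $c_1+c_2|z|$ into $f(|z|)$ yields (c). For (d) the same estimate applied pointwise to $|z/(z+s)^2|\le |z|/((|z|+s)^2\sin^2(\eps/2))$ gives $|zf'(z)|\le (\sin(\eps/2))^{-2}|z|f'(|z|)\le (\sin(\eps/2))^{-2}f(|z|)$, where the last step invokes (a). For (e) I would combine (c) with the fact that (a) forces $(f(r)/r)'\le 0$, so $f(r)/r$ is nonincreasing on $(0,\infty)$; hence $f(r)\le f(1)r$ for $r\ge 1$ and $f(r)\le f(1)$ for $r\le 1$, giving $f(r)\le f(1)(1+r)$, and therefore $|f(z)|\le f(1)(\sin(\eps/2))^{-1}(1+|z|)$.

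The only genuinely complex-analytic ingredient is the geometric estimate on $|z+s|$; once that is in hand, everything else is bookkeeping. One minor point to flag: the natural argument I sketched for (d) yields the constant $(\sin(\eps/2))^{-2}$ rather than the stated $(\sin(\eps/2))^{-1}$, and testing the CBF $f(z)=z/(1+z)$ at $z=-\cos^2\eps+i\cos\eps\sin\eps$ (where $|\Arg z|=\pi-\eps$) gives a ratio $|zf'(z)|/f(|z|)=(1+\cos\eps)/\sin^2\eps\sim 2/\eps^2$ as $\eps\to 0^+$, which exceeds $(\sin(\eps/2))^{-1}\sim 2/\eps$; so the stated exponent in (d) likely needs to be $-2$.
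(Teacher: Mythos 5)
Your proofs of (a), (b), (c), and (e) are correct and match the paper's proof almost verbatim: the paper likewise differentiates under the integral sign in~\eqref{eq:cbf0} and derives (c)--(e) from the geometric estimate $|s+z|\ge (\sin(\eps/2))(s+|z|)$ (the paper arrives at it via $|s+z|^2\ge s^2-2s|z|\cos\eps+|z|^2$ and then shows this exceeds $\tfrac{1-\cos\eps}{2}(s+|z|)^2$, whereas you used $|z+s|^2=(r+s)^2-4rs\sin^2(\theta/2)\ge (r+s)^2\cos^2(\theta/2)$; these are the same computation in different notation). For (e) the paper instead uses the elementary inequality $|z|/(s+|z|)\le(1+|z|)/(s+1)$ inside the integral, but your route via monotonicity of $f(r)/r$ is equally direct.

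Your observation about part (d) is a genuine catch: the inequality as stated, with constant $(\sin(\eps/2))^{-1}$, is false, and your counterexample proves it. The paper's proof of (d) relies on the chain
\[
\abs{\frac{sz}{(s+z)^2}} \le \frac{|z|}{|s+z|} \le (\sin(\eps/2))^{-1}\,\frac{|z|}{s+|z|},
\]
but the first link requires $s\le|s+z|$, which fails whenever $\real z<0$ and $s$ is small (e.g.\ $s=1$, $z=\cos\eps\,e^{i(\pi-\eps)}$ gives $|s+z|=\sin\eps<1=s$). Plugging the same point into $f(z)=z/(1+z)$ --- a bona fide CBF, with $m_0=\pi\delta_1$ --- yields $|zf'(z)|/f(|z|)=1/(1-\cos\eps)=\tfrac12(\sin(\eps/2))^{-2}$, which indeed exceeds $(\sin(\eps/2))^{-1}$ for all small $\eps$. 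Your corrected constant $(\sin(\eps/2))^{-2}$, obtained by applying the geometric estimate to $|z+s|^2$ directly, is the right one (and your example shows it is tight up to a factor $2$). The one place the paper invokes (d), namely Lemma~\ref{lem:power}, does so with $z=i\zeta$ (so $\eps=\pi/2$) and only needs the bound $|zf'(z)|\le 2f(|z|)$, which still follows from the corrected $(\sin(\pi/4))^{-2}=2$; so the error in the stated constant does not propagate.
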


\begin{proof}
All statements follow easily from the representation~\eqref{eq:cbf0} and differentiation under the integral sign. Indeed, (a) reduces to
\formula{
 0 & \le c_2 z + \frac{1}{\pi} \int_0^\infty \frac{s z}{(s + z)^2} \, \frac{m_0(ds)}{s} \le c_1 + c_2 z + \frac{1}{\pi} \int_0^\infty \frac{z}{s + z} \, \frac{m_0(ds)}{s} \, ,
}
which follows from the inequality $s z / (s + z)^2 \le z / (s + z)$ for $s, z > 0$. In a similar manner, (b) follows from $2 s z / (s + z)^3 \le 2 s / (s + z)^2$. To prove (c)--(e), observe that for $z \in \C$ with $\Arg z \in [-\pi + \eps, \pi - \eps]$,
\formula{
 |s + z|^2 & = s^2 + 2 s \real z + |z|^2 \ge s^2 - 2 s |z| \cos(\eps) + |z|^2 .
}
Hence, by a simple calculation,
\formula{
 |s + z|^2 & \ge \frac{1 - \cos(\eps)}{2} \, (s + |z|)^2 = (\sin(\eps/2))^2 (s + |z|)^2 .
}
Therefore, (c) is a consequence of $|z / (s + z)| \le (\sin(\eps/2))^{-1} \, |z| / (s + |z|)$, and~(d) is proved using $|s z / (s + z)^2| \le |z| / |s + z| \le (\sin(\eps/2))^{-1} |z| / (s + |z|)$. Finally, to prove~(e), use~(c) and the inequality $|z| / (s + |z|) \le (1 + |z|) / (s + 1)$, verified easily by a direct calculation.
\end{proof}

For further properties of complete Bernstein functions and related notions, see Chapters~6 and~7 in~\cite{bib:ssv10}, or Chapter~3 in~\cite{bib:j01}.

%
%                            ---------- o ----------
%

\section{Transformation of the Laplace exponent}
\label{sec:cbf}

%
%                            ---------- o ----------
%

Recall that $X_t = B_{Z_t}$ is a subordinate Brownian motion, $\psi(\xi)$ is the Laplace exponent of the subordinator $Z_t$, and $\Psi(\xi) = \psi(\xi^2)$ is the L{\'e}vy-Khintchine exponent of $X_t$. Assumption~\ref{asmp} is in force; that is, $\psi$ is assumed to be a \emph{complete} Bernstein function. In this section we introduce the operation $\psi \mapsto \psi^\dagger$. Although it will be used for complete Bernstein functions only, the definition is quite general.

\begin{defin}
\label{def:dagger}
When $\psi$ is a positive function on $(0, \infty)$, and the integral $\int_0^\infty \min(1, \xi^{-2}) \log \psi(\xi) d\xi$ is finite, then we define
\formula[eq:dagger]{
 \psi^\dagger(\xi) & = \exp\expr{\frac{1}{\pi} \, \int_0^\infty \frac{\log \psi(\xi^2 \zeta^2)}{1 + \zeta^2} \, d\zeta} , && \xi > 0 .
}
\end{defin}

\begin{prop}
For $\psi$ as Definition~\ref{def:dagger},
\formula[eq:dagger1]{
 \psi^\dagger(\xi) & = \exp\expr{\frac{1}{\pi} \, \int_0^\infty \frac{\xi \log \psi(\zeta^2)}{\xi^2 + \zeta^2} \, d\zeta} , && \xi > 0 .
}
\end{prop}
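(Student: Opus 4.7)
The claim is that two formulas for $\psi^\dagger(\xi)$ agree, and the bridge between them is merely a rescaling of the variable of integration. The plan is to transform the integral in the definition~\eqref{eq:dagger} by the substitution $\zeta' = \xi \zeta$ (with $\xi > 0$ fixed).

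Under this substitution, $d\zeta = \xi^{-1} d\zeta'$, the range $(0, \infty)$ is preserved, and $1 + \zeta^2 = 1 + (\zeta'/\xi)^2 = (\xi^2 + \zeta'^2)/\xi^2$. Combining these,
\formula{
 \frac{d\zeta}{1 + \zeta^2} & = \frac{\xi^2}{\xi^2 + \zeta'^2} \cdot \frac{d\zeta'}{\xi} = \frac{\xi \, d\zeta'}{\xi^2 + \zeta'^2} \, .
}
Moreover $\psi(\xi^2 \zeta^2) = \psi(\zeta'^2)$. Substituting into~\eqref{eq:dagger} and renaming $\zeta'$ back to $\zeta$ yields~\eqref{eq:dagger1}.

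The integrability hypothesis $\int_0^\infty \min(1, \xi^{-2}) \log \psi(\xi) d\xi < \infty$ from Definition~\ref{def:dagger} justifies the change of variables (the integrand has the same sign structure before and after, so absolute convergence is preserved), so Fubini/substitution issues do not arise. There is really no obstacle here; the lemma is a one-line rescaling that recasts the integral from Poisson-kernel form on the half-line (in the variable $\zeta$ with respect to the fixed point $1$) into the Poisson-kernel form with kernel $\xi/(\xi^2+\zeta^2)$ centered at the origin, scaled to point $\xi$. The point of stating~\eqref{eq:dagger1} separately is that this second form makes subsequent identifications with Cauchy-Stieltjes-type transforms (and eventually the Wiener-Hopf factorization in the next section) more transparent.
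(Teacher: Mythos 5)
Your proof is correct and uses exactly the same approach as the paper, which simply says ``Substitute $\xi\zeta = s$ in~\eqref{eq:dagger}.'' You have just written out the details of that one-line substitution.
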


\begin{proof}
Substitute $\xi \zeta = s$ in~\eqref{eq:dagger}.
\end{proof}

Note that~\eqref{eq:dagger1} defines a holomorphic extension of $\psi^\dagger$ to the half-plane $\real \xi > 0$. We denote this extension by the same symbol $\psi^\dagger(\xi)$.

\begin{rem}
The definition of $\psi^\dagger$ lies at the very heart of the Wiener-Hopf method. Roughly speaking, if $\psi^\dagger(\xi)$ can be extended continuously to the boundary of the region $\real \xi > 0$, that is, to the imaginary axis, then we have $\psi(\xi^2) = \psi^\dagger(i \xi) \psi^\dagger(-i \xi)$, a Wiener-Hopf factorization of $\psi(\xi^2)$. For complete Bernstein functions, this is formally proved in Lemma~\ref{lem:cbfextension} below. The Wiener-Hopf method is described in Section~\ref{sec:eigenfunctions}.

The function $\psi^\dagger$ plays an important role in fluctuation theory of L{\'e}vy processes. When $X_t$ is a symmetric L{\'e}vy process, then its L{\'e}vy-Khintchine exponent $\Psi$ is symmetric, $\Psi(\xi) = \psi(\xi^2)$ for some nonnegative $\psi$ on $[0, \infty)$. Suppose that $X_t$ is not a compound Poisson process. By Corollary~9.7 in~\cite{bib:f74}, the Laplace exponent $\kappa(z, \xi)$ of the bivariate ascending ladder process satisfies
\formula{
 \kappa(z, \xi) & = (z + \psi)^\dagger(\xi) , && z \ge 0, \, \xi > 0 .
}
This relation was used extensively in~\cite{bib:ksv10, bib:ksv11, bib:kmr11}.\qed
\end{rem}

The fundamental Lemma~\ref{lem:cbfextension} states that if $\psi$ is a CBF, then $\psi^\dagger$ is a CBF. Before we prove it, we establish some simple properties of $\psi^\dagger$.

\begin{prop}
\label{prop:dagger:prop}
Whenever both sides of the following identities make sense, we have:
\begin{enumerate}
\item[(a)] $(1 / \psi)^\dagger = 1 / \psi^\dagger$, $(\psi^\alpha)^\dagger = (\psi^\dagger)^\alpha$ ($\alpha \in \R$) and $(\psi_1 \psi_2)^\dagger = \psi_1^\dagger \psi_2^\dagger$;
\item[(b)] $(C^2 \psi)^\dagger = C \psi^\dagger$ for $C > 0$;
\item[(c)] when $\psi(\xi) = \xi$, then $\psi^\dagger(\xi) = \xi$;
\item[(d)] if appropriate limits of $\psi$ exist, then the corresponding limits of $\psi^\dagger$ exist, and $\lim_{\xi \to 0^+} \psi^\dagger(\xi) = (\lim_{\xi \to 0^+} \psi(\xi))^{1/2}$, $\lim_{\xi \to \infty} \psi^\dagger(\xi) = (\lim_{\xi \to \infty} \psi(\xi))^{1/2}$.
\end{enumerate}
\end{prop}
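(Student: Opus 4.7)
The plan is to read off parts (a)--(c) directly from the defining formula~\eqref{eq:dagger} using elementary properties of the logarithm together with two standard integral evaluations, and to handle (d) by passing the limit inside the integral via monotone/dominated convergence.

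For (a), I would apply the identities $\log(1/\psi) = -\log\psi$, $\log\psi^\alpha = \alpha\log\psi$ and $\log(\psi_1\psi_2) = \log\psi_1 + \log\psi_2$ under the integral in~\eqref{eq:dagger}, and use linearity of the exponential-of-an-integral to separate the factors, giving each of the three identities in a single line. For (b), $\log(C^2\psi) = 2\log C + \log\psi$ together with the classical evaluation $\int_0^\infty d\zeta/(1+\zeta^2) = \pi/2$ yields
\[
(C^2\psi)^\dagger(\xi) = \exp\!\biggl(\frac{2\log C}{\pi}\int_0^\infty\frac{d\zeta}{1+\zeta^2}\biggr)\psi^\dagger(\xi) = e^{\log C}\,\psi^\dagger(\xi) = C\,\psi^\dagger(\xi).
\]
For (c), $\log(\xi^2\zeta^2) = 2\log\xi + 2\log\zeta$ combined with the previous evaluation and the classical identity
\[
\int_0^\infty \frac{\log\zeta}{1+\zeta^2}\,d\zeta = 0,
\]
(obtained by splitting at $\zeta = 1$ and substituting $\zeta\mapsto 1/\zeta$ in one half) reduces $\psi^\dagger(\xi)$ to $\exp(\log\xi) = \xi$.

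The substantive content is (d). Since $\psi$ is a CBF (Assumption~\ref{asmp}), $\psi$ is nondecreasing on $(0,\infty)$, so for each fixed $\zeta > 0$ the map $\xi\mapsto \log\psi(\xi^2\zeta^2)$ is monotone, with $\log\psi(\xi^2\zeta^2)\downarrow \log\psi(0^+)$ as $\xi\to 0^+$ and $\log\psi(\xi^2\zeta^2)\uparrow\log\psi(\infty)$ as $\xi\to\infty$. Assuming the limit $\psi(0^+)$ exists, I would apply monotone convergence to $\log\psi(\xi^2\zeta^2) - \log\psi(0^+)$ when $\psi(0^+)\in(0,\infty)$, using the finiteness assumption $\int_0^\infty \min(1,\xi^{-2})\log\psi(\xi)\,d\xi < \infty$ of Definition~\ref{def:dagger} to supply an integrable reference value at some $\xi_0 > 0$; when $\psi(0^+) = 0$, I would apply monotone convergence to $-\log\psi(\xi^2\zeta^2)$ on the region where this quantity is eventually nonnegative. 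Either way one obtains
\[
\frac{1}{\pi}\int_0^\infty \frac{\log\psi(\xi^2\zeta^2)}{1+\zeta^2}\,d\zeta \; \longrightarrow \; \tfrac{1}{2}\log\psi(0^+) \qquad (\xi\to 0^+),
\]
so $\psi^\dagger(\xi)\to\sqrt{\psi(0^+)}$. The limit at $\xi\to\infty$ follows symmetrically; alternatively, one can apply (a) with $1/\psi$ (or the substitution $\zeta\mapsto 1/\zeta$ inside the integral) to reduce the behaviour at $\infty$ to the behaviour at $0$.

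The main — and only — obstacle is bookkeeping in (d): separately handling the cases $\psi(0^+)\in\{0\}$ versus $\psi(0^+)\in(0,\infty)$ (and analogously at infinity), and in each splitting the $\zeta$-integration into a compact piece and a tail so that the monotone convergence theorem can be applied with a majorant drawn from the integrability hypothesis. Beyond this, everything is formal algebra.
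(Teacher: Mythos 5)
Your parts (a)--(c) coincide with the paper's proof: (a) is immediate from the defining integral, (b) uses $\int_0^\infty d\zeta/(1+\zeta^2)=\pi/2$, and (c) uses $\int_0^\infty (1+\zeta^2)^{-1}\log\zeta\,d\zeta=0$, exactly as in the paper. For (d) you take a slightly different route. The paper establishes the finite-positive-limit case via uniform integrability (a Vitali-type argument, explicitly leaning on $\psi$ being a CBF) and then dispatches the degenerate limits $0$ and $\infty$ with Fatou's lemma; you instead exploit the monotonicity of a CBF to run a single monotone-convergence argument, subtracting the $\xi_0$-integrand (guaranteed integrable by Definition~\ref{def:dagger}) as the reference. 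This handles all three subcases uniformly and is, if anything, tidier than the paper's sketch. The argument is correct — for $\xi_n\downarrow 0$ the differences $\log\psi(\xi_0^2\zeta^2)-\log\psi(\xi_n^2\zeta^2)$ are nonnegative and increasing, so monotone convergence applies and yields the limit (including the value $-\infty$, giving $\psi^\dagger\to 0$ when $\psi(0^+)=0$); the $\xi\to\infty$ case is symmetric. One small inaccuracy: the aside that ``one can apply (a) with $1/\psi$ to reduce the behaviour at $\infty$ to the behaviour at $0$'' does not work as stated, since $(1/\psi)^\dagger=1/\psi^\dagger$ relates values at the \emph{same} $\xi$, not $\xi\leftrightarrow 1/\xi$; the correct reduction is the $\zeta\mapsto 1/\zeta$ substitution in~\eqref{eq:dagger1}, which shows $\psi^\dagger(\xi)=\tilde\psi^\dagger(1/\xi)$ with $\tilde\psi(\xi)=\psi(1/\xi)$. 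Since this was offered only as an alternative to your symmetric argument, it does not affect the proof.
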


\begin{proof}
Properties~(a) follow immediately from formula~\eqref{eq:dagger1}. In~(b), one uses also $\int_0^\infty \xi / (\xi^2 + \zeta^2) d\zeta = \pi/2$. For $\psi(\xi) = \xi$, by~\eqref{eq:dagger} we have
\formula{
 \psi^\dagger(\xi) & = \exp\expr{\frac{1}{\pi} \int_0^\infty \frac{2 \log \xi + 2 \log \zeta}{1 + \zeta^2} \, d\zeta} \\
 & = \exp\expr{\log \xi + \frac{2}{\pi} \int_0^\infty \frac{\log \zeta}{1 + \zeta^2} \, d\zeta} = \xi ;
}
here we used the identity $\int_0^\infty (1 + s^2)^{-1} \log s \, ds = 0$, proved easily by a substitution $r = 1/s$. Finally, to show~(d), assume that a finite, positive limit $\psi(0) = \lim_{\xi \to 0^+} \psi(\xi)$ exists. It can be proved that the functions $\log \psi(\xi^2 \zeta^2) / (1 + \zeta^2)$ of $\zeta$ are uniformly integrable as $\xi \to 0^+$ (this is clear when $\psi$ is a CBF; we omit the details in the general case). It follows that
\formula{
 \lim_{\xi \to 0^+} \int_0^\infty \frac{\log \psi(\xi^2 \zeta^2)}{1 + \zeta^2} \, d\zeta = \int_0^\infty \frac{\log \psi(0)}{1 + \zeta^2} \, d\zeta = \frac{\pi \log \psi(0)}{2} \, ,
}
and hence $\lim_{\xi \to 0^+} \psi^\dagger(\xi) = \sqrt{\psi(0)}$. When the limit of $\psi$ is $0$ or $\infty$, one uses in a similar way Fatou's lemma; we omit the details. Limits at $\infty$ are dealt with in a similar way.
\end{proof}

\begin{prop}
\label{prop:explogbound}
Let $\psi$ be as in Definition~\ref{def:dagger}. If $C_1, C_2 > 0$, $\alpha \in \R$ and $\psi(\xi) \le  (C_1^2 + C_2^2 \xi)^\alpha$ for all $\xi > 0$, then, with the same constants, $|\psi^\dagger(\xi)| \le |C_1 + C_2 \xi|^\alpha$, $\real \xi > 0$. In a similar manner, if $\psi(\xi) \ge (C_1^2 + C_2^2 \xi)^\alpha$ for $\xi > 0$ for some $C_1, C_2 > 0$, $\alpha \in \R$, then $|\psi^\dagger(\xi)| \ge |C_1 + C_2 \xi|^\alpha$, $\real \xi > 0$.
\end{prop}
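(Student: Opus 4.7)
The plan is to reduce the statement to an explicit calculation on a single reference function, followed by a monotonicity argument. First I would compute the $\dagger$-transform of the linear complete Bernstein function $f(\xi) = C_1^2 + C_2^2 \xi$. Writing $\log f(\xi^2 \zeta^2) = 2 \log C_1 + \log(1 + (C_2 \xi \zeta / C_1)^2)$ and plugging into~\eqref{eq:dagger}, the evaluation reduces to the classical identity
\[
 \int_0^\infty \frac{\log(1 + a^2 \zeta^2)}{1 + \zeta^2} \, d\zeta \;=\; \pi \log(1 + a), \qquad a \ge 0,
\]
which can be verified by differentiating in $a$ and doing partial fractions. The outcome is $f^\dagger(\xi) = C_1 + C_2 \xi$, first for $\xi > 0$ and then, by analytic continuation, for all $\xi$ with $\real \xi > 0$. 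Applying the power rule from Proposition~\ref{prop:dagger:prop}(a) to $g(\xi) = f(\xi)^\alpha = (C_1^2 + C_2^2 \xi)^\alpha$ yields $g^\dagger(\xi) = (C_1 + C_2 \xi)^\alpha$, and hence $|g^\dagger(\xi)| = |C_1 + C_2 \xi|^\alpha$.

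For the monotonicity step, I would take real parts in~\eqref{eq:dagger1}, obtaining
\[
 \log |\psi^\dagger(\xi)| \;=\; \frac{1}{\pi} \int_0^\infty \real\!\left(\frac{\xi}{\xi^2 + \zeta^2}\right) \log \psi(\zeta^2) \, d\zeta, \qquad \real \xi > 0.
\]
The crucial observation is that the weight is pointwise positive: for $\xi = u + i v$ with $u > 0$ and $\zeta > 0$, a direct computation gives
\[
 \real \frac{\xi}{\xi^2 + \zeta^2} \;=\; \frac{u (|\xi|^2 + \zeta^2)}{|\xi^2 + \zeta^2|^2} \;>\; 0.
\]
Therefore, the pointwise inequality $\log \psi(\zeta^2) \le \log g(\zeta^2)$ can be integrated against this positive kernel to give $\log |\psi^\dagger(\xi)| \le \log |g^\dagger(\xi)|$, which is exactly the desired bound $|\psi^\dagger(\xi)| \le |C_1 + C_2 \xi|^\alpha$. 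The reverse inequality is obtained by swapping the roles of $\psi$ and $g$ in the same argument.

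The only nontrivial step is the explicit evaluation $f^\dagger(\xi) = C_1 + C_2 \xi$ for the linear CBF $f$; once this is done, the monotonicity step is immediate from positivity of the Poisson-type kernel. A minor bookkeeping item is verifying that $g = f^\alpha$ satisfies the integrability hypothesis of Definition~\ref{def:dagger}, which is immediate since $\log g(\xi) = \alpha \log(C_1^2 + C_2^2 \xi)$ grows only logarithmically at infinity.
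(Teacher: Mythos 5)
Your proof is correct, and it reaches the bound by a genuinely different technical route than the paper. The paper symmetrizes the integral in~\eqref{eq:dagger1} to an integral over $\R$ against the Poisson kernel $t/(t^2+(\zeta-s)^2)$ of the upper half-plane, and then recognizes $\log\psi(\zeta^2)\le 2\alpha\real\log(C_1-C_2 i\zeta)$ as the boundary trace of a harmonic function whose Poisson integral can be written down, yielding $|\psi^\dagger(\xi)|^2 \le |C_1+C_2\xi|^{2\alpha}$. You instead (i) compute $f^\dagger$ for the reference CBF $f(\xi)=C_1^2+C_2^2\xi$ via the classical integral $\int_0^\infty\frac{\log(1+a^2\zeta^2)}{1+\zeta^2}\,d\zeta=\pi\log(1+a)$, obtaining $f^\dagger(\xi)=C_1+C_2\xi$ on $(0,\infty)$ and hence for $\real\xi>0$ by analyticity; (ii) propagate this to $g=f^\alpha$ by the power rule of Proposition~\ref{prop:dagger:prop}(a); (iii) prove monotonicity of $\psi\mapsto|\psi^\dagger(\xi)|$ directly from the pointwise positivity $\real\bigl(\xi/(\xi^2+\zeta^2)\bigr) = u(|\xi|^2+\zeta^2)/|\xi^2+\zeta^2|^2>0$. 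The underlying positivity is the same as in the paper (indeed the paper's manipulation shows that kernel is a sum of two Poisson kernels at $\pm s$), but your version is arguably more modular: it isolates a clean monotonicity principle for $\log|\psi^\dagger(\xi)|$ together with an explicit $\dagger$-image of the affine CBF, instead of invoking the Poisson representation of a harmonic function as a single step. Both routes require essentially the same amount of explicit computation; yours trades the Poisson formula for the elementary logarithmic integral identity. Your remarks on the integrability hypothesis for $g$ and on the role of analytic continuation are exactly the right bookkeeping, and the proof is complete as stated.
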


\begin{proof}
For $\xi = t + i s$ ($t > 0$, $s \in \R$), we have
\formula{
 |\psi^\dagger(\xi)|^2 & = \exp \expr{\frac{1}{\pi} \int_0^\infty \real \frac{2 \xi}{\xi^2 + \zeta^2} \, \log \psi(\zeta^2) d\zeta} \\
 & = \exp \expr{\frac{1}{\pi} \int_0^\infty \real \expr{\frac{1}{\xi - i \zeta} + \frac{1}{\xi + i \zeta}} \log \psi(\zeta^2) d\zeta} \\
 & = \exp \expr{\frac{1}{\pi} \int_0^\infty \expr{\frac{t}{t^2 + (\zeta - s)^2} + \frac{t}{t^2 + (\zeta + s)^2}} \log \psi(\zeta^2) d\zeta} \\
 & = \exp \expr{\frac{1}{\pi} \int_{-\infty}^\infty \frac{t}{t^2 + (\zeta - s)^2} \, \log \psi(\zeta^2) d\zeta} .
}
Suppose that $\psi(\zeta) \le  (C_1^2 + C_2^2 \zeta)^\alpha$ for $\zeta > 0$. Then
\formula{
 \log \psi(\zeta^2) & \le \alpha \log (C_1^2 + C_2^2 \zeta^2) = 2 \alpha \real (\log (C_1 - C_2 i \zeta)) ,
}
and the right-hand side extends to a bounded below harmonic function in the upper half-plane $\imag \zeta > 0$ (with vanishing linear term in the representation similar to~\eqref{eq:poisson:harm}). By the Poisson formula,
\formula{
 \frac{1}{\pi} \int_{-\infty}^\infty \frac{t}{t^2 + (\zeta - s)^2} \, \real (\log (C_1 - C_2 i \zeta)) d\zeta & = \real (\log (C_1 - C_2 i (s + i t))) .
}
Hence,
\formula{
 |\psi^\dagger(\xi)|^2 & \le \exp \expr{2 \alpha \real (\log (C_1 - C_2 i (s + i t)))} \\
 & = ((C_1 + C_2 t)^2 + C_2^2 s^2)^\alpha = |C_1 + C_2 \xi|^{2 \alpha}
}
This proves the first statement. The other one follows by reversing the inequalities in the above argument.
\end{proof}

\begin{rem}
In Proposition~2.1 in~\cite{bib:kmr11} it is proved that if $\psi(\xi)$ and $\xi / \psi(\xi)$ are increasing, then $\psi(\xi) / 2 \le \psi^\dagger(\xi) \le 2 \psi(\xi)$. For complete Bernstein functions, this was independently proved in Proposition~3.7 in~\cite{bib:ksv11}.\qed
\end{rem}

Recall that under Assumption~\ref{asmp}, $\psi$ is a complete Bernstein function. Hence, $\psi$ extends to a holomorphic function in $\C \setminus (-\infty, 0]$; we use the same symbol $\psi$ for this extension. First, we define an auxiliary function
\formula[eq:psistar]{
 f(\xi) & = \exp\expr{\frac{1}{\pi} \, \int_0^\infty \frac{\log \psi(\xi \zeta^2)}{1 + \zeta^2} \, d\zeta} , && \xi \in \C \setminus (-\infty, 0],
}
so that $\psi^\dagger(\xi) = f(\xi^2)$. Note that $(f(\xi))^2$ is an (integral-type) weighted geometric mean of the family of complete Bernstein functions $\psi(\xi \zeta^2)$ ($\zeta > 0$), so that $(f(\xi))^2$ is a complete Bernstein function. This is formally proved in the following simple result.

\begin{lem}
\label{lem:cbfmean}
If $\psi$ is a complete Bernstein function, then $f(\xi)$ and $(f(\xi))^2$ are complete Bernstein functions.
\end{lem}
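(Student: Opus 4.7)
The plan is to verify the geometric characterization of complete Bernstein functions from Corollary~\ref{corr:cbf}(c): a nonnegative function on $(0, \infty)$ is a CBF iff it is either constant or extends holomorphically to $\C \setminus (-\infty, 0]$ with image of the open upper half-plane contained in the open upper half-plane (and likewise for the lower). If $\psi \equiv c > 0$, then $f \equiv \sqrt{c}$ and $f^2 \equiv c$ are trivially CBFs, so we may assume $\psi$ is non-constant. Positivity of $f$ on $(0, \infty)$ is immediate, since the integrand in~\eqref{eq:psistar} is real there. For the holomorphic extension, the open mapping theorem applied to the non-constant CBF $\psi$ (which preserves $\C \setminus (-\infty, 0]$ by Corollary~\ref{corr:cbf}(c)) ensures that $\psi$ does not vanish and does not take values in $(-\infty, 0]$ on $\C \setminus (-\infty, 0]$, so the principal branch of $\log \psi$ is holomorphic there. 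The bounds of Proposition~\ref{prop:cbf:ests}(c),(e), together with positivity and continuity of $\psi$ on $(0, \infty)$, furnish a locally uniform integrable majorant for $|\log \psi(\xi \zeta^2)| / (1 + \zeta^2)$, so the exponent in~\eqref{eq:psistar} is holomorphic on $\C \setminus (-\infty, 0]$, and therefore both $f$ and $f^2$ extend holomorphically.

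The heart of the argument is the bound
\formula{
 0 < \Arg \psi(w) \le \Arg w , && w \in \C \setminus (-\infty, 0], \ \Arg w \in (0, \pi) ,
}
valid for any non-constant CBF $\psi$. I derive this from the representation~\eqref{eq:cbf0}: each summand $c_1$, $c_2 w$, and $w/(w + s)$ (for $s > 0$) lies in the closed cone $\set{z \in \C : \Arg z \in [0, \Arg w]}$ --- for $w/(w+s) = 1/(1 + s/w)$, note that $s/w$ has argument $-\Arg w$, so $1 + s/w$ has argument in $[-\Arg w, 0]$, whence $w/(w+s)$ has argument in $[0, \Arg w]$. This cone is convex because $\Arg w \le \pi$, so the integral against $m_0(ds)/s$ also stays in it. Strict positivity of $\Arg \psi(w)$ follows from non-constancy of $\psi$ together with the open mapping theorem. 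Applying this with $w = \xi \zeta^2$ (which has the same argument as $\xi$) and integrating $\imag \log \psi(\xi\zeta^2) = \Arg \psi(\xi\zeta^2)$ against $(1 + \zeta^2)^{-1} d\zeta / \pi$ (of total mass $1/2$) yields
\formula{
 0 < \Arg f(\xi) = \frac{1}{\pi} \int_0^\infty \frac{\Arg \psi(\xi \zeta^2)}{1 + \zeta^2} \, d\zeta \le \frac{\Arg \xi}{2}
}
for $\Arg \xi \in (0, \pi)$. Thus $f(\xi)$ lies in the open upper half-plane, and doubling gives $\Arg f(\xi)^2 \in (0, \Arg \xi] \sub (0, \pi)$, so $f(\xi)^2$ does too. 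The lower half-plane is handled by complex conjugation, completing the verification of Corollary~\ref{corr:cbf}(c) for both $f$ and $f^2$.

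The only genuine obstacle is establishing the argument inequality $\Arg \psi(w) \le \Arg w$; once that is in hand, the rest reduces to integration against the explicit positive kernel $(1 + \zeta^2)^{-1} d\zeta / \pi$ of total mass $1/2$. This factor of $1/2$ is precisely what allows \emph{both} $f$ and $f^2$ to preserve the upper half-plane at once; for $f^k$ one would need $k \Arg \xi / 2 < \pi$, which already fails for $k \ge 3$ when $\Arg \xi$ is close to $\pi$, so the construction is sharp with respect to higher powers.
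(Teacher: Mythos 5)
Your proof is correct, and the central mechanism — that $\Arg f(\xi) = \frac{1}{\pi}\int_0^\infty \frac{\Arg\psi(\xi\zeta^2)}{1+\zeta^2}\,d\zeta$, computed against a positive kernel of total mass $1/2$, must lie in $(0,\pi/2)$, whence both $f$ and $f^2$ preserve the open upper half-plane and Corollary~\ref{corr:cbf}(c) applies — is exactly what the paper does. Where you diverge is in how you control $\Arg\psi(\xi\zeta^2)$: you establish the sharper angle-decreasing estimate $\Arg\psi(w) \le \Arg w$ directly from the representation~\eqref{eq:cbf0} via a convexity-of-cones argument, whereas the paper simply invokes the crude bound $\Arg\psi(\xi\zeta^2) \in (0,\pi)$, which is already contained in Corollary~\ref{corr:cbf}(c) (a non-constant CBF maps the open upper half-plane into itself). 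For the purposes of this lemma the crude bound is all that is needed: integrating any function with values strictly in $(0,\pi)$ against the mass-$1/2$ kernel already lands in $(0,\pi/2)$. Your sharper estimate $\Arg f(\xi) \le \Arg\xi/2$ is a genuine refinement of the Löwner/Pick type, but it is not used anywhere in the conclusion, and your own closing remark about $f^k$ already shows that neither version of the bound extends the result past $k=2$. So the extra work on $\Arg\psi$ is correct but inessential here; the paper's shortcut is worth absorbing, since $\Arg\psi(w)\in(0,\pi)$ is precisely the defining property of a non-constant CBF and needs no re-derivation from~\eqref{eq:cbf0}.
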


\begin{proof}
Clearly, $f(\xi) \ge 0$ for $\xi > 0$. Furthermore,
\formula{
 \Arg f(\xi) & = \frac{1}{\pi} \int_0^\infty \frac{\imag \log \psi(\xi \zeta^2)}{1 + \zeta^2} \, d\zeta = \frac{1}{\pi} \int_0^\infty \frac{\Arg \psi(\xi \zeta^2)}{1 + \zeta^2} \, d\zeta .
}
When $\imag \xi > 0$, we have $\Arg \psi(\xi \zeta^2) \in (0, \pi)$. It follows that $\Arg f(\xi) \in (0, \pi/2)$, that is, $\imag f(\xi) > 0$ and $\imag (f(\xi))^2 > 0$. Finally, $f(\bar{\xi}) = \overline{f(\xi)}$, so $\imag f(\xi) < 0$ and $\imag (f(\xi))^2 < 0$ when $\imag \xi < 0$. The result follows by Corollary~\ref{corr:cbf}(c).
\end{proof}

Below we prove that $\psi^\dagger(\xi) = f(\xi^2)$ is a complete Bernstein function; this is stronger than the assertion of Lemma~\ref{lem:cbfmean}, see Proposition~\ref{prop:cbf:prop}(b).

\begin{lem}
\label{lem:cbfextension}
If $\psi$ is a complete Bernstein function, then $\psi^\dagger$ is a complete Bernstein function. The holomorphic extension of $\psi^\dagger$ (denoted by the same symbol) satisfies
\formula[eq:duality]{
 \psi^\dagger(\xi) \psi^\dagger(-\xi) & = \psi(-\xi^2) , && \xi \in \C \setminus \R .
}
\end{lem}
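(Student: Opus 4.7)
The plan is to build on Lemma~\ref{lem:cbfmean} and establish both statements via a Wiener-Hopf factorization argument. The starting observation is that the substitution $\zeta = \xi \eta$ in~\eqref{eq:dagger1} rewrites $\psi^\dagger(\xi) = f(\xi^2)$ for $\real \xi > 0$, where $f$ is the CBF from Lemma~\ref{lem:cbfmean}. In particular, $f$ extends holomorphically to $\C \setminus (-\infty, 0]$, never vanishes (being the exponential of a finite integral), and admits boundary values $f^\pm(x) = \lim_{\eps \to 0^+} f(x \pm i\eps)$ for $x < 0$, with $f^-(x) = \overline{f^+(x)}$ by Schwarz reflection. This already gives a holomorphic extension of $\psi^\dagger$ to the right half plane.

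The crux is the \emph{Wiener-Hopf identity}
\[
 |f^+(-t^2)|^2 = \psi(t^2), \qquad t > 0 .
\]
I would prove this by a double computation. On one hand, $u(\xi) := \log|\psi(-\xi^2)|$ is harmonic on the upper half plane (since $-\xi^2 \in \C \setminus (-\infty, 0]$ there, so $\psi(-\xi^2)$ is nonvanishing and holomorphic) with boundary values $\log|\psi^+(-s^2)|$ on $\R$. The bound $\psi(z) \leq C(1+|z|)$ from Proposition~\ref{prop:cbf:ests}(e) yields $u(\xi) = O(\log|\xi|)$ at infinity, which rules out an additional linear term in Poisson's representation. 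Evaluating at $\xi = it$ gives $\log\psi(t^2) = \tfrac{t}{\pi}\int_{-\infty}^{\infty}\log|\psi^+(-s^2)|/(s^2+t^2)\,ds$. On the other hand, taking the real part of $\log f(-t^2 + i\eps)$ using the integral definition of $f$, passing to the limit $\eps \to 0^+$, and substituting $s = t\zeta$ produces exactly twice the same integral. The identity then enables the extension of $\psi^\dagger$ to $\C \setminus (-\infty, 0]$ by
\[
 \psi^\dagger(\xi) = \begin{cases} f(\xi^2) & \text{if } \real \xi \geq 0, \\ \psi(-\xi^2)/f(\xi^2) & \text{if } \real \xi \leq 0, \ \xi \notin (-\infty, 0] . \end{cases}
\]
Both branches are holomorphic on their respective domains, and they agree on the imaginary axis: at $\xi = it$ the right branch gives $f^+(-t^2)$, while the left branch gives $\psi(t^2)/\overline{f^+(-t^2)}$, which equals $f^+(-t^2)$ by the Wiener-Hopf identity.

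To verify the CBF property via Corollary~\ref{corr:cbf}(c), I use the standard bound $\Arg g(z) \in [0, \Arg z]$ for any CBF $g$ and $z$ with $\Arg z \in (0, \pi)$, an immediate consequence of Corollary~\ref{corr:cbf}(b) applied term by term. For $\xi$ in the upper right quadrant one has $\Arg \psi^\dagger(\xi) = \Arg f(\xi^2) \in [0, 2\Arg\xi] \subset [0, \pi]$. For $\xi$ in the upper left quadrant, $-\xi^2$ lies in the upper half plane with $\Arg(-\xi^2) = 2\Arg\xi - \pi$, while $\Arg f(\xi^2) = -\Arg f(\overline{\xi^2}) \in [2\Arg\xi - 2\pi, 0]$ by Schwarz reflection; summing these bounds gives $\Arg \psi^\dagger(\xi) \in [0, \pi]$. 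Together with $\psi^\dagger(\xi) > 0$ on $(0, \infty)$, this confirms the CBF property. The duality~\eqref{eq:duality} is built into the construction: for $\real \xi > 0$, $\psi^\dagger(\xi)\psi^\dagger(-\xi) = f(\xi^2)\cdot \psi(-\xi^2)/f((-\xi)^2) = \psi(-\xi^2)$, and the case $\real \xi < 0$ is symmetric. The \emph{main obstacle} is the Wiener-Hopf identity itself, which requires careful verification of the growth conditions and integrability needed to justify Poisson's representation and the passage to boundary values in the integral defining $\log f$.
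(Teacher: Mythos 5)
Your approach is genuinely different from the paper's, and for the most part it works. The paper derives the holomorphic extension of $\psi^\dagger$ to $\C \setminus (-\infty, 0]$ by a contour rotation in the integral~\eqref{eq:dagger}, obtaining the formula~\eqref{eq:dagger2} directly, and then obtains the relation $\psi^\dagger(\xi) = \psi(-\xi^2)/f(\xi^2)$ in the second quadrant from a residue at $\zeta = i$. You instead prove the boundary identity $|f^+(-t^2)|^2 = \psi(t^2)$ first, and patch the two branches along the imaginary axis. Your verification of the CBF property via the cone property $\Arg g(z) \in [0, \Arg z]$ for CBF $g$ is a nice simplification of the paper's bounded-harmonic-function argument; it is correct and cleanly replaces the Poisson-representation step used in the paper for that part.

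The genuine gap, which you partly flag yourself, is the reliance on pointwise boundary values of $f$ and $\psi$ along the branch cut. For a general CBF $\psi$, the limits $\psi^+(x) = \lim_{\eps \to 0^+}\psi(x + i\eps)$ exist only a.e.\ on $(-\infty,0)$, not pointwise (this is exactly why Theorem~\ref{th:eigenfunctions} imposes an extra hypothesis on $\psi^+$ for the formula~\eqref{eq:gamma0}); the same applies to $f^+$. Consequently, the identity $|f^+(-t^2)|^2 = \psi(t^2)$ is an a.e.\ statement, and gluing the two holomorphic branches along the imaginary axis via the reflection principle in its classical form requires continuous, everywhere-defined boundary values. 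Your plan can probably be rescued with an $L^1_{\mathrm{loc}}$ version of Morera/Painlev\'e together with the fact that $\Arg\psi(-\xi^2)$ and $\Arg f(\xi^2)$ are bounded harmonic, but as written these points are not addressed, and the Poisson-representation step for $\log|\psi(-\xi^2)|$ also needs care since that function is unbounded both from above near $\infty$ and from below near $0$, so a naive growth estimate alone does not exclude a linear term. The paper's contour-integration route avoids all of these delicacies because the deformation of the $\zeta$-contour never touches the cut: everything takes place inside the open domain of holomorphicity.
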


\begin{rem}
\label{rem:wh:fact}
The first statement of the above lemma was proved independently in Proposition~2.4 in~\cite{bib:ksv10} using Theorem~6.10 in~\cite{bib:ssv10}. After the preliminary version of the article was made available, Jacek Ma{\l}ecki pointed out to the author that the other statement of the lemma, formula~\eqref{eq:duality}, at least for purely imaginary $\xi$, can be deduced from the Wiener-Hopf factorization of the L{\'e}vy-Khintchine exponent in fluctuation theory, see formula~(VI.4) in~\cite{bib:b98}. The novelty of Lemma~\ref{lem:cbfextension} lies in the combination of the two parts, which is one of the key steps in the derivation of the explicit formula for the eigenfunctions $F_\lambda$ in Theorem~\ref{th:eigenfunctions}.\qed
\end{rem}

\begin{figure}
\centering
\begin{tabular}{ccc}
\def\svgwidth{5.40cm}\small
%% Creator: Inkscape inkscape 0.48.1, www.inkscape.org
%% PDF/EPS/PS + LaTeX output extension by Johan Engelen, 2010
%% Accompanies image file '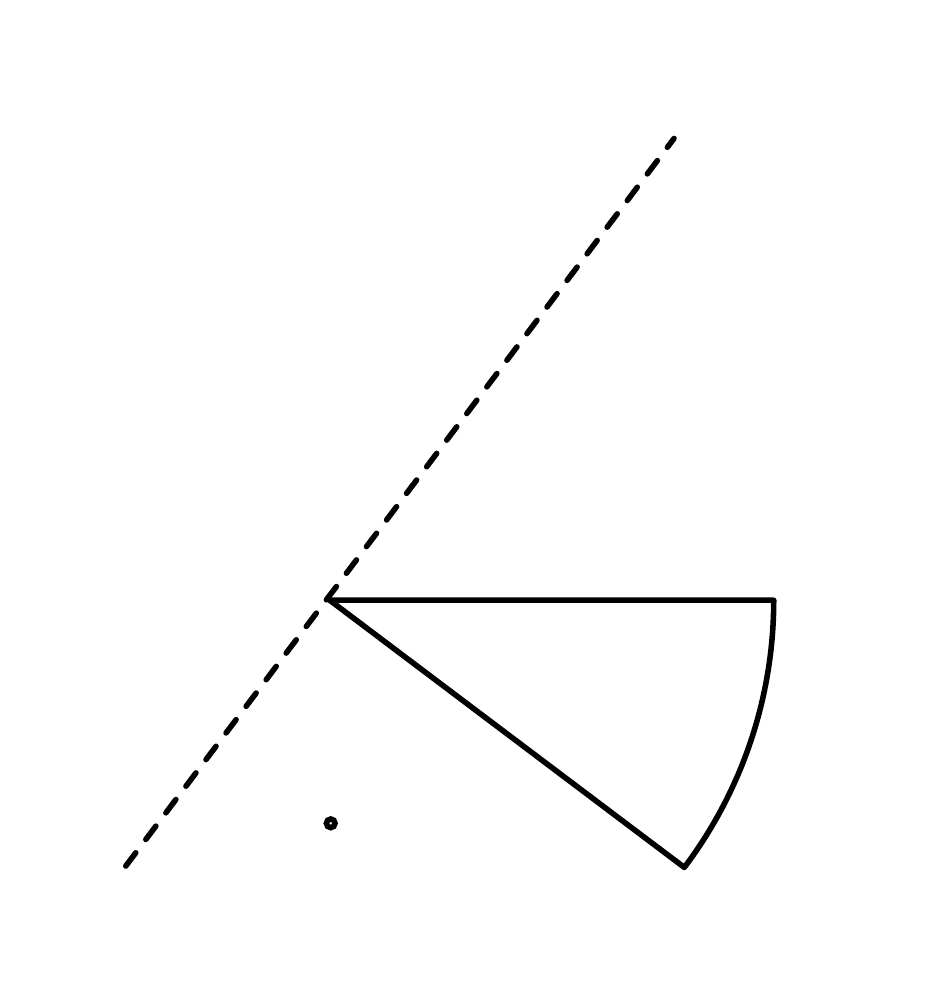' (pdf, eps, ps)
%%
%% To include the image in your LaTeX document, write
%%   \input{<filename>.pdf_tex}
%%  instead of
%%   \includegraphics{<filename>.pdf}
%% To scale the image, write
%%   \def\svgwidth{<desired width>}
%%   \input{<filename>.pdf_tex}
%%  instead of
%%   \includegraphics[width=<desired width>]{<filename>.pdf}
%%
%% Images with a different path to the parent latex file can
%% be accessed with the `import' package (which may need to be
%% installed) using
%%   \usepackage{import}
%% in the preamble, and then including the image with
%%   \import{<path to file>}{<filename>.pdf_tex}
%% Alternatively, one can specify
%%   \graphicspath{{<path to file>/}}
%% 
%% For more information, please see info/svg-inkscape on CTAN:
%%   http://tug.ctan.org/tex-archive/info/svg-inkscape

\begingroup
  \makeatletter
  \providecommand\color[2][]{%
    \errmessage{(Inkscape) Color is used for the text in Inkscape, but the package 'color.sty' is not loaded}
    \renewcommand\color[2][]{}%
  }
  \providecommand\transparent[1]{%
    \errmessage{(Inkscape) Transparency is used (non-zero) for the text in Inkscape, but the package 'transparent.sty' is not loaded}
    \renewcommand\transparent[1]{}%
  }
  \providecommand\rotatebox[2]{#2}
  \ifx\svgwidth\undefined
    \setlength{\unitlength}{270.85635986pt}
  \else
    \setlength{\unitlength}{\svgwidth}
  \fi
  \global\let\svgwidth\undefined
  \makeatother
  \begin{picture}(1,1.0675136)%
    \put(0,0){\includegraphics[width=\unitlength]{contour2.pdf}}%
    \put(0.29742268,0.42240919){\color[rgb]{0,0,0}\makebox(0,0)[lb]{\smash{$0$}}}%
    \put(0.6542081,0.06303393){\color[rgb]{0,0,0}\makebox(0,0)[lb]{\smash{$r/\sqrt{\xi}$}}}%
    \put(0.73789728,0.46891429){\color[rgb]{0,0,0}\makebox(0,0)[lb]{\smash{$r/\sqrt{|\xi|}$}}}%
    \put(0.29836725,0.1298281){\color[rgb]{0,0,0}\makebox(0,0)[lb]{\smash{$-i$}}}%
  \end{picture}%
\endgroup
&
\def\svgwidth{6.38cm}\small
%% Creator: Inkscape inkscape 0.48.1, www.inkscape.org
%% PDF/EPS/PS + LaTeX output extension by Johan Engelen, 2010
%% Accompanies image file '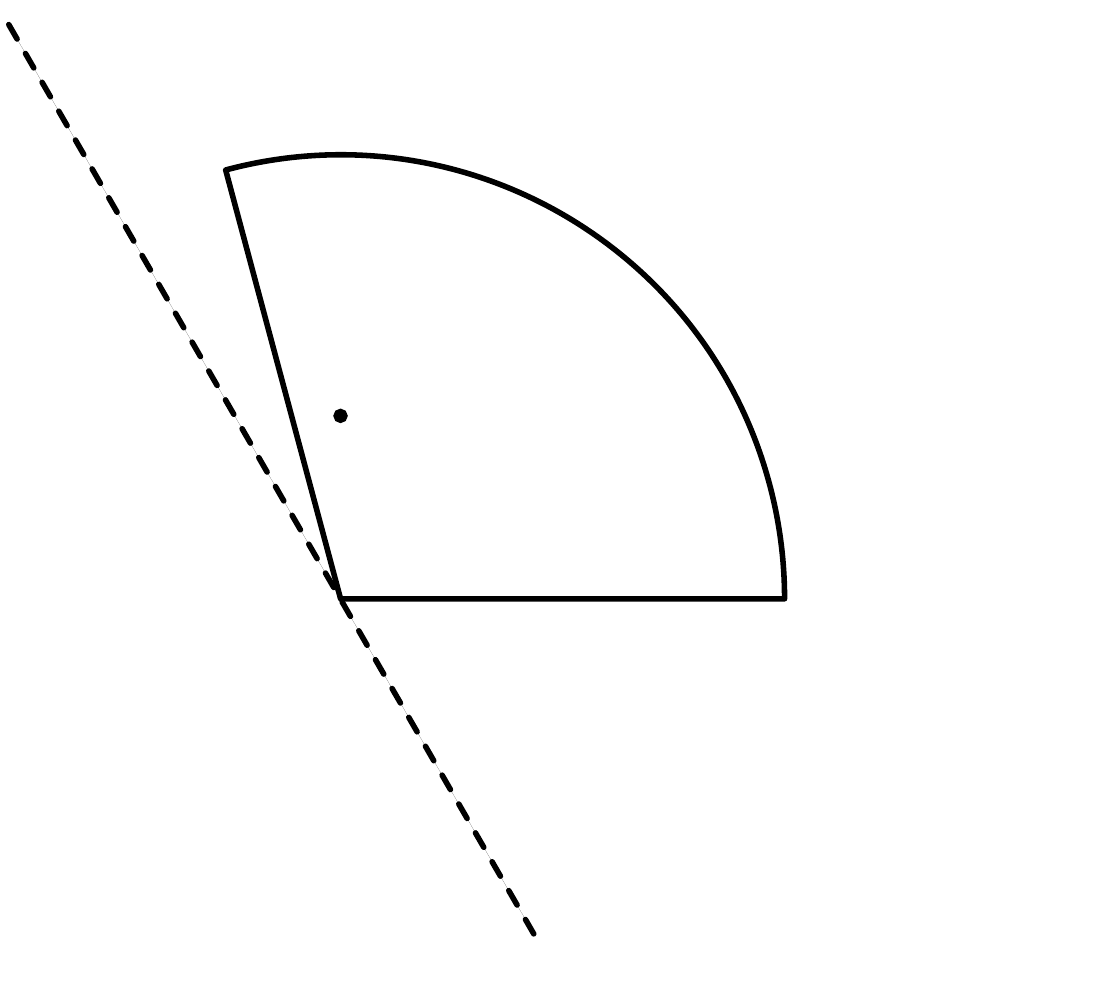' (pdf, eps, ps)
%%
%% To include the image in your LaTeX document, write
%%   \input{<filename>.pdf_tex}
%%  instead of
%%   \includegraphics{<filename>.pdf}
%% To scale the image, write
%%   \def\svgwidth{<desired width>}
%%   \input{<filename>.pdf_tex}
%%  instead of
%%   \includegraphics[width=<desired width>]{<filename>.pdf}
%%
%% Images with a different path to the parent latex file can
%% be accessed with the `import' package (which may need to be
%% installed) using
%%   \usepackage{import}
%% in the preamble, and then including the image with
%%   \import{<path to file>}{<filename>.pdf_tex}
%% Alternatively, one can specify
%%   \graphicspath{{<path to file>/}}
%% 
%% For more information, please see info/svg-inkscape on CTAN:
%%   http://tug.ctan.org/tex-archive/info/svg-inkscape

\begingroup
  \makeatletter
  \providecommand\color[2][]{%
    \errmessage{(Inkscape) Color is used for the text in Inkscape, but the package 'color.sty' is not loaded}
    \renewcommand\color[2][]{}%
  }
  \providecommand\transparent[1]{%
    \errmessage{(Inkscape) Transparency is used (non-zero) for the text in Inkscape, but the package 'transparent.sty' is not loaded}
    \renewcommand\transparent[1]{}%
  }
  \providecommand\rotatebox[2]{#2}
  \ifx\svgwidth\undefined
    \setlength{\unitlength}{318.85715332pt}
  \else
    \setlength{\unitlength}{\svgwidth}
  \fi
  \global\let\svgwidth\undefined
  \makeatother
  \begin{picture}(1,0.90680997)%
    \put(0,0){\includegraphics[width=\unitlength]{contour3.pdf}}%
    \put(0.26634191,0.33373005){\color[rgb]{0,0,0}\makebox(0,0)[lb]{\smash{$0$}}}%
    \put(0.61959603,0.30444088){\color[rgb]{0,0,0}\makebox(0,0)[lb]{\smash{$r/\sqrt{|\xi|}$}}}%
    \put(0.07921712,0.78757119){\color[rgb]{0,0,0}\makebox(0,0)[lb]{\smash{$-r/\sqrt{\xi}$}}}%
    \put(0.32160686,0.51651332){\color[rgb]{0,0,0}\makebox(0,0)[lb]{\smash{$i$}}}%
  \end{picture}%
\endgroup
\\
(a)&(b)
\end{tabular}
\caption{Two contours used in the proof of Lemma~\ref{lem:cbfextension}. Dashed line depicts the boundary of the region in which the integrand is meromorphic. In (a), $\real \xi > 0$ and $\imag \xi > 0$, and the integrand $\log \psi(\xi^2 \zeta^2) / (1 + \zeta^2)$ is meromorphic in the region $\real(\xi \zeta) > 0$ with pole at $-i$, outside the contour of integration. In (b), $\real \xi < 0$ and $\imag \xi > 0$, and the integrand $\log \psi(\xi^2 \zeta^2) / (1 + \zeta^2)$ is meromorphic in $\real(-\xi \zeta) > 0$ with a simple pole at $i$.
}
\label{fig:contour}
\end{figure}

\begin{proof}
Let $f$ be (the holomorphic extension of) the complete Bernstein function defined in~\eqref{eq:psistar} (see Lemma~\ref{lem:cbfmean}); hence, $\psi^\dagger(\xi) = f(\xi^2)$ for $\xi > 0$, and this identity defines a holomorphic extension of $\psi^\dagger$ to the half-plane $\real \xi > 0$. When $\real \xi > 0$, by~\eqref{eq:psistar} we have
\formula{
 \psi^\dagger(\xi) = f(\xi^2) & = \exp\expr{\frac{1}{\pi} \, \int_0^\infty \frac{\log \psi(\xi^2 \zeta^2)}{1 + \zeta^2} \, d\zeta} .
}
For a fixed $\xi$, the integrand on the right-hand side is a meromorphic function of $\zeta$ in the region $\real(\xi \zeta) > 0$ with a simple pole at $-i$. Furthermore, by Proposition~\ref{prop:cbf:ests}(e), for any $\eps > 0$ the integrand decays at least as fast as $|\zeta|^{-2} \log |\zeta|^2$ when $|\zeta| \to \infty$ in the region $-\pi/2 + \eps < \Arg(\xi \zeta) < \pi/2 - \eps$. Hence, by a standard contour integration (using contours shown in Figure~\ref{fig:contour}~(a) with $r \to \infty$; the pole at $-i$ is outside the contour) and then parametrization $\zeta = s / \sqrt{\xi}$ of $[0, \xi^{-1/2} \infty)$, we obtain that
\formula{
 \psi^\dagger(\xi) & = \exp\expr{\frac{1}{\pi} \, \int_0^{\xi^{-1/2} \infty} \frac{\log \psi(\xi^2 \zeta^2)}{1 + \zeta^2} \, d\zeta} \\ & = \exp\expr{\frac{1}{\pi} \, \int_0^\infty \frac{\sqrt{\xi} \log \psi(\xi s^2)}{\xi + s^2} \, ds} .
}
The formula on the right-hand side clearly defines a holomorphic function in $\C \setminus (-\infty, 0]$ (integrability follows by Proposition~\ref{prop:cbf:ests}(e)), which is the holomorphic extension of $\psi^\dagger(\xi)$. Suppose that $\real \xi < 0$ and $\imag \xi > 0$. Using the parametrization $\zeta = -s / \sqrt{\xi}$ of $[0, -\xi^{-1/2} \infty)$, we obtain that
\formula{
 \psi^\dagger(\xi) & = \exp\expr{-\frac{1}{\pi} \, \int_0^{-\xi^{-1/2} \infty} \frac{\log \psi(\xi^2 \zeta^2)}{1 + \zeta^2} \, d\zeta} .
}
For a fixed $\xi$, the integrand on the right-hand side is a meromorphic function of $\zeta$ in the region $\real(-\xi \zeta) > 0$ with a simple pole at $i$. Furthermore, again by Proposition~\ref{prop:cbf:ests}(e), for any $\eps > 0$, the integrand decays at least as fast as $\zeta^{-2} \log |\zeta|^2$ when $|\zeta| \to \infty$ in the region $-\pi/2 + \eps < \Arg(-\xi \zeta) < \pi/2 - \eps$. Hence, again using standard contour integration (along contours shown in Figure~\ref{fig:contour}~(b) with $r \to \infty$; this time the pole at $i$ is inside the contour) and the residue theorem, we obtain that
\formula{
 \psi^\dagger(\xi) & = \exp\expr{-\frac{1}{\pi} \, \int_0^\infty \frac{\log \psi(\xi^2 \zeta^2)}{1 + \zeta^2} \, d\zeta + 2 i \res_i\expr{\frac{\log \psi(\xi^2 \zeta^2)}{1 + \zeta^2}}} \\
 & = \exp\expr{-\frac{1}{\pi} \, \int_0^\infty \frac{\log \psi(\xi^2 \zeta^2)}{1 + \zeta^2} \, d\zeta + \log \psi(-\xi^2)} = \frac{\psi(-\xi^2)}{f(\xi^2)} \, .
}
Recall that $\real \xi < 0$ and $\imag \xi > 0$. Hence $\real (-\xi) > 0$, so that $f(\xi^2) = f((-\xi)^2) = \psi^\dagger(-\xi)$. Formula~\eqref{eq:duality} follows when $\real \xi < 0$ and $\imag \xi > 0$.

Since $\psi$ and $f$ are complete Bernstein functions, by Corollary~\ref{corr:cbf}(c), $h_1(\xi) = \Arg \psi(-\xi^2)$ and $h_2(\xi) = \Arg f(\xi^2)$ are bounded harmonic functions in the region $\real \xi < 0$, $\imag \xi > 0$, taking values in $[0, \pi]$ and $[-\pi, 0]$, respectively. Hence, $h_1$ and $h_2$ are Poisson integrals of their (bounded, measurable) boundary values $g_1$, $g_2$ (defined on $(-\infty, 0] \cup [0, i \infty)$). Furthermore, we have $g_1(i s) = \Arg \psi(s^2) = 0$ and $g_2(-s) = \Arg f(s^2) = 0$ for $s > 0$, and $g_1(-s) \in [0, \pi]$, $g_2(i s) \in [-\pi, 0]$ for almost all $s > 0$.

Observe that when $\real \xi < 0$, $\imag \xi > 0$, we have
\formula{
 \Arg \psi^\dagger(\xi) & = \Arg \psi(-\xi^2) - \Arg f(\xi^2) = h_1(\xi) - h_2(\xi) .
}
Hence, $h(\xi) = \Arg \psi^\dagger(\xi)$ is a bounded harmonic function in the region $\real \xi < 0$, $\imag \xi > 0$, with boundary value $g$ given by $g = g_1$ on $(-\infty, 0]$, and $g = -g_2$ on $[0, i \infty)$. It follows that $g(\xi) \in [0, \pi]$ for almost every boundary point $\xi$, and so $h(\xi) \in [0, \pi]$ for all $\xi$ such that $\real \xi < 0$, $\imag \xi > 0$. It follows that $\imag \psi^\dagger(\xi) \ge 0$ in this region. We have already seen that $\imag \psi^\dagger(\xi) = \imag f(\xi^2) \ge 0$ when $\real \xi > 0$, $\imag \xi > 0$. Finally, $\psi^\dagger(\bar{\xi}) = \overline{\psi^\dagger(\xi)}$, and so $\imag \psi^\dagger(\xi) \le 0$ when $\imag \xi < 0$. By Corollary~\ref{corr:cbf}(c), $\psi^\dagger$ is a CBF, and~\eqref{eq:duality} extends to all $\xi \in \C \setminus \R$.
\end{proof}

We note a third formula for (the holomorphic extension of) $\psi^\dagger(\xi)$, other than~\eqref{eq:dagger} and~\eqref{eq:dagger1}: when $\psi$ is a complete Bernstein function, then
\formula[eq:dagger2]{
 \psi^\dagger(\xi) & = \exp\expr{\frac{1}{\pi} \, \int_0^\infty \frac{\sqrt{\xi} \log \psi(\xi s^2)}{\xi + s^2} \, ds} , && \xi \in \C \setminus (-\infty, 0] ;
}
see the above proof of Lemma~\ref{lem:cbfextension}.

In Section~\ref{sec:eigenfunctions}, the transformation $\psi \mapsto \psi^\dagger$ is applied to the function
\formula[eq:psilambda]{
 \psi_\lambda(\xi) & = \frac{1 - \xi / \lambda^2}{1 - \psi(\xi) / \psi(\lambda^2)} \, , && \lambda > 0, \, \xi \in \C \setminus (-\infty, 0) .
}
This definition is continuously extended at $\xi = \lambda^2$ by letting $\psi_\lambda(\lambda^2) = \psi(\lambda^2) / (\lambda^2 \psi'(\lambda^2))$. We denote $\psi_\lambda^\dagger(\xi) = (\psi_\lambda)^\dagger(\xi)$.

By Proposition~\ref{prop:cbf:ratio}, $\psi_\lambda$ and $\log \psi_\lambda$ are complete Bernstein functions, and $\psi_\lambda(0) = 1$. It is easy to see that $\psi_\lambda'(\lambda) = (\psi(\lambda^2) |\psi''(\lambda^2)|) / (2 \lambda^2 (\psi'(\lambda^2))^2)$ (note that $\psi''(\lambda^2) < 0$). Since $\psi$ is concave, its graph lies below the tangent line at $\xi = \lambda^2$, that is,
\formula[eq:psilambdaest]{
 \psi_\lambda(\xi) & \le \frac{\psi(\lambda^2)}{\lambda^2 \psi'(\lambda^2)} + \frac{\psi(\lambda^2) |\psi''(\lambda^2)|}{2 \lambda^2 (\psi'(\lambda^2))^2} \, (\xi - \lambda^2) , && \lambda, \xi > 0 .
}
Formula~\eqref{eq:psilambdaest} (plus $\xi - \lambda^2 \le \xi$) combined with Proposition~\ref{prop:explogbound} yields that
\formula[eq:psilambdadaggerest]{
 \psi_\lambda^\dagger(\xi) & \le \abs{\sqrt{\frac{\psi(\lambda^2)}{\lambda^2 \psi'(\lambda^2)}} + \sqrt{\frac{\psi(\lambda^2) |\psi''(\lambda^2)|}{2 \lambda^2 (\psi'(\lambda^2))^2}} \, \xi} , && \lambda > 0, \, \real \xi > 0 .
}
We remark that more detailed estimates of $\psi_\lambda$ and $\psi_\lambda^\dagger$ can be found in~\cite{bib:kmr11a}.

%
%                            ---------- o ----------
%

\section{Derivation of the formula for eigenfunctions}
\label{sec:eigenfunctions}

%
%                            ---------- o ----------
%

Below we prove Theorem~\ref{th:eigenfunctions}. Our strategy is as follows. First we state a distributional version of the spectral problem for $\A_\hl$, and in Lemma~\ref{lem:eigenfunctions} we prove that a distributional solution is automatically a strong solution. The (simpler) case of compound Poisson processes requires a different approach and is studied separately in Proposition~\ref{prop:poisson}. Next, we rephrase the distributional problem as a Wiener-Hopf equation, and we solve it using the Wiener-Hopf method. Finally, we combine the two parts and prove Theorem~\ref{th:eigenfunctions}. In addition, we establish some basic properties of the eigenfunctions.

%
%                            ---------- o ----------
%

\subsection{Distributional and strong eigenfunctions}

In this part, $X_t$ can be an arbitrary one-dimensional \emph{symmetric} L{\'e}vy process with L{\'e}vy-Khintchine exponent $\Psi$. We define $\psi$ by the formula $\Psi(\xi) = \psi(\xi^2)$. Note that $\psi(\xi)$ need not be a Bernstein function.

Recall that $\A$ is the generator of $X_t$, and $\A_\hl$ is the generator of the process $X_t$ killed upon leaving $\hl$, defined on an appropriate domain. Here we only consider the $L^\infty$, $C_b$ and $C_0$ generators, formal definitions are given in Preliminaries. Recall also that $A \in \schwartz'$ is the distributional generator of $X_t$, see Definition~\ref{def:gen:distr}. The Fourier transform of the distribution $A$ is $-\psi(\xi^2)$, and since $A$ is symmetric, it is equal to its reflection $\check{A}$.

Let $F \in L^\infty(\hl)$. As usual, $F$ is extended to entire real line by setting $F(x) = 0$ for $x \le 0$. Given some regularity of $X_t$, if $F \in C_0(\hl)$ then, by Lemma~\ref{lem:d2}, the condition $\A_\hl F = -\psi(\lambda^2) F$ is equivalent to the apparently weaker condition $A \conv F(x) = -\psi(\lambda^2) F(x)$ for $x \in \hl$. This motivates the following definition.

\begin{defin}
\label{def:weak}
Let $\lambda > 0$. A tempered distribution $F$ is said to be a \emph{distributional eigenfunction} of $\A_\hl$, corresponding to the eigenvalue $-\psi(\lambda^2)$, if $F$ is supported in $[0, \infty)$, $\schwartz'$-convolvable with $A$, and $A \conv F + \psi(\lambda^2) F$ is supported in $(-\infty, 0]$. Informally,
\formula[eq:problem]{
\begin{aligned}
 F & = 0 &\qquad& \text{on $(-\infty, 0)$} , \\
 A \conv F & = \psi(\lambda^2) F && \text{on $(0, \infty)$} .
\end{aligned}
}
\end{defin}

\begin{lem}
\label{lem:eigenfunctions}
Suppose that $X_t$ is a symmetric L{\'e}vy process, which has the strong Feller property, and that $0$ is a regular boundary point of $\hl$. Let $\lambda > 0$, and suppose that $F$ is a distributional eigenfunction of $\A_\hl$, corresponding to the eigenvalue $-\psi(\lambda^2)$. If $F$ is bounded and continuous on $\R$, and if
\formula{
 \lim_{x \to \infty} (F(x) - C \sin(\lambda x + \thet)) = 0
}
for some $C, \thet \in \R$, then $F \in \domain(\A_\hl; L^\infty)$ and $\A_\hl F = -\psi(\lambda^2) F$.
\end{lem}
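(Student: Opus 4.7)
The plan is to split $F$ into a smooth piece $F_2$ that captures the asymptotic oscillation $C\sin(\lambda x + \thet)$ and a remainder $F_1 \in C_0(\hl)$, and then apply Lemma \ref{lem:smooth:domain} to $F_2$ and Lemma \ref{lem:d2} to $F_1$. I fix a smooth cutoff $\chi \colon \R \to [0,1]$ with $\chi = 0$ on $(-\infty,1]$ and $\chi = 1$ on $[2,\infty)$, together with a bump $\eta \in C_c^\infty((1,\infty))$ such that $\int \eta(z)\,\nu(dz) \ne 0$ (such $\eta$ exists whenever $\nu$ is nontrivial; the pure-diffusion case is handled with $\eta = 0$). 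I then set
\[
F_2(x) \;=\; C\chi(x)\sin(\lambda x + \thet) - c\,\eta(x), \qquad F_1 \;=\; F - F_2,
\]
where the scalar $c$ is chosen so that $\A F_2(0) = 0$. By construction $F_2 \in C_b^\infty(\R)$ vanishes in a neighbourhood of $(-\infty,0]$, and the hypothesis $F(x) - C\sin(\lambda x + \thet) \to 0$, combined with $F = F_2 = 0$ on $(-\infty, 0]$, forces $F_1 \in C_0(\hl)$.

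Lemma \ref{lem:smooth:domain} now applies to $F_2$, yielding $F_2 \in \domain(\A_\hl; C_b) \subset \domain(\A_\hl; L^\infty)$ with $\A_\hl F_2 = \A F_2$ on $\hl$. For $F_1$, Lemma \ref{lem:gen:cb} together with Proposition \ref{prop:gen:distr} identifies $\check A \conv F_2$ with $\A F_2$ as distributions, so the symmetry $\check A = A$ and the distributional eigenfunction equation give
\[
\check A \conv F_1 \;=\; -\psi(\lambda^2)\,F - \A F_2 \qquad \text{on } \hl.
\]
By Remark \ref{rem:spectral:free} one has $\A(C\sin(\lambda\cdot+\thet))(x) = -C\psi(\lambda^2)\sin(\lambda x+\thet)$ identically, so the correction $\A F_2 + C\psi(\lambda^2)\sin(\lambda\cdot+\thet) = \A\bigl(F_2 - C\sin(\lambda\cdot+\thet)\bigr)$ is the generator acting on a bounded smooth function supported in $(-\infty,2]$; a tail estimate using $\nu((-\infty,-R]) \to 0$ shows it vanishes as $x \to +\infty$. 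Combined with the assumed asymptotic of $F$, this yields $\check A \conv F_1(x) \to 0$ at $+\infty$, and continuity at $0$ of the extension by zero on $(-\infty,0]$ is guaranteed by $F(0) = 0$ and the identity $\A F_2(0) = 0$. Lemma \ref{lem:d2} therefore produces $F_1 \in \domain(\A_\hl; C_0) \subset \domain(\A_\hl; L^\infty)$ with $\A_\hl F_1 = -\psi(\lambda^2)F - \A F_2$ on $\hl$, and summing the two contributions gives $\A_\hl F = -\psi(\lambda^2)\,F$ in $L^\infty(\hl)$, as required.

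The main obstacle, and the reason for introducing the scalar correction $c\,\eta$, is that $\A F_2(0)$ is a single scalar quantity that simultaneously blocks Lemma \ref{lem:smooth:domain} applied to $F_2$ and breaks the continuity at $0$ of $\check A \conv F_1$ required by Lemma \ref{lem:d2}; absorbing it into a compactly supported bump inside $\hl$ removes both obstructions at once while keeping $F_1$ in $C_0(\hl)$. The remaining analytic point is the decay $\A g(x) \to 0$ as $x \to +\infty$ for bounded smooth $g$ with support in $(-\infty,2]$, which reduces to tail integrability of $\nu$ and is routine.
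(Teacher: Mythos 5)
Your proof is correct and follows essentially the same decomposition and reasoning as the paper's: split $F$ into a smooth piece (handled by Lemma~\ref{lem:smooth:domain}) plus a $C_0(\hl)$ remainder (handled by Lemma~\ref{lem:d2}), with the single compatibility constraint that the smooth piece $g$ satisfies $\A g(0)=0$ and the remainder's image under $\check A\,\conv$ restricted to $\hl$ lies in $C_0(\hl)$. The only cosmetic difference is the mechanism for enforcing $\A g(0)=0$: you add a compactly supported bump $c\eta$, while the paper allows the cutoff $h$ to leave $[0,1]$ and imposes $\int_1^\infty h(z)F^*(z)\nu(z)\,dz=0$ on $h$ directly --- the two devices are interchangeable.
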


\begin{proof}
Since $F$ does not vanish at infinity, we cannot apply Lemma~\ref{lem:d2} directly to $F$ to prove that $F \in \domain(\A_\hl; L^\infty)$. Thus, we decompose $F$ into the sum of a $C_0(\hl)$ function and a smooth bounded function.

Denote $F^*(x) = C \sin(\lambda x + \thet)$. By Remark~\ref{rem:spectral:free}, $F^* \in \domain(\A; L^\infty)$ and $\A F^* = -\psi(\lambda^2) F^*$. By Proposition~\ref{prop:gen:distr}, $A \conv F^* = -\psi(\lambda^2) F^*$. We choose an infinitely smooth function $h$ such that $h(x) = 0$ for $x \le 1$ and $h(x) = 1$ for $x$ large enough. Furthermore, we require that
\formula[eq:cutoff]{
 \int_1^\infty h(z) F^*(z) \nu(z) dz & = 0 .
}
(Note that such a choice is always possible, as we do not assume that $0 \le h(x) \le 1$.) We write $F = f_1 + f_2$, where $f_1 = h F^*$ and $f_2 = F - h F^*$. The function $f_1$ is infinitely smooth, with all derivatives bounded, so by Lemma~\ref{lem:gen:cb}, $f_1 \in \domain(\A; C_b)$. We have $f_1(x) = 0$ for $x \le 1$, and by~\eqref{eq:gen:cb} and~\eqref{eq:cutoff}, $\A f_1(0) = 0$. Hence, by Lemma~\ref{lem:smooth:domain} and Proposition~\ref{prop:gen:distr}, we have $f_1 \in \domain(\A_\hl; L^\infty)$ and $\A_\hl f_1(x) = \A f_1(x) = A \conv f_1(x)$ for $x \in \hl$. We claim that also $f_2 \in \domain(\A_\hl; L^\infty)$ and $\A_\hl f_2(x) = A \conv f_2(x)$ for $x \in \hl$. Once this is proved, we have $F \in \domain(\A_\hl; L^\infty)$ and $\A_\hl F(x) = A \conv F(x) = -\psi(\lambda^2) F(x)$ for $x > 0$, as desired. Hence, it remains to prove the claim.

We will apply Lemma~\ref{lem:d2} to $f_2 = F - h F^*$. Since $F$ and $h$ vanish on $(-\infty, 0]$, we have $f_2(x) = 0$ for $x \le 0$. Also, $f_2 = (F - F^*) + (1 - h) F^*$, and both $F - F^*$, $(1 - h)$ vanish at infinity, so that $\lim_{x \to \infty} f_2(x) = 0$. In other words, $f_2 \in C_0(\hl)$. Furthermore,
\formula{
 A \conv f_2 & = A \conv F - A \conv (h F^*) = A \conv F - A \conv f_1 .
}
By assumption, $A \conv F$ restricted to $\hl$ is equal to $-\psi(\lambda^2) F(x)$. We have already seen that $A \conv f_1 = \A f_1$ is a continuous function on $\R$ which vanishes at $0$. Hence, $A \conv f_2$ restricted to $\hl$ is equal to a continuous function, vanishing continuously at $0$. Furthermore, for $x > 0$,
\formula{
 A \conv f_2(x) & = A \conv F(x) - A \conv F^*(x) + A \conv ((1 - h) F^*)(x) \\
 & = -\psi(\lambda^2) (F(x) - F^*(x)) + A \conv ((1 - h) F^*)(x) .
}
By assumption, the first term on the right-hand side converges to $0$ as $x \to \infty$. Since $1 - h(x) = 0$ for $x$ large enough, also $A \conv ((1 - h) F^*)(x)$ vanishes as $x \to \infty$ (by Lemma~\ref{lem:gen:cb} and Proposition~\ref{prop:gen:distr}). We conclude that $A \conv f_2$ restricted to $\hl$ is equal to a $C_0(\hl)$ function. By Lemma~\ref{lem:d2}, $f_2 \in \domain(\A_\hl; C_0)$ (and therefore also $f_2 \in \domain(\A_\hl; L^\infty)$), and $\A_\hl f_2(x) = A \conv f_2(x)$ for $x > 0$.
\end{proof}

\begin{rem}
Continuity of $F$ at $0$ is essential. Indeed, when $X_t$ is the Brownian motion (i.e. $\psi(\xi) = \xi$), then $F(x) = \cos(\lambda x) \ind_\hl(x)$ is the distributional eigenfunction of $\A = d^2 / dx^2$ in $(0, \infty)$, but $F$ is not in the $L^\infty(\R^d)$ domain of $\A_\hl$. In particular $P^\hl_t F$ is not equal to $e^{-\lambda^2 t} F$, as can be verified by a direct calculation.\qed
\end{rem}

When $X_t$ is a compound Poisson process, the proof is much easier.

\begin{prop}
\label{prop:poisson}
Suppose that $X_t$ is a symmetric compound Poisson process. Let $\lambda > 0$, and suppose that $F \in L^\infty(\hl)$ is a distributional eigenfunction of $\A_\hl$, corresponding to the eigenvalue $-\psi(\lambda^2)$. Then $\A_\hl F = -\psi(\lambda^2) F$.
\end{prop}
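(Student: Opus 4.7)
The plan is to reduce the statement directly to Lemma~\ref{lem:cp}. Since $X_t$ is a symmetric compound Poisson process, its characteristic triplet has $\beta = 0$, $\gamma = 0$, and a finite symmetric L\'evy measure $\nu$, so the distributional generator is the finite signed measure $A = \nu - M \delta_0$, where $M = \nu(\R)$. Being a finite measure, $A$ is $\schwartz'$-convolvable with every bounded measurable function $F$, and the convolution coincides with the ordinary bounded function
\[ A \conv F(x) = \int_\R F(x + z) \nu(dz) - M F(x). \]

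First I would translate the distributional eigenfunction condition into a pointwise identity. The requirement that $A \conv F + \psi(\lambda^2) F$ be supported in $(-\infty, 0]$, together with the fact that both $A \conv F$ and $F$ are elements of $L^\infty(\R)$, becomes the almost everywhere equality $A \conv F(x) = -\psi(\lambda^2) F(x)$ for $x > 0$.

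Next I would apply Lemma~\ref{lem:cp} with $D = \hl$ and $p = \infty$. That lemma asserts that $\A_\hl$ is a bounded operator on $L^\infty(\hl)$, so every $F \in L^\infty(\hl)$ lies in $\domain(\A_\hl; L^\infty)$, and that, under the standing extension by zero to $\R$,
\[ \A_\hl F(x) = \ind_\hl(x) \int_\R (F(x + z) - F(x)) \nu(dz) = \ind_\hl(x) \, A \conv F(x). \]
Combining this with the pointwise identity from the previous step yields $\A_\hl F(x) = -\psi(\lambda^2) F(x)$ for $x \in \hl$, which is the desired conclusion.

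There is essentially no obstacle: the whole argument is a translation between the distributional formulation of Definition~\ref{def:weak} and the bounded-operator formulation of Lemma~\ref{lem:cp}. The special feature of the compound Poisson case --- and what makes this proof much shorter than that of Lemma~\ref{lem:eigenfunctions} --- is that $A$ is a genuine finite signed measure, so $A \conv F$ is unambiguously a bounded function for any bounded $F$, with no need to handle a non-compactly-supported part via cutoffs or regularization, and no need to invoke Dynkin's characterization via Lemma~\ref{lem:d2}.
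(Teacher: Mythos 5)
Your argument is correct and is precisely the paper's own proof, which is stated in one sentence as ``a straightforward application of Lemma~\ref{lem:cp} and the fact that $A$ is the signed measure $\nu(dz) - \nu(\R)\delta_0(dz)$''; you have merely spelled out the intermediate identifications that the paper leaves implicit.
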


\begin{proof}
The result is a straightforward application of Lemma~\ref{lem:cp} and the fact that $A$ is the signed measure $\nu(dz) - \nu(\R) \delta_0(dz)$.
\end{proof}

%
%                            ---------- o ----------
%

\subsection{Wiener-Hopf factorization}

A method of solving problems of the form~\eqref{eq:problem} was found in~\cite{bib:wh31} in the case when $A$ is an integrable function satisfying some growth conditions. For this reason, problems of this kind are called \emph{Wiener-Hopf equations}, and the algorithm for finding their solutions is the \emph{Wiener-Hopf method}; see~\cite{bib:ggk90, bib:k62} for a detailed exposition. We begin with a brief description of the method. A version of Paley-Wiener theorem states that a function is supported in $[0, \infty)$ or $(-\infty, 0]$ if and only if its Fourier transform extends holomorphically to the upper or lower complex half-plane, respectively. Therefore, \eqref{eq:problem} can be re-written as
\formula{
 & \text{$\fourier F(\xi)$ extends to the upper complex half-plane,} \\
 & \text{$(C - \psi(\xi^2)) \fourier F(\xi)$ extends to the lower complex half-plane,}
}
where $C = \psi(\lambda^2)$. The main ingredient of the \emph{Wiener-Hopf method} is factorization of the symbol $C - \psi(\xi^2)$ into the product of two functions, one extending holomorphically to the upper half-plane, the other one to the lower half-plane (the \emph{Wiener-Hopf factors}). Then $\fourier F$ is taken to be the multiplicative inverse of the first factor.

The fundamental condition for the Wiener-Hopf method is that the symbol does not vanish. In our case, we have $C - \psi(\xi^2) = 0$ for $\xi = \pm \lambda$. For this reason, we apply the Wiener-Hopf method to the \emph{regularized} symbol
\formula{
 \frac{1 - \psi(\xi^2) / \psi(\lambda^2)}{1 - \xi^2 / \lambda^2} = \frac{1}{\psi_\lambda(\xi^2)} \, ;
}
here we use the notation of~\eqref{eq:psilambda}.

From now on, Assumption~\ref{asmp} is in force. It is required in order to establish some properties of $1 / \psi_\lambda(\xi^2)$ and its Wiener-Hopf factors. In particular, since $\psi_\lambda$ is a complete Bernstein function, the factorization is given by Lemma~\ref{lem:cbfextension}:
\formula{
 \frac{1}{\psi_\lambda(\xi^2)} & = \frac{1}{\psi_\lambda^\dagger(-i \xi)} \, \frac{1}{\psi_\lambda^\dagger(i \xi)} \, , && \xi \in \R .
}
The Wiener-Hopf factors are $\psi_\lambda^\dagger(-i \xi)$ and $\psi_\lambda^\dagger(i \xi)$, and the solution is expected to satisfy $\fourier F(\xi) = \psi_\lambda^\dagger(-i \xi) / (1 - \xi^2 / \lambda^2)$, up to a multiplicative constant. Due to the singularity in the denominator, we prefer to work with the Laplace transform of $F$. Below we provide details for the above idea.

Fix $\lambda > 0$, and define a tempered distribution
\formula{
 A_\lambda & = \psi(\lambda^2) \delta_0 + A ,
}
where $A$ is the distributional generator of $X_t$ and $\delta_0$ is the Dirac delta measure. Hence, $\fourier A(\xi) = \psi(\lambda^2) - \psi(\xi^2)$. We decompose $A_\lambda$ into the convolution of the distribution 
\formula[eq:s]{
 S & = \frac{\psi(\lambda^2)}{\lambda^2} \, (\lambda^2 \delta_0 + \delta_0'')
}
(the \emph{singular part}) and two infinitely divisible measures $R_+$ and $R_-$ (\emph{regular parts}) supported in $[0, \infty)$ and $(-\infty, 0]$ respectively.

Clearly,
\formula[eq:fouriers]{
 \fourier S(\xi) & = \frac{\psi(\lambda^2)}{\lambda^2} \, (\lambda^2 - \xi^2) = \psi(\lambda^2) \expr{1 - \frac{\xi^2}{\lambda^2}} .
}
We define the tempered distribution $R$ by means of Fourier transform,
\formula[eq:fourierr]{
 \fourier R(\xi) & = \frac{1 - \psi(\xi^2) / \psi(\lambda^2)}{1 - \xi^2 / \lambda^2} = \frac{1}{\psi_\lambda(\xi^2)} , && \xi \in \R ,
}
with $\psi_\lambda$ defined in~\eqref{eq:psilambda}. Since $S$ is compactly supported, $S$ and $R$ are $\schwartz'$-convolvable, and since $\fourier S(\xi)\fourier R(\xi) = \psi(\lambda^2) - \psi(\xi^2) = \fourier A_\lambda$, by the exchange formula we have
\formula{
 A_\lambda & = S \conv R .
}

\begin{lem}
\label{lem:laplacerp}
There is a probability measure $R_+$ supported in $[0, \infty)$, such that (see~\eqref{eq:dagger1})
\formula[eq:laplacerp]{
 \laplace R_+(\xi) & = \exp\expr{-\frac{1}{\pi} \int_0^\infty \frac{\xi \log \psi_\lambda(\zeta^2)}{\xi^2 + \zeta^2} \, d\zeta} = \frac{1}{\psi_\lambda^\dagger(\xi)} \, , && \real \xi \ge 0 .
}
Furthermore, $R_+$ has a completely monotone density on $(0, \infty)$ (but it may have an atom at $0$).

If $R_-(E) = R_+(-E)$ is the reflection of $R_+$, then $R = R_+ * R_-$ and $\fourier R(\xi) = \fourier R_+(\xi) \fourier R_-(\xi)$ ($\xi \in \R$).
\end{lem}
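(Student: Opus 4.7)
\medskip

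\noindent\textbf{Proof plan.} The idea is to read off $R_+$ directly from the Stieltjes representation of $1/\psi_\lambda^\dagger$, and then check the factorization by comparing Fourier transforms via the duality~\eqref{eq:duality}.

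First I would record the structural input. By Proposition~\ref{prop:cbf:ratio}, $\psi_\lambda$ is a complete Bernstein function; in particular $\psi_\lambda(0)=1$. Lemma~\ref{lem:cbfextension} then gives that $\psi_\lambda^\dagger$ is a CBF, and Proposition~\ref{prop:dagger:prop}(d) yields $\psi_\lambda^\dagger(0)=\sqrt{\psi_\lambda(0)}=1$. By Proposition~\ref{prop:cbf:prop}(a), the reciprocal $1/\psi_\lambda^\dagger$ is therefore a Stieltjes function, which by the representation~\eqref{eq:stieltjes} and the remark following it equals the Laplace transform of a Borel measure
\formula{
 R_+ & = \tilde c_1 \delta_0 + \bigl(\tilde c_2 + \pi^{-1}\laplace \tilde m_0\bigr) dx
}
on $[0,\infty)$. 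Using Proposition~\ref{prop:cbf:repr}, $\tilde c_2 = \lim_{\xi\to 0^+}\xi/\psi_\lambda^\dagger(\xi)=0$, so the absolutely continuous part of $R_+$ has density $\pi^{-1}\laplace \tilde m_0(x)$, which is completely monotone on $(0,\infty)$ by Bernstein's theorem. Evaluating the Laplace transform at $\xi=0$ gives $R_+([0,\infty))=1/\psi_\lambda^\dagger(0)=1$, so $R_+$ is a probability measure. The explicit exponential formula~\eqref{eq:laplacerp} is then just the identity $\laplace R_+(\xi)=1/\psi_\lambda^\dagger(\xi)$ combined with~\eqref{eq:dagger1}.

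For the factorization statement, let $R_-$ denote the reflection of $R_+$ so that $R_-$ is supported in $(-\infty,0]$. Then for $\xi\in\R$ we have $\fourier R_+(\xi)=\laplace R_+(-i\xi)=1/\psi_\lambda^\dagger(-i\xi)$ and, by reflection, $\fourier R_-(\xi)=\fourier R_+(-\xi)=1/\psi_\lambda^\dagger(i\xi)$. Applying the duality~\eqref{eq:duality} at $\xi\to i\xi$ (this is legitimate since $i\xi\in\C\setminus\R$ for $\xi\in\R\setminus\{0\}$) gives $\psi_\lambda^\dagger(i\xi)\psi_\lambda^\dagger(-i\xi)=\psi_\lambda(\xi^2)$, so
\formula{
 \fourier R_+(\xi)\,\fourier R_-(\xi) & = \frac{1}{\psi_\lambda(\xi^2)} = \fourier R(\xi) , && \xi\in\R\setminus\{0\},
}
and the $\xi=0$ case follows by continuity. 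Because $R_+$ and $R_-$ are supported in $[0,\infty)$ and $(-\infty,0]$ respectively, they are automatically $\schwartz'$-convolvable, and the exchange formula gives $\fourier(R_+\conv R_-)=\fourier R_+\cdot\fourier R_-=\fourier R$. Injectivity of the Fourier transform on tempered distributions then yields $R=R_+\conv R_-$.

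The only subtle point is the Stieltjes identification of $1/\psi_\lambda^\dagger$: one must check that the constant $\tilde c_2$ vanishes so that the density on $(0,\infty)$ is actually completely monotone and no extra atom hides in the representation. This is taken care of by Proposition~\ref{prop:cbf:repr} together with the normalization $\psi_\lambda(0)=1$, which forces $\psi_\lambda^\dagger(0)=1$ and hence $\tilde c_2=0$; the possible remaining atom at the origin (present when $\psi_\lambda$ is bounded, i.e.\ in the compound Poisson case) has mass $\tilde c_1=\lim_{\xi\to\infty}1/\psi_\lambda^\dagger(\xi)$, which is nonnegative and consistent with the statement of the lemma.
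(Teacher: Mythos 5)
Your argument follows the same route as the paper's proof: identify $1/\psi_\lambda^\dagger$ as a Stieltjes function via Lemma~\ref{lem:cbfextension} and Proposition~\ref{prop:cbf:prop}(a), read off $R_+$ from the Stieltjes representation, normalize using $\psi_\lambda^\dagger(0)=1$ (Proposition~\ref{prop:dagger:prop}(d)), and verify $\fourier R = \fourier R_+ \cdot \fourier R_-$ from the duality~\eqref{eq:duality}. That is precisely what the paper does. Your additional verification that $\tilde c_2 = \lim_{z\to 0^+} z/\psi_\lambda^\dagger(z) = 0$ via Proposition~\ref{prop:cbf:repr} is correct, but the stated motivation is slightly off: the density $\tilde c_2 + \pi^{-1}\laplace\tilde m_0$ is completely monotone for every $\tilde c_2\ge 0$ (a nonnegative constant is CM), so this check controls the \emph{finiteness} of $R_+$, not its complete monotonicity; in the paper that point is subsumed in the normalization $R_+([0,\infty))=1/\psi_\lambda^\dagger(0)=1$.

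One remark at the end is actually wrong: the atom at $0$, of mass $\tilde c_1 = \lim_{z\to\infty}1/\psi_\lambda^\dagger(z)$, is present precisely when $\psi_\lambda$ is bounded, and since $\psi_\lambda(\xi)\sim (\psi(\lambda^2)/\lambda^2)\,\xi/\psi(\xi)$ as $\xi\to\infty$, this happens exactly when $\psi(\xi)/\xi$ has a positive limit at infinity, i.e., when $X_t$ has a nontrivial Brownian component — \emph{not} in the compound Poisson case. Indeed, when $\psi$ is bounded (compound Poisson), the denominator $1-\psi(\xi)/\psi(\lambda^2)$ stays bounded while the numerator $1-\xi/\lambda^2\to-\infty$, so $\psi_\lambda(\xi)\to\infty$ and $\tilde c_1=0$, i.e., there is no atom. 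This does not affect the correctness of the main argument, which is identical to the paper's.
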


\begin{proof}
By Lemma~\ref{lem:cbfextension}, $\psi_\lambda^\dagger$ is a complete Bernstein function. By Proposition~\ref{prop:cbf:prop}(a), $1 / \psi_\lambda^\dagger$ is a Stieltjes function, and therefore it is the Laplace transform of a measure $R_+$, which has completely monotone density function on $(0, \infty)$. By Proposition~\ref{prop:dagger:prop}(d), $1 / \psi_\lambda^\dagger(0) = 1 / \psi_\lambda(0) = 1$. Hence, $R_+$ is a probability measure. Finally, $\fourier R_+(\xi) = 1 / \psi_\lambda^\dagger(-i \xi)$ (where the holomorphic extension of $\psi_\lambda^\dagger$ is denoted by the same symbol).

Let $R_-$ be the reflection of $R_+$. By~\eqref{lem:cbfextension},
\formula{
 \fourier R(\xi) & = \frac{1}{\psi_\lambda(\xi^2)} = \frac{1}{\psi_\lambda^\dagger(-i \xi)} \, \frac{1}{\psi_\lambda^\dagger(i \xi)} = \fourier R_+(\xi) \fourier R_-(\xi)
}
for $\xi \in \R$. The lemma follows by the exchange formula.
\end{proof}

\begin{lem}
\label{lem:reduction}
Suppose that $F \in \schwartz'$ is a distribution supported in $[0, \infty)$, for which $F * \ph$ is bounded for any $\ph \in \schwartz$. If $F \conv (S \conv R_+)$ is supported in $\set{0}$, then $F$ is a distributional eigenfunction of $\A_\hl$.
\end{lem}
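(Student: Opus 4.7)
The plan is to show that $A \conv F + \psi(\lambda^2) F = A_\lambda \conv F$ is supported in $(-\infty, 0]$; combined with $\supp F \sub [0, \infty)$ and the $\schwartz'$-convolvability of $A$ with $F$, this is exactly the content of Definition~\ref{def:weak}. My approach is to exploit the factorization $A_\lambda = S \conv R_+ \conv R_-$ coming from~\eqref{eq:s},~\eqref{eq:fourierr} and Lemma~\ref{lem:laplacerp} and, via associativity of $\schwartz'$-convolution, to rewrite
\formula{
 A_\lambda \conv F = R_- \conv \bigl((S \conv R_+) \conv F\bigr) .
}
Since $R_-$ is supported in $(-\infty, 0]$ and $(S \conv R_+) \conv F$ is supported in $\set{0}$ by hypothesis, the support of the right-hand side is contained in $(-\infty, 0] + \set{0} = (-\infty, 0]$, which is exactly the desired conclusion.

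The first technical task is to verify the relevant $\schwartz'$-convolvability conditions. For the pair $(A, F)$: every $\ph_1 \in \schwartz$ belongs to $\domain(\A; L^1)$, so $A * \ph_1 = \A \ph_1 \in L^1(\R)$, while $F * \ph_2$ is bounded by hypothesis, hence the defining integral $\int_\R (A * \ph_1)(y) (F * \ph_2)(x-y) dy$ is absolutely convergent for every $x$. The same reasoning covers the pairs $(R_\pm, F)$ (since $R_\pm$ are probability measures, $R_\pm * \ph \in L^1(\R) \cap L^\infty(\R)$) and $(S \conv R_+, R_-)$ (since $S$ has compact support, $(S \conv R_+) * \ph = S * (R_+ * \ph) \in L^1(\R) \cap L^\infty(\R)$); convolvability of $(S \conv R_+, F)$ is built into the hypothesis.

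The main point is justifying associativity of the three-fold $\schwartz'$-convolution $R_- \conv (S \conv R_+) \conv F$. Using the associativity criterion of~\cite{bib:v02}, Section~4.2.8, it suffices to check that for all $\ph_1, \ph_2, \ph_3 \in \schwartz$ the function
\formula{
 (R_- * \ph_1)(y) \, ((S \conv R_+) * \ph_2)(z) \, (F * \ph_3)(x - y - z)
}
is integrable in $(y, z)$ for each $x$. This follows from Fubini, because the first two factors lie in $L^1(\R)$ and the third is bounded. Once associativity (and commutativity) of the triple convolution is in place, one obtains
\formula{
 A_\lambda \conv F = \bigl((S \conv R_+) \conv R_-\bigr) \conv F = R_- \conv \bigl((S \conv R_+) \conv F\bigr) ,
}
and the support calculation from the first paragraph closes the proof.

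I expect the main obstacle to be precisely this bookkeeping of convolvability and associativity. Once the mild integrability properties of $R_\pm * \ph$ and $S * (R_+ * \ph)$ are in hand, the argument reduces to a purely formal manipulation of supports, and uses no analytic input beyond Lemma~\ref{lem:laplacerp} and the generalities on $\schwartz'$-convolution recalled in the Preliminaries.
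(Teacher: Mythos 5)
Your proposal is correct and follows essentially the same route as the paper's proof: you verify the relevant $\schwartz'$-convolvability conditions, invoke the associativity criterion of~\cite{bib:v02} (Section~4.2.8) via the observation that $(S \conv R_+)*\ph$ and $R_-*\ph$ are integrable while $F*\ph$ is bounded, rewrite $A_\lambda \conv F = R_- \conv \bigl((S \conv R_+) \conv F\bigr)$, and conclude from the support calculation. The paper's proof is a more compressed version of the same argument (and notes, as you implicitly use, that $\schwartz'$-convolvability of $F$ and $S \conv R_+$ follows automatically because both are supported in $[0,\infty)$).
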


\begin{proof}
Note that both $F$ and $S \conv R_+$ are supported in $[0, \infty)$, and so they are $\schwartz'$-convolvable. Furthermore, for any $\ph_1, \ph_2, \ph_3 \in \schwartz$, $F * \ph_1$ is bounded, and $(S \conv R_+) * \ph_2$, $R_- * \ph_3$ are integrable. Therefore, the $\schwartz'$-convolution of $F$, $S \conv R_+$ and $R_-$ is associative. In other words, if we let $\tilde{F} = F \conv (S \conv R_+)$, then $\tilde{F} \conv R_- = A_\lambda \conv F$. Clearly, if the support of $\tilde{F}$ is $\set{0}$, then $A_\lambda \conv F = \tilde{F} \conv R_-$ is supported in $(-\infty, 0]$.
\end{proof}

%
%                            ---------- o ----------
%

\subsection{Formula for eigenfunctions}

Lemma~\ref{lem:reduction} enables us to describe a family of solutions to~\eqref{eq:problem}. If $F \conv (S \conv R_+)$ is supported in $\set{0}$, then $\fourier F \cdot \fourier (S \conv R_+) = P$ for some polynomial $P$. Therefore, the Fourier transform of a distributional eigenfunction $F$ is expected to have the form $P / \fourier (S \conv R_+)$; however, extra care is needed because of the zeros of the denominator $\fourier (S \conv R_+)$ at $\pm \lambda$.

Let (see~\eqref{eq:laplacerp})
\formula[eq:lf0]{
\begin{aligned}
 f(\xi) & = \frac{c_\lambda}{(\lambda^2 + \xi^2) \laplace R_+(\xi)} = \frac{c_\lambda \psi_\lambda^\dagger(\xi)}{\lambda^2 + \xi^2} && \qquad \xi > 0 ,
\end{aligned}
}
where
\formula[eq:clambda]{
 c_\lambda & = \sqrt{\frac{\lambda^2}{\psi_\lambda(\lambda^2)}} = \sqrt{\frac{\lambda^4 \psi'(\lambda^2)}{\psi(\lambda^2)}}
}
is a normalization constant. Since $\psi_\lambda^\dagger$ is a complete Bernstein function, $f$ extends to a meromorphic function in $\C \setminus (-\infty, 0]$, with poles at $\pm i \lambda$. We denote this extension again by $f$. By~\eqref{eq:dagger1} and Proposition~\ref{prop:dagger:prop}(b),
\formula[eq:switch]{
\begin{aligned}
 \psi_\lambda^\dagger(\xi) & = \sqrt{\psi_\lambda(\lambda^2)} \, \exp\expr{\frac{1}{\pi} \int_0^\infty \frac{\xi}{\xi^2 + \zeta^2} \, \log \frac{\psi_\lambda(\zeta^2)}{\psi_\lambda(\lambda^2)} \, d\zeta} \\
 & = \sqrt{\frac{\psi(\lambda^2)}{\lambda^2 \psi'(\lambda^2)}} \, \exp\expr{\frac{1}{\pi} \int_0^\infty \frac{\xi}{\xi^2 + \zeta^2} \, \log \frac{\psi'(\lambda^2) (\lambda^2 - \zeta^2)}{\psi(\lambda^2) - \psi(\zeta^2)} \, d\zeta} ;
\end{aligned}
}
this holds true for complex $\xi$ with $\real \xi > 0$. It follows that $f(\xi)$ is equal to the right-hand side of~\eqref{eq:lf}. In order to prove Theorem~\ref{th:eigenfunctions}, we need to show that $f$ is the Laplace transform of a function $F$ of the form~\eqref{eq:f}, and that $F$ is a (distributional, and hence $L^\infty$) eigenfunction of $\A_\hl$.

We define
\formula[eq:fstar]{
 f^*(\xi) & = \frac{c_\lambda}{2 i \lambda} \expr{\frac{\psi_\lambda^\dagger(i \lambda)}{\xi - i \lambda} - \frac{\psi_\lambda^\dagger(-i \lambda)}{\xi + i \lambda}} , && \xi \in \C \setminus \{-i \lambda, i \lambda\} ,
}
and
\formula[eq:gstar]{
 g(\xi) & = f^*(\xi) - f(\xi) = \frac{1}{2 i \lambda} \expr{\frac{c_\lambda \psi_\lambda^\dagger(i \lambda)}{\xi - i \lambda} - \frac{c_\lambda \psi_\lambda^\dagger(-i \lambda)}{\xi + i \lambda}} - \frac{c_\lambda \psi_\lambda^\dagger(\xi)}{\lambda^2 + \xi^2} \, .
}
Note that $C_1 (\xi - i C_2)^{-1}$ is the Laplace transform of $C_1 e^{-i C_2 x} \ind_{(0, \infty)}(x)$. Hence, $f^*$ is the Laplace transform of
\formula[eq:fstar2]{
 F^*(x) & = \frac{c_\lambda \psi_\lambda^\dagger(i \lambda) e^{i \lambda x} - c_\lambda \psi_\lambda^\dagger(-i \lambda) e^{-i \lambda x}}{2 i \lambda} \, \ind_{(0, \infty)}(x) , && x \in \R .
}
By Lemma~\ref{lem:g} (applied to a CBF $c_\lambda \psi_\lambda^\dagger$), $g$ is a Stieltjes function, it is the Laplace transform of a completely monotone function $G(x)$, and $G$ is the Laplace transform of a finite measure. Hence, $f$ is the Laplace transform of $F(x) = F^*(x) - G(x)$.

\begin{prop}
\label{prop:residue}
We have $\psi_\lambda^\dagger(\pm i \lambda) = \sqrt{\psi_\lambda(\lambda^2)} \, e^{\pm i \thet_\lambda}$, where $\thet_\lambda$ is given by~\eqref{eq:theta}.
\end{prop}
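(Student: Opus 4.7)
The plan is to evaluate $\psi_\lambda^\dagger$ at $\pm i\lambda$ by passing to the limit in the integral representation~\eqref{eq:switch}, which was derived for $\real \xi > 0$. Since $\psi_\lambda^\dagger$ is a complete Bernstein function (Lemma~\ref{lem:cbfextension}), it extends holomorphically to $\C \setminus (-\infty, 0]$, and in particular is continuous at $\pm i\lambda$. Thus it suffices to compute the boundary value $\lim_{\eps \to 0^+} \psi_\lambda^\dagger(\eps + i\lambda)$ and compare with the claimed formula.

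Substituting $\xi = \eps + i\lambda$ into~\eqref{eq:switch} and abbreviating $L(\zeta) = \log[\psi'(\lambda^2)(\lambda^2 - \zeta^2)/(\psi(\lambda^2) - \psi(\zeta^2))]$, the integrand becomes
\formula{
 \frac{\eps + i\lambda}{(\eps + i\lambda)^2 + \zeta^2} \, L(\zeta) \, ,
}
which, for each fixed $\zeta \ne \lambda$, converges pointwise as $\eps \to 0^+$ to $-i\lambda L(\zeta) / (\lambda^2 - \zeta^2)$. Observe now the critical cancellation near the singular point $\zeta = \lambda$: since $\psi(\lambda^2) - \psi(\zeta^2) = \psi'(\lambda^2)(\lambda^2 - \zeta^2) + O((\lambda - \zeta)^2)$, we have $L(\zeta) = O(|\lambda - \zeta|)$ as $\zeta \to \lambda$, so the pole of $1/(\lambda^2 - \zeta^2)$ is cancelled by a zero of $L$. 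Writing $\zeta = \lambda + u$, a direct computation gives $|(\eps + i\lambda)^2 + \zeta^2| \ge c \sqrt{u^2 + \eps^2}$ for $|u|$ small, which together with $|L(\lambda + u)| \le C|u|$ shows the integrand is uniformly bounded near $\zeta = \lambda$, independently of $\eps$. Away from $\zeta = \lambda$, the factor $(\eps + i\lambda)/((\eps + i\lambda)^2 + \zeta^2)$ is dominated by $C/(1 + \zeta^2)$, and $L(\zeta)$ grows at most logarithmically (by Proposition~\ref{prop:cbf:ests}(e) applied to $\psi_\lambda$), so an integrable majorant exists.

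Dominated convergence then yields
\formula{
 \psi_\lambda^\dagger(i\lambda) & = \sqrt{\psi_\lambda(\lambda^2)} \exp\expr{-\frac{1}{\pi} \int_0^\infty \frac{i\lambda}{\lambda^2 - \zeta^2} \, L(\zeta) \, d\zeta} = \sqrt{\psi_\lambda(\lambda^2)} \, e^{i \thet_\lambda} \, ,
}
by the definition~\eqref{eq:theta} of $\thet_\lambda$. The case $\xi = -i\lambda$ follows either by the same argument with $\xi = \eps - i\lambda$, or more quickly from the conjugate symmetry $\psi_\lambda^\dagger(\bar\xi) = \overline{\psi_\lambda^\dagger(\xi)}$ (inherited from the fact that $\psi_\lambda^\dagger$ is real on $(0, \infty)$ and holomorphic on $\C \setminus (-\infty, 0]$).

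The only nontrivial point in the proof is the bookkeeping near $\zeta = \lambda$, where the naive integrand is singular; once the pole-zero cancellation is exploited to produce an integrable majorant, the identity is an immediate consequence of dominated convergence and the definition of $\thet_\lambda$.
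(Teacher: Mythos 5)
Your argument is correct and follows the same strategy as the paper: evaluate $\psi_\lambda^\dagger$ at $\eps \pm i\lambda$ via \eqref{eq:switch}, build an $\eps$-uniform integrable majorant, and pass to the limit by dominated convergence; then deduce the $-i\lambda$ case by conjugate symmetry. The only real difference is bookkeeping for the majorant: you split into a neighborhood of $\zeta = \lambda$ (where you use the Taylor expansion $L(\zeta) = O(|\lambda - \zeta|)$ against the $\sqrt{u^2+\eps^2}$ lower bound for the denominator) and the complement (where you invoke Proposition~\ref{prop:cbf:ests}(e) for logarithmic growth of $L$), whereas the paper obtains a single global majorant $4\lambda|\log\zeta - \log\lambda|/(|\zeta-\lambda|(\zeta+\lambda))$ directly from the concavity bound $\zeta^2/\lambda^2 \le \psi_\lambda(\zeta^2)/\psi_\lambda(\lambda^2) \le 1$ for $\zeta \le \lambda$ (and its reverse). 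Both routes give the same estimate near $\zeta = \lambda$; yours makes the pole--zero cancellation more visibly quantitative, while the paper's single estimate is a bit tighter to state.
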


\begin{proof}
By~\eqref{eq:switch}, for $\eps \in (0, \lambda)$ we have
\formula{
 \psi_\lambda^\dagger(i \lambda + \eps) & = \sqrt{\psi_\lambda(\lambda^2)} \, \exp\expr{\frac{1}{\pi} \, \int_0^\infty \frac{i \lambda + \eps}{(i \lambda + \eps)^2 + \zeta^2} \, \log \frac{\psi_\lambda(\zeta^2)}{\psi_\lambda(\lambda^2)} \, d\zeta} .
}
We claim that the integrand on the right-hand side is dominated by an integrable function. Then, by dominated convergence,
\formula{
 \psi_\lambda^\dagger(i \lambda) & = \lim_{\eps \to 0^+} \psi_\lambda^\dagger(i \lambda + \eps) \\
 & = \sqrt{\psi_\lambda(\lambda^2)} \, \exp\expr{\frac{1}{\pi} \, \int_0^\infty \frac{i \lambda}{-\lambda^2 + \zeta^2} \, \log \frac{\psi_\lambda(\zeta^2)}{\psi_\lambda(\lambda^2)} \, d\zeta} ,
}
as desired (see~\eqref{eq:theta} and~\eqref{eq:switch}). Hence, it remains to prove the claim.

Observe that
\formula{
 \abs{\frac{i \lambda + \eps}{(i \lambda + \eps)^2 + \zeta^2}} & = \frac{|i \lambda + \eps|}{|i (\lambda - \zeta) + \eps| \, |i (\lambda + \zeta) + \eps|} \\
 & \le \frac{\lambda + \eps}{|\lambda - \zeta| (\lambda + \zeta)} \le \frac{2 \lambda}{|\lambda - \zeta| (\lambda + \zeta)} \, .
}
Since $\psi_\lambda$ is nonnegative and concave, we have $\psi_\lambda(\zeta^2) / \psi_\lambda(\lambda^2) \ge \zeta^2 / \lambda^2$ for $\zeta \le \lambda$, and $\psi_\lambda(\zeta^2) / \psi_\lambda(\lambda^2) \le \zeta^2 / \lambda^2$ for $\zeta \ge \lambda$. This and monotonicity of $\psi_\lambda$ gives that
\formula{
 \abs{\log \frac{\psi_\lambda(\zeta^2)}{\psi_\lambda(\lambda^2)}} & \le \abs{\log \frac{\zeta^2}{\lambda^2}} = 2 |\log \zeta - \log \lambda| .
}
It follows that
\formula{
 \abs{\frac{i \lambda + \eps}{(i \lambda + \eps)^2 + \zeta^2} \, \log \frac{\psi_\lambda(\zeta^2)}{\psi_\lambda(\lambda^2)}} & \le \frac{4 \lambda |\log \zeta - \log \lambda|}{|\zeta - \lambda| (\zeta + \lambda)} \, ,
}
which is integrable in $\zeta > 0$.
\end{proof}

The above proposition and formulae~\eqref{eq:clambda} and~\eqref{eq:fstar2} give that $F^*(x) = \sin(\lambda x + \thet_\lambda) \ind_\hl(x)$. We summarize our development in the following result.

\begin{cor}
\label{cor:f}
The function $f$ is the Laplace transform of a function $F(x) = F^*(x) - G(x)$, where
\formula{
 F^*(x) & = \sin(\lambda x + \thet_\lambda) \ind_{\hl}(x) ,
}
$G$ is completely monotone on $(0, \infty)$ and it is the Laplace transform of a finite measure on $(0, \infty)$.\qed
\end{cor}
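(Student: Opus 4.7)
The plan is to assemble the corollary directly from the preceding ingredients: the splitting $f = f^{*} - g$ established in \eqref{eq:gstar}, together with the identification of the inverse Laplace transforms of the two summands. First I would record that by the definition of $g$ in \eqref{eq:gstar}, one has $f(\xi) = f^{*}(\xi) - g(\xi)$ for $\real \xi > 0$, so that uniqueness of the Laplace transform on the half-plane reduces the problem to identifying the preimages of $f^{*}$ and $g$ separately and then subtracting.

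For the singular part $f^{*}$, I would invoke the elementary identity $\laplace(e^{i a x} \ind_{\hl})(\xi) = 1/(\xi - ia)$, valid for $\real \xi > 0$ and $a \in \R$, applied to the two simple-pole summands in \eqref{eq:fstar}. This gives directly that $f^{*} = \laplace F^{*}$ with $F^{*}$ as in \eqref{eq:fstar2}. The next step is to convert the complex exponentials into a real sine: using Proposition~\ref{prop:residue}, I would substitute $\psi_\lambda^\dagger(\pm i \lambda) = \sqrt{\psi_\lambda(\lambda^2)}\, e^{\pm i \thet_\lambda}$ into \eqref{eq:fstar2}, and then use the definition \eqref{eq:clambda} of $c_\lambda$ to simplify $c_\lambda \sqrt{\psi_\lambda(\lambda^2)} = \lambda$. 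The two exponential factors of $\lambda$ then cancel against $2 i \lambda$ in the denominator, leaving $F^{*}(x) = \frac{1}{2i}\bigl(e^{i(\lambda x + \thet_\lambda)} - e^{-i(\lambda x + \thet_\lambda)}\bigr)\ind_{\hl}(x) = \sin(\lambda x + \thet_\lambda)\ind_{\hl}(x)$, as claimed.

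For the regular part $g$, I would apply Lemma~\ref{lem:g} to the complete Bernstein function $c_\lambda \psi_\lambda^\dagger$ (which is a CBF by Lemma~\ref{lem:cbfextension} and Proposition~\ref{prop:cbf:prop}) with $a = \lambda$. Comparing the resulting expression with \eqref{eq:gstar} shows that $g$ is precisely the function denoted $g$ in that lemma, so its conclusion yields a completely monotone $G$ on $\hl$ with $\laplace G = g$, and moreover $G$ is itself the Laplace transform of a finite measure on $\hl$. Combining with the previous step, $f = \laplace F^{*} - \laplace G = \laplace(F^{*} - G)$, which is the assertion of the corollary.

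There is no real obstacle here; the statement is essentially a bookkeeping step that packages the computations already performed in Proposition~\ref{prop:residue} and Lemma~\ref{lem:g}. The only minor point requiring care is matching the constants: verifying that the normalization $c_\lambda$ in \eqref{eq:clambda} is exactly the one that turns the pair of complex exponentials in $F^{*}$ into the unit-amplitude sine $\sin(\lambda x + \thet_\lambda)$, rather than a multiple thereof. This is a one-line computation using $\psi_\lambda(\lambda^2) = \psi(\lambda^2)/(\lambda^2 \psi'(\lambda^2))$, but it is the only algebraic identity one must actually check to complete the proof.
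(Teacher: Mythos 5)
Your proposal is correct and follows exactly the same route as the paper: it decomposes $f = f^* - g$ via \eqref{eq:gstar}, inverts $f^*$ term by term using $\laplace(e^{iax}\ind_{\hl})(\xi) = 1/(\xi - ia)$, applies Lemma~\ref{lem:g} to the CBF $c_\lambda\psi_\lambda^\dagger$ to get the completely monotone $G$, and uses Proposition~\ref{prop:residue} together with $c_\lambda\sqrt{\psi_\lambda(\lambda^2)}=\lambda$ to collapse $F^*$ to $\sin(\lambda x + \thet_\lambda)\ind_{\hl}(x)$. This is precisely the bookkeeping the paper performs in the paragraph preceding the corollary (which is why the corollary carries a \qed with no separate proof).
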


\begin{lem}
\label{lem:f}
The function $F$ constructed above is a distributional eigenfunction of $\A_\hl$.
\end{lem}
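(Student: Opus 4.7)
I would prove Lemma~\ref{lem:f} by direct application of Lemma~\ref{lem:reduction}: the task reduces to showing that $F \conv (S \conv R_+)$ is supported in $\set{0}$, and this will follow because the Laplace transforms of the three factors telescope to a constant.

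First I would verify the hypotheses of Lemma~\ref{lem:reduction}. By Corollary~\ref{cor:f}, $F = F^* - G$, where $F^*(x) = \sin(\lambda x + \thet_\lambda) \ind_\hl(x)$ is bounded by $1$ and $G$ is the Laplace transform of a finite measure on $(0, \infty)$, hence bounded by the total mass of that measure. Therefore $F$ is a bounded measurable function supported in $[0, \infty)$, so it is a tempered distribution, and $F * \ph$ is bounded for every $\ph \in \schwartz$. Next, both $F$ and $S \conv R_+$ are supported in $[0, \infty)$, so they are automatically $\schwartz'$-convolvable, and the exchange formula for the Laplace transform applies:
\formula{
 \laplace\bigl(F \conv (S \conv R_+)\bigr)(\xi) & = \laplace F(\xi) \cdot \laplace S(\xi) \cdot \laplace R_+(\xi) .
}
By the construction of $F$ in Corollary~\ref{cor:f}, $\laplace F(\xi) = f(\xi) = c_\lambda \psi_\lambda^\dagger(\xi) / (\lambda^2 + \xi^2)$. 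From~\eqref{eq:s} and the identity $\laplace \delta_0''(\xi) = \xi^2$, we have $\laplace S(\xi) = (\psi(\lambda^2)/\lambda^2)(\lambda^2 + \xi^2)$. Finally, $\laplace R_+(\xi) = 1/\psi_\lambda^\dagger(\xi)$ by Lemma~\ref{lem:laplacerp}. The product collapses to the constant $c_\lambda \psi(\lambda^2)/\lambda^2$.

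To conclude, $F \conv (S \conv R_+)$ is a tempered distribution supported in $[0, \infty)$ whose Laplace transform is constant. Since the Laplace transform is injective on tempered distributions supported in $[0, \infty)$, and $(c_\lambda \psi(\lambda^2)/\lambda^2) \delta_0$ has the same Laplace transform, we deduce that $F \conv (S \conv R_+) = (c_\lambda \psi(\lambda^2)/\lambda^2) \delta_0$, which is supported in $\set{0}$. Lemma~\ref{lem:reduction} then yields that $F$ is a distributional eigenfunction of $\A_\hl$. There is no serious obstacle beyond carefully tracking supports and convolvability to justify the exchange formula, and these are all ensured by the common support in $[0, \infty)$; the argument is a direct Laplace-transform computation once the setup is in place.
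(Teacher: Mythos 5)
Your proof is correct and follows essentially the same route as the paper's: verify the hypotheses of Lemma~\ref{lem:reduction}, apply the exchange formula for Laplace transforms, observe that $\laplace F \cdot \laplace S \cdot \laplace R_+$ collapses to the constant $c_\lambda \psi(\lambda^2)/\lambda^2$, and hence $F \conv (S \conv R_+)$ is a multiple of $\delta_0$. The only cosmetic difference is that you spell out the injectivity of the Laplace transform on distributions supported in $[0,\infty)$, which the paper leaves implicit.
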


\begin{proof}
By~\eqref{eq:s}, we have $\laplace S = (\psi(\lambda^2) / \lambda^2) (\lambda^2 + \xi^2)$. Since $F = F^* - G$ and both $F^*$ and $G$ are bounded, also $F$ is a bounded function. Hence, $F$ is $\schwartz'$-convolvable with $S \conv R_+$, and $\laplace F(\xi) \cdot (\laplace S(\xi) \laplace R_+(\xi)) = f(\xi) \cdot ((\psi(\lambda^2) / \lambda^2) (\lambda^2 + \xi^2) / \psi_\lambda^\dagger(\xi)) = c_\lambda \psi(\lambda^2) / \lambda^2$ when $\real \xi > 0$. By the exchange formula, $F \conv (S \conv R_+) = (c_\lambda \psi(\lambda^2) / \lambda^2) \delta_0$, which is a distribution supported in $\set{0}$. An application of Lemma~\ref{lem:reduction} completes the proof.
\end{proof}

\begin{rem}
Clearly, also the distributional derivatives (of all orders) of $F$ are distributional eigenfunctions of $\A_\hl$. However, they are not in the $L^\infty(\hl)$ domain of $\A_\hl$.\qed
\end{rem}

In order to apply Lemma~\ref{lem:eigenfunctions}, we need to show that $F$ vanishes continuously at~$0$.

\begin{lem}
\label{lem:f0}
The function $F$ has a nonnegative right limit at $0$. If $\psi$ is unbounded, then $F$ vanishes continuously at $0$.
\end{lem}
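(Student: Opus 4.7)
The plan begins by observing that, by Corollary~\ref{cor:f}, we may write $F=F^{*}-G$, where $F^{*}(x)=\sin(\lambda x+\thet_\lambda)\,\ind_\hl(x)$ is continuous from the right at $0$ with $F^{*}(0^{+})=\sin\thet_\lambda$, and $G$ is the Laplace transform of the finite measure $\gamma_\lambda$ on $(0,\infty)$. Dominated convergence applied to $G(x)=\int_0^\infty e^{-xs}\,\gamma_\lambda(ds)$ shows that $G$ is bounded and that $G(0^{+})=\gamma_\lambda((0,\infty))$. Consequently $F$ is bounded, continuous from the right at $0$, and $F(0^{+})=\sin\thet_\lambda-\gamma_\lambda((0,\infty))$ exists in $\R$.

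Next, I would invoke the initial value theorem for Laplace transforms: for $F$ bounded and right-continuous at $0$, one has $F(0^{+})=\lim_{\xi\to\infty}\xi\laplace F(\xi)=\lim_{\xi\to\infty}\xi f(\xi)$ (this follows from the fact that the probability density $\xi e^{-\xi x}\,dx$ concentrates at $0$ as $\xi\to\infty$, after a simple splitting of $\int_0^\delta$ and $\int_\delta^\infty$). From~\eqref{eq:lf0} we have $\xi f(\xi)=c_\lambda\xi\psi_\lambda^\dagger(\xi)/(\lambda^{2}+\xi^{2})$, so the limit equals $c_\lambda L$, where $L:=\lim_{\xi\to\infty}\psi_\lambda^\dagger(\xi)/\xi$. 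Since $\psi_\lambda^\dagger$ is a complete Bernstein function by Lemma~\ref{lem:cbfextension}, Proposition~\ref{prop:cbf:prop}(a) gives that $\psi_\lambda^\dagger(\xi)/\xi$ is a Stieltjes function, in particular nonincreasing, so $L\in[0,\infty)$ exists. This proves $F(0^{+})=c_\lambda L\ge 0$, establishing the first claim.

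For the second claim, assume $\psi$ is unbounded; I must show $L=0$. Using $\psi_\lambda(\xi)=(1-\xi/\lambda^{2})/(1-\psi(\xi)/\psi(\lambda^{2}))$ together with $\psi(\xi)\to\infty$, one verifies directly that $\psi_\lambda(\xi)/\xi\to 0$ as $\xi\to\infty$: indeed, if $\psi(\xi)/\xi$ converges to a positive constant $\beta$, then $\psi_\lambda(\xi)\to\psi(\lambda^{2})/(\lambda^{2}\beta)$ is bounded, while if $\psi(\xi)/\xi\to 0$, then $\psi_\lambda(\xi)/\xi\sim\psi(\lambda^{2})/(\lambda^{2}\psi(\xi))\to 0$. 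Thus given $\eps>0$ there exists $\xi_{0}$ with $\psi_\lambda(\xi)\le\eps\xi$ for $\xi\ge\xi_{0}$; combined with monotonicity of $\psi_\lambda$ for $\xi\le\xi_{0}$, this yields the global bound $\psi_\lambda(\xi)\le\psi_\lambda(\xi_{0})+\eps\xi$ for all $\xi>0$. Applying Proposition~\ref{prop:explogbound} with $\alpha=1$, $C_{1}^{2}=\psi_\lambda(\xi_{0})$, and $C_{2}^{2}=\eps$ gives $\psi_\lambda^\dagger(\xi)\le\sqrt{\psi_\lambda(\xi_{0})}+\sqrt{\eps}\,\xi$, so $\limsup_{\xi\to\infty}\psi_\lambda^\dagger(\xi)/\xi\le\sqrt{\eps}$. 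Letting $\eps\to 0^{+}$ gives $L=0$, and hence $F(0^{+})=0$.

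The main subtle point is transferring sublinear growth of $\psi_\lambda$ at infinity to sublinear growth of $\psi_\lambda^\dagger$: a naive appeal to Proposition~\ref{prop:dagger:prop}(d) handles only the case of bounded $\psi_\lambda$ (that is, $\beta>0$). The key mechanism that covers both subcases of unbounded $\psi$ uniformly is the combination of concavity of $\psi_\lambda$ (to produce an $\eps$-linear dominating majorant after fixing $\xi_{0}$) with the quantitative bound of Proposition~\ref{prop:explogbound} that transfers a linear majorant through the $\dagger$-transform.
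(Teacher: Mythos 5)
Your proof is correct, and it agrees with the paper's proof for the first claim (existence and nonnegativity of $F(0^+)$): both decompose $F=F^*-G$, appeal to the Laplace initial value theorem to get $F(0^+)=\lim_{\xi\to\infty}\xi f(\xi)=c_\lambda\lim_{\xi\to\infty}\psi_\lambda^\dagger(\xi)/\xi$, and use the Stieltjes/complete-Bernstein structure to see that the limit $L$ exists and is nonnegative.

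For the second claim you take a genuinely different route. The paper uses the algebraic identity $\psi_\lambda^\dagger(\xi)/\xi=(\psi_\lambda(\xi)/\xi)^\dagger$ (Proposition~\ref{prop:dagger:prop}(a,c)), then observes $\psi_\lambda(\xi)/\xi\to 0$ and invokes the limit statement Proposition~\ref{prop:dagger:prop}(d). That proposition does cover the zero-limit case (its proof indicates a Fatou argument, with details omitted), so the paper's argument is complete; your characterization of it as handling ``only the case of bounded $\psi_\lambda$'' is not quite fair. Your alternative is nonetheless attractive: using monotonicity of $\psi_\lambda$ you build a global $\eps$-linear majorant $\psi_\lambda(\xi)\le\psi_\lambda(\xi_0)+\eps\xi$, push it through the $\dagger$-transform with the quantitative Proposition~\ref{prop:explogbound} (with $\alpha=1$) to get $\psi_\lambda^\dagger(\xi)\le\sqrt{\psi_\lambda(\xi_0)}+\sqrt\eps\,\xi$, and conclude $L\le\sqrt\eps$ for every $\eps>0$. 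This sidesteps the omitted Fatou step and gives an explicit bound; what you lose is the neat structural identity the paper exploits. One small inconsistency: in your closing paragraph you credit ``concavity of $\psi_\lambda$'' for producing the majorant, but the derivation in the body only uses monotonicity (which suffices).
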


\begin{proof}
Since $G$ is non-increasing (even completely monotone), the limit of $F(x)$ as $x \to 0^+$ exists. Since $\laplace F = f$, where $f$ is given by~\eqref{eq:lf0}, we have
\formula{
 \lim_{x \to 0^+} F(x) & = \lim_{\xi \to \infty} \xi f(\xi) = \lim_{\xi \to \infty} \frac{c_\lambda \xi \psi_\lambda^\dagger(\xi)}{\lambda^2 + \xi^2} = \lim_{\xi \to \infty} \expr{\frac{c_\lambda \xi^2}{\lambda^2 + \xi^2} \, \frac{\psi_\lambda^\dagger(\xi)}{\xi}} .
}
The above expression is nonnegative, proving the first part. Furthermore, by Proposition~\ref{prop:dagger:prop}(a,c), $\psi_\lambda^\dagger(\xi) / \xi = (\psi_\lambda(\xi) / \xi)^\dagger$. If $\psi$ is unbounded on $[0, \infty)$, then $\psi_\lambda(\xi) / \xi$ tends to $0$ as $\xi \to \infty$ (see~\eqref{eq:psilambda}). Hence, by Proposition~\ref{prop:dagger:prop}(d), $\psi_\lambda^\dagger(\xi) / \xi$ converges to $0$ as $\xi \to \infty$, and therefore $\lim_{x \to 0^+} F(x) = 0$.
\end{proof}

\begin{proof}[Proof of Theorem~\ref{th:eigenfunctions}]
In the remark following~\eqref{eq:switch} it was already observed that $\laplace F(\xi) = f(\xi)$ coincides with the right-hand side of the formula~\eqref{eq:lf} for $\laplace F_\lambda$, so that $F = F_\lambda$. We will show that $F_\lambda$ satisfies also the remaining conditions of Theorem~\ref{th:eigenfunctions}.

By Corollary~\ref{cor:f}, $F_\lambda = F^* - G$ has the form~\eqref{eq:f} with completely monotone $G_\lambda(x) = G(x)$, and $G_\lambda$ is the Laplace transform of a finite measure $\gamma_\lambda$ on $(0, \infty)$. By Lemma~\ref{lem:f}, $F_\lambda$ is the distributional eigenfunction of $\A_\hl$.

If $\psi$ is unbounded, then $F_\lambda = F$ vanishes continuously at $0$ (Lemma~\ref{lem:f0}), and $\lim_{x \to \infty} (F_\lambda(x) - \sin(\lambda x + \thet_\lambda)) = 0$ (because $G_\lambda = \laplace \gamma_\lambda$ tends to $0$ at infinity). Furthermore, the subordinate Brownian motion $X_t$ has the strong Feller property, and $0$ is a regular boundary point of $\hl$ (see Preliminaries). Hence, Lemma~\ref{lem:eigenfunctions} applies: $F_\lambda \in \domain(\A_\hl; L^\infty)$ and $\A_\hl F_\lambda = -\psi(\lambda^2) F_\lambda$. When $\psi$ is bounded, then $\A_\hl$ is a bounded operator on $L^\infty(\hl)$, and $\A_\hl F_\lambda = -\psi(\lambda^2) F_\lambda$ by Proposition~\ref{prop:poisson}. In both cases $F_\lambda$ is an $L^\infty$ eigenfunction of $\A_\hl$.

Note that (see~\cite{bib:d65})
\formula{
 P^\hl_t F(x) - F(x) & = \int_0^t P^\hl_s \A_\hl F(x) ds = -\psi(\lambda^2) \int_0^t P^\hl_s f(x) ds
}
for $x \in \hl$. For a fixed $x$, this integral equation is easily solved, and we obtain $P^\hl_t F(x) = e^{-t \psi(\lambda^2)} F(x)$, that is, $F_\lambda$ is the eigenfunction of $P^\hl_t$.

By Proposition~\ref{prop:residue}, $\thet_\lambda = \Arg \psi_\lambda^\dagger(i \lambda)$. Recall that $\psi_\lambda^\dagger(0) = 1$ (by Proposition~\ref{prop:dagger:prop}(d)), and hence (by representation~\eqref{eq:cbf0} for $\psi_\lambda^\dagger$) $\real \psi_\lambda^\dagger(i \lambda) \ge 1$. It follows that $0 \le \thet_\lambda < \pi/2$, as desired. It remains to prove formula~\eqref{eq:gamma0}.

Suppose that (the holomorphic extension of) $\psi(\xi)$ extends to a continuous function $\psi^+(\xi)$ (not to be confused with $\psi^\dagger(\xi)$) in the region $\imag \xi \ge 0$, and $\psi^+(-\xi) \ne \psi(\lambda^2)$ for all $\xi > 0$. Recall that $\laplace G_\lambda = g$ (with $g$ given by~\eqref{eq:gstar}), and $G_\lambda = \laplace \gamma_\lambda$. Hence, by the remark following~\eqref{eq:stieltjes}, $\gamma_\lambda = \pi^{-1} \tilde{m}_0$, where $\tilde{m}_0$ is the representation measure in~\eqref{eq:stieltjes} for the Stieltjes function $g$. To prove formula~\eqref{eq:gamma0}, it suffices to apply Proposition~\ref{prop:cbf:repr}, as we now describe.

We have
\formula{
 g(\xi) & = f^*(\xi) - f(\xi) = f^*(\xi) - \frac{c_\lambda \psi_\lambda^\dagger(\xi)}{\lambda^2 + \xi^2} \, .
}
By Lemma~\ref{lem:cbfextension}, when $\imag \xi > 0$ and $\xi \ne i \lambda$,
\formula{
 g(\xi) & = f^*(\xi) - \frac{c_\lambda \psi_\lambda(-\xi^2)}{\psi_\lambda^\dagger(-\xi) (\lambda^2 + \xi^2)} = f^*(\xi) - \frac{\psi(\lambda^2)}{\lambda^2 \psi_\lambda^\dagger(-\xi) (\psi(\lambda^2) - \psi(-\xi^2))} \, .
}
Hence, $g$ extends to a continuous function $g^+$ in the region $\imag \xi \ge 0$, and for $\xi > 0$,
\formula{
 g^+(-\xi) & = f^*(-\xi) - \frac{c_\lambda \psi(\lambda^2)}{\lambda^2 \psi_\lambda^\dagger(\xi) (\psi(\lambda^2) - \psi^+(-\xi^2))} \, .
}
Note that $f^*(-\xi)$ and $\psi_\lambda^\dagger(\xi)$ are real for $\xi > 0$. By Proposition~\ref{prop:cbf:repr}, $(-\imag g^+(-\xi)) d\xi$ is the measure $\tilde{m}_0$ in the representation~\eqref{eq:stieltjes} of the Stieltjes function $g$. Since $\gamma_\lambda = \pi^{-1} \tilde{m}_0$, $c_\lambda = \sqrt{\lambda^4 \psi'(\lambda^2) / \psi(\lambda^2)}$ and $\psi_\lambda(\lambda^2) = \psi(\lambda^2) / (\lambda^2 \psi'(\lambda^2))$, we obtain
\formula{
 \gamma_\lambda(d\xi) = \frac{-\imag g^+(-\xi)}{\pi} \, d\xi & = \frac{1}{\pi} \frac{\sqrt{\psi'(\lambda^2) \psi(\lambda^2)}}{\psi_\lambda^\dagger(\xi)} \, \imag \frac{1}{\psi(\lambda^2) - \psi^+(-\xi^2)} \, d\xi \\
 & = \frac{1}{\pi} \frac{\sqrt{\psi_\lambda(\lambda^2)}}{\psi_\lambda^\dagger(\xi)} \, \imag \frac{\lambda \psi'(\lambda^2)}{\psi(\lambda^2) - \psi^+(-\xi^2)} \, d\xi .
}
An application of~\eqref{eq:switch} completes the proof.
\end{proof}

\begin{rem}
Using~\eqref{eq:switch}, formulae of Theorem~\ref{th:eigenfunctions} can be written in a more concise form. We record them for easier reference:
\formula[]{
\label{eq:theta0}
 \thet_\lambda & = \frac{1}{\pi} \int_0^\infty \frac{\lambda}{\lambda^2 - \zeta^2} \, \log \frac{\psi_\lambda(\zeta^2)}{\psi_\lambda(\lambda^2)} \, d\zeta = \Arg \psi_\lambda^\dagger(i \lambda) ; \\
\label{eq:lf00}
 \laplace F_\lambda(\xi) & = \frac{\lambda^2}{\lambda^2 + \xi^2} \, \frac{\psi_\lambda^\dagger(\xi)}{\sqrt{\psi_\lambda(\lambda^2)}} \, ; \\
\label{eq:gamma00}
 \gamma_\lambda(d\xi) & = \frac{1}{\pi} \frac{\sqrt{\psi_\lambda(\lambda^2)}}{\psi_\lambda^\dagger(\xi)} \, \imag \frac{\lambda \psi'(\lambda^2)}{\psi(\lambda^2) - \psi^+(-\xi^2)} \, d\xi .
}
\qed
\end{rem}

\begin{rem}
\label{rem:gamma}
The formula~\eqref{eq:gamma0} for $\gamma_\lambda$ remains true for general CBF $\psi$, as long as it is understood in the distributional sense. More precisely, it can be proved that $\gamma_\lambda$ is the limit of
\formula{
 \frac{1}{\pi} \frac{\sqrt{\psi_\lambda(\lambda^2)}}{\psi_\lambda^\dagger(\xi)} \expr{\imag \frac{\lambda \psi'(\lambda^2)}{\psi(\lambda^2) - \psi(-\xi^2 + i \eps)}} \ind_{\hl}(\xi) d\xi
}
as $\eps \to 0^+$. We omit the details.\qed
\end{rem}

\begin{rem}
In order to extend Theorem~\ref{th:eigenfunctions} to more general symmetric L{\'e}vy processes, one needs two ingredients: formula~\eqref{eq:lf} must define a function $F_\lambda$, and the convolution $F \conv (R_+ \conv S) \conv R_-$ needs to be associative (that is, some regularity of $F$ and $R_\pm$ is needed). This seems to be problematic, even for general subordinate Brownian motions.\qed
\end{rem}

%
%                            ---------- o ----------
%

\subsection{Further properties of eigenfunctions}

We begin with a technical result, useful for the computation and estimates of $\thet_\lambda$.

\begin{prop}
We have
\formula[eq:integral]{
 \frac{1}{\pi} \int_0^1 \frac{-\log z}{1 - z^2} \, dz & = \frac{\pi}{8} \, .
}
\end{prop}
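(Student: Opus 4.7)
My plan is to evaluate the integral by expanding the integrand as a power series and reducing it to a classical sum. Since $|z| < 1$ throughout the interval of integration, I would write
\formula{
 \frac{1}{1 - z^2} & = \sum_{n=0}^\infty z^{2n} ,
}
and (after justifying the interchange of sum and integral by monotone convergence, using that $-\log z \ge 0$ on $(0,1)$ and all terms $z^{2n}$ are nonnegative) obtain
\formula{
 \int_0^1 \frac{-\log z}{1 - z^2} \, dz & = \sum_{n=0}^\infty \int_0^1 (-\log z) z^{2n} \, dz .
}

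Next, I would compute each term. A single integration by parts, or the substitution $z = e^{-t}$ followed by $\int_0^\infty t e^{-(2n+1)t} dt = (2n+1)^{-2}$, yields
\formula{
 \int_0^1 (-\log z) z^{2n} \, dz & = \frac{1}{(2n+1)^2} .
}
Hence the integral equals $\sum_{n=0}^\infty (2n+1)^{-2}$.

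Finally, I would invoke Euler's identity $\sum_{n=1}^\infty n^{-2} = \pi^2/6$ and split off the even terms: $\sum_{n=1}^\infty (2n)^{-2} = \pi^2/24$, so that $\sum_{n=0}^\infty (2n+1)^{-2} = \pi^2/6 - \pi^2/24 = \pi^2/8$. Dividing by $\pi$ gives the desired value $\pi/8$. There is no real obstacle here; the only item requiring a word of care is the termwise integration, which is immediate from monotone convergence applied to the nonnegative partial sums.
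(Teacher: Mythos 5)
Your proof is correct and follows essentially the same route as the paper: expand $1/(1-z^2)$ as a geometric series, integrate termwise to get $\sum_{n\ge 0}(2n+1)^{-2}$, and evaluate that sum from $\sum n^{-2}=\pi^2/6$ by subtracting the even terms. The justification via monotone convergence (nonnegative terms) is a minor but welcome addition that the paper leaves implicit.
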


\begin{proof}
By series expansion and integration by parts,
\formula{
 \int_0^1 \frac{-\log z}{1 - z^2} \, dz & = \sum_{n = 0}^\infty \int_0^1 z^{2n} (-\log z) dz = \sum_{n = 0}^\infty \frac{1}{2n + 1} \int_0^1 z^{2n} dz \\
 & = \sum_{n = 0}^\infty \frac{1}{(2n + 1)^2} = \sum_{k = 1}^\infty \frac{1}{k^2} - \sum_{k = 1}^\infty \frac{1}{(2 k)^2} = \frac{3}{4} \sum_{k = 1}^\infty \frac{1}{k^2} = \frac{3}{4} \, \frac{\pi^2}{6} \, ,
}
as desired.
\end{proof}

The following formula for $\thet_\lambda$ is often more convenient than~\eqref{eq:theta}.

\begin{prop}
For $\lambda > 0$,
\formula[eq:theta2]{
 \thet_\lambda & = \frac{1}{\pi} \int_0^1 \frac{1}{1 - z^2} \, \log \frac{\psi_\lambda(\lambda^2 / z^2)}{\psi_\lambda(\lambda^2 z^2)} \, dz .
}
\end{prop}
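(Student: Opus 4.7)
The plan is to derive \eqref{eq:theta2} from the closed-form expression \eqref{eq:theta} by splitting the integration domain at $\zeta = \lambda$ and performing the natural changes of variables that map each half onto $(0,1)$. First rewrite \eqref{eq:theta} in terms of $\psi_\lambda$: from definition \eqref{eq:psilambda} together with $\psi_\lambda(\lambda^2) = \psi(\lambda^2)/(\lambda^2\psi'(\lambda^2))$ one obtains $\psi_\lambda(\zeta^2)/\psi_\lambda(\lambda^2) = \psi'(\lambda^2)(\lambda^2-\zeta^2)/(\psi(\lambda^2)-\psi(\zeta^2))$, so that
\begin{equation*}
\thet_\lambda = -\frac{1}{\pi}\int_0^\infty \frac{\lambda}{\lambda^2-\zeta^2}\,\log\frac{\psi_\lambda(\zeta^2)}{\psi_\lambda(\lambda^2)}\,d\zeta.
\end{equation*}
Since $\psi_\lambda$ is analytic and positive at $\lambda^2$ (Proposition~\ref{prop:cbf:ratio}), the logarithm vanishes to first order in $\zeta^2-\lambda^2$, so the apparent pole of $\lambda/(\lambda^2-\zeta^2)$ at $\zeta = \lambda$ is removable and the integrand is bounded in a neighbourhood of $\zeta = \lambda$.

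Next I would split the integral at $\zeta = \lambda$ and treat each piece with its own substitution. On $(0,\lambda)$ put $\zeta = \lambda z$, $z \in (0,1)$; then $\lambda\,d\zeta/(\lambda^2-\zeta^2) = dz/(1-z^2)$ and the contribution equals
\begin{equation*}
\int_0^1\frac{1}{1-z^2}\,\log\frac{\psi_\lambda(\lambda^2 z^2)}{\psi_\lambda(\lambda^2)}\,dz.
\end{equation*}
On $(\lambda,\infty)$ put $\zeta = \lambda/z$, $z \in (0,1)$; then $d\zeta = -\lambda\,dz/z^2$, $\lambda/(\lambda^2-\zeta^2) = z^2/(\lambda(z^2-1))$, and after accounting for the orientation reversal the contribution equals
\begin{equation*}
-\int_0^1\frac{1}{1-z^2}\,\log\frac{\psi_\lambda(\lambda^2/z^2)}{\psi_\lambda(\lambda^2)}\,dz.
\end{equation*}
Each of these two integrals is convergent on its own: the behaviour at $z = 1$ is controlled exactly as above, while at $z = 0$ the bound $\psi_\lambda(\xi) = O(\xi)$ (valid for any complete Bernstein function) makes $\log\psi_\lambda(\lambda^2/z^2)$ at worst $O(\log(1/z))$, which is integrable.

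Finally, summing the two contributions and combining the logarithms inside a single integral — legitimate because each integrand is integrable, even though $\log\psi_\lambda(\lambda^2)/(1-z^2)$ on its own is not — the terms involving $\psi_\lambda(\lambda^2)$ cancel and one obtains
\begin{equation*}
-\pi\thet_\lambda = \int_0^1\frac{1}{1-z^2}\,\log\frac{\psi_\lambda(\lambda^2 z^2)}{\psi_\lambda(\lambda^2/z^2)}\,dz,
\end{equation*}
which is exactly \eqref{eq:theta2} after inverting the ratio inside the logarithm. There is no substantial obstacle here; the only point requiring care is to keep the two transformed logarithms paired with $\log\psi_\lambda(\lambda^2)$ until after they have been merged, since pulling that constant out against $1/(1-z^2)$ alone would produce a divergent integral on $(0,1)$.
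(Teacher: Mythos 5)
Correct, and essentially the same route as the paper: you split at $\zeta = \lambda$ and substitute $\zeta = \lambda z$ on $(0,\lambda)$ and $\zeta = \lambda/z$ on $(\lambda,\infty)$, which is exactly the paper's change of variables $\zeta = \lambda\xi$ followed by $\xi \mapsto z$ on $(0,1)$ and $\xi \mapsto 1/z$ on $(1,\infty)$. Recasting \eqref{eq:theta} in terms of $\psi_\lambda$ before substituting rather than afterwards is purely cosmetic, and your closing remark about keeping $\log\psi_\lambda(\lambda^2)$ paired with each transformed logarithm until they are merged is a valid caveat that the paper leaves implicit.
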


\begin{proof}
By substituting $\zeta = \lambda \xi$ in~\eqref{eq:theta}, we obtain
\formula{
 \thet_\lambda & = -\frac{1}{\pi} \int_0^\infty \frac{1}{1 - \xi^2} \, \log \frac{\lambda^2 \psi'(\lambda^2)(1 - \xi^2)}{\psi(\lambda^2) - \psi(\lambda^2 \xi^2)} \, d\xi .
}
A substitution $\xi = z$ in the integral over $(0, 1)$ and $\xi = 1/z$ in the integral over $(1, \infty)$ yields
\formula{
 \thet_\lambda & = \frac{1}{\pi} \int_0^1 \frac{1}{1 - z^2} \, \log \frac{\psi(\lambda^2) - \psi(\lambda^2 z^2)}{(\psi(\lambda^2 / z^2) - \psi(\lambda^2)) z^2} \, dz \\ & = \frac{1}{\pi} \int_0^1 \frac{1}{1 - z^2} \, \log \frac{\psi_\lambda(\lambda^2 / z^2)}{\psi_\lambda(\lambda^2 z^2)} \, dz .
\qedhere
}
\end{proof}

\begin{prop}
\label{prop:diff:theta}
The function $\thet_\lambda$ is differentiable in $\lambda > 0$, and~\eqref{eq:theta2} can be differentiated in $\lambda$ under the integral sign.
\end{prop}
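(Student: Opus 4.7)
The plan is to apply Leibniz's rule (i.e.\ dominated convergence). Fix any compact interval $[\lambda_0, \lambda_1] \subset \hl$; it suffices to exhibit an integrable dominant for $|\partial_\lambda I(z, \lambda)|$ on $(0, 1)$, uniformly in $\lambda \in [\lambda_0, \lambda_1]$, where $I(z, \lambda)$ denotes the integrand in~\eqref{eq:theta2}. Since $\psi$ is a non-constant complete Bernstein function, it is real-analytic on $\hl$ with $\psi' > 0$, so the pointwise differentiability of $I$ in $\lambda$ for fixed $z \in (0, 1)$ is automatic.

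First, rewrite and differentiate. Using the identity $\log\psi_\lambda(\eta) = \log|\lambda^2 - \eta| - \log\lambda^2 - \log|\psi(\lambda^2) - \psi(\eta)| + \log\psi(\lambda^2)$ with $\eta = \lambda^2 z^2$ and $\eta = \lambda^2/z^2$, the $\log|\lambda^2 - \eta|$ and $\log\lambda^2$ contributions cancel, leaving
\begin{equation*}
 \log\frac{\psi_\lambda(\lambda^2/z^2)}{\psi_\lambda(\lambda^2 z^2)} = -2\log z + \log\frac{\psi(\lambda^2) - \psi(\lambda^2 z^2)}{\psi(\lambda^2/z^2) - \psi(\lambda^2)}.
\end{equation*}
Differentiating in $\lambda$ and setting $g(\xi) = \xi\psi'(\xi)$ gives
\begin{equation*}
 \partial_\lambda I(z, \lambda) = \frac{2}{\pi\lambda(1-z^2)}\left[\frac{g(\lambda^2) - g(\lambda^2 z^2)}{\psi(\lambda^2) - \psi(\lambda^2 z^2)} - \frac{g(\lambda^2/z^2) - g(\lambda^2)}{\psi(\lambda^2/z^2) - \psi(\lambda^2)}\right].
\end{equation*}
By Cauchy's mean value theorem each bracketed ratio equals $h(c_i)$ for some $c_1 \in (\lambda^2 z^2, \lambda^2)$ and $c_2 \in (\lambda^2, \lambda^2/z^2)$, where $h(\xi) := g'(\xi)/\psi'(\xi) = 1 + \xi\psi''(\xi)/\psi'(\xi)$. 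Proposition~\ref{prop:cbf:ests}(b) yields $\xi\psi''(\xi)/\psi'(\xi) \in [-2, 0]$, hence $|h| \leq 1$ on $\hl$.

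Away from $z = 1$ (say $z \leq 1/2$), the factor $(1-z^2)^{-1}$ is bounded and the trivial bound $|h(c_i)| \leq 1$ on each ratio gives the required uniform control. The real obstacle is the range $z$ close to $1$, where the singular factor $(1-z^2)^{-1}$ has to be cancelled by the vanishing of the bracket. The strategy is to observe that for $z \in [1/2, 1)$ and $\lambda \in [\lambda_0, \lambda_1]$ both $c_1$ and $c_2$ lie in a fixed compact subset $K \subset \hl$ (for instance $K = [\lambda_0^2/4, 4\lambda_1^2]$), on which $h$ is $C^1$ and hence Lipschitz with some constant $L = L(K)$. Since the bracket equals $h(c_1) - h(c_2)$, it is controlled by
\begin{equation*}
 L|c_2 - c_1| \leq L\expr{\lambda^2/z^2 - \lambda^2 z^2} = L\lambda^2 (1-z^2)(1+z^2)/z^2 \leq 8L\lambda_1^2 (1-z^2),
\end{equation*}
and the factor $(1-z^2)$ cancels the singularity, producing a uniform bound on $[1/2, 1)$ as well. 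Combining both regimes gives a uniform---and therefore integrable, since $(0, 1)$ has finite measure---dominant, and Leibniz's rule then delivers both the differentiability of $\thet_\lambda$ and the validity of differentiation under the integral sign in~\eqref{eq:theta2}.
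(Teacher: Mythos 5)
Your proposal is correct and is essentially the same as the paper's proof: both rewrite the log-ratio to isolate the $-2\log z$ term, differentiate, apply Cauchy's mean value theorem twice to reduce each difference quotient to $1 + t\psi''(t)/\psi'(t)$ at intermediate points, invoke the global bound from Proposition~\ref{prop:cbf:ests}(b) away from $z=1$, and use a Lipschitz (mean-value) bound on a fixed compact subset of $\hl$ to cancel the $(1-z^2)^{-1}$ singularity near $z=1$. The only cosmetic difference is that the paper differentiates with respect to $s = \lambda^2$ rather than $\lambda$ directly.
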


\begin{proof}
Let $h(s, z) = (\log (\psi(s) - \psi(s z^2)) - \log (\psi(s / z^2) - \psi(s)) - 2 \log z$ for $z \in (0, 1)$ and $s > 0$. Then $h(\lambda^2, z) = \log (\psi_\lambda(\lambda^2 z^2) / \psi_\lambda(\lambda^2 / z^2))$. Therefore, it suffices to prove that $(1 - z^2)^{-1} |(\partial / \partial s) h(s, z)|$ is integrable over rectangles $z \in (0, 1)$, $s \in (s_1, s_2)$, where $0 < s_1 < s_2$.

We have
\formula{
 \frac{\partial h(s, z)}{\partial s} & = \frac{1}{s} \, \frac{s \psi'(s) - s z^2 \psi'(s z^2)}{\psi(s) - \psi(s z^2)} - \frac{1}{s} \, \frac{s z^{-2} \psi'(s / z^2) - s \psi'(s)}{\psi(s / z^2) - \psi(s)} \, .
}
Using twice Cauchy's mean value theorem (for two functions $t \psi'(t)$ and $\psi(t)$), we obtain that for some $t_1 \in [s z^2, s]$ and $t_2 \in [s, s / z^2]$,
\formula{
 \frac{\partial h(s, z)}{\partial s} & = \frac{1}{s} \frac{\psi'(t_1) + t_1 \psi''(t_1)}{\psi'(t_1)} - \frac{1}{s} \frac{\psi'(t_2) + t_2 \psi''(t_2)}{\psi'(t_2)} \, .
}
Since $t \psi''(t) / \psi'(t)$ is bounded (by Proposition~\ref{prop:cbf:ests}(b)), the above expression is uniformly bounded in $s \in (s_1, s_2)$ and $z \in (0, 1)$. Furthermore, when $s \in (s_1, s_2)$ and $z \in (1/2, 1)$, by the usual mean value theorem,
\formula{
 \abs{\frac{\partial h(s, z)}{\partial s}} & \le \frac{t_2 - t_1}{s} \, \sup_{t \in [s z^2, s / z^2]} \abs{\frac{d}{d t} \frac{\psi'(t) + t \psi''(t)}{\psi'(t)}} \\
 & \le \expr{\frac{1}{z^2} - z^2} \sup_{t \in [s_1 / 4, 4 s_2]} \abs{\frac{d}{d t} \frac{\psi'(t) + t \psi''(t)}{\psi'(t)}} \le C (1 - z^2)
}
for some $C$ depending on $s_1$ and $s_2$. The proof is complete.
\end{proof}

\begin{prop}
\label{prop:theta}
We have
\formula[eq:theta:bound]{
 \thet_\lambda & \le \sup_{\xi \in \R} \frac{\xi \psi_\lambda'(\xi)}{\psi_\lambda(\xi)} \cdot \frac{\pi}{2} \, .
}
\end{prop}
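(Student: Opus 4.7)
The natural starting point is the concise representation~\eqref{eq:theta2} rather than~\eqref{eq:theta}, because its integrand $\log(\psi_\lambda(\lambda^2/z^2)/\psi_\lambda(\lambda^2 z^2))/(1-z^2)$ involves $\psi_\lambda$ evaluated at points related by multiplicative scaling ($\lambda^2/z^2 = (\lambda^2 z^2) \cdot z^{-4}$). This makes it natural to bound the log-ratio using the logarithmic derivative $\xi \psi_\lambda'(\xi)/\psi_\lambda(\xi)$ along the multiplicative scale $u = \log \xi$, which is precisely the quantity that appears in the supremum on the right-hand side of~\eqref{eq:theta:bound}.

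Concretely, I would set $\alpha := \sup_{\xi > 0} \xi \psi_\lambda'(\xi)/\psi_\lambda(\xi)$; if $\alpha = +\infty$ the asserted bound is vacuous, so one may assume $\alpha < \infty$. Applying the fundamental theorem of calculus to $u \mapsto \log \psi_\lambda(e^u)$, one obtains
\[
\log \frac{\psi_\lambda(\lambda^2/z^2)}{\psi_\lambda(\lambda^2 z^2)} = \int_{\log(\lambda^2 z^2)}^{\log(\lambda^2/z^2)} \frac{\xi \psi_\lambda'(\xi)}{\psi_\lambda(\xi)} \bigg|_{\xi = e^u} du \le -4 \alpha \log z
\]
for $z \in (0, 1)$, since the interval of integration has length $-4 \log z > 0$ and the integrand is nonnegative (because $\psi_\lambda$ is a nondecreasing complete Bernstein function by Proposition~\ref{prop:cbf:ratio}).

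Plugging this estimate into~\eqref{eq:theta2} and invoking the integral identity~\eqref{eq:integral} then yields
\[
\thet_\lambda \le \frac{1}{\pi} \int_0^1 \frac{-4 \alpha \log z}{1 - z^2} \, dz = 4 \alpha \cdot \frac{\pi}{8} = \alpha \cdot \frac{\pi}{2},
\]
which is exactly~\eqref{eq:theta:bound}. No real obstacle arises; the whole argument is a one-line estimate once the multiplicative symmetry already encoded in~\eqref{eq:theta2} is exploited, so that the logarithmic derivative appears naturally and the universal constant $\pi/8$ of~\eqref{eq:integral} absorbs the factor $4$ coming from $z^{-4}$.
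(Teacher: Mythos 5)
Your proof is correct and is essentially the same argument as the paper's: both express the log-ratio $\log(\psi_\lambda(\lambda^2/z^2)/\psi_\lambda(\lambda^2 z^2))$ as an integral of the logarithmic derivative $\xi\psi_\lambda'(\xi)/\psi_\lambda(\xi)$ over a multiplicative interval of logarithmic length $-4\log z$, pull out the supremum, and invoke~\eqref{eq:integral}. The paper parametrizes the inner integral by $s\in[z^2,1/z^2]$ rather than by $u=\log\xi$, which is just a change of variables.
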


\begin{proof}
Note that $(d / ds) \log \psi_\lambda(\lambda^2 s) = \lambda^2 \psi_\lambda'(\lambda^2 s) / \psi_\lambda(\lambda^2 s)$. Hence, by~\eqref{eq:theta2},
\formula{
 \thet_\lambda & = \frac{1}{\pi} \int_0^1 \frac{1}{1 - z^2} \expr{\int_{z^2}^{1 / z^2} \frac{\lambda^2 \psi_\lambda'(\lambda^2 s)}{\psi_\lambda(\lambda^2 s)} \, ds} dz .
}
It follows that
\formula{
 \thet_\lambda & \le \expr{\sup_{\xi \in \R} \frac{\xi \psi_\lambda'(\xi)}{\psi_\lambda(\xi)}} \frac{1}{\pi} \int_0^1 \frac{1}{1 - z^2} \expr{\int_{z^2}^{1 / z^2} \frac{1}{s} \, ds} dz . 
}
Finally, by~\eqref{eq:integral},
\formula{
 \frac{1}{\pi} \int_0^1 \frac{1}{1 - z^2} \expr{\int_{z^2}^{1 / z^2} \frac{1}{s} \, ds} dz & = \frac{4}{\pi} \int_0^1 \frac{-\log z}{1 - z^2} \, dz = \frac{\pi}{2} \, .
\qedhere
}
\end{proof}

\begin{cor}
If $\thet_\lambda = 0$ for some $\lambda > 0$, then $\psi$ is linear, $X_t$ is a Brownian motion, and $\thet_\lambda = 0$ for all $\lambda > 0$.
\end{cor}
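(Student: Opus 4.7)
The plan is to exploit the formula~\eqref{eq:theta2}, which expresses $\thet_\lambda$ as an integral of a nonnegative quantity, and to extract rigidity from the hypothesis $\thet_\lambda = 0$.

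First I would observe that, since $\psi_\lambda$ is a complete Bernstein function (by Proposition~\ref{prop:cbf:ratio}), it is nondecreasing on $(0,\infty)$, so for every $z \in (0,1)$ we have $\psi_\lambda(\lambda^2/z^2) \ge \psi_\lambda(\lambda^2 z^2) > 0$, and the weight $1/(1-z^2)$ is strictly positive on $(0,1)$. Hence the integrand in~\eqref{eq:theta2} is nonnegative and continuous in $z \in (0,1)$. If $\thet_\lambda = 0$, nonnegativity and continuity force the integrand to vanish identically, giving
\formula{
 \psi_\lambda(\lambda^2/z^2) & = \psi_\lambda(\lambda^2 z^2) && \text{for all } z \in (0,1) .
}
As $z$ ranges over $(0,1)$, the arguments $\lambda^2 z^2$ and $\lambda^2/z^2$ sweep out $(0,\lambda^2)$ and $(\lambda^2,\infty)$ respectively. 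By monotonicity of $\psi_\lambda$, this forces $\psi_\lambda$ to be constant on $(0,\infty)$; combined with the continuous extension $\psi_\lambda(0) = 1$ noted after~\eqref{eq:psilambda}, we conclude $\psi_\lambda \equiv 1$.

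Next I would unwind the definition~\eqref{eq:psilambda}: the identity $\psi_\lambda(\xi) = 1$ for $\xi > 0$ is equivalent to
\formula{
 1 - \frac{\xi}{\lambda^2} & = 1 - \frac{\psi(\xi)}{\psi(\lambda^2)} \, ,
}
i.e.\ $\psi(\xi) = c \xi$ with $c = \psi(\lambda^2)/\lambda^2 > 0$. Consequently $\Psi(\xi) = \psi(\xi^2) = c \xi^2$, which is exactly the L{\'e}vy--Khintchine exponent of a Brownian motion (with variance $2ct$). Finally, with $\psi(\xi) = c\xi$, a direct substitution in~\eqref{eq:psilambda} gives $\psi_\mu \equiv 1$ for every $\mu > 0$, and then~\eqref{eq:theta2} yields $\thet_\mu = 0$ for all $\mu > 0$.

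No serious obstacles are anticipated: the only real content is the sign of the integrand in~\eqref{eq:theta2}, and the rest is the elementary algebraic inversion of $\psi_\lambda \equiv 1$ back to $\psi$ being linear.
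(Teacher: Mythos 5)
Your proposal is correct and follows essentially the same approach as the paper: both exploit that $\psi_\lambda$ is nondecreasing to see that the integrand in~\eqref{eq:theta2} is nonnegative, so $\thet_\lambda = 0$ forces $\psi_\lambda(\lambda^2/z^2) = \psi_\lambda(\lambda^2 z^2)$ for all $z \in (0,1)$, which can only happen when $\psi$ is linear. You merely spell out the ``it is easily checked'' step of the paper, deducing $\psi_\lambda \equiv 1$ from monotonicity plus the normalization $\psi_\lambda(0) = 1$ and then inverting~\eqref{eq:psilambda}.
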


\begin{proof}
Since $\psi_\lambda$ is increasing, for $z \in (0, 1)$ we have $\psi_\lambda(\lambda^2 / z^2) \ge \psi_\lambda(\lambda^2 z^2)$, and it is easily checked that equality holds if and only if $\psi$ is linear. Hence, by~\eqref{eq:theta2}, $\thet_\lambda \ge 0$, and if $\psi$ is not linear, then $\thet_\lambda > 0$.
\end{proof}

Now we turn to the properties of $F_\lambda$.

\begin{prop}
\label{prop:lf:0}
We have
\formula[eq:lf:0]{
 \lim_{\xi \to 0^+} \laplace F_\lambda(\xi) & = \sqrt{\frac{\psi'(\lambda^2)}{\psi(\lambda^2)}} \, , && \lambda > 0 .
}
\end{prop}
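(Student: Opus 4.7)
The plan is to evaluate the limit directly from the explicit formula established in the derivation of $F_\lambda$. Using~\eqref{eq:lf0}, we have
\formula{
 \laplace F_\lambda(\xi) & = \frac{c_\lambda \psi_\lambda^\dagger(\xi)}{\lambda^2 + \xi^2} \, ,
}
so the problem reduces to computing $\lim_{\xi \to 0^+} \psi_\lambda^\dagger(\xi)$ and evaluating $c_\lambda / \lambda^2$.

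For the first ingredient I would invoke Proposition~\ref{prop:dagger:prop}(d): provided $\lim_{\xi \to 0^+} \psi_\lambda(\xi)$ exists as a positive real, we have $\lim_{\xi \to 0^+} \psi_\lambda^\dagger(\xi) = \sqrt{\psi_\lambda(0)}$. Reading off the value from the definition~\eqref{eq:psilambda} of $\psi_\lambda$, together with the fact that $\psi(0) = 0$ (which holds because $\psi$ is the Laplace exponent of a subordinator, see~\eqref{eq:psi}), gives $\psi_\lambda(0) = (1 - 0)/(1 - \psi(0)/\psi(\lambda^2)) = 1$, and consequently $\lim_{\xi \to 0^+} \psi_\lambda^\dagger(\xi) = 1$. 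The second ingredient is immediate from~\eqref{eq:clambda}: $c_\lambda / \lambda^2 = \sqrt{\psi'(\lambda^2)/\psi(\lambda^2)}$. Assembling the two gives the claimed limit.

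No serious obstacle is anticipated, as this is a short computation once the machinery of Section~\ref{sec:cbf} is in place. The one pitfall worth flagging is that one cannot attempt to pass the limit $\xi \to 0^+$ naively inside the integral representation~\eqref{eq:lf}: the Poisson-type kernel $\xi / (\xi^2 + \zeta^2)$ does \emph{not} vanish uniformly in $\zeta$, but instead concentrates mass $\pi/2$ near $\zeta = 0$, producing a nontrivial boundary contribution equal to $-\tfrac{1}{2} \log \psi_\lambda(\lambda^2)$ in the exponent. Routing the computation through $\psi_\lambda^\dagger$ and Proposition~\ref{prop:dagger:prop}(d) absorbs this delicate boundary behavior automatically, which is why this approach is preferable to direct manipulation of~\eqref{eq:lf}.
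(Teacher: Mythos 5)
Your proposal is correct and follows exactly the same route as the paper: reduce to $\laplace F_\lambda(\xi) = c_\lambda \psi_\lambda^\dagger(\xi)/(\lambda^2 + \xi^2)$, then use Proposition~\ref{prop:dagger:prop}(d) with $\psi_\lambda(0)=1$ to get $\psi_\lambda^\dagger(0)=1$, and substitute $c_\lambda/\lambda^2 = \sqrt{\psi'(\lambda^2)/\psi(\lambda^2)}$ from~\eqref{eq:clambda}. The closing remark about the Poisson-kernel boundary contribution is a sensible caution but is not part of the paper's (one-line) argument.
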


\begin{proof}
The result follows from $\psi_\lambda^\dagger(0) = 1$ (see Proposition~\ref{prop:dagger:prop}(d)), $\laplace F_\lambda(\xi) = c_\lambda \psi_\lambda^\dagger(\xi) / (\lambda^2 + \xi^2)$, and~\eqref{eq:clambda}.
\end{proof}

\begin{lem}
\label{lem:gest}
For all $\lambda, x > 0$, $0 \le G_\lambda(x) \le \sin \thet_\lambda$. Furthermore,
\formula[eq:gint]{
 \int_0^\infty G_\lambda(x) dx & = \frac{1}{\lambda} \expr{\cos \thet_\lambda - \sqrt{\frac{\lambda^2 \psi'(\lambda^2)}{\psi(\lambda^2)}}} , && \lambda > 0 .
}
\end{lem}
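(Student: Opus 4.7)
Both claims are extracted by analysing the Laplace transform $\laplace G_\lambda = g$ of $G_\lambda$, displayed in~\eqref{eq:gstar}, at the endpoint $\xi = 0$. I first record the basic structural properties of $G_\lambda$ that I will need: since $G_\lambda = \laplace \gamma_\lambda$ for the finite measure $\gamma_\lambda$, it is nonnegative, bounded, and (being completely monotone) non-increasing on $(0,\infty)$, with $G_\lambda(0^+) = \gamma_\lambda((0,\infty)) < \infty$.

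For the upper bound $G_\lambda(x) \le \sin \thet_\lambda$, I evaluate~\eqref{eq:f} as $x \to 0^+$: this gives $F_\lambda(0^+) = \sin \thet_\lambda - G_\lambda(0^+)$, and Lemma~\ref{lem:f0} furnishes $F_\lambda(0^+) \ge 0$, so $G_\lambda(0^+) \le \sin \thet_\lambda$. Monotonicity then propagates the estimate to all $x > 0$, while nonnegativity supplies the lower bound $G_\lambda(x) \ge 0$.

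For the integral identity, I use that $G_\lambda \ge 0$ to justify, by monotone convergence,
\formula{
 \int_0^\infty G_\lambda(x)\,dx & = \lim_{\xi \to 0^+} \laplace G_\lambda(\xi) = \lim_{\xi \to 0^+} g(\xi) = g(0) ,
}
the last limit existing because $\psi_\lambda^\dagger$ is a CBF and hence continuous at $0$. Writing $g = f^* - \laplace F_\lambda$, I compute both terms at $\xi = 0$. Proposition~\ref{prop:lf:0} delivers $\laplace F_\lambda(0) = \sqrt{\psi'(\lambda^2)/\psi(\lambda^2)}$. For $f^*(0)$, I substitute $\psi_\lambda^\dagger(\pm i \lambda) = \sqrt{\psi_\lambda(\lambda^2)}\,e^{\pm i \thet_\lambda}$ (Proposition~\ref{prop:residue}) and $c_\lambda = \sqrt{\lambda^2/\psi_\lambda(\lambda^2)}$ into~\eqref{eq:fstar}; the prefactor $c_\lambda\sqrt{\psi_\lambda(\lambda^2)}/(2 i \lambda)$ collapses to $1/(2i)$, so that $f^*(0) = (2i)^{-1}((-i\lambda)^{-1} e^{i\thet_\lambda} - (i\lambda)^{-1} e^{-i\thet_\lambda}) = \lambda^{-1}\cos\thet_\lambda$. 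Subtracting yields exactly~\eqref{eq:gint}.

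There is no genuine obstacle here: the only subtlety is the justification of passing $\xi \to 0^+$ in $\laplace G_\lambda$, which is settled by recording the nonnegativity of $G_\lambda$ (from complete monotonicity) before invoking monotone convergence; every other step is an elementary substitution of the values of $\psi_\lambda^\dagger$ at $0$ and at $\pm i\lambda$ already computed earlier.
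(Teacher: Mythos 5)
Your proof is correct and follows essentially the same route as the paper's: both bound $G_\lambda$ via monotonicity and the nonnegativity of $F_\lambda(0^+)$ from Lemma~\ref{lem:f0}, and both evaluate $\int_0^\infty G_\lambda$ as $\lim_{\xi\to 0^+}(f^*(\xi)-\laplace F_\lambda(\xi))$ using Propositions~\ref{prop:residue} and~\ref{prop:lf:0}. The only difference is cosmetic: you spell out the monotone-convergence step and compute $f^*(0)$ directly, whereas the paper writes the intermediate form $c_\lambda \real\psi_\lambda^\dagger(i\lambda)/\lambda^2$ before simplifying.
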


\begin{proof}
Clearly, $0 \le G_\lambda(x) \le G_\lambda(0)$, and by Lemma~\ref{lem:f0}, we have $0 \le \lim_{x \to 0^+} F_\lambda(x) = \sin \thet_\lambda - G_\lambda(0)$. This proves the first statement. Furthermore, $\laplace G_\lambda(\xi) = g(\xi) = f^*(\xi) - f(\xi) = f^*(\xi) - \laplace F_\lambda(\xi)$. By~\eqref{eq:clambda}, \eqref{eq:fstar}, \eqref{eq:lf:0} and Proposition~\ref{prop:residue},
\formula{
 \int_0^\infty G_\lambda(x) dx & = \lim_{\xi \to 0^+} \laplace G_\lambda(\xi) = f^*(0) - \lim_{\xi \to 0^+} \laplace F_\lambda(\xi) \\
 & = \frac{c_\lambda \real \psi_\lambda^\dagger(i \lambda)}{\lambda^2} - \sqrt{\frac{\psi'(\lambda^2)}{\psi(\lambda^2)}} = \frac{\cos \thet_\lambda}{\lambda} - \sqrt{\frac{\psi'(\lambda^2)}{\psi(\lambda^2)}} \, . \qedhere
}
\end{proof}

\begin{prop}
\label{prop:lfest}
We have
\formula[eq:lfest]{
 |\laplace F_\lambda(\xi)| & \le \frac{|\lambda + \xi|}{|\lambda^2 + \xi^2|} , && \lambda > 0 , \, \real \xi > 0 .
}
\end{prop}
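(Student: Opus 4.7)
The plan is to apply Proposition~\ref{prop:explogbound} to $\psi_\lambda$ with exponent $\alpha = 1$, reducing the claim to a pointwise bound on $\psi_\lambda$ along the positive real axis. By formula~\eqref{eq:lf00},
\formula{
 \laplace F_\lambda(\xi) & = \frac{c_\lambda \psi_\lambda^\dagger(\xi)}{\lambda^2 + \xi^2} \, , & c_\lambda & = \sqrt{\frac{\lambda^2}{\psi_\lambda(\lambda^2)}} \, ,
}
so it suffices to prove
\formula{
 |\psi_\lambda^\dagger(\xi)| & \le \frac{\sqrt{\psi_\lambda(\lambda^2)}}{\lambda} \, |\lambda + \xi| , && \real \xi > 0 ,
}
since $c_\lambda \cdot \sqrt{\psi_\lambda(\lambda^2)}/\lambda = 1$.

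To apply Proposition~\ref{prop:explogbound} (with $C_1 = \sqrt{\psi_\lambda(\lambda^2)}$, $C_2 = \sqrt{\psi_\lambda(\lambda^2)}/\lambda$, and $\alpha = 1$), I need to verify the real-variable inequality
\formula[eq:plan:pointwise]{
 \psi_\lambda(\xi) & \le \psi_\lambda(\lambda^2) \expr{1 + \frac{\xi}{\lambda^2}} , && \xi > 0 .
}
I would prove this in two cases. For $\xi \le \lambda^2$, monotonicity of the Bernstein function $\psi_\lambda$ gives $\psi_\lambda(\xi) \le \psi_\lambda(\lambda^2) \le \psi_\lambda(\lambda^2)(1 + \xi/\lambda^2)$. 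For $\xi \ge \lambda^2$, I would use the fact that $\psi_\lambda$ is a complete Bernstein function, so by Proposition~\ref{prop:cbf:prop}(a) the function $\psi_\lambda(\xi)/\xi$ is a Stieltjes function, in particular completely monotone and hence non-increasing. Therefore $\psi_\lambda(\xi)/\xi \le \psi_\lambda(\lambda^2)/\lambda^2$ on $[\lambda^2, \infty)$, which gives $\psi_\lambda(\xi) \le \psi_\lambda(\lambda^2) \xi/\lambda^2 \le \psi_\lambda(\lambda^2)(1 + \xi/\lambda^2)$.

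Once~\eqref{eq:plan:pointwise} is established, Proposition~\ref{prop:explogbound} yields the bound on $|\psi_\lambda^\dagger(\xi)|$ in the right half-plane, and the desired estimate on $|\laplace F_\lambda(\xi)|$ follows by dividing by $|\lambda^2 + \xi^2|$. There is no real obstacle here: both halves of the splitting of~\eqref{eq:plan:pointwise} are one-line consequences of facts already recorded in the Preliminaries, and Proposition~\ref{prop:explogbound} is tailor-made for converting such a pointwise majorant of a CBF into a half-plane estimate for its $\dagger$-transform.
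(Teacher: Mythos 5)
Your proof is correct. Both you and the paper follow the same skeleton: reduce the claim to a real-axis majorant for $\psi_\lambda$, feed it into Proposition~\ref{prop:explogbound} (with $\alpha = 1$) to get a half-plane bound on $\psi_\lambda^\dagger$, and divide by $|\lambda^2 + \xi^2|$. The difference lies in how the majorant is produced. The paper uses concavity of $\psi_\lambda$ to bound it by its tangent line at $\lambda^2$ (which is exactly~\eqref{eq:psilambdaest}), passes this through Proposition~\ref{prop:explogbound} to obtain~\eqref{eq:psilambdadaggerest}, and only at the very end invokes $\lambda^2 |\psi''(\lambda^2)| \le 2 \psi'(\lambda^2)$ from Proposition~\ref{prop:cbf:ests}(b) to convert the coefficient $\sqrt{|\psi''(\lambda^2)|/(2\psi'(\lambda^2))}$ into $1/\lambda$. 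You instead prove the cleaner secant-type bound $\psi_\lambda(\xi) \le \psi_\lambda(\lambda^2)(1 + \xi/\lambda^2)$ directly, by splitting $(0,\infty)$ at $\lambda^2$ and using monotonicity of $\psi_\lambda$ on one side and monotonicity of $\psi_\lambda(\xi)/\xi$ (via the Stieltjes property from Proposition~\ref{prop:cbf:prop}(a)) on the other. This is arguably slightly tidier: it stays entirely at the level of $\psi_\lambda$ and Proposition~\ref{prop:cbf:ests}(a), and never needs to touch $\psi''$ or Proposition~\ref{prop:cbf:ests}(b). The paper's route, on the other hand, produces the sharper intermediate estimate~\eqref{eq:psilambdadaggerest}, which is recorded for its own sake and reused elsewhere; but for the statement at hand the two arguments land in exactly the same place.
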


\begin{proof}
Since $\laplace F_\lambda(\xi) = c_\lambda \psi_\lambda^\dagger(\xi) / (\lambda^2 + \xi^2)$, by~\eqref{eq:psilambdadaggerest} and~\eqref{eq:clambda} we have
\formula{
 |\laplace F_\lambda(\xi)| & \le \frac{\lambda}{|\lambda^2 + \xi^2|} \abs{1 + \sqrt{\frac{|\psi''(\lambda^2)|}{2 \psi'(\lambda^2)}} \, \xi} .
}
By Proposition~\ref{prop:cbf:ests}(b), $\lambda^2 |\psi''(\lambda^2)| \le 2 \psi'(\lambda^2)$, and~\eqref{eq:lfest} follows.
\end{proof}

\begin{lem}
\label{lem:lfest2}
Suppose that there are $C, \alpha > 0$ such that $\psi'(\xi) \ge C \xi^{\alpha-1}$ for $\xi$ large enough. Then (cf.~\eqref{eq:lfest})
\formula[eq:lfest2]{
 |\laplace F_\lambda(\xi)| & \le \sqrt{\sup_{\zeta > \lambda} \frac{\lambda^{2 - 2\alpha} \psi'(\lambda^2)}{\zeta^{2 - 2\alpha} \psi'(\zeta^2)}} \, \frac{\lambda}{|\lambda^2 + \xi^2|} \abs{1 + \frac{\xi}{\lambda}}^{1 - \alpha}
}
whenever $\lambda > 0$, and $\real \xi > 0$.
\end{lem}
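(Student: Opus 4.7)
The plan is to combine the representation $\laplace F_\lambda(\xi) = c_\lambda \psi_\lambda^\dagger(\xi) / (\lambda^2 + \xi^2)$ with $c_\lambda = \lambda^2 \sqrt{\psi'(\lambda^2) / \psi(\lambda^2)}$, which is how Proposition~\ref{prop:lfest} was obtained, with a sharper estimate on $|\psi_\lambda^\dagger(\xi)|$ in the right half-plane. The shape of~\eqref{eq:lfest2} indicates that the right tool is Proposition~\ref{prop:explogbound}: once we establish $\psi_\lambda(\zeta) \le K^2 (1 + \zeta / \lambda^2)^{1 - \alpha}$ for all $\zeta > 0$ and an appropriate constant $K > 0$, that proposition transfers the bound to $|\psi_\lambda^\dagger(\xi)| \le K |1 + \xi/\lambda|^{1 - \alpha}$ for $\real \xi > 0$.

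Set $K^2 = \sup_{\zeta > \lambda} \psi(\lambda^2) / (\lambda^{2\alpha} \zeta^{2 - 2\alpha} \psi'(\zeta^2))$. The hypothesis $\psi'(\xi) \ge C \xi^{\alpha - 1}$ for large $\xi$ ensures $K < \infty$ and, combined with the fact that $\psi'$ is non-increasing (being completely monotone, by Proposition~\ref{prop:cbf:ests}(b)), forces $\alpha \in (0, 1]$. The key step is the pointwise estimate
\[
 \psi_\lambda(\zeta) \le K^2 \left(1 + \frac{\zeta}{\lambda^2}\right)^{1 - \alpha}, \qquad \zeta > 0,
\]
which I will derive via the mean value theorem. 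For $\zeta \ne \lambda^2$ there is $\eta$ between $\zeta$ and $\lambda^2$ with $\psi_\lambda(\zeta) = \psi(\lambda^2) / (\lambda^2 \psi'(\eta))$. When $\zeta > \lambda^2$, monotonicity of $\psi'$ gives $\psi'(\eta) \ge \psi'(\zeta)$, hence $\psi_\lambda(\zeta) \le \psi(\lambda^2) / (\lambda^2 \psi'(\zeta))$, and the defining inequality of $K^2$ (with $\eta^2 = \zeta$) yields $\psi_\lambda(\zeta) \le K^2 (\zeta / \lambda^2)^{1 - \alpha} \le K^2 (1 + \zeta/\lambda^2)^{1-\alpha}$. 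When $\zeta \le \lambda^2$, instead $\psi'(\eta) \ge \psi'(\lambda^2)$, so $\psi_\lambda(\zeta) \le \psi_\lambda(\lambda^2) \le K^2$, where the last inequality follows by passing to the limit $\zeta \to \lambda^+$ inside the supremum defining $K^2$; since $1 - \alpha \ge 0$, this too is dominated by $K^2 (1 + \zeta / \lambda^2)^{1 - \alpha}$.

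Rewriting the majorant as $(C_1^2 + C_2^2 \zeta)^{1 - \alpha}$ with $C_1 = K^{1/(1 - \alpha)}$ and $C_2 = K^{1/(1 - \alpha)} / \lambda$ (for $\alpha = 1$ the conclusion $|\psi_\lambda^\dagger(\xi)| \le K$ follows directly from the identity $(K^2)^\dagger \equiv K$ combined with the monotonicity of the $\dagger$-transform established in Proposition~\ref{prop:explogbound}), Proposition~\ref{prop:explogbound} gives $|\psi_\lambda^\dagger(\xi)| \le K |1 + \xi/\lambda|^{1 - \alpha}$ for $\real \xi > 0$. Substituting into $\laplace F_\lambda(\xi) = c_\lambda \psi_\lambda^\dagger(\xi) / (\lambda^2 + \xi^2)$ and simplifying $c_\lambda K = \lambda \sqrt{\sup_{\zeta > \lambda} \lambda^{2 - 2\alpha} \psi'(\lambda^2) / (\zeta^{2 - 2\alpha} \psi'(\zeta^2))}$ by direct algebra produces~\eqref{eq:lfest2}. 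The mildly delicate point is handling both sides of $\zeta = \lambda^2$ in the mean value step with the correct direction of the monotonicity of $\psi'$; once this is done, the rest is routine.
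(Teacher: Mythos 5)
Your proof is correct and follows essentially the same route as the paper: both bound $\psi_\lambda(\zeta)$ by $K^2(1+\zeta/\lambda^2)^{1-\alpha}$ (the paper via the concavity inequality $\psi_\lambda(\xi) \le (\psi(\lambda^2)/\lambda^2)\max(1/\psi'(\lambda^2),1/\psi'(\xi))$, you via the equivalent mean value theorem argument) and then transfer to $\psi_\lambda^\dagger$ via Proposition~\ref{prop:explogbound}, finishing with $c_\lambda K = \lambda\sqrt{C_\lambda}$. Your explicit remark that the hypothesis forces $\alpha\le 1$, and your separate treatment of $\alpha=1$ (which requires factoring out the constant $K^2$ before invoking Proposition~\ref{prop:explogbound} with exponent $0$, rather than citing a ``monotonicity of $\dagger$'' that is not literally stated there), are small points of care that the paper leaves implicit but do not change the substance of the argument.
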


\begin{proof}
Since $\psi$ is concave and increasing, for $\lambda, \xi > 0$ we have
\formula{
 \psi_\lambda(\xi) & = \frac{\psi(\lambda^2)}{\lambda^2} \, \frac{\lambda^2 - \xi}{\psi(\lambda^2) - \psi(\xi)} \le \frac{\psi(\lambda^2)}{\lambda^2} \, \max \expr{\frac{1}{\psi'(\lambda^2)}, \frac{1}{\psi'(\xi)}} .
}
Let $C_\lambda$ be the supremum in~\eqref{eq:lfest2} (finite by the assumption), so that $\psi'(\lambda^2) / \psi'(\xi) \le C_\lambda (\xi / \lambda^2)^{1 - \alpha}$ for all $\xi > \lambda^2$. Then
\formula{
 \psi_\lambda(\xi) & \le \frac{C_\lambda \psi(\lambda^2)}{\lambda^2 \psi'(\lambda^2)} \expr{1 + \frac{\xi}{\lambda^2}}^{1-\alpha} .
}
By Proposition~\ref{prop:explogbound}, we have
\formula[eq:psilambdadaggerest2]{
 \psi_\lambda^\dagger(\xi) & \le \sqrt{\frac{C_\lambda \psi(\lambda^2)}{\lambda^2 \psi'(\lambda^2)}} \abs{1 + \frac{\xi}{\lambda}}^{1 - \alpha} , && \lambda > 0 , \, \real \xi > 0 .
}
The lemma follows from $\laplace F_\lambda(\xi) = c_\lambda \psi_\lambda^\dagger(\xi) / (\xi^2 + \lambda^2)$ and~\eqref{eq:clambda}.
\end{proof}

\begin{lem}
\label{lem:power}
Suppose that there are $C, \alpha > 0$ such that $\psi'(\xi) \ge C \xi^{\alpha-1}$ for sufficiently large $\xi$. Then for each $\lambda > 0$, the function $F_\lambda$ is H{\"o}lder continuous with any exponent less than $\alpha$. More precisely, there is an absolute constant $c$ such that
\formula[eq:power]{
 \frac{|F_\lambda(x_1) - F_\lambda(x_2)|}{|x_1 - x_2|^\eps} & \le \frac{c \lambda^\eps}{\alpha - \eps} \, \sqrt{\sup_{\zeta > \lambda} \frac{\lambda^{2 - 2\alpha} \psi'(\lambda^2)}{\zeta^{2 - 2\alpha} \psi'(\zeta^2)}}
}
for all $\eps \in (0, \alpha)$, $\lambda > 0$ and $x_1, x_2 \in \R$.
\end{lem}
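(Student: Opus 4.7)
Use the decomposition $F_\lambda(x) = \sin(\lambda x + \thet_\lambda) - G_\lambda(x)$ for $x > 0$ provided by Corollary~\ref{cor:f}, where $G_\lambda(x) = \int_0^\infty e^{-x\zeta}\gamma_\lambda(d\zeta)$ is the Laplace transform of a finite measure. The sine piece is handled directly: $|\sin(\lambda x_1 + \thet_\lambda) - \sin(\lambda x_2 + \thet_\lambda)| \le \min(2, \lambda|x_1-x_2|) \le 2\lambda^\eps|x_1-x_2|^\eps$, because $\min(1,s) \le s^\eps$ for $s \ge 0$, $\eps \in [0,1]$. For the completely monotone piece, the elementary inequality $|e^{-a}-e^{-b}| \le |a-b|^\eps$ (for $a,b \ge 0$, $\eps \in [0,1]$) yields
\[
|G_\lambda(x_1) - G_\lambda(x_2)| \le |x_1-x_2|^\eps \int_0^\infty \zeta^\eps\,\gamma_\lambda(d\zeta) =: |x_1-x_2|^\eps M_\eps,
\]
so the problem reduces to bounding $M_\eps$ by $c\lambda^\eps\sqrt{C_\lambda}/(\alpha-\eps)$, where $C_\lambda$ denotes the supremum in~\eqref{eq:lfest2}. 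The case $x_1 \le 0 < x_2$ follows by applying the resulting estimate at $x_1 = 0^+$, using $F_\lambda(0^+) = 0$ from Lemma~\ref{lem:f0}; the case $x_1, x_2 \le 0$ is trivial.

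To bound $M_\eps$, consider $\phi(t) := \sin\thet_\lambda - G_\lambda(t) = \int_0^\infty(1-e^{-t\zeta})\gamma_\lambda(d\zeta)$, which is nondecreasing from $0$ to $\sin\thet_\lambda \le 1$. Its Laplace transform, computed from $g = f^* - f$ and the explicit form of $f^*$, equals
\[
\laplace\phi(\xi) = \frac{\sin\thet_\lambda\,\lambda^2}{\xi(\xi^2+\lambda^2)} + \frac{c_\lambda\psi_\lambda^\dagger(\xi) - \lambda\cos\thet_\lambda}{\xi^2+\lambda^2}.
\]
Since any complete Bernstein function grows at most linearly, the hypothesis forces $\alpha \le 1$; combining this with the bound~\eqref{eq:psilambdadaggerest2} on $\psi_\lambda^\dagger$ and $|\sin\thet_\lambda|, |\cos\thet_\lambda| \le 1$, a direct estimate gives $|\laplace\phi(\xi)| \le C\sqrt{C_\lambda}\lambda^\alpha\xi^{-1-\alpha}$ for all $\xi \ge \lambda$, with $C$ absolute. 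The elementary Markov-type inequality $\phi(1/\xi) \le e\cdot\xi\laplace\phi(\xi)$, valid for nondecreasing $\phi$ with $\phi(0^+)=0$ and proved via $\xi\laplace\phi(\xi) \ge \phi(1/\xi)\int_{1/\xi}^\infty \xi e^{-\xi t}dt = e^{-1}\phi(1/\xi)$, then delivers
\[
\phi(t) \le C'\sqrt{C_\lambda}(\lambda t)^\alpha \quad \text{for } t \le 1/\lambda,
\]
while $\phi(t) \le 1$ for all $t$.

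Finally, the identity $\zeta^\eps = \frac{\eps}{\Gamma(1-\eps)}\int_0^\infty(1-e^{-t\zeta})t^{-1-\eps}dt$ and Fubini (applicable since the integrand is nonnegative) give $M_\eps = \frac{\eps}{\Gamma(1-\eps)}\int_0^\infty\phi(t)t^{-1-\eps}dt$. Splitting at $t = 1/\lambda$ and using the two bounds on $\phi$,
\[
M_\eps \le \frac{\eps}{\Gamma(1-\eps)}\expr{C'\sqrt{C_\lambda}\lambda^\alpha\cdot\frac{\lambda^{\eps-\alpha}}{\alpha-\eps} + \frac{\lambda^\eps}{\eps}} \le \frac{c\sqrt{C_\lambda}\lambda^\eps}{\alpha-\eps},
\]
using that $\sqrt{C_\lambda} \ge 1$ (the supremum defining $C_\lambda$ includes $\zeta = \lambda$, where the ratio equals $1$) and that $\eps/\Gamma(1-\eps)$ is bounded on $(0, 1)$. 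Combining with the sine estimate and the trivial $G_\lambda$ reduction gives the claimed H\"older bound.

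The main technical point is the sharp bound $|\laplace\phi(\xi)| \le C\sqrt{C_\lambda}\lambda^\alpha\xi^{-1-\alpha}$ for $\xi \ge \lambda$: the first summand of $\laplace\phi$ contributes only $O(\lambda^2/\xi^3)$ at infinity, but the second summand requires showing that the apparent $O(\xi^{-1})$ decay from $\sin\thet_\lambda/\xi - g(\xi)$ is improved to $O(\sqrt{C_\lambda}\lambda^\alpha\xi^{-1-\alpha})$ after using the precise pointwise bound~\eqref{eq:psilambdadaggerest2}. Once this decay estimate is in place, the Tauberian step and the moment integral are routine.
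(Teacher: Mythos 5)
Your proof is correct, and it is a genuinely different route from the one in the paper. The paper's proof replaces the original decomposition by a refined one, $\laplace F_\lambda = \tilde f^* - \tilde g$ with $\tilde F^*(x)=(1-e^{-\lambda x})\sin(\lambda x+\thet_\lambda)\ind_\hl(x)$, so that the remainder $\tilde g$ has enough decay on $i\R$ to make $\int |\xi|^\eps |\tilde g(i\xi)|\,d\xi$ converge; this requires pointwise bounds on $\psi_\lambda^\dagger$ and its derivative along the imaginary axis (via Proposition~\ref{prop:cbf:ests}(c,d)) in three separate regions of $\xi$, plus a standard Fourier-transform H\"older estimate. You instead work with the \emph{original} decomposition $F_\lambda = \sin(\lambda x + \thet_\lambda) - G_\lambda$ from Corollary~\ref{cor:f}, handle the sine trivially, and reduce the $G_\lambda$ part to bounding the $\eps$-moment $M_\eps = \int_0^\infty \zeta^\eps\,\gamma_\lambda(d\zeta)$. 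The key device is the Tauberian/Markov-type estimate $\phi(1/\xi)\le e\,\xi\laplace\phi(\xi)$ for the nondecreasing function $\phi = \sin\thet_\lambda - G_\lambda$, combined with the identity $\zeta^\eps = \frac{\eps}{\Gamma(1-\eps)}\int_0^\infty (1-e^{-t\zeta})t^{-1-\eps}\,dt$. This keeps everything on the real positive half-line: the only estimate on $\psi_\lambda^\dagger$ needed is the real-argument bound~\eqref{eq:psilambdadaggerest2}, and one avoids entirely the imaginary-axis estimates and the three-regime case analysis for $\tilde g$. I verified the explicit Laplace transform $\laplace\phi(\xi) = \sin\thet_\lambda\,\lambda^2/(\xi(\xi^2+\lambda^2)) + (c_\lambda\psi_\lambda^\dagger(\xi)-\lambda\cos\thet_\lambda)/(\xi^2+\lambda^2)$ (using $c_\lambda\psi_\lambda^\dagger(\pm i\lambda) = \lambda e^{\pm i\thet_\lambda}$), the decay bound $O(\sqrt{C_\lambda}\,\lambda^\alpha\xi^{-1-\alpha})$ for $\xi\ge\lambda$ (using $\alpha\le 1$ and $\sqrt{C_\lambda}\ge 1$), the moment computation, and the reduction of the $x_1\le 0 < x_2$ case to the interior one via $F_\lambda(0^+)=0$ (Lemma~\ref{lem:f0}); all steps go through with absolute constants. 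Your approach is arguably the more transparent of the two, at the cost of exploiting more heavily the specific structure of $G_\lambda$ as a completely monotone function (whereas the paper's argument is closer in spirit to a generic Fourier-side H\"older estimate).
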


\begin{proof}
We decompose $\laplace F_\lambda = c_\lambda \psi_\lambda^\dagger(\xi) / (\lambda^2 + \xi^2)$ into the difference of $\tilde{f}^*$ and $\tilde{g}$ in a similar way to the decomposition $\laplace F_\lambda = f^* - g$ in the derivation of the formula for $F_\lambda$, but this time $\tilde{f}^*$ and $\tilde{g}$ have a better decay rate at infinity. When $\real \xi > 0$, define
\formula{
 \tilde{f}^*(\xi) & = \frac{c_\lambda}{2 i \lambda} \expr{\frac{\lambda \psi_\lambda^\dagger(i \lambda)}{(\xi - i \lambda)(\xi - i \lambda + \lambda)} - \frac{\lambda \psi_\lambda^\dagger(-i \lambda)}{(\xi + i \lambda)(\xi + i \lambda + \lambda)}} , \\
 \tilde{g}(\xi) & = \frac{c_\lambda}{2 i \lambda} \biggl(\frac{\lambda \psi_\lambda^\dagger(i \lambda) - (\xi - i \lambda + \lambda) \psi_\lambda^\dagger(\xi)}{(\xi - i \lambda)(\xi - i \lambda + \lambda)} \\ & \hspace{3cm} - \frac{\lambda \psi_\lambda^\dagger(-i \lambda) - (\xi + i \lambda + \lambda) \psi_\lambda^\dagger(\xi)}{(\xi + i \lambda)(\xi + i \lambda + \lambda)}\biggr) .
}
Note that $\lambda (\xi \mp i \lambda)^{-1} (\xi \mp i \lambda + \lambda)^{-1}$ is the Laplace transform of the function $(1 - e^{-\lambda x}) e^{\pm i \lambda x} \ind_{\hl}(x)$, and $c_\lambda \psi_\lambda^\dagger(\pm i \lambda) = \lambda e^{\pm i \thet_\lambda}$. Therefore, $\tilde{f}^*$ is the Laplace transform of
\formula{
 \tilde{F}^*(x) & = (1 - e^{-\lambda x}) \sin(\lambda x + \thet_\lambda) \ind_{\hl}(x) .
}
Note that $\tilde{F}^*$ is Lipschitz continuous on $\R$. In fact, for $x > 0$ we have
\formula{
 |(\tilde{F}^*)'(x)| & = |\lambda e^{-\lambda x} \sin(\lambda x + \thet_\lambda) + \lambda (1 - e^{-\lambda x}) \cos(\lambda x + \thet_\lambda)| \\
 & \le \sqrt{\lambda^2 e^{-2\lambda x} + \lambda^2 (1 - e^{-\lambda x})^2} \le \lambda ,
}
so that the Lipschitz constant of $\tilde{F}^*$ is not greater than $\lambda$.

The function $\tilde{g}$ extends continuously to $i \R$. Denote the summands in the parentheses in the definition of $\tilde{g}$ by $\tilde{g}_+$ and $\tilde{g}_-$, so that $\tilde{g}(\xi) = (c_\lambda / (2 i \lambda)) (\tilde{g}_+(\xi) - \tilde{g}_-(\xi))$. We first consider $\tilde{g}_+(i \xi)$ for $\xi \in [\lambda/2, 2\lambda]$. We have
\formula{
 |\tilde{g}_+(i \xi)| & = \abs{\frac{\lambda \psi_\lambda^\dagger(i \lambda) - (i \xi - i \lambda + \lambda) \psi_\lambda^\dagger(i \xi)}{(i \xi - i \lambda)(i \xi - i \lambda + \lambda)}} \\ & \le \frac{\lambda |\psi_\lambda^\dagger(i \lambda) - \psi_\lambda^\dagger(i \xi)| + |\xi - \lambda| |\psi_\lambda^\dagger(i \xi)|}{|\xi - \lambda| \sqrt{(\xi - \lambda)^2 + \lambda^2}} \\
 & \le \frac{1}{\sqrt{(\xi - \lambda)^2 + \lambda^2}} \expr{\lambda \sup_{\zeta \in [\lambda, \xi]} |(\psi_\lambda^\dagger)'(i \zeta)| + |\psi_\lambda^\dagger(i \xi)|} ;
}
when $\lambda > \xi$, then we understand that $[\lambda, \xi]$ denotes the interval $[\xi, \lambda]$. By Proposition~\ref{prop:cbf:ests}(c,d), $|(\psi_\lambda^\dagger)'(i \zeta)| \le 2 \psi_\lambda^\dagger(\zeta) / \zeta$ and $|\psi_\lambda^\dagger(i \zeta)| \le 2 \psi_\lambda^\dagger(\zeta)$ for $\zeta \in \R$; the latter inequality is used frequently in the remainder of the proof. Using also $\sqrt{(\xi - \lambda)^2 + \lambda^2} \ge \lambda$, we obtain
\formula{
 |\tilde{g}_+(i \xi)| & \le \frac{1}{\lambda} \expr{\lambda \sup_{\zeta \in [\lambda/2, 2\lambda]} \frac{2 \psi_\lambda^\dagger(\zeta)}{\zeta} + 2 \psi_\lambda^\dagger(\xi)} \le \frac{6 \psi_\lambda^\dagger(2 \lambda)}{\lambda} \, .
}
By~\eqref{eq:psilambdadaggerest2}, $c_\lambda \psi_\lambda^\dagger(2\lambda) \le 3^{1-\alpha} \lambda \sqrt{C_\lambda} \le 3 \lambda \sqrt{C_\lambda}$, where $C_\lambda$ is the supremum in~\eqref{eq:lfest2}. Hence,
\formula{
 |c_\lambda \tilde{g}_+(i \xi)| & \le 18 \sqrt{C_\lambda} \, , && \lambda > 0 , \, \xi \in [\lambda/2, 2\lambda] .
}
When $\xi \in [-2\lambda, \lambda/2]$, we simply have $|\xi - \lambda| \ge \lambda / 2$, $\sqrt{(\xi - \lambda)^2 + \lambda^2} \ge \lambda$, so that
\formula{
 |\tilde{g}_+(i \xi)| & \le \frac{\lambda |\psi_\lambda^\dagger(i \lambda)|}{|\xi - \lambda| \sqrt{(\xi - \lambda)^2 + \lambda^2}} + \frac{|\psi_\lambda^\dagger(i \xi)|}{|\xi - \lambda|} \le \frac{4 \psi_\lambda^\dagger(\lambda) + 4 \psi_\lambda^\dagger(2\lambda)}{\lambda} \, .
}
Using~\eqref{eq:psilambdadaggerest2}, we obtain that $c_\lambda \psi_\lambda^\dagger(\lambda) \le 2^{1-\alpha} \lambda \sqrt{C_\lambda} \le 2 \lambda \sqrt{C_\lambda}$. Together with $c_\lambda \psi_\lambda^\dagger(2 \lambda) \le 3 \lambda \sqrt{C_\lambda}$, this gives
\formula{
 |c_\lambda \tilde{g}_+(i \xi)| & \le 20 \sqrt{C_\lambda} \, , && \lambda > 0 , \, \xi \in [-2\lambda, \lambda/2] .
}
Similar estimates hold for $c_\lambda \tilde{g}_-$. It follows that
\formula{
 |\tilde{g}(i \xi)| & \le \frac{c_\lambda (|\tilde{g}_+(\xi)| + |\tilde{g}_-(\xi)|)}{2 \lambda} \le \frac{20 \sqrt{C_\lambda}}{\lambda} , && \lambda > 0 , \, \xi \in [-2\lambda, 2\lambda] .
}
The estimate for $\xi \in \R \setminus [-2\lambda, 2\lambda]$ is obtained using $|\tilde{g}(i \xi)| \le |\laplace F_\lambda(i \xi)| + |\tilde{f}^*(i \xi)|$ (here $\laplace F_\lambda(i \xi)$ denotes the continuous boundary limit). By the inequalities $|\xi - \lambda| \ge |\xi| / 2$, $\sqrt{(\xi - \lambda)^2 + \lambda^2} \ge |\xi| / 2$, we have
\formula{
 |\tilde{f}^*(i \xi)| & \le \frac{c_\lambda}{2} \expr{\frac{|\psi_\lambda^\dagger(i \lambda)|}{|\xi - \lambda| \sqrt{(\xi - \lambda)^2 + \lambda^2}} + \frac{|\psi_\lambda^\dagger(-i \lambda)|}{|\xi + \lambda| \sqrt{(\xi + \lambda)^2 + \lambda^2}}} \\ & \le \frac{8 c_\lambda \psi_\lambda^\dagger(\lambda)}{\xi^2} \le \frac{16 \lambda \sqrt{C_\lambda}}{\xi^2} \, .
}
Hence, using also~\eqref{eq:lfest2} and the inequality $\xi^2 - \lambda^2 \ge (|\xi| + \lambda)^2 / 3$, for $\xi \in \R \setminus [-2\lambda, 2\lambda]$ we obtain
\formula{
 |\tilde{g}(i \xi)| & \le |\laplace F_\lambda(i \xi)| + |\tilde{f}^*(i \xi)| \le \frac{\lambda \sqrt{C_\lambda} \, (1 + |\xi| / \lambda)^{1 - \alpha}}{\xi^2 - \lambda^2} + \frac{16 \lambda \sqrt{C_\lambda}}{\xi^2} \\
 & \le \frac{3 \sqrt{C_\lambda}}{\lambda (1 + |\xi| / \lambda)^{1 + \alpha}} + \frac{16 \sqrt{C_\lambda}}{\lambda (|\xi| / \lambda)^2} \le \frac{20 \sqrt{C_\lambda}}{\lambda} \expr{\frac{\lambda}{|\xi|}}^{1 + \alpha} .
}
This way we have estimated $|\tilde{g}(i \xi)|$ for all $\xi \in \R$. The function $\tilde{g}$ is the Laplace transform of $\tilde{G}$, and $\tilde{G}$ is the inverse Fourier transform of $\tilde{g}(-i \xi)$ ($\xi \in \R$). Fix $\eps \in (0, \alpha)$. We have
\formula{
 |\tilde{G}(x_1) - \tilde{G}(x_2)| & = \frac{1}{2 \pi} \abs{\int_{-\infty}^\infty (e^{-i x_1 \xi} - e^{-i x_2 \xi}) \tilde{g}(-i\xi) d\xi} \\
 & \le \frac{1}{2 \pi} \, \sup_{\xi \in \R} \frac{|e^{-i x_1 \xi} - e^{-i x_2 \xi}|}{|\xi|^\eps} \int_{-\infty}^\infty \xi^\eps |\tilde{g}(-i\xi)| d\xi .
}
Since $|e^{-i x_1 \xi} - e^{-i x_2 \xi}| \le \min(2, |\xi| |x_1 - x_2|)$, the supremum is at most $2^{1-\eps} |x_1 - x_2|^\eps \le 2 |x_1 - x_2|^\eps$. Furthermore,
\formula{
 \int_{-\infty}^\infty \xi^\eps |\tilde{g}(-i\xi)| d\xi & \le \frac{20 \sqrt{C_\lambda}}{\lambda} \expr{2 \int_0^{2 \lambda} \xi^\eps d\xi + 2 \int_{2 \lambda}^\infty \frac{\lambda^{1 + \alpha}}{\xi^{1 + \alpha - \eps}} \, d\xi} \\
 & = 20 \sqrt{C_\lambda} \expr{\frac{2^{2 + \eps} \lambda^\eps}{1 + \eps} + \frac{2^{1 - \alpha + \eps} \lambda^\eps}{\alpha - \eps}} \\ & = 20 \sqrt{C_\lambda} \expr{8 + \frac{2}{\alpha - \eps}} \lambda^\eps \le \frac{200 \sqrt{C_\lambda} \lambda^\eps}{\alpha - \eps} \, .
}
In particular, $\tilde{G}$ is H{\"o}lder continuous of order $\eps$, with H{\"o}lder constant at most $(200 / \pi) \lambda^\eps \sqrt{C_\lambda} / (\alpha - \eps)$. The lemma is proved.
\end{proof}

\begin{cor}
\label{cor:power}
Suppose that for some $\eps > 0$,
\formula[eq:fpt:a4]{
 \limsup_{\xi \to 0^+} \frac{\xi |\psi''(\xi)|}{\psi'(\xi)} & < 1 - \eps , & \limsup_{\xi \to \infty} \frac{\xi |\psi''(\xi)|}{\psi'(\xi)} & < 1 - \eps .
}
(see~\eqref{eq:fpt:a3}). Then there is $C > 0$ (depending on $\eps$) such that
\formula{
 |F_\lambda(x_1) - F_\lambda(x_2)| & \le C \lambda^\eps |x_1 - x_2|^\eps , && \lambda > 0, \, x_1, x_2 \in \R ,
}
and
\formula{
 |\laplace F_\lambda(\xi)| \le C \, \frac{\lambda}{\lambda^2 + \xi^2} \abs{1 + \frac{\xi}{\lambda}}^{1 - \eps} , && \lambda > 0 , \, \real \xi > 0 .
}
\end{cor}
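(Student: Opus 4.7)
The plan is to reduce the corollary to a combined application of Lemma~\ref{lem:power} and Lemma~\ref{lem:lfest2}. Both of these require a lower bound of the form $\psi'(\xi) \ge C \xi^{\alpha - 1}$ for large $\xi$, together with a uniform-in-$\lambda$ bound on the quantity
\formula{
 C_\lambda & = \sup_{\zeta > \lambda} \frac{\lambda^{2 - 2\alpha}\psi'(\lambda^2)}{\zeta^{2 - 2\alpha}\psi'(\zeta^2)} ,
}
so the task is to extract such $\alpha$ and such a uniform bound from the hypothesis~\eqref{eq:fpt:a4}.

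The first step is to choose the exponent. By~\eqref{eq:fpt:a4}, I can pick $\eta \in (0, 1-\eps)$ and $0 < a < b$ such that $\xi|\psi''(\xi)|/\psi'(\xi) \le \eta$ for all $\xi \in (0, a] \cup [b, \infty)$, and then fix $\alpha \in (\eps, 1 - \eta)$. The key observation is that the function $h(\xi) = \xi^{1-\alpha} \psi'(\xi)$ has logarithmic derivative
\formula{
 \frac{h'(\xi)}{h(\xi)} & = \frac{1}{\xi}\expr{1 - \alpha - \frac{\xi|\psi''(\xi)|}{\psi'(\xi)}} ,
}
which is nonnegative on $(0, a] \cup [b, \infty)$. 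Thus $h$ is nondecreasing on each of these two intervals, while on the compact interval $[a, b]$ it is continuous and strictly positive and therefore bounded between two finite positive constants.

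The second step is to conclude that $h$ is \emph{almost nondecreasing} on $(0, \infty)$: there exists $M \ge 1$ with $h(u) \le M h(v)$ for all $0 < u < v$. The proof is a straightforward case analysis using that $h$ is nondecreasing on $(0, a]$ and on $[b, \infty)$ and that it is comparable to a constant on $[a, b]$. Substituting $u = \lambda^2$, $v = \zeta^2$ immediately gives $C_\lambda \le M$ uniformly in $\lambda > 0$. Moreover, on $[b, \infty)$ the monotonicity of $h$ yields $\psi'(\xi) \ge h(b) \xi^{\alpha - 1}$, so the hypothesis of Lemma~\ref{lem:lfest2} (and hence of Lemma~\ref{lem:power}) holds.

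The final step is to combine the lemmas with the right choice of parameter. For the Laplace transform bound, Lemma~\ref{lem:lfest2} gives $|\laplace F_\lambda(\xi)| \le \sqrt{M} \, \lambda |\lambda^2 + \xi^2|^{-1} |1 + \xi/\lambda|^{1 - \alpha}$; since $\real(1 + \xi/\lambda) > 1$ for $\real \xi > 0$ and $1 - \alpha < 1 - \eps$, we have $|1 + \xi/\lambda|^{1 - \alpha} \le |1 + \xi/\lambda|^{1 - \eps}$, which yields the desired estimate with $C = \sqrt{M}$. For the H\"older bound, since $\eps < \alpha$, I apply Lemma~\ref{lem:power} with H\"older exponent $\eps$, obtaining the bound with constant $c \sqrt{M}/(\alpha - \eps)$ uniformly in $\lambda$. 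No step here is a serious obstacle; the only delicate point is the uniform boundedness of $C_\lambda$, which is precisely where the two-sided hypothesis~\eqref{eq:fpt:a4} (controlling $\psi''/\psi'$ at both $0$ and $\infty$) is used, rather than merely a growth condition at infinity.
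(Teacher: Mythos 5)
Your proposal is correct and takes essentially the same route as the paper: both choose $\alpha\in(\eps,\,1-\limsup\xi|\psi''|/\psi')$, introduce $h(\zeta)=\zeta^{1-\alpha}\psi'(\zeta)$, observe its logarithmic derivative is nonnegative away from a compact set so that $C_\lambda = \sup_{\zeta>\lambda} h(\lambda^2)/h(\zeta^2)$ is uniformly bounded (the paper phrases this via the integral formula $\exp(-\int h'/h)$, you via the almost-nondecreasing case analysis), and then invoke Lemmas~\ref{lem:lfest2} and~\ref{lem:power}. Your version spells out more explicitly the verification that $\psi'(\xi)\ge h(b)\xi^{\alpha-1}$ for $\xi\ge b$ and the passage from exponent $\alpha$ to $\eps$ in both bounds, but the substance is identical.
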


Note that in general, $\xi |\psi''(\xi)| \le 2 \psi'(\xi)$, see Proposition~\ref{prop:cbf:ests}(b).

\begin{proof}
Choose $\alpha > \eps$ small enough, so that~\eqref{eq:fpt:a4} holds with $\eps$ replaced by $\alpha$. Let $h(\zeta) = \zeta^{1 - \alpha} \psi'(\zeta)$. For $\xi \ge \lambda > 0$ we have
\formula{
 \frac{\lambda^{2 - 2\alpha} \psi'(\lambda^2)}{\xi^{2 - 2\alpha} \psi'(\xi^2)} & = \frac{h(\lambda^2)}{h(\xi^2)} = \exp \expr{-\int_{\lambda^2}^{\xi^2} \frac{h'(\zeta)}{h(\zeta)} \, d\zeta} \\ & = \exp \expr{-\int_{\lambda^2}^{\xi^2} \expr{\frac{1 - \alpha}{\zeta} - \frac{|\psi''(\zeta)|}{\psi'(\zeta)}} d\zeta} .
}
The integrand on the right-hand side is positive for $\zeta \in (0, C_1) \cup (C_2, \infty)$ for some $C_1, C_2 > 0$. Hence the above expression is bounded uniformly in $\lambda$ and $\xi$, $0 < \lambda \le \xi$. The result follows by Lemmas~\ref{lem:lfest2} and~\ref{lem:power}.
\end{proof}

Recall that a function $f$ is said to be \emph{regularly varying} of order $\alpha > 0$ at $\infty$ or at $0$, if for all $k > 0$, $\lim_{x \to \infty} f(k x) / f(x) = k^\alpha$ or $\lim_{x \to 0^+} f(k x) / f(x) = k^\alpha$, respectively. 

\begin{cor}
\label{cor:power2}
Suppose that $\psi$ is regularly varying of order $\alpha_\infty > 0$ at $\infty$ and regularly varying of order $\alpha_0 > 0$ at $0$. Then for all $\eps < \min(\alpha_0, \alpha_\infty)$, the hypothesis of Corollary~\ref{cor:power} holds true.
\end{cor}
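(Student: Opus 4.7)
The plan is to compute the limits $\lim_{\xi\to\infty}\xi|\psi''(\xi)|/\psi'(\xi)$ and $\lim_{\xi\to 0^+}\xi|\psi''(\xi)|/\psi'(\xi)$ via two successive applications of the monotone density theorem from Karamata's theory of regularly varying functions. Setting up: $\psi$ being a complete Bernstein function makes $\psi'$ completely monotone (positive and decreasing) and $-\psi''$ completely monotone, so $|\psi''|$ is positive and decreasing; concavity of $\psi$ also forces $\alpha_0,\alpha_\infty\in(0,1]$, and regular variation of positive order at zero forces $\psi(0^+)=0$. I would first pass from $\psi$ to $\psi'$: from $\psi(\xi)-\psi(0)=\int_0^\xi \psi'(t)\,dt$ with $\psi'$ monotone and $\psi$ regularly varying of order $\alpha_\infty$ at infinity, the monotone density theorem gives $\psi'(\xi)\sim \alpha_\infty\psi(\xi)/\xi$ as $\xi\to\infty$, making $\psi'$ regularly varying of order $\alpha_\infty-1$; the identical argument at $0^+$ makes $\psi'$ regularly varying of order $\alpha_0-1$.

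Next I would pass from $\psi'$ to $|\psi''|$. For $\alpha_\infty<1$, since $\psi'(\infty)=0$ one has $\psi'(\xi)=\int_\xi^\infty |\psi''(t)|\,dt$, and the tail version of the monotone density theorem yields $|\psi''(\xi)|\sim(1-\alpha_\infty)\psi'(\xi)/\xi$, so $\xi|\psi''(\xi)|/\psi'(\xi)\to 1-\alpha_\infty$. The main obstacle is the edge case $\alpha_\infty=1$, which the standard monotone density theorem does not directly cover; I would handle it by the elementary inequality
\begin{equation*}
\xi|\psi''(\xi)| \le 2\int_{\xi/2}^{\xi}|\psi''(t)|\,dt = 2(\psi'(\xi/2)-\psi'(\xi)),
\end{equation*}
valid by monotonicity of $|\psi''|$, and observing that when $\alpha_\infty=1$ the function $\psi'$ either is slowly varying (if $\psi'(\infty)=0$) or converges to a positive constant, so in either case $\psi'(\xi/2)-\psi'(\xi)=o(\psi'(\xi))$, yielding $\xi|\psi''(\xi)|/\psi'(\xi)\to 0 = 1-\alpha_\infty$. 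The analogous dichotomy at $0^+$ (using $\psi'(\xi)=\psi'(1)+\int_\xi^1|\psi''(t)|\,dt$ for $\alpha_0<1$ and the same monotonicity estimate for $\alpha_0=1$) produces the limit $1-\alpha_0$ there.

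Since both limits are strictly less than $1-\eps$ whenever $\eps<\min(\alpha_0,\alpha_\infty)$, this is precisely~\eqref{eq:fpt:a4}, verifying the hypothesis of Corollary~\ref{cor:power}.
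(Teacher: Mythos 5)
Correct, and essentially the same approach as the paper: both establish condition~\eqref{eq:fpt:a4} by computing $\lim \xi|\psi''(\xi)|/\psi'(\xi) = 1-\alpha_\infty$ (resp.\ $1-\alpha_0$) via the monotone density theorem applied to the monotone derivatives of the Bernstein function. Your explicit handling of the boundary case $\alpha=1$ is a welcome refinement, since the paper's blanket assertion that $\psi''$ is regularly varying of order $\alpha-2$ is not actually justified there (e.g.\ $\psi(\xi)=\xi+\log(1+\xi)$ has $\alpha_\infty=1$ but $\psi''$ regularly varying of order $-2$), even though the paper's argument survives because the quantity $\zeta h'(\zeta)/h(\zeta)$ it controls still has the desired positive limit.
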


\begin{proof}
Let $\eps < \alpha < \min(\alpha_0, \alpha_\infty)$. Since $\psi$ is a regularly varying Bernstein function, also $\psi'$ and $\psi''$ are regularly varying at $0$ and $\infty$ (see~\cite{bib:bgt87}). Hence, if we define $h(\zeta) = \zeta^{1 - \alpha} \psi'(\zeta)$, then $h$ is regularly varying of order $\alpha_0 - \alpha > 0$ at zero, and regularly varying of order $\alpha_\infty - \alpha > 0$ at $\infty$, and a similar statement with orders decreased by $1$ is true for $h'$. It follows that the function $\zeta h'(\zeta) / h(\zeta)$ (continuous on $\hl$) has positive limits at $0$ and~$\infty$, and we can now repeat the proof of Corollary~\ref{cor:power}.
\end{proof}

When $\psi$ is regularly varying at infinity, then the behavior of $F_\lambda$ near $0$ for a fixed $\lambda > 0$ has a very simple description. We write $f(x) \sim g(x)$ as $x \to 0^+$ or $x \to \infty$ when $f(x) / g(x)$ converges to $1$ as $x \to 0^+$ or $x \to \infty$, respectively.

\begin{lem}
\label{lem:regular}
If $\psi$ is unbounded in $\hl$ and $\psi(\xi)$ is regularly varying at infinity of order $\alpha \in [0, 1]$, then
\formula[]{
 \label{eq:lfregular} & \laplace F_\lambda(\xi) \sim \sqrt{\frac{\lambda^2 \psi'(\lambda^2)}{\xi^2 \psi(\xi^2)}} && \text{as $\xi \to \infty$,} \\
 \label{eq:fregular} & F_\lambda(x) \sim \frac{1}{\Gamma(1 + \alpha)} \sqrt{\frac{\lambda^2 \psi'(\lambda^2)}{\psi(x^{-2})}} && \text{as $x \to 0^+$.}
}
\end{lem}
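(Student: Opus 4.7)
The strategy is to first establish \eqref{eq:lfregular} by analyzing the formula $\laplace F_\lambda(\xi) = c_\lambda \psi_\lambda^\dagger(\xi)/(\lambda^2+\xi^2)$ (see \eqref{eq:lf0}) via a Karamata-type asymptotic for $\psi_\lambda^\dagger$, and then to pass from \eqref{eq:lfregular} to \eqref{eq:fregular} using Karamata's Tauberian theorem together with the monotone density theorem from the theory of regular variation.

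Under the hypothesis, $\psi_\lambda$ defined in \eqref{eq:psilambda} is regularly varying of order $1-\alpha$ at infinity, with $\psi_\lambda(\xi) \sim \xi \psi(\lambda^2)/(\lambda^2 \psi(\xi))$. The key intermediate claim is $\psi_\lambda^\dagger(\xi) \sim \sqrt{\psi_\lambda(\xi^2)}$ as $\xi \to \infty$. To prove it, I would substitute $\zeta = \xi s$ in \eqref{eq:dagger1} and use $\int_0^\infty (1+s^2)^{-1} ds = \pi/2$ to rewrite
\formula{
 \log \psi_\lambda^\dagger(\xi) - \tfrac{1}{2} \log \psi_\lambda(\xi^2) = \frac{1}{\pi} \int_0^\infty \frac{1}{1+s^2} \log \frac{\psi_\lambda(\xi^2 s^2)}{\psi_\lambda(\xi^2)} \, ds .
}
The integrand tends pointwise to $2(1-\alpha) \log s/(1+s^2)$, whose integral over $\hl$ vanishes (by the classical identity used in the proof of Proposition~\ref{prop:dagger:prop}). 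Potter's inequality for the regularly varying $\psi_\lambda$ yields an integrable dominating function on $s \ge \sqrt{X}/\xi$, while on the complementary window $s < \sqrt{X}/\xi$ one combines $\psi_\lambda(0)=1$, monotonicity of $\psi_\lambda$, and the polynomial growth bound $\log \psi_\lambda(\xi^2) = O(\log \xi)$ (from Proposition~\ref{prop:cbf:ests}(e)) to show the contribution is $O(\log \xi/\xi) \to 0$. Dominated convergence delivers the claim, and inserting it together with $\psi_\lambda(\xi^2) \sim \xi^2 \psi(\lambda^2)/(\lambda^2 \psi(\xi^2))$ and $c_\lambda = \lambda^2 \sqrt{\psi'(\lambda^2)/\psi(\lambda^2)}$ from \eqref{eq:clambda} yields \eqref{eq:lfregular} after a straightforward simplification.

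For \eqref{eq:fregular}, one first verifies that $F_\lambda$ is non-negative and monotone increasing on some interval $(0, x_0)$. Using $F_\lambda(0^+)=0$ from Lemma~\ref{lem:f0}, the decomposition $F_\lambda(x) = (\sin(\lambda x + \thet_\lambda) - \sin \thet_\lambda) + (G_\lambda(0^+) - G_\lambda(x))$ exhibits $F_\lambda$ as a sum of two non-negative terms near $0$; moreover $F_\lambda'(x) = \lambda \cos(\lambda x + \thet_\lambda) + (-G_\lambda'(x))$ is strictly positive for small $x$ since $\thet_\lambda \in [0, \pi/2)$ and $-G_\lambda' \ge 0$ by complete monotonicity of $G_\lambda$. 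Rewriting \eqref{eq:lfregular} as $\laplace F_\lambda(\xi) \sim \xi^{-(1+\alpha)} \widetilde L(\xi)$ with $\widetilde L$ slowly varying at infinity, Karamata's Tauberian theorem applied to the non-negative measure $F_\lambda(x) \ind_{[0, x_0]}(x) \, dx$ (whose Laplace transform differs from $\laplace F_\lambda(\xi)$ by an exponentially small correction) yields $\int_0^x F_\lambda(y) \, dy \sim x^{1+\alpha} \widetilde L(1/x)/\Gamma(2+\alpha)$ as $x \to 0^+$. The monotone density theorem then differentiates this to $F_\lambda(x) \sim x^\alpha \widetilde L(1/x)/\Gamma(1+\alpha)$, which reduces to \eqref{eq:fregular} upon using $\sqrt{\psi(x^{-2})} = x^{-\alpha} \sqrt{L_\psi(x^{-2})}$ (with $L_\psi$ the slowly varying factor of $\psi$) and $\Gamma(2+\alpha) = (1+\alpha)\Gamma(1+\alpha)$.

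The main technical obstacle is the dominated-convergence step in proving $\psi_\lambda^\dagger(\xi) \sim \sqrt{\psi_\lambda(\xi^2)}$: Potter's bounds apply only when $\xi^2 s^2$ is bounded below, so the narrow range $s \in (0, \sqrt{X}/\xi)$ must be handled by the ad hoc estimate sketched above. Once this Karamata-type asymptotic is in place, the rest of the argument—the algebraic simplification leading to \eqref{eq:lfregular}, the verification of monotonicity of $F_\lambda$ near $0$, and the two Tauberian steps—are essentially routine.
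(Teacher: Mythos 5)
Your proof plan is correct and follows the same overall architecture as the paper's argument: establish $\psi_\lambda^\dagger(\xi)^2 \sim \psi_\lambda(\xi^2) \sim \xi^2\psi(\lambda^2)/(\lambda^2\psi(\xi^2))$ by dominated convergence in the integral formula \eqref{eq:dagger}, substitute into $\laplace F_\lambda = c_\lambda \psi_\lambda^\dagger/(\lambda^2+\xi^2)$ to obtain \eqref{eq:lfregular}, note that $F_\lambda$ is increasing near the origin (since $\thet_\lambda \in [0,\pi/2)$ and $G_\lambda$ is completely monotone), and then invoke Karamata's Tauberian theorem together with the monotone density theorem.

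The one place where you diverge from the paper is the dominated-convergence step. You propose Potter's inequality for the regularly varying $\psi_\lambda$ supplemented by an ad hoc $O(\log\xi/\xi)$ estimate on the narrow window $s < \sqrt{X}/\xi$ where Potter's bound is unavailable. That does work, and you have correctly identified exactly where the cheap route breaks down. The paper instead avoids Potter's theorem entirely: it introduces the auxiliary function $h(\xi,\zeta) = \bigl(\lambda^2\psi(\xi^2)/(\psi(\lambda^2)\zeta^{2-2\alpha}\xi^2)\bigr)\psi_\lambda(\xi^2\zeta^2)$ and uses the pure complete-Bernstein-function inequalities $\psi_\lambda(\xi^2) \le \psi_\lambda(\xi^2\zeta^2) \le \zeta^2\psi_\lambda(\xi^2)$ for $\zeta \ge 1$ (valid since both $\psi_\lambda$ and $\zeta/\psi_\lambda(\zeta)$ are increasing, Proposition~\ref{prop:cbf:prop}(a)) together with the fact that $\psi(\xi^2)\psi_\lambda(\xi^2)/\xi^2$ stays bounded between positive constants for $\xi > 1$. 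This yields a single dominating function $C(1+|\log\zeta|)/(1+\zeta^2)$ over all of $(0,\infty)$, with no case split, and uses nothing from regular-variation theory beyond the pointwise limit and the final Tauberian step. Your version is entirely viable but imports more external machinery; the paper's trades Potter for the CBF structure it already has at hand and gets a uniform domination with less bookkeeping. Either way the conclusion is the same, and your handling of the Tauberian passage to \eqref{eq:fregular}, including the localization to a set where $F_\lambda$ is nonnegative and monotone, is accurate.
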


\begin{proof}
Consider an auxiliary function:
\formula{
 h(\xi, \zeta) & = \frac{\lambda^2 \psi(\xi^2)}{\psi(\lambda^2) \zeta^{2 - 2\alpha} \xi^2} \, \psi_\lambda(\xi^2 \zeta^2) \, , && \xi, \zeta > 0 .
}
Since $\psi_\lambda$ is a complete Bernstein function, both $\psi_\lambda(\xi)$ and $\xi / \psi_\lambda(\xi)$ are increasing (Proposition~\ref{prop:cbf:prop}(a)). It follows that for $\xi > 0$, $\zeta \ge 1$, we have $\psi_\lambda(\xi^2) \le \psi_\lambda(\xi^2 \zeta^2) \le \zeta^2 \psi_\lambda(\xi^2)$. Hence
\formula[eq:hest]{
 \frac{\lambda^2}{\psi(\lambda^2) \zeta^{2 - 2\alpha}} \, \frac{\psi(\xi^2) \psi_\lambda(\xi^2)}{\xi^2} & \le h(\xi, \zeta) \le \frac{\lambda^2}{\psi(\lambda^2) \zeta^{-2\alpha}} \, \frac{\psi(\xi^2) \psi_\lambda(\xi^2)}{\xi^2} \, .
}
Note that $\psi(\xi^2) \psi_\lambda(\xi^2) / \xi^2$ has a positive limit $\psi(\lambda^2) / \lambda^2$ as $\xi \to \infty$. Hence, it is bounded above and below by positive constants when $\xi \in (1, \infty)$. It follows that for some $C > 0$ (depending on $\lambda$),
\formula{
 |\log h(\xi, \zeta)| & \le C (1 + |\log \zeta|) && \xi > 1, \, \zeta \ge 1 .
}
For $\zeta \in (0, 1]$, the inequalities in~\eqref{eq:hest} are reversed, and we obtain the same bound for $|\log h(\xi, \zeta)|$. Since $\psi$ is regularly varying of order $\alpha$ and unbounded,
\formula{
 \lim_{\xi \to \infty} h(\xi, \zeta) & = \lim_{\xi \to \infty} \frac{\psi(\xi^2)}{\zeta^{2 - 2\alpha} \xi^2} \, \frac{\lambda^2 - \xi^2 \zeta^2}{\psi(\lambda^2) - \psi(\xi^2 \zeta^2)} \\
 & = \lim_{\xi \to \infty} \frac{\zeta^{2 \alpha} \psi(\xi^2)}{\psi(\xi^2 \zeta^2)} \, \frac{\lambda^2 / (\xi \zeta)^2 - 1}{\psi(\lambda^2) / \psi(\xi^2 \zeta^2) - 1} = 1
}
for all $\zeta > 0$. Hence, by dominated convergence,
\formula{
 & \lim_{\xi \to \infty} \expr{\frac{1}{\pi} \int_0^\infty \frac{\log \psi_\lambda(\xi^2 \zeta^2)}{1 + \zeta^2} \, d\zeta - \frac{1}{\pi} \int_0^\infty \frac{1}{1 + \zeta^2} \, \log \frac{\psi(\lambda^2) \zeta^{2 - 2\alpha} \xi^2}{\lambda^2 \psi(\xi^2)} \, d\zeta} \\
 & \qquad = \lim_{\xi \to \infty} \int_0^\infty \frac{\log h(\xi, \zeta)}{1 + \zeta^2} \, d\zeta = 0 .
}
Since the integral of $(\log \zeta) / (1 + \zeta^2)$ over $(0, \infty)$ is zero, we have
\formula{
 \frac{1}{\pi} \int_0^\infty \frac{1}{1 + \zeta^2} \, \log \frac{\psi(\lambda^2) \zeta^{2 - 2\alpha} \xi^2}{\lambda^2 \psi(\xi^2)} \, d\zeta & = \frac{1}{2} \, \log \frac{\psi(\lambda^2) \xi^2}{\lambda^2 \psi(\xi^2)} \, .
}
These two formulae yield that (see~\eqref{eq:dagger})
\formula{
 \psi_\lambda^\dagger(\xi) & = \exp\expr{\frac{1}{\pi} \int_0^\infty \frac{\log \psi_\lambda(\xi^2 \zeta^2)}{1 + \zeta^2} \, d\zeta} \sim \sqrt{\frac{\psi(\lambda^2) \xi^2}{\lambda^2 \psi(\xi^2)}} && \text{as $\xi \to \infty$.}
}
Since $\laplace F_\lambda = c_\lambda \psi_\lambda^\dagger(\xi) / (\xi^2 + \lambda^2)$, this (and~\eqref{eq:clambda}) gives
\formula{
 \laplace F_\lambda(\xi) & \sim \sqrt{\frac{\psi'(\lambda^2)}{\psi(\lambda^2)}} \, \frac{\lambda^2}{\xi^2} \, \sqrt{\frac{\psi(\lambda^2) \xi^2}{\lambda^2 \psi(\xi^2)}} && \text{as $\xi \to \infty$,}
}
and~\eqref{eq:lfregular} follows. Since $F_\lambda$ is increasing on the interval $(0, \pi/2 - \thet_\lambda)$ (see~\eqref{eq:f}), formula~\eqref{eq:fregular} follows by Karamata's Tauberian theorem and monotone density theorem (see~\cite{bib:bgt87}).
\end{proof}

%
%                            ---------- o ----------
%

\section{Spectral representation in half-line}
\label{sec:spectral}

In this section we study the $L^2(\hl)$ properties of the operators $P^\hl_t$ and prove Theorems~\ref{th:spectral}, \ref{th:pdt} and~\ref{th:fpt}. Let us define the operator $\Pi^*$ by the formula
\formula[eq:pi]{
 \Pi^* f(x) & = \int_0^\infty f(\lambda) F_\lambda(x) d\lambda , && x > 0 ,
}
for $f \in C_c(\hl)$. Here $F_\lambda(x) = \sin(\lambda x + \thet_\lambda) - G_\lambda(x)$ is given by Theorem~\ref{th:eigenfunctions}. At least formally, $\Pi^*$ is the adjoint of $\Pi$ defined in~\eqref{eq:pistar}.

In this section, we write $\|f\|_2$ for $\|f\|_{L^2(\hl)}$, and $\tscalar{f, g}$ for $\tscalar{f, g}_{L^2(\hl)}$.

\begin{lem}
\label{lem:pi:bounded}
The operator $\Pi^*$ extends to a bounded operator on $L^2(\hl)$.
\end{lem}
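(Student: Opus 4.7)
The natural approach is to exploit the decomposition $F_\lambda(x)=\sin(\lambda x+\thet_\lambda)-G_\lambda(x)$ provided by Theorem~\ref{th:eigenfunctions}, and split $\Pi^*=\Pi^*_{\rm osc}-\Pi^*_G$ accordingly, with
\formula{
 \Pi^*_{\rm osc} f(x) & = \int_0^\infty f(\lambda)\sin(\lambda x+\thet_\lambda)\,d\lambda , & \Pi^*_G f(x) & = \int_0^\infty f(\lambda) G_\lambda(x)\,d\lambda .
}
Each will be shown bounded separately.

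For the oscillatory part, the addition formula gives $\sin(\lambda x+\thet_\lambda)=\cos\thet_\lambda\sin(\lambda x)+\sin\thet_\lambda\cos(\lambda x)$. Since $\thet_\lambda\in[0,\pi/2)$, the multipliers $\cos\thet_\lambda$ and $\sin\thet_\lambda$ are bounded by $1$, so $\lambda\mapsto f(\lambda)\cos\thet_\lambda$ and $\lambda\mapsto f(\lambda)\sin\thet_\lambda$ lie in $L^2(\hl)$ with norms at most $\|f\|_2$. Then $\Pi^*_{\rm osc}f$ is the sum of the Fourier sine transform of the first and the Fourier cosine transform of the second, and Plancherel's theorem yields $\|\Pi^*_{\rm osc}f\|_2\le\sqrt{2\pi}\,\|f\|_2$.

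The $G$-piece requires more care, because a crude Cauchy--Schwarz bound on $G_\lambda$ fails: for symmetric $\alpha$-stable processes $G_\lambda(x)=G_1(\lambda x)$, so $\int_0^\infty\|G_\lambda\|_{L^2(\hl)}^2 d\lambda=\infty$. Instead, using the representation $G_\lambda(x)=\laplace\gamma_\lambda(x)$ together with the uniform bound $\gamma_\lambda(\hl)=G_\lambda(0^+)\le\sin\thet_\lambda\le 1$ of Lemma~\ref{lem:gest}, Fubini (valid for $f\in C_c(\hl)$) gives
\formula{
 \Pi^*_G f(x) & = \int_0^\infty e^{-x\xi}\mu_f(d\xi), & \text{where} \quad \mu_f(E) & = \int_0^\infty f(\lambda)\gamma_\lambda(E)\,d\lambda .
}
Since the Laplace transform maps $L^2(\hl)$ into itself with norm $\sqrt\pi$ (a direct consequence of the Hardy--Hilbert inequality applied to $\|\laplace h\|_2^2=\iint h(\xi)h(\eta)(\xi+\eta)^{-1}d\xi\,d\eta$), it suffices to prove that the map $f\mapsto d\mu_f/d\xi$ is bounded on $L^2(\hl)$. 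When $\psi$ admits the boundary extension $\psi^+$, formula~\eqref{eq:gamma0} gives the kernel $\Gamma(\lambda,\xi)$ of this operator explicitly; the general case is handled by approximation as in Remark~\ref{rem:gamma}. Schur's test then applies: the ``row'' bound $\sup_\lambda\int_0^\infty\Gamma(\lambda,\xi)\,d\xi\le 1$ is immediate from Lemma~\ref{lem:gest}, and the ``column'' bound $\sup_\xi\int_0^\infty\Gamma(\lambda,\xi)\,d\lambda<\infty$ follows from~\eqref{eq:gamma00} combined with the estimate~\eqref{eq:psilambdadaggerest} on $\psi_\lambda^\dagger$ and the monotonicity of $\psi$.

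The hard part is the ``column'' Schur estimate: for fixed $\xi>0$, the imaginary part in~\eqref{eq:gamma00} is a Cauchy-type singular kernel in $\lambda$, concentrated near the set $\{\lambda:\psi(\lambda^2)=\real\psi^+(-\xi^2)\}$, and one must exploit the strict monotonicity of $\psi$ on $\hl$ together with the uniform comparability of $\psi_\lambda^\dagger$ and $\sqrt{\psi_\lambda(\lambda^2)}\cdot(1+\xi/\lambda)$ to make the integral uniformly bounded. This is the principal technical step of the proof; once it is in place, combining the bounds on $\Pi^*_{\rm osc}$ and $\Pi^*_G$ yields $\|\Pi^*f\|_2\le C\|f\|_2$ for all $f\in C_c(\hl)$, and extension to $L^2(\hl)$ is by density.
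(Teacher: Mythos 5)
Your treatment of the oscillatory part matches the paper's: splitting $\sin(\lambda x + \thet_\lambda)$ via the phase shift and invoking Plancherel gives the same $\sqrt{2\pi}\,\|f\|_2$ bound.

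For the $G$-piece, however, you take a genuinely different route that you have not actually completed. You factor $\Pi^*_G = \laplace \circ T$, where $T$ maps $f$ to the density of $\mu_f(d\xi) = \int_0^\infty f(\lambda)\gamma_\lambda(d\xi)\,d\lambda$, use $L^2$-boundedness of the Laplace transform (which is indeed a consequence of Hardy--Hilbert), and then propose to Schur-test the kernel $\Gamma(\lambda,\xi)$. Two problems. First, $\gamma_\lambda$ is not absolutely continuous in general --- Theorem~\ref{th:eigenfunctions} only guarantees a density under extra hypotheses on $\psi$, and the paper explicitly exhibits an example where $\gamma_\lambda$ has an atom --- so $T$ is not a priori a kernel operator on $L^2$, and appealing to ``approximation as in Remark~\ref{rem:gamma}'' does not resolve this, since Schur's test is about pointwise kernel bounds that do not pass trivially to weak limits of measures. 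Second, and more importantly, you explicitly defer the column estimate $\sup_\xi \int_0^\infty \Gamma(\lambda,\xi)\,d\lambda < \infty$, describing it as ``the principal technical step''; as written, this is an assertion, not a proof, and there is no indication of how the singular Cauchy-type behaviour of $\imag\,(\psi(\lambda^2)-\psi^+(-\xi^2))^{-1}$ near $\psi(\lambda^2)=\real\psi^+(-\xi^2)$ is to be integrated uniformly in $\xi$. So the proposal has a genuine gap.

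The paper avoids all of this by working with $G_\lambda$ directly rather than with its Laplace representation. Lemma~\ref{lem:gest} gives $0\le G_\lambda\le\sin\thet_\lambda\le 1$ and $\int_0^\infty G_\lambda(x)\,dx\le 1/\lambda$, from which one reads off
\formula{
 \int_0^\infty G_{\lambda_1}(x)\,G_{\lambda_2}(x)\,dx \le \frac{1}{\max(\lambda_1,\lambda_2)}\,,
}
and then a single application of the Hardy--Hilbert inequality to the kernel $1/\max(\lambda_1,\lambda_2)$ bounds $\|\Pi^*_G f\|_2^2$ by $4\|f\|_2^2$. Note that this kernel is not Schur-testable (its marginal integrals diverge logarithmically), so Hardy--Hilbert is doing real work here --- precisely the work your Schur-test route cannot supply without the missing column estimate. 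You would do better to abandon the $\laplace\circ T$ factorization and estimate the bilinear form $\int_0^\infty G_{\lambda_1}G_{\lambda_2}\,dx$ directly, as the paper does.
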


\begin{proof}
We follow the proof of Theorem~3 in~\cite{bib:kkms10}. Let $f \in C_c(\hl)$. Define
\formula{
 \Pi^*_1 f(x) & = \int_0^\infty f(\lambda) \sin(\lambda x + \thet_\lambda) d\lambda = \imag \fourier \tilde{f}(x), && x > 0 ,
}
where $\tilde{f}(\lambda) = e^{i \thet_\lambda} f(\lambda)$. Clearly, $\|\Pi^*_1 f\|_2 \le \|\fourier \tilde{f}\|_{L^2(\R)} = \sqrt{2 \pi} \|f\|_2$. Let
\formula{
 \Pi^*_2 f(x) & = \int_0^\infty f(\lambda) G_\lambda(x) d\lambda , && x > 0 ,
}
so that $\Pi^* = \Pi^*_1 - \Pi^*_2$. We have
\formula{
 \int_0^\infty (\Pi^*_2 f(x))^2 dx & \le \int_0^\infty \int_0^\infty \int_0^\infty |f(\lambda_1)| |f(\lambda_2)| G_{\lambda_1}(x) G_{\lambda_2}(x) dx d\lambda_1 d\lambda_2 .
}
By Lemma~\ref{lem:gest}, when $0 < \lambda_1 \le \lambda_2$,
\formula{
 \int_0^\infty G_{\lambda_1}(x) G_{\lambda_2}(x) dx & \le \sin \thet_{\lambda_1} \int_0^\infty G_{\lambda_2}(x) dx \\ & \hspace*{-3em} = \frac{\sin \thet_{\lambda_1}}{\lambda_2} \expr{\cos \thet_{\lambda_2} - \sqrt{\frac{\lambda_2^2 \psi'(\lambda_2^2)}{\psi(\lambda_2^2)}}} \le \frac{1}{\lambda_2} \, .
}
By symmetry, for all $\lambda_1, \lambda_2 > 0$,
\formula{
 \int_0^\infty G_{\lambda_1}(x) G_{\lambda_2}(x) dx & \le \frac{1}{\max(\lambda_1, \lambda_2)} \, .
}
Hence, using Hardy-Hilbert's inequality (see e.g.~\cite{bib:h52}) in the last step,
\formula{
 \norm{\Pi^*_2 f}_2^2 = \int_0^\infty (\Pi^*_2 f(x))^2 dx & \le \int_0^\infty \int_0^\infty \frac{|f(\lambda_1)| |f(\lambda_2)|}{\max(\lambda_1, \lambda_2)} \, d\lambda_1 d\lambda_2 < 4 \norm{f}_2^2 .
}
The lemma follows.
\end{proof}

The extension of $\Pi^*$ to $L^2(\hl)$ is denoted by the same symbol. By Lemma~\ref{lem:pi:bounded}, we see that $\Pi^*$ is indeed the adjoint of $\Pi$ defined in~\eqref{eq:pistar}.

\begin{lem}
\label{lem:pi:spec}
We have $\tscalar{\Pi^* f, \Pi^* g} = (\pi / 2) \tscalar{f, g}$ for $f, g \in L^2(\hl)$. Furthermore, for $f \in L^2(\hl)$ such that $e^{-t \psi(\lambda^2)} f(\lambda)$ is integrable in $\lambda > 0$, we have
\formula[eq:pdt:spec]{
 P^\hl_t \Pi^* f(x) & = \int_0^\infty e^{-t \psi(\lambda^2)} f(\lambda) F_\lambda(x) d\lambda , && t, x > 0 .
}
\end{lem}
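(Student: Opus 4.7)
The plan is to prove the formula \eqref{eq:pdt:spec} first, then the isometry $\tscalar{\Pi^* f, \Pi^* g} = (\pi/2) \tscalar{f, g}$. Formula \eqref{eq:pdt:spec} follows from Fubini and the eigenvalue relation of Theorem~\ref{th:eigenfunctions}. For a truncation $f_n(\lambda) = f(\lambda) \ind_{(0, n]}(\lambda)$ the function $f_n$ is in $L^1 \cap L^2$ (by Cauchy--Schwarz), and the bound $|F_\lambda(y)| \le 2$ (from the decomposition in Theorem~\ref{th:eigenfunctions} and Lemma~\ref{lem:gest}) makes the integrand $p^\hl_t(x,y) f_n(\lambda) F_\lambda(y)$ absolutely integrable. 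Fubini converts $P^\hl_t \Pi^* f_n(x)$ into $\int_0^n f(\lambda) P^\hl_t F_\lambda(x) d\lambda = \int_0^n e^{-t \psi(\lambda^2)} f(\lambda) F_\lambda(x) d\lambda$. Letting $n \to \infty$, the left-hand side converges in $L^2(\hl)$ (since $\Pi^* f_n \to \Pi^* f$ in $L^2$ by Lemma~\ref{lem:pi:bounded} and $P^\hl_t$ is a contraction), while the right-hand side converges pointwise by dominated convergence with majorant $2 |e^{-t \psi(\lambda^2)} f(\lambda)|$; continuity of both sides in $x$ (Feller regularity for the LHS, dominated convergence for the RHS) upgrades almost-everywhere equality to pointwise equality.

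For the isometry, by polarization and density it suffices to prove $\|\Pi^* f\|_2^2 = (\pi/2) \|f\|_2^2$ for real $f \in C_c(\hl)$. I would regularize with $h_\eps(x) = e^{-\eps x} \Pi^* f(x) \ind_\hl(x)$, so that $h_\eps \to \Pi^* f \ind_\hl$ in $L^2(\R)$ by dominated convergence. Applying Plancherel on $\R$ and Fubini (justified by the estimate of Proposition~\ref{prop:lfest}) gives
\formula{
 \|h_\eps\|_2^2 & = \frac{1}{2\pi} \int_\hl \int_\hl f(\lambda) f(\mu) \, c_\lambda c_\mu \, I_\eps(\lambda, \mu) \, d\lambda \, d\mu ,
}
where $I_\eps(\lambda, \mu) = \int_\R \psi_\lambda^\dagger(\eps - i\xi) \psi_\mu^\dagger(\eps + i\xi) [(\lambda^2 + (\eps - i\xi)^2)(\mu^2 + (\eps + i\xi)^2)]^{-1} d\xi$. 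Polynomial growth of $\psi^\dagger$ (Proposition~\ref{prop:cbf:ests}(e)) gives $O(|\xi|^{-2})$ decay, so closing the contour in the upper half-plane (where $\psi_\lambda^\dagger(\eps - i\xi)$ is holomorphic, its branch cut being in the lower half-plane) and applying the residue theorem at the two poles $\xi = \pm \mu + i\eps$ of the second factor produces an explicit formula for $I_\eps$, which is real and symmetric in $\lambda \leftrightarrow \mu$ (the symmetry is also evident from the substitution $\xi \to -\xi$ in the definition of $I_\eps$), with an apparent $(\lambda^2 - \mu^2)^{-1}$ singularity regularized by $\eps$.

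As $\eps \to 0^+$, the Sokhotskyi--Plemelj formula applied to $((\lambda^2 - \mu^2) + 4 i \eps \mu)^{-1}$ splits the distributional limit of $I_\eps$ on $\hl \times \hl$ into a Dirac piece and a principal-value piece. Using $\psi_\mu^\dagger(i \mu) \psi_\mu^\dagger(-i \mu) = \psi_\mu(\mu^2)$ from Lemma~\ref{lem:cbfextension} together with the normalization $c_\mu^2 \psi_\mu(\mu^2) = \mu^2$, the $\delta$-contribution produces exactly $(\pi/2) \|f\|_2^2$. The principal-value piece is proportional to $\pv(1/(\lambda^2 - \mu^2))$, whose antisymmetry, combined with the symmetry of $I_\eps$, forces its coefficient to be antisymmetric in $\lambda \leftrightarrow \mu$; paired with the symmetric real kernel $f(\lambda) f(\mu) c_\lambda c_\mu$, it integrates to zero. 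The main obstacle will be rigorously passing to the $\eps \to 0^+$ limit in the double integral, for which uniform-in-$\eps$ bounds on $I_\eps$ are needed; these come from $|\psi_\lambda^\dagger(i\xi)|^2 = \psi_\lambda(\xi^2)$ (the real-axis specialization of Lemma~\ref{lem:cbfextension}) together with Proposition~\ref{prop:cbf:ests}(c, e). Extension from real $C_c(\hl)$ to all of $L^2(\hl)$ and recovery of the sesquilinear identity by polarization are standard, using Lemma~\ref{lem:pi:bounded}.
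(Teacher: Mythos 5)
Your treatment of formula~\eqref{eq:pdt:spec} is essentially the paper's own argument (Fubini, the eigenvalue relation, approximation), just with truncation details spelled out more fully; that part is fine.

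The isometry is a different story, and there is a genuine gap in the contour step. You close the contour in the upper half-plane and claim the integral $I_\eps(\lambda,\mu)$ is given by the residues at $\xi = \pm\mu + i\eps$ of the second denominator. But the upper half-plane is \emph{not} a region of meromorphy for the integrand: the factor $\psi_\mu^\dagger(\eps + i\xi)$ is (the extension of) a complete Bernstein function of $w = \eps + i\xi$, holomorphic in $\C \setminus (-\infty, 0]$, and the pre-image of the branch cut $(-\infty, 0]$ in the $\xi$-plane is the ray $\{i t : t \ge \eps\}$, which lies \emph{inside} your contour. The jump of $\psi_\mu^\dagger$ across its branch cut is nontrivial for any non-linear CBF (its boundary imaginary part is precisely the representing measure $m_0$ via Proposition~\ref{prop:cbf:repr}), so the residue sum alone does not equal $I_\eps$; there is an uncontrolled branch-cut integral over $(0,\infty)$ that you never account for. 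The same problem occurs if you close in the lower half-plane, since then $\psi_\lambda^\dagger(\eps - i\xi)$ has its cut inside. There is no half-plane in which both CBF factors are meromorphic, so the ``explicit formula for $I_\eps$'' on which your Sokhotskyi--Plemelj limit is built is simply incorrect as stated. (Your observations that $I_\eps$ is real and symmetric are correct, but they are not enough to rescue the residue evaluation, and your further statement that the limit can be controlled by $|\psi_\lambda^\dagger(i\xi)|^2 = \psi_\lambda(\xi^2)$ underestimates the difficulty: near $\lambda = \mu$ the integral genuinely diverges as $\eps \to 0^+$, which is why the distributional $\delta$-piece arises, and you need more than pointwise bounds to pass to the limit inside the $(\lambda,\mu)$-integral.)

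For comparison, the paper avoids Fourier analysis and contour integrals entirely. It first establishes orthogonality of $\Pi^* f$ and $\Pi^* g$ when $f$ and $g$ have disjoint supports, by exploiting the self-adjointness of $P^\hl_t$ together with the intertwining relation $P^\hl_t \Pi^* f_k = \Pi^* f_{k+1}$ for $f_k(\lambda) = e^{-kt\psi(\lambda^2)} f(\lambda)$ and letting $k \to \infty$. This shows $m(E) = \|\Pi^* \ind_E\|_2^2$ is a measure. The density $dm/d\lambda = \pi/2$ is then computed directly from the concrete form $F_\lambda(x) = \sin(\lambda x + \thet_\lambda) - G_\lambda(x)$: the sine part contributes $\pi/2$ via the Fourier sine transform, the $\thet_\lambda$-variation is controlled by the Lipschitz bound of Proposition~\ref{prop:diff:theta}, and the $G_\lambda$ part is negligible by the $L^1$ and $L^\infty$ bounds of Lemma~\ref{lem:gest}. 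If you want to pursue your Plancherel route, you would need to either track the branch-cut contribution explicitly (which will reintroduce the Stieltjes representing measure of $\psi_\mu^\dagger$ and is messy), or replace the contour computation by an appeal to the known integral $\int_0^\infty G_{\lambda}(x) G_{\mu}(x)\,dx$ type bounds that the paper uses.
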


\begin{proof}
Again the argument follows the proof of Theorem~3 in~\cite{bib:kkms10}. Let $p^\hl_t(x, dy)$ be the kernel of $P^\hl_t$ (i.e. the $\pr_x$ distribution of the killed process $X^\hl_t$). If $f \in C_c(\hl)$, then $p^\hl_t(x, dy) f(\lambda) F_\lambda(y)$ is integrable in $y, \lambda > 0$. Theorem~\ref{th:eigenfunctions} and Fubini yield~\eqref{eq:pdt:spec} in this case. The general case $f \in L^2(\hl)$ follows by approximation.

Let $f, g \in C_c(\hl)$, $k \in \Z$, and define $f_k(\lambda) = e^{-k t \psi(\lambda^2)} f(\lambda)$, $g_k(\lambda) = e^{-k t \psi(\lambda^2)} g(\lambda)$. From~\eqref{eq:pdt:spec} it follows that $P^\hl_t \Pi^* f_k = \Pi^* f_{k+1}$ and $P^\hl_t \Pi^* g_k = \Pi^* g_{k+1}$. The operators $P^\hl_t$ are self-adjoint, so that
\formula{
 \scalar{\Pi^* f, \Pi^* g} & = \scalar{P^\hl_t \Pi^* f_{-1}, \Pi^* g(x)} \\
 & = \scalar{\Pi^* f_{-1}, P^\hl_t \Pi^* g} = \scalar{\Pi^* f_{-1}, \Pi^* g_1} .
}
By induction, we have $\scalar{\Pi^* f, \Pi^* g} = \scalar{\Pi^* f_{-k}, \Pi^* g_k}$ for $k \ge 0$. In particular, if $\supp f \sub (0, \lambda_0)$ and $\supp g \sub (\lambda_0, \infty)$, then $\scalar{\Pi^* f, \Pi^* g} = \langle e^{-k t \psi(\lambda_0^2)} f_{-k}, e^{k t \psi(\lambda_0^2)} g_k \rangle$. The right-hand side tends to zero as $k \to \infty$, so that $\Pi^* f$ and $\Pi^* g$ are orthogonal in $L^2(\hl)$. By approximation, this holds true for any $f, g \in L^2(\hl)$ such that $f(\lambda) = 0$ for $\lambda \ge \lambda_0$ and $g(\lambda) = 0$ for $\lambda \le \lambda_0$.

Define $m(E) = \|\Pi^* \ind_E\|_2^2$ for Borel $E \sub \hl$. Clearly, $0 \le m(E) \le C \|\ind_E\|_2^2 = C |E|$. If $E_1 \sub (0, \lambda_0)$ and $E_2 \sub (\lambda_0, \infty)$, then
\formula{
 m(E_1 \cup E_2) & = \norm{\Pi^* \ind_{E_1}}_2^2 + \norm{\Pi^* \ind_{E_2}}_2^2 + 2 \scalar{\Pi^* \ind_{E_1}, \Pi^* \ind_{E_2}} = m(E_1) + m(E_2) .
}
Finally, suppose that $E = \bigcup_{n = 1}^\infty E_n$, where $E_1 \sub E_2 \sub ...$ and $|E| < \infty$. Since $\ind_{E_n}$ converges to $\ind_E$ in $L^2(\hl)$, also $\Pi^* \ind_{E_n}$ converges to $\Pi^* \ind_E$ in $L^2(\hl)$, and so $m(E) = \lim_{n \rightarrow \infty} m(E_n)$. It follows that $m$ is an absolutely continuous measure on $(0, \infty)$, and by approximation,
\formula{
 \scalar{\Pi^* f, \Pi^* g} & = \int_0^\infty f(\lambda) g(\lambda) m(d\lambda)
}
for any $f, g \in L^2(\hl)$. The lemma will be proved if we show that $m(E) = (\pi / 2) |E|$.

Fix $\lambda_0 > 0$ and define $f_\delta = (1 / \sqrt{\delta}) \ind_{[\lambda_0, \lambda_0 + \delta]}$ for $\delta > 0$. Clearly, $\|f_\delta\|_2 = 1$, and $m([\lambda_0, \lambda_0 + \delta]) / \delta = \|\Pi^* f_\delta\|_2^2$. Furthermore, $\sqrt{\delta} \, \Pi^* f_\delta = g_1 + g_2 - g_3$, where
\formula{
 g_1(x) & = \int_{\lambda_0}^{\lambda_0 + \delta} \sin(\lambda x + \theta_{\lambda_0}) d\lambda , \\
 g_2(x) & = \int_{\lambda_0}^{\lambda_0 + \delta} (\sin(\lambda x + \theta_\lambda) - \sin(\lambda x + \theta_{\lambda_0})) d\lambda , \\
 g_3(x) & = \int_{\lambda_0}^{\lambda_0 + \delta} G_\lambda(x) d\lambda .
}
We study the behavior of $g_1$, $g_2$ and $g_3$ as $\delta \to 0^+$.

By Lemma~\ref{lem:gest}, we have $G_\lambda(x) \le 1$ and $\int_0^\infty G_\lambda(x) dx \le 1 / \lambda$. Hence, by Fubini,
\formula{
 \norm{g_3}_2^2 & = \int_0^\infty \expr{\int_{\lambda_0}^{\lambda_0 + \delta} G_\lambda(x) d\lambda}^2 dx \le \int_0^\infty \delta \expr{\int_{\lambda_0}^{\lambda_0 + \delta} G_\lambda(x) d\lambda} dx \le \frac{\delta^2}{\lambda_0} ,
}
so that $\|g_3\|_2 / \sqrt{\delta}$ tends to zero as $\delta \to 0^+$.

The function $g_2$ is the imaginary part of the Fourier transform of the function $\tilde{f}(\lambda) = (\exp(i \thet_\lambda) - \exp(i \thet_{\lambda_0})) \ind_{[\lambda_0, \lambda_0 + \delta]}$. By Proposition~\ref{prop:diff:theta}, we have $|\thet_\lambda - \thet_{\lambda_0}| \le C |\lambda - \lambda_0|$ for some $C > 0$ (depending on $\lambda_0$). Hence $|\tilde{f}(\lambda)| \le C \delta$, and so $\|\tilde{f}\|_2 \le C \delta^{3/2}$. It follows that $\|g_2\|_2 / \sqrt{\delta} \le \sqrt{2 \pi} \, \|\tilde{f}\|_2 / \sqrt{\delta} \le \sqrt{2 \pi} \, C \delta$ also tends to zero as $\delta \to 0^+$.

Finally, for the function $g_1$, by Fubini,
\formula{
 & 2 \int_0^\infty (g_1(x))^2 e^{-\eps x} dx \\ & \hspace*{1.5em} = \int\limits_{\lambda_0}^{\lambda_0 + \delta} \int\limits_{\lambda_0}^{\lambda_0 + \delta} \int\limits_0^\infty 2 \sin(\lambda_1 x + \theta_{\lambda_0}) \sin(\lambda_2 x + \theta_{\lambda_0}) e^{-\eps x} dx d\lambda_1 d\lambda_2 \\
 & \hspace*{1.5em} = \int\limits_{\lambda_0}^{\lambda_0 + \delta} \int\limits_{\lambda_0}^{\lambda_0 + \delta} \int\limits_0^\infty (\cos((\lambda_1 - \lambda_2) x) - \cos((\lambda_1 + \lambda_2) x + 2 \theta_{\lambda_0})) e^{-\eps x} dx d\lambda_1 d\lambda_2 \\
 & = \int\limits_{\lambda_0}^{\lambda_0 + \delta} \int\limits_{\lambda_0}^{\lambda_0 + \delta} \expr{\frac{\eps}{\eps^2 + (\lambda_1 - \lambda_2)^2} + \frac{\eps \cos(2 \thet_{\lambda_0}) - (\lambda_1 + \lambda_2) \sin(2 \thet_{\lambda_0})}{\eps^2 + (\lambda_1 + \lambda_2)^2}} d\lambda_1 d\lambda_2 .
}
By taking the limit $\eps \to 0^+$ and using dominated convergence for the second term of the integrand, we obtain that
\formula{
 \norm{g_1}_2^2 & = \int_0^\infty (g_1(x))^2 dx = \frac{1}{2} \int_{\lambda_0}^{\lambda_0 + \delta} \expr{\pi - \sin(2 \thet_{\lambda_0}) \log \frac{\lambda_0 + \delta + \lambda_2}{\lambda_0 + \lambda_2}} d\lambda_2 .
}
Hence, using $0 \le \log(1 + s) \le s$ ($s \ge 0$),
\formula{
 \lim_{\delta \to 0^+} \frac{\norm{g_1}_2^2}{\delta} & = \frac{\pi}{2} - \lim_{\delta \to 0^+} \frac{\sin(2 \thet_{\lambda_0})}{2 \delta} \int_{\lambda_0}^{\lambda_0 + \delta} \log \expr{1 + \frac{\delta}{\lambda_0 + \lambda_2}} d\lambda_2 = \frac{\pi}{2} \, .
}
We conclude that for all $\lambda_0 > 0$,
\formula{
 \lim_{\delta \to 0^+} \frac{m([\lambda_0, \lambda_0 + \delta])}{\delta} & = \lim_{\delta \to 0^+} \expr{\frac{\|g_1 + g_2 - g_3\|_2}{\sqrt{\delta}}}^2 = \frac{\pi}{2} \, ,
}
that is, $m(d\lambda) = (\pi / 2) d\lambda$.
\end{proof}

\begin{proof}[Proof of Theorem~\ref{th:spectral} when $\Pi$ is injective]
By Lemma~\ref{lem:pi:spec}, $\sqrt{2 / \pi} \, \Pi^*$ is an $L^2(\hl)$ isometry, and by the extra assumption, its adjoint $\sqrt{2 / \pi} \, \Pi$ is injective. Hence, both operators are unitary on $L^2(\hl)$. The theorem follows now directly from Lemma~\ref{lem:pi:spec} and $(\Pi^*)^{-1} = (2 / \pi) \Pi^*$.
\end{proof}

In~\cite{bib:kmr11a}, Theorem~\ref{th:spectral} is proved in full generality. More precisely, it is shown that $\Pi$ is always injective. Here we are satisfied with the following observation: When $X_t$ is a symmetric $\alpha$-stable process, then, by Example~\ref{ex:stable} below, $F_\lambda(x) = F_x(\lambda)$, and therefore $\Pi^* = \Pi$. Since a self-adjoint isometry is necessarily unitary, we see (by Lemma~\ref{lem:pi:spec}) that $\Pi$ is injective. Hence, we have proved the following result.

\begin{cor}
\label{cor:stable}
When $\alpha \in (0, 2]$ and $X_t$ is the symmetric $\alpha$-stable process (that is, $\psi(\xi) = \xi^{\alpha/2}$), then the hypothesis of Theorem~\ref{th:spectral} holds true.\qed
\end{cor}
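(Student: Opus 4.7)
The plan is to exploit the scaling invariance of the symmetric $\alpha$-stable process to show that the kernel $F_\lambda(x)$ is symmetric in $\lambda$ and $x$. This makes the integral operator $\Pi$ of~\eqref{eq:pistar} formally self-adjoint, so it coincides with $\Pi^*$; the isometry property of $\Pi^*$ from Lemma~\ref{lem:pi:spec} is then immediately inherited by $\Pi$, yielding injectivity.

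The key step is the scaling identity $F_\lambda(x) = F_1(\lambda x)$ for all $\lambda, x > 0$ (to be verified in detail in Example~\ref{ex:stable} of Section~\ref{sec:examples}). For $\psi(\xi) = \xi^{\alpha/2}$ one has $\psi'(\lambda^2) = (\alpha/2) \lambda^{\alpha-2}$ and $\psi(\lambda^2) = \lambda^\alpha$, so the ratio $\psi'(\lambda^2)(\lambda^2 - \zeta^2) / (\psi(\lambda^2) - \psi(\zeta^2))$ equals $(\alpha/2)(1 - (\zeta/\lambda)^2)/(1 - (\zeta/\lambda)^\alpha)$, depending on $\lambda$ and $\zeta$ only through the quotient $\zeta/\lambda$. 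Substituting $\zeta = \eta/\lambda$ in~\eqref{eq:lf} transforms the exponent into the corresponding exponent for $\laplace F_1(\xi/\lambda)$, while the prefactor $\lambda / (\lambda^2 + \xi^2)$ becomes $(1/\lambda)/(1 + (\xi/\lambda)^2)$. Hence
\formula{
 \laplace F_\lambda(\xi) & = \frac{1}{\lambda} \, \laplace F_1(\xi/\lambda) ,
}
which is the Laplace transform of $x \mapsto F_1(\lambda x)$, and uniqueness of Laplace inversion gives $F_\lambda(x) = F_1(\lambda x)$. Since $F_1$ is a function of a single variable, the symmetry $F_\lambda(x) = F_1(\lambda x) = F_1(x \lambda) = F_x(\lambda)$ follows at once.

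With the kernel symmetric, for $f \in C_c(\hl)$ one has
\formula{
 \Pi^* f(x) & = \int_0^\infty f(\lambda) F_\lambda(x) d\lambda = \int_0^\infty f(\lambda) F_x(\lambda) d\lambda = \Pi f(x) ,
}
and since $C_c(\hl)$ is dense in $L^2(\hl)$ and both operators are bounded (boundedness of $\Pi$ follows from that of $\Pi^*$ in Lemma~\ref{lem:pi:bounded} by adjunction), the identity $\Pi = \Pi^*$ extends to all of $L^2(\hl)$. By Lemma~\ref{lem:pi:spec}, $\sqrt{2/\pi}\,\Pi^*$ is an isometry of $L^2(\hl)$, hence injective; equivalently $\Pi = \Pi^*$ is injective, which is precisely the additional hypothesis needed in Theorem~\ref{th:spectral}. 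There is no real obstacle in this corollary beyond verifying the scaling identity for $F_\lambda$, which is immediate from the homogeneity of $\xi^{\alpha/2}$; the much deeper injectivity statement for arbitrary complete Bernstein $\psi$ is, as the authors note, the content of~\cite{bib:kmr11a}.
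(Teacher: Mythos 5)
Your argument is correct and follows essentially the same route as the paper: verify the scaling identity $F_\lambda(x) = F_1(\lambda x)$ (hence $F_\lambda(x) = F_x(\lambda)$) for $\psi(\xi) = \xi^{\alpha/2}$, conclude $\Pi = \Pi^*$, and deduce injectivity from the isometry property established in Lemma~\ref{lem:pi:spec}. One small slip: the substitution in the exponent should be $\zeta = \lambda\eta$ (so $\eta = \zeta/\lambda$), not $\zeta = \eta/\lambda$; the conclusion you draw from it is nonetheless the right one.
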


In the following proofs of Theorems~\ref{th:pdt} and~\ref{th:fpt}, we use the full statement of Theorem~\ref{th:spectral}. In other words, we assume that $\Pi$ is injective (which is proved to be true in general in~\cite{bib:kmr11a}).

\begin{proof}[Proof of Theorem~\ref{th:pdt}]
Let $f \in C_c(\hl)$. By Theorem~\ref{th:spectral},
\formula{
 P^\hl_t f(x) & = \frac{2}{\pi} \, \Pi^* (e^{-t \psi(\lambda^2)} \Pi f(\lambda))(x) = \frac{2}{\pi} \int_0^\infty e^{-t \psi(\lambda^2)} F_\lambda(x) \Pi^* f(\lambda) d\lambda
}
for all $t, x > 0$. The function $e^{-t \psi(\lambda^2)} F_\lambda(x) F_\lambda(y) f(y)$ is jointly integrable in $y, \lambda > 0$. By Fubini,
\formula{
 P^\hl_t f(x) & = \int_0^\infty \expr{\frac{2}{\pi} \int_0^\infty e^{-t \psi(\lambda^2)} F_\lambda(x) F_\lambda(y) d\lambda} f(y) dy .
}
Since $f \in C_c(\hl)$ is arbitrary, the kernel of $P^\hl_t$ has the form~\eqref{eq:pdt}.
\end{proof}

\begin{proof}[Proof of Theorem~\ref{th:fpt} under additional assumption~\eqref{eq:fpt:a3}]
By monotone convergence,
\formula{
 \pr_x(\tau_\hl > t) & = P^\hl_t \ind_\hl(x) = \lim_{\eps \to 0^+} P^\hl_t e_\eps(x) ,
}
where $e_\eps(x) = e^{-\eps x} \ind_\hl(x)$. We have
\formula{
 \Pi e_\eps(\lambda) & = \int_0^\infty e^{-\eps x} F_\lambda(x) dx = \laplace F_\lambda(\eps) .
}
By Theorem~\ref{th:spectral},
\formula[eq:lfpt]{
 P^\hl_t e_\eps(x) & = \frac{2}{\pi} \int_0^\infty e^{-t \psi(\lambda^2)} F_\lambda(x) \laplace F_\lambda(\eps) d\lambda .
}
If we can use dominated convergence, then, by Proposition~\ref{prop:lf:0},
\formula{
 \pr_x(\tau_\hl > t) & = \frac{2}{\pi} \int_0^\infty e^{-t \psi(\lambda^2)} F_\lambda(x) \expr{\lim_{\eps \to 0^+} \laplace F_\lambda(\eps)} d\lambda \\
 & = \frac{2}{\pi} \int_0^\infty e^{-t \psi(\lambda^2)} F_\lambda(x) \sqrt{\frac{\psi'(\lambda^2)}{\psi(\lambda^2)}} \, d\lambda ,
}
as desired. Hence, it remains to show that the integrand in~\eqref{eq:lfpt} is dominated by an integrable function when $\eps \to 0^+$.

By the assumption~\eqref{eq:fpt:a3}, there are $C, \alpha > 0$ such that $\psi(\xi) \ge C \xi^\alpha$ for $\xi > 1$ (see the proof of Corollary~\ref{cor:power}). Hence, $e^{-t \psi(\lambda^2)}$ is integrable in $\lambda > 0$. By Proposition~\ref{prop:lfest}, we have $\laplace F_\lambda(\eps) \le 2 / \lambda$, and by Corollary~\ref{cor:power} (for $x_1 = 0$ and $x_2 = x$), $|F_\lambda(x)| \le C \min(1, \lambda^{\alpha/2})$ for some $C > 0$ (depending on $x$). Hence, the integrand in~\eqref{eq:lfpt} is bounded by $2 C e^{-t \psi(\lambda^2)} \min(1, \lambda^{\alpha/2}) / \lambda$, which is integrable in $\lambda > 0$. Formula~\eqref{eq:fpt} is proved.

By the same argument, the integrand in~\eqref{eq:fptd} is integrable jointly in $\lambda > 0$ and $t > t_0$. Formula~\eqref{eq:fptd} follows by integrating both sides of it in $t$ over $(t_0, \infty)$, using Fubini and comparing the result with~\eqref{eq:fpt}.
\end{proof}

%
%                            ---------- o ----------
%

\section{Examples}
\label{sec:examples}

\begin{exa}
\label{ex:stable}
Let $\psi(\xi) = \xi^{\alpha/2}$, where $\alpha \in (0, 2)$. Then $Z_t$ is the $(\alpha / 2)$-stable subordinator, and $X_t = B_{Z_t}$ is the symmetric $\alpha$-stable L{\'e}vy process. By~\eqref{eq:theta2} and~\eqref{eq:integral},
\formula{
 \thet_\lambda & = \frac{1}{\pi} \int_0^1 \frac{1}{1 - z^2} \, \log \frac{1 - z^\alpha}{(z^{-\alpha} - 1) z^2} \, dz = \frac{2 - \alpha}{\pi} \int_0^1 \frac{-\log z}{1 - z^2} \, dz = \frac{(2 - \alpha) \pi}{8} \, .
}
(Note that the \emph{phase-shift} of $(2 - \alpha) \pi/8$ was conjectured, in somewhat different setting, in~\cite{bib:zrk07}; see~\cite{bib:k10a} for further discussion.) By Theorem~\ref{th:eigenfunctions}, the eigenfunctions of $P^\hl_t$ and $\A_\hl$ are given by the formula
\formula{
 F_\lambda(x) & = \sin(\lambda x + (2 - \alpha) \pi / 8) - \int_0^\infty \gamma_\lambda(\xi) e^{-x \xi} d\xi , && x > 0 ,
}
where (see~\eqref{eq:gamma00} and~\eqref{eq:dagger})
\formula{
 \gamma_\lambda(\xi) & = \frac{\sqrt{2 \alpha} \, \lambda^{\alpha - 1}}{2 \pi} \, \frac{\xi^\alpha \sin (\alpha \pi / 2)}{\lambda^{2\alpha} + \xi^{2 \alpha} - 2 \lambda^\alpha \xi^\alpha \cos (\alpha \pi / 2)} \times \\
 & \qquad \times \exp\expr{\frac{1}{\pi} \int_0^\infty \frac{1}{1 + \zeta^2} \, \log \frac{1 - \xi^2 \zeta^2 / \lambda^2}{1 - \xi^\alpha \zeta^\alpha / \lambda^\alpha} \, d\zeta} .
}
Clearly $\gamma(s) = \lambda \gamma_\lambda(\lambda s)$ does not depend on $\lambda$, and finally,
\formula{
 F_\lambda(x) & = F(\lambda x) = \sin(\lambda x + (2 - \alpha) \pi / 8) - \int_0^\infty \gamma(s) e^{-\lambda s x} ds ,
}
where
\formula{
 \gamma(s) & = \frac{\sqrt{2 \alpha} \, \sin (\alpha \pi / 2)}{2 \pi} \, \frac{s^\alpha}{1 + s^{2 \alpha} - 2 s^\alpha \cos (\alpha \pi / 2)} \\ & \qquad \times \exp\expr{\frac{1}{\pi} \int_0^\infty \frac{1}{1 + \zeta^2} \, \log \frac{1 - s^2 \zeta^2}{1 - s^\alpha \zeta^\alpha} \, d\zeta} .
}
By Lemma~\ref{lem:regular}, we have $F_\lambda(x) \sim (\sqrt{\alpha / 2} \, \Gamma(\alpha / 2))^{-1} (\lambda x)^{\alpha/2}$ as $x \to 0^+$. By Theorem~\ref{th:spectral} (or Corollary~\ref{cor:stable}), the functions $F_\lambda$ yield a generalized eigenfunction expansion of $\A_\hl$ and $P^\hl_t$, and by Theorems~\ref{th:pdt} and~\ref{th:fpt},
\formula{
 p^\hl_t(x, y) & = \frac{2}{\pi} \int_0^\infty e^{-t \lambda^\alpha} F(\lambda x) F(\lambda y) d\lambda , && t, x, y > 0 , \\
 \pr_x(\tau_\hl > t) & = \frac{\sqrt{2 \alpha}}{\pi} \int_0^\infty \frac{e^{-t \lambda^\alpha} F(\lambda x)}{\lambda} \, d\lambda , && t , x > 0 .
}
For $\alpha = 1$, these results were obtained in~\cite{bib:kkms10}, and the formulae for $p^\hl_t(x, y)$ and $\pr_x(\tau_\hl > t)$ can be significantly simplified in this case.
\end{exa}

\begin{exa}
\label{ex:relativistic}
If $\psi(\xi) = \sqrt{m^2 + \xi} - m$ for some $m > 0$, then $Z_t$ is the relativistic subordinator with mass $m$, and the corresponding L{\'e}vy process is the relativistic $1$-stable process with mass $m$. Its infinitesimal operator $\A$ is the (quasi-)relativistic Hamiltonian of a free particle with mass $m$, and $\A_\hl$ is the Hamiltonian corresponding to the semi-infinite potential well. By~\eqref{eq:theta2},
\formula{
 \thet_\lambda & = \frac{1}{\pi} \int_0^1 \frac{1}{1 - z^2} \, \log \frac{\sqrt{m^2 + \lambda^2} - \sqrt{m^2 + \lambda^2 z^2}}{\expr{\sqrt{m^2 + \lambda^2 / z^2} - \sqrt{m^2 + \lambda^2}} z^2} \, dz ,
}
which increases with $\lambda$ from $0$ as $\lambda \to 0^+$ to $\pi / 8$ as $\lambda \to \infty$. The eigenfunctions of $P^\hl_t$ have the form $F_\lambda(x) = \sin(\lambda x + \thet_\lambda) - G_\lambda(x)$, where $G_\lambda$ is the Laplace transform of an explicit nonnegative function. By Lemma~\ref{lem:regular}, $F_\lambda(x) \sim \sqrt{2 \lambda x / \pi}$ as $x \to 0^+$. See~\cite{bib:kkm11} for a detailed discussion of this example.
\end{exa}

\begin{exa}
\label{ex:cauchybrownian}
Let $\psi(\xi) = \xi^{\alpha/2} + \beta \xi$, $\alpha \in (0, 2)$, $\beta > 0$. Then $Z_t$ is the $(\alpha/2)$-stable subordinator with drift, and the corresponding L{\'e}vy process $X_t$ is a mixture of the Brownian motion and the symmetric $\alpha$-stable L{\'e}vy process. When $\alpha = 1$, then, by~\eqref{eq:theta2},
\formula{
 \thet_\lambda & = \frac{1}{\pi} \int_0^1 \frac{1}{1 - \zeta^2} \, \log \frac{(\lambda + \beta \lambda^2) - (\lambda \zeta + \beta \lambda^2 \zeta^2)}{((\frac{\lambda}{\zeta} + \frac{\beta \lambda^2}{\zeta^2}) - (\lambda + \beta \lambda^2)) \zeta^2} \, d\zeta \, \\
 & = \frac{1}{\pi} \int_0^1 \frac{1}{1 - \zeta^2} \, \log \frac{1 + \beta \lambda + \beta \lambda \zeta}{\zeta + \beta \lambda + \beta \lambda \zeta} \, d\zeta .
}
This decreases with $\lambda$ from $\pi / 8$ as $\lambda \to 0^+$ to $0$ as $\lambda \to \infty$, and can be written explicitly in terms of the dilogarithm function $\li_2$. For general $\alpha$, the expression for $\thet_\lambda$ is more complicated, and it can be proved that $\thet_\lambda$ decreases from $(2 - \alpha) \pi / 8$ to $0$. By Lemma~\ref{lem:regular}, $F_\lambda(x) \sim \sqrt{\beta + (\alpha/2) \lambda^{\alpha - 2}} \, \lambda x$ as $x \to 0^+$.
\end{exa}

\begin{exa}
\label{ex:gamma}
Let $\psi(\xi) = \log(1 + \xi)$, so that $Z_t$ is the gamma subordinator. The subordinate process $X_t$ is called variance gamma process. By~\eqref{eq:theta2},
\formula{
 \thet_\lambda & = \frac{1}{\pi} \int_0^1 \frac{1}{1 - z^2} \, \log \frac{\log(1 + \lambda^2) - \log(1 + \lambda^2 z^2)}{(\log(1 + \lambda^2 / z^2) - \log(1 + \lambda^2)) z^2} \, dz \, .
}
It can be proved that $\thet_\lambda$ increases with $\lambda$ from $0$ as $\lambda \to 0^+$ to $\pi / 4$ as $\lambda \to \infty$. By Lemma~\ref{lem:regular}, the eigenfunctions $F_\lambda$ satisfy $F_\lambda(x) \sim C_\lambda / \sqrt{|\log x|}$ as $x \to 0^+$, with $C_\lambda = \lambda / \sqrt{2 + 2 \lambda^2}$. In particular, $F_\lambda$ is not H{\"o}lder continuous. Note that the assumptions of Theorem~\ref{th:pdt} are satisfied only for $t > 1/2$, but Theorem~\ref{th:fpt} applies for all $t > 0$. On the other hand, the more restrictive condition~\eqref{eq:fpt:a3} is not met, so this case is not covered by the proof of Theorem~\ref{th:fpt} given above. The general argument of~\cite{bib:kmr11a} needs to be used.
\end{exa}

\begin{exa}
\label{ex:loglog}
Let $\psi(\xi) = \log(1 + \log(1 + \xi))$. It can be verified that in this case $\thet_\lambda$ is greater than $\pi/4$ for some $\lambda$, e.g. $\thet_8 \approx 0.287 \pi$. This proves that it is not true in general that $\thet_\lambda \le \pi/4$, even if $\psi$ is unbounded. Furthermore, $e^{-t \psi(\lambda^2)}$ is not integrable in $\lambda > 0$ for any $t > 0$, so Theorem~\ref{th:pdt} is of no use here. Theorem~\ref{th:fpt} applies for $t > 1/2$, but the additional assumption~\eqref{eq:fpt:a2} is not satisfied.
\end{exa}

\begin{exa}
\label{ex:bounded}
Let $\psi(\xi) = \xi / (1 + \xi)$. In this case the subordinator $Z_t$ is the compound Poisson process with exponential jump distribution, and the jumps of $X_t$ have Laplace distribution with density $(1/2) e^{-|x|}$. Note that $1 / (\psi(\lambda^2) - \psi(\zeta))$ is holomorphic in $\zeta \in \C \setminus \{\lambda\}$, and hence $\gamma_\lambda$ vanishes. It follows that $F_\lambda = \sin(\lambda x + \thet_\lambda) \ind_{\hl}(x)$. By~\eqref{eq:theta} and a contour integration,
\formula{
 \thet_\lambda & = \frac{1}{2 \pi} \int_{-\infty}^\infty \frac{\lambda}{\lambda^2 - z^2} \, \log \frac{1 + z^2}{1 + \lambda^2} \, dz = \arctan \lambda ;
}
we omit the details. Hence, $F_\lambda(x) = \sin(\lambda x + \arctan \lambda) \ind_{\hl}(x)$ satisfies $\A F_\lambda(x) = -\psi(\lambda^2) F_\lambda(x)$ for all $x \in \hl$. This can be alternatively proved by a direct calculation.

It can be easily proved that $\gamma_\lambda$ vanishes if and only if $\psi(\xi) = \beta \xi$ (i.e. $X_t$ is a Brownian motion) or $\psi(\xi) = C_1 \xi / (\xi + C_2)$ for some $C_1, C_2 > 0$ (which corresponds to the process $X_t$ studied in this example, up to time and space scaling).
\end{exa}

\begin{exa}
In general, $\gamma_\lambda$ may have non-zero singular part. For example, consider $\psi(\xi) = 5 x / (x + 1) + x / (x + 5)$ and $\lambda = 1$. Then $\psi^+(-4) = 8 / 3 = \psi(1)$, and hence $\gamma_\lambda$ has an atom at $2$ for $\lambda = 1$.
\end{exa}

%
%                            ---------- o ----------
%

\section{Application to systems of PDEs}
\label{sec:applications}

As it was the case in~\cite{bib:kkms10}, where the Cauchy process was studied, the eigenfunctions $F_\lambda$ are the boundary values of solutions of a certain system of PDEs, a spectral problem with spectral parameter in the boundary. In~\cite{bib:kkms10}, the formula for $F_\lambda$ (for the case of the Cauchy process) was found by solving the corresponding system of PDEs. Here the argument goes in the opposite direction, and the result for PDEs is an application of the formula for $F_\lambda$.

We need some background on Kre{\u{\i}}n correspondence. For a complete Bernstein function $\psi$, there is a unique \emph{string} $m$ (a non-decreasing, right-continuous unbounded function from $[0, \infty)$ to $[0, \infty]$) which is the \emph{Kre\u{\i}n representation} of $\psi$. For simplicity, we only consider the case when $m(0) = 0$, $m$ is continuously differentiable on $(0, \infty)$, and $m'(y) > 0$ for all $y > 0$. Denote $a(y) = 1 / m'(y)$. Then, for each $\lambda \ge 0$ there exists a (unique) nonnegative, non-increasing continuous function $g_\lambda(y) = g(\lambda, y)$ such that $a(y) g_\lambda''(y) = \lambda g_\lambda(y)$ for $y > 0$, and $g_\lambda(0) = 1$. Furthermore,
\formula{
 \psi(\lambda) & = -g_\lambda'(0) , && \lambda \ge 0 .
}
For the details, we refer to~\cite{bib:kw82}, or Chapter~14 in~\cite{bib:ssv10}, and the references therein. The Kre\u{\i}n's correspondence between $m$ and $\psi$ is a rather complicated concept, and there are only few examples of explicit pairs, see~\cite{bib:ssv10}.

Let $f \in \schwartz$ and define $u(x, y)$ using the Fourier transform in $x$,
\formula{
 \fourier_x u(\xi, y) & = g(\xi^2, y) \fourier f(\xi) , && \xi \in \R, \, y \ge 0 .
}
It is easy to check that $u$ is the solution of the problem
\formula{
 \expr{\frac{\partial^2}{\partial x^2} + a(y) \, \frac{\partial^2}{\partial y^2}} u(x, y) & = 0 && x \in \R, \, y > 0 ; \\
 u(x, 0) & = f(x) , && x \in \R .
}
Furthermore,
\formula{
 \fourier_x \expr{\frac{\partial}{\partial y} \, u}(\xi, 0) & = \lim_{y \to 0^+} \frac{g(\xi^2, y) - 1}{y} \, \fourier f(\xi) = -\psi(\xi^2) \fourier f(\xi) ,
}
so that $(\partial / \partial y) u(x, 0) = \A f(x)$, where $\A$ is the operator with Fourier symbol $-\psi(\xi^2)$. Theorems~\ref{th:eigenfunctions} implies the following result.

\begin{thm}
\label{th:pde}
Let $a(y)$ be a positive continuous function on $(0, \infty)$ such that $1 / a$ is integrable over $(0, 1)$, but not over $(1, \infty)$. For $\lambda \ge 0$, let $g_\lambda(y) = g(\lambda, y)$ be the nonnegative, non-increasing continuous solution of $a(y) g_\lambda''(y) = \lambda g_\lambda(y)$ ($y > 0$), satisfying $g_\lambda(0) = 1$. Define $\psi(\lambda) = g_\lambda'(0)$, and let $F_\lambda$ be the function given in Theorem~\ref{th:eigenfunctions}. Finally, let
\formula[eq:pde:u]{
 u_\lambda(x, y) & = \fourier_x^{-1} (g(\xi^2, y) \fourier F_\lambda(\xi)) , && x \in \R , \, y \ge 0 .
}
Then $u_\lambda$ is continuous and bounded in $x \in \R$, $y \ge 0$, twice differentiable in $x \in \R$, $y > 0$, and it satisfies
\formula[]{
 \nonumber \expr{\frac{\partial^2}{\partial x^2} + a(y) \, \frac{\partial^2}{\partial y^2}} u_\lambda(x, y) & = 0 && x \in \R, \, y > 0 ; \\
 \label{eq:pde:problem} u_\lambda(x, 0) & = 0 , && x < 0 ; \\
 \nonumber \frac{d}{dy} \, u_\lambda(x, 0) & = -\psi(\lambda^2) u_\lambda(x, 0) , && x > 0 .
}
\end{thm}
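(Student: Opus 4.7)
The plan is to realize $u_\lambda$ as a convolution in $x$ with a family of symmetric probability measures $p_y$ whose characteristic function is $\xi \mapsto g(\xi^2, y)$, derive the PDE from the ODE $a(y) g_\lambda''(y) = \lambda g_\lambda(y)$, and read off the two boundary conditions from $g(\xi^2, 0) = 1$ and $\partial_y g(\xi^2, y)|_{y=0} = -\psi(\xi^2)$ together with the distributional eigenvalue equation $A \conv F_\lambda = -\psi(\lambda^2) F_\lambda$ on $\hl$ established inside the proof of Theorem~\ref{th:eigenfunctions}.

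First I would check that for each $y > 0$, $\xi \mapsto g(\xi^2, y)$ is the characteristic function of a symmetric probability density $p_y$. The Kre\u{\i}n correspondence provides a probabilistic picture: $g_\lambda(y)$ is the Laplace transform at $\lambda$ of the first-passage time $\sigma_y$ to $0$ for the diffusion with generator $a(y) \partial_y^2$ started at $y$, and the law of an independent Brownian motion evaluated at $\sigma_y$ is a symmetric probability measure $p_y$ with characteristic function $g(\xi^2, y)$; this is precisely the subordination whose L{\'e}vy-Khintchine exponent is $\psi(\xi^2)$. The decay of $g(\xi^2, y)$ in $\xi$ for fixed $y > 0$ (from $g_\lambda(y) \to 0$ as $\lambda \to \infty$, together with the ODE) yields that $p_y$ is a smooth density. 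Since $F_\lambda$ is bounded, I would set
\formula{
 u_\lambda(x, y) & = (p_y * F_\lambda)(x) = \int_\R p_y(x - z) F_\lambda(z) dz , && y > 0 ,
}
and $u_\lambda(x, 0) = F_\lambda(x)$. Boundedness of $u_\lambda$ on $\R \times [0, \infty)$ is immediate, and weak convergence $p_y \to \delta_0$ as $y \to 0^+$ combined with continuity of $F_\lambda$ on $\R$ (Theorem~\ref{th:eigenfunctions} and Lemma~\ref{lem:f0}, continuity at $0$ being ensured under the standing hypothesis on $a$) yields continuity of $u_\lambda$ at $y = 0$.

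The PDE on $\R \times (0, \infty)$ is now transparent on the Fourier side: $\fourier_x u_\lambda(\xi, y) = g(\xi^2, y) \fourier F_\lambda(\xi)$ as tempered distributions, and the identity $a(y) \partial_y^2 g(\xi^2, y) = \xi^2 g(\xi^2, y)$ gives $(-\xi^2 + a(y) \partial_y^2)(g(\xi^2, y) \fourier F_\lambda(\xi)) = 0$, which is the Fourier image of $(\partial_x^2 + a(y) \partial_y^2) u_\lambda = 0$. Classical smoothness of $u_\lambda$ in $(x, y)$ for $y > 0$, needed to interpret the PDE pointwise, follows from the decay of $g(\xi^2, y)$ and its $y$-derivatives in $\xi$, which legitimises differentiation under the convolution. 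For the boundary data at $y = 0$, the condition $u_\lambda(x, 0) = 0$ for $x < 0$ is immediate from $F_\lambda$ being supported in $[0, \infty)$. For the second condition, the ODE and $g_\lambda'' \ge 0$ give the monotone bounds $0 \ge g_\lambda'(y) \ge g_\lambda'(0) = -\psi(\lambda)$, whence $|\partial_y g(\xi^2, y)| \le \psi(\xi^2) \le C(1 + \xi^2)$ uniformly in $y \ge 0$ (the last inequality by Proposition~\ref{prop:cbf:ests}(e)); the mean value theorem then yields $|y^{-1}(g(\xi^2, y) - 1)| \le \psi(\xi^2)$ with pointwise limit $-\psi(\xi^2)$. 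Dominated convergence gives $y^{-1}(p_y - \delta_0) \to -A$ in $\schwartz'$, where $A$ is the symmetric distributional generator with $\fourier A(\xi) = -\psi(\xi^2)$. Since $F_\lambda$ is bounded (hence $\schwartz'$-convolvable with each $p_y$ and with $A$), passing to the limit in $y^{-1}(u_\lambda(\cdot, y) - F_\lambda)$ yields $\partial_y u_\lambda(x, 0^+) = (A \conv F_\lambda)(x)$, and Lemma~\ref{lem:f} (the distributional form of Theorem~\ref{th:eigenfunctions}) delivers $A \conv F_\lambda(x) = -\psi(\lambda^2) F_\lambda(x) = -\psi(\lambda^2) u_\lambda(x, 0)$ for $x > 0$.

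The main technical obstacle is justifying the interchange of the $y$-derivative at $y = 0^+$ with convolution against the bounded, non-decaying function $F_\lambda$. The uniform Fourier-side bound $|\partial_y g(\xi^2, y)| \le \psi(\xi^2)$ coming from the ODE is the crucial input: it reduces convergence to a dominated-convergence argument against Schwartz test functions, after which continuity of $F_\lambda$ together with a smooth-cutoff decomposition of $F_\lambda$ (in the spirit of the proof of Lemma~\ref{lem:eigenfunctions}) upgrades the $\schwartz'$ limit to the pointwise statement used above.
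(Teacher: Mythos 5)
Your construction is a genuinely different route from the one the paper sketches. The paper works on the Fourier side directly: it splits $g(\xi^2,y)\fourier F_\lambda(\xi)$ into an $L^2$ piece plus two explicit simple poles at $\pm\lambda$ to get boundedness, and it invokes interior \emph{elliptic regularity} for the operator $\partial_x^2 + a(y)\partial_y^2$ to get $C^2$ smoothness for $y>0$. You instead interpret $g(\xi^2,y)$ as the characteristic function of a probability measure $p_y$ (the law of $B_{\sigma_y}$ in the Kre{\u{\i}}n picture) and write $u_\lambda = p_y * F_\lambda$. This gives boundedness ($\|u_\lambda(\cdot,y)\|_\infty\le\|F_\lambda\|_\infty$) and boundary continuity ($p_y\Rightarrow\delta_0$ against the bounded continuous $F_\lambda$) more cheaply than the paper does, and the key quantitative estimate $|g_\lambda(y)-1|\le y\psi(\lambda)$, from $g_\lambda'$ increasing from $-\psi(\lambda)$ to $0$, is a nice concrete input that the paper leaves implicit.

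Two steps of your argument need more care, and here the paper's choices are not incidental. First, for the interior $C^2$ regularity you appeal to ``decay of $g(\xi^2,y)$ and its $y$-derivatives in $\xi$''; but the hypotheses on the string give only $g_\lambda(y)\to 0$ as $\lambda\to\infty$, with no rate, so neither the smoothness of $p_y$ nor the differentiability under the convolution is automatic. The paper's use of elliptic regularity sidesteps precisely this: once $u_\lambda$ is a bounded weak solution of $(\partial_x^2+a(y)\partial_y^2)u_\lambda=0$, interior regularity comes for free, without any pointwise control of $p_y$. Second, for the boundary derivative you correctly obtain $y^{-1}(p_y-\delta_0)\to -A$ in $\schwartz'$ from the dominated bound $|y^{-1}(g(\xi^2,y)-1)|\le\psi(\xi^2)\le C(1+\xi^2)$, but the upgrade to the pointwise statement $\partial_y u_\lambda(x,0^+) = A\conv F_\lambda(x)$ for $x>0$ is not a direct consequence of Lemma~\ref{lem:eigenfunctions}'s cutoff decomposition: that lemma (via Lemma~\ref{lem:d2} and Dynkin's formula) is built around the transition semigroup $P_t$ of $X_t$, whereas your family $\{p_y\}$ is \emph{not} a semigroup and is not generated by $A$. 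Concretely, for the smooth piece $f_1 = hF^*$ of the decomposition, showing $y^{-1}(p_y*f_1 - f_1)(x)\to A\conv f_1(x)$ pointwise still requires a quantitative second-order Taylor bound on $g(\xi^2,y)$ (something like $g(\xi^2,y)-1+y\psi(\xi^2)=O(\xi^2\, y\, m(y))$ with $m(y)=\int_0^y ds/a(s)$, obtainable by integrating the ODE twice) paired with enough decay of $\fourier f_1$ — none of which is spelled out. This is the genuine gap; the rest of your proposal is a valid and in places cleaner alternative.
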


The Fourier transform and inverse Fourier transform in~\eqref{eq:pde:u} needs to be understood in the distributional sense. 

\begin{proof}[Sketch of the proof]
Using the properties of $g(\lambda, y)$, it is easy to check that $u_\lambda$ is a weak solution of~\eqref{eq:pde:problem}. Since $g(\xi^2, y) \fourier F_\lambda(\xi)$ (for a fixed $y \ge 0$) is a sum of an $L^2(\R)$ function and $C_y / (\xi - \lambda) - \bar{C}_y / (\xi + \lambda)$ (in a similar way as in Section~\ref{sec:eigenfunctions}), $u(x, y)$ is a bounded function. Continuity of $u_\lambda$ and its first and second partial derivatives in $x \in \R$, $y > 0$ follows by elliptic regularity theorem. Continuity of $u_\lambda$ on the boundary is proved using the properties of $g(\lambda, y)$ and $F_\lambda(x)$.
\end{proof}

We provide two examples. If $a(y) = 1 / (1 + 2 a y)$ for a constant $a > 0$, then we find that $g_\lambda(y) = (1 + 2 a y)^{-(\sqrt{\lambda + a^2} - a) / (2a)}$ and $\psi(\lambda) = \sqrt{\lambda + a^2} - a$. Hence $\psi$ is the characteristic exponent of the relativistic subordinator, described in Example~\ref{ex:relativistic}. For a related example, see Section~2.7 in~\cite{bib:py03}.

Consider now $\alpha \in (0, 2)$ and $a(y) = \alpha^2 c_\alpha^2 y^{2 - 2/\alpha}$, where $c_\alpha = 2^{-\alpha} \Gamma(1-\alpha/2) / \Gamma(1+\alpha/2)$. We have $g_\lambda(y) = C_\alpha (c_\alpha \sqrt{\lambda} \, y)^{1/2} K_{\alpha/2}((c_\alpha \sqrt{\lambda} \, y)^{1/\alpha})$, where $K_{\alpha/2}$ is the modified Bessel function of the second kind and $C_\alpha = \alpha 2^{-\alpha/2} / \Gamma(1 + \alpha/2)$. Moreover, $\psi(\lambda) = \lambda^{\alpha/2}$, so that $\psi$ is the characteristic exponent of the $(\alpha/2)$-stable subordinator, considered in Example~\ref{ex:stable}. When $\alpha = 1$, $a(y) = 1$.

The problem~\eqref{eq:pde:problem} with $a(y) = 1$ was studied in~\cite{bib:kkms10} to find the eigenfunctions of $P^\hl_t$. Earlier, a similar relation for more general domains (also in higher dimensions) was applied e.g. in~\cite{bib:bk04, bib:bk06, bib:bbkrsv09, bib:s58}, and for general symmetric stable processes e.g. in~\cite{bib:d90, bib:d04, bib:dm07, bib:mo69}. Related problems appear frequently in hydrodynamics (the \emph{sloshing problem}), see the references in~\cite{bib:kkms10}.

The operator $d^2 / dx^2 + a(y) d^2 / dy^2$ is the generator of a two-dimensional diffusion $Y_t$ on $\R \times [0, \infty)$, with reflecting boundary. By repeating the construction given in Example~3.1 in~\cite{bib:ksv10b}, we can identify the subordinate Brownian motion $X_t$ studied in the present article with the trace left on the horizontal axis by $Y_t$. We remark that the connection between subordinate Brownian motion in $\R^d$ and traces of diffusions in $\R^{d+1}$ was applied e.g. in~\cite{bib:bmr09, bib:bmr10} to find formulae for the distribution of some L{\'e}vy processes stopped at the time of first exit from the half-line or the interval. A related problem for the trace of a two-dimensional jump-type stable process was studied in~\cite{bib:i09}.

%
%                            ---------- o ----------
%

\subsection*{Acknowledgements} I would like to express my gratitude to Tadeusz Kulczycki for many enlightening discussions. I also thank Krzysztof Bogdan, Tomasz Byczkowski, Piotr Graczyk, Alexey Kuznetsov, Jacek Ma{\l}ecki, Micha{\l} Ryznar, Renming Song and Zoran Vondra\v{c}ek for their comments to the preliminary version of the article. I am particularly thankful to the anonymous referee for helpful remarks and many detailed suggestions.

%
%                            ---------- o ----------
%

%
%                            ---------- o ----------
%

\end{document}